\documentclass[12pt]{amsart}
\usepackage{mathrsfs}
\usepackage{cases}
\usepackage{epic}
\usepackage{amsfonts}
\usepackage{tikz}
\usepackage{tikz-cd}
\usepackage{graphicx}
\usepackage{amsmath}
\usepackage{amssymb, upgreek}
\usepackage{bm}
\usepackage{latexsym,todonotes}
\usepackage{pdflscape}
\usepackage[all]{xypic}
\usepackage[all]{xy}
\usepackage{tikz-cd}
\usepackage{color}
\usepackage{colordvi}
\usepackage{multicol}
\usepackage{mathtools}
\usepackage[normalem]{ulem}
\usepackage[linktocpage=true]{hyperref}
\hypersetup{colorlinks,linkcolor=blue,urlcolor=cyan,citecolor=blue}
\usepackage{graphicx}
\NewDocumentCommand{\sslash}{s}{%
	\IfBooleanTF{#1}
	{\big/\mkern-7mu\big/}
	{/\mkern-6mu/}%
}
\makeatletter
\newsavebox{\@brx}
\newcommand{\llangle}[1][]{\savebox{\@brx}{\(\m@th{#1\langle}\)}%
	\mathopen{\copy\@brx\kern-0.5\wd\@brx\usebox{\@brx}}}
\newcommand{\rrangle}[1][]{\savebox{\@brx}{\(\m@th{#1\rangle}\)}%
	\mathclose{\copy\@brx\kern-0.5\wd\@brx\usebox{\@brx}}}
\makeatother

\tikzcdset{scale cd/.style={every label/.append style={scale=#1},
		cells={nodes={scale=#1}}}}

\DeclareMathAlphabet{\mathbbb}{U}{bbold}{m}{n}
\DeclareMathOperator{\dimv}{\underline{\dim}}

\makeatletter

\newcommand{\Rmnum}[1]{\textup{\expandafter\@slowromancap\romannumeral #1@}}
\makeatother

\allowdisplaybreaks

\def \tUU{(\tU\otimes\tU)}
\def \tUUi {(\tU\otimes\tU)^\imath}

\begin{document}
	\input xy
	\xyoption{all}
	\newcommand{\iLa}{\Lambda^{\imath}}
	\newcommand{\iadd}{\operatorname{iadd}\nolimits}
	\renewcommand{\mod}{\operatorname{mod}\nolimits}
	\newcommand{\fproj}{\operatorname{f.proj}\nolimits}
	\newcommand{\Fac}{\operatorname{Fac}\nolimits}
	\newcommand{\ci}{{\I}_{\btau}}
	\newcommand{\proj}{\operatorname{proj}\nolimits}
	\newcommand{\inj}{\operatorname{inj}\nolimits}
	\newcommand{\rad}{\operatorname{rad}\nolimits}
	\newcommand{\Span}{\operatorname{Span}\nolimits}
	\newcommand{\soc}{\operatorname{soc}\nolimits}
	\newcommand{\ind}{\operatorname{inj.dim}\nolimits}
	\newcommand{\Ginj}{\operatorname{Ginj}\nolimits}
	\newcommand{\res}{\operatorname{res}\nolimits}
	\newcommand{\np}{\operatorname{np}\nolimits}
	\newcommand{\Mor}{\operatorname{Mor}\nolimits}
	\newcommand{\Mod}{\operatorname{Mod}\nolimits}
	\newcommand{\End}{\operatorname{End}\nolimits}
	\newcommand{\lf}{\operatorname{l.f.}\nolimits}
	\newcommand{\Iso}{\operatorname{Iso}\nolimits}
	\newcommand{\Aut}{\operatorname{Aut}\nolimits}
	\newcommand{\Rep}{\operatorname{Rep}\nolimits}
	
	\newcommand{\colim}{\operatorname{colim}\nolimits}
	\newcommand{\gldim}{\operatorname{gl.dim}\nolimits}
	\newcommand{\cone}{\operatorname{cone}\nolimits}
	\newcommand{\rep}{\operatorname{rep}\nolimits}
	\newcommand{\Ext}{\operatorname{Ext}\nolimits}
	\newcommand{\Tor}{\operatorname{Tor}\nolimits}
	\newcommand{\Hom}{\operatorname{Hom}\nolimits}
	\newcommand{\Top}{\operatorname{top}\nolimits}
	\newcommand{\Coker}{\operatorname{Coker}\nolimits}
	\newcommand{\thick}{\operatorname{thick}\nolimits}
	\newcommand{\rank}{\operatorname{rank}\nolimits}
	\newcommand{\Gproj}{\operatorname{Gproj}\nolimits}
	\newcommand{\Len}{\operatorname{Length}\nolimits}
	\newcommand{\RHom}{\operatorname{RHom}\nolimits}
	\renewcommand{\deg}{\operatorname{deg}\nolimits}
	\renewcommand{\Im}{\operatorname{Im}\nolimits}
	\newcommand{\Ker}{\operatorname{Ker}\nolimits}
	\newcommand{\Coh}{\operatorname{Coh}\nolimits}
	\newcommand{\Id}{\operatorname{Id}\nolimits}
	\newcommand{\Qcoh}{\operatorname{Qch}\nolimits}
	\newcommand{\CM}{\operatorname{CM}\nolimits}
	\newcommand{\sgn}{\operatorname{sgn}\nolimits}
	\newcommand{\utMH}{\operatorname{\cm\ch(\iLa)}\nolimits}
	\newcommand{\GL}{\operatorname{GL}}
	\newcommand{\Perv}{\operatorname{Perv}}
	
	\newcommand{\IC}{\operatorname{IC}}
	\def \hU{\widehat{\U}}
	\def \hUi{\widehat{\U}^\imath}
	\newcommand{\bb}{\psi_*}
	\newcommand{\bvs}{{\boldsymbol{\varsigma}}}
	\def \ba{\mathbf{a}}
	\newcommand{\vs}{\varsigma}
	\def \bfk {\mathbf{k}}

	\def \bd{\mathbf{d}}
	\newcommand{\e}{{\bf 1}}
	\newcommand{\EE}{E^*}
	\newcommand{\dbl}{\operatorname{dbl}\nolimits}
	\newcommand{\ga}{\gamma}
	\newcommand{\tM}{\cm\widetilde{\ch}}
	\newcommand{\la}{\lambda}
	
	\newcommand{\For}{\operatorname{{\bf F}or}\nolimits}
	\newcommand{\coker}{\operatorname{Coker}\nolimits}
	\newcommand{\rankv}{\operatorname{\underline{rank}}\nolimits}
	\newcommand{\diag}{{\operatorname{diag}\nolimits}}
	\newcommand{\swa}{{\operatorname{swap}\nolimits}}
	\newcommand{\supp}{{\operatorname{supp}}}
	
	\renewcommand{\Vec}{{\operatorname{Vec}\nolimits}}
	\newcommand{\pd}{\operatorname{proj.dim}\nolimits}
	\newcommand{\gr}{\operatorname{gr}\nolimits}
	\newcommand{\id}{\operatorname{id}\nolimits}
	\newcommand{\aut}{\operatorname{Aut}\nolimits}
	\newcommand{\Gr}{\operatorname{Gr}\nolimits}
	
	\newcommand{\pdim}{\operatorname{proj.dim}\nolimits}
	\newcommand{\idim}{\operatorname{inj.dim}\nolimits}
	\newcommand{\Gd}{\operatorname{G.dim}\nolimits}
	\newcommand{\Ind}{\operatorname{Ind}\nolimits}
	\newcommand{\add}{\operatorname{add}\nolimits}
	\newcommand{\pr}{\operatorname{pr}\nolimits}
	\newcommand{\oR}{\operatorname{R}\nolimits}
	\newcommand{\oL}{\operatorname{L}\nolimits}
	\def \brW{\mathrm{Br}(W_\btau)}
	\newcommand{\Perf}{{\mathfrak Perf}}
	\newcommand{\cc}{{\mathcal C}}
	\newcommand{\gc}{{\mathcal GC}}
	\newcommand{\ce}{{\mathcal E}}
	\newcommand{\calI}{{\mathcal I}}
	\newcommand{\cs}{{\mathcal S}}
	\newcommand{\cf}{{\mathcal F}}
	\newcommand{\cx}{{\mathcal X}}
	\newcommand{\cy}{{\mathcal Y}}
	\newcommand{\ct}{{\mathcal T}}
	\newcommand{\cu}{{\mathcal U}}
	\newcommand{\cv}{{\mathcal V}}
	\newcommand{\cn}{{\mathcal N}}
	\newcommand{\mcr}{{\mathcal R}}
	\newcommand{\ch}{{\mathcal H}}
	\newcommand{\ca}{{\mathcal A}}
	\newcommand{\cb}{{\mathcal B}}
	\newcommand{\cj}{{\mathcal J}}
	\newcommand{\cl}{{\mathcal L}}
	\newcommand{\cm}{{\mathcal M}}
	\newcommand{\cp}{{\mathcal P}}
	\newcommand{\cg}{{\mathcal G}}
	\newcommand{\cw}{{\mathcal W}}
	\newcommand{\co}{{\mathcal O}}
	\newcommand{\cq}{{\mathcal Q}}
	\newcommand{\cd}{{\mathcal D}}
	\newcommand{\ck}{{\mathcal K}}
	\newcommand{\calr}{{\mathcal R}}
	\newcommand{\cz}{{\mathcal Z}}
	\newcommand{\ol}{\overline}
	\newcommand{\ul}{\underline}
	\newcommand{\st}{[1]}
	\newcommand{\ow}{\widetilde}
	\renewcommand{\P}{\mathbf{P}}
	\newcommand{\pic}{\operatorname{Pic}\nolimits}
	\newcommand{\Spec}{\operatorname{Spec}\nolimits}
	\newcommand{\Fr}{\mathrm{Fr}}
	\newcommand{\Gp}{\mathrm{Gp}}

	\newtheorem{innercustomthm}{{\bf Theorem}}
	\newenvironment{customthm}[1]
	{\renewcommand\theinnercustomthm{#1}\innercustomthm}
	{\endinnercustomthm}
	
	\newtheorem{innercustomcor}{{\bf Corollary}}
	\newenvironment{customcor}[1]
	{\renewcommand\theinnercustomcor{#1}\innercustomcor}
	{\endinnercustomthm}
	
	\newtheorem{innercustomprop}{{\bf Proposition}}
	\newenvironment{customprop}[1]
	{\renewcommand\theinnercustomprop{#1}\innercustomprop}
	{\endinnercustomthm}
	
	\newtheorem{theorem}{Theorem}[section]
	\newtheorem{acknowledgement}[theorem]{Acknowledgement}
	\newtheorem{algorithm}[theorem]{Algorithm}
	\newtheorem{axiom}[theorem]{Axiom}
	\newtheorem{case}[theorem]{Case}
	\newtheorem{claim}[theorem]{Claim}
	\newtheorem{conclusion}[theorem]{Conclusion}
	\newtheorem{condition}[theorem]{Condition}
	\newtheorem{conjecture}[theorem]{Conjecture}
	\newtheorem{construction}[theorem]{Construction}
	\newtheorem{corollary}[theorem]{Corollary}
	\newtheorem{criterion}[theorem]{Criterion}
	\newtheorem{definition}[theorem]{Definition}
	\newtheorem{example}[theorem]{Example}
	\newtheorem{assumption}[theorem]{Assumption}
	\newtheorem{lemma}[theorem]{Lemma}
	\newtheorem{notation}[theorem]{Notation}
	\newtheorem{problem}[theorem]{Problem}
	\newtheorem{proposition}[theorem]{Proposition}
	\newtheorem{solution}[theorem]{Solution}
	\newtheorem{summary}[theorem]{Summary}
	\newtheorem{hypothesis}[theorem]{Hypothesis}
	\newtheorem*{thm}{Theorem}
	
	\theoremstyle{remark}
	\newtheorem{remark}[theorem]{Remark}
	
	\def \Br{\mathrm{Br}}
	\newcommand{\tK}{K}
	
	\newcommand{\tk}{\widetilde{k}}
	\newcommand{\tU}{\widetilde{{\mathbf U}}}
	\newcommand{\Ui}{{\mathbf U}^\imath}
	\newcommand{\tUi}{\widetilde{{\mathbf U}}^\imath}
	\newcommand{\qbinom}[2]{\begin{bmatrix} #1\\#2 \end{bmatrix} }
	\newcommand{\ov}{\overline}
	\newcommand{\tMHg}{\operatorname{\widetilde{\ch}(Q,\btau)}\nolimits}
	\newcommand{\tMHgop}{\operatorname{\widetilde{\ch}(Q^{op},\btau)}\nolimits}
	
	\newcommand{\rMHg}{\operatorname{\ch_{\rm{red}}(Q,\btau)}\nolimits}
	\newcommand{\dg}{\operatorname{dg}\nolimits}
	\def \fu{{\mathfrak{u}}}
	\def \fv{{\mathfrak{v}}}
	\def \sqq{{\mathbbb{v}}}
	\def \bp{{\mathbf p}}
	\def \bv{{\mathbf v}}
	\def \bw{{\mathbf w}}
	\def \bA{{\mathbf A}}
	\def \bL{{\mathbf L}}
	\def \bF{{\mathbf F}}
	\def \bS{{\mathbf S}}
	\def \bC{{\mathbf C}}
	\def \bU{{\mathbf U}}
	\def \U{{\mathbf U}}
	\def \btau{\varpi}
	\def \La{\Lambda}
	\def \Res{\Delta}
	\newcommand{\ev}{\bar{0}}
	\newcommand{\odd}{\bar{1}}
	\def \fk{\mathfrak{k}}
	\def \ff{\mathfrak{f}}
	\def \fp{{\mathfrak{P}}}
	\def \fg{\mathfrak{g}}
	\def \fn{\mathfrak{n}}
	\def \gr{\mathfrak{gr}}
	\def \Z{\mathbb{Z}}
	\def \F{\mathbb{F}}
	\def \D{\mathbb{D}}
	\def \C{\mathbb{C}}
	\def \N{\mathbb{N}}
	\def \Q{\mathbb{Q}}
	\def \G{\mathbb{G}}
	\def \P{\mathbb{P}}
	\def \K{\mathbb{K}}
	\def \E{\mathbb{K}}
	\def \I{\mathbb{I}}
	
	\def \eps{\varepsilon}
	\def \BH{\mathbb{H}}
	\def \btau{\varrho}
	\def \cv{\varpi}
	
	\def \tR{\widetilde{\bf R}}
	\def \tRZ{\widetilde{\bf R}_\cz}
	\def \hR{\widehat{\bf R}}
	\def \hRZ{\widehat{\bf R}_\cz}
	\def\tRi{\widetilde{\bf R}^\imath}
	\def\hRi{\widehat{\bf R}^\imath}
	\def\tRiZ{\widetilde{\bf R}^\imath_\cz}
	\def\reg{\mathrm{reg}}
	
	\def\hRiZ{\widehat{\bf R}^\imath_\cz}
	\def \tTT{\widetilde{\mathbf{T}}}
	\def \TT{\mathbf{T}}
	\def \br{\mathbf{r}}
	\def \bp{{\mathbf p}}
	\def \tS{\texttt{S}}
	\def \bq{{\bm q}}
	\def \bvt{{v}}
	\def \bs{{ r}}
	\def \tt{{v}}
	\def \k{k}
	\def \bnu{\bm{\nu}}
	\def\bc{\mathbf{c}}
	\def \ts{\textup{\texttt{s}}}
	\def \tt{\textup{\texttt{t}}}
	\def \tr{\textup{\texttt{r}}}
	\def \tc{\textup{\texttt{c}}}
	\def \tg{\textup{\texttt{g}}}
	\def \bW{\mathbf{W}}
	\def \bV{\mathbf{V}}

	\newcommand{\browntext}[1]{\textcolor{brown}{#1}}
	\newcommand{\greentext}[1]{\textcolor{green}{#1}}
	\newcommand{\redtext}[1]{\textcolor{red}{#1}}
	\newcommand{\bluetext}[1]{\textcolor{blue}{#1}}
	\newcommand{\brown}[1]{\browntext{ #1}}
	\newcommand{\green}[1]{\greentext{ #1}}
	\newcommand{\red}[1]{\redtext{ #1}}
	\newcommand{\blue}[1]{\bluetext{ #1}}
	\numberwithin{equation}{section}
	\renewcommand{\theequation}{\thesection.\arabic{equation}}
	
	\newcommand{\wtodo}{\rightarrowdo[inline,color=orange!20, caption={}]}
	\newcommand{\lutodo}{\rightarrowdo[inline,color=green!20, caption={}]}
	\def \tT{\widetilde{\mathcal T}}
	
	\def \tTL{\tT(\iLa)}
	\def \iH{\widetilde{\ch}}
	
	
	\title[Dual canonical bases of quantum groups and $\imath$quantum groups II]{Dual canonical bases of quantum groups and $\imath$quantum groups II: geometry}
	
	\author[Ming Lu]{Ming Lu}
	\address{Department of Mathematics, Sichuan University, Chengdu 610064, P.R.China}
	\email{luming@scu.edu.cn}

	\author[Xiaolong Pan]{Xiaolong Pan}
	\address{Department of Mathematics, Sichuan University, Chengdu 610064, P.R.China}
	\email{xiaolong\_pan@stu.scu.edu.cn}

	\subjclass[2020]{Primary 17B37, 18G80.}
	\keywords{dual canonical bases, quantum groups,  $\imath$quantum groups, Hall algebras, quiver varieties}

	\begin{abstract}
		The $\imath$quantum groups admit two realizations: one via the $\imath$Hall algebras and the other via the quantum Grothendieck rings of quiver varieties, as developed by the first author and Wang. Based on these two realizations, we establish the dual canonical bases for $\imath$quantum groups of type ADE in two distinct ways, using perverse sheaves and Hall algebras respectively. In this paper, we prove that these two dual canonical bases coincide, thereby proving their invariance under braid group actions, and that their structural constants are integral and positive. Furthermore, we establish the positivity of the coefficients of the transition matrix from the Hall basis (and PBW basis) to the dual canonical basis.
	\end{abstract}

	\maketitle
	\setcounter{tocdepth}{1}
	
	\tableofcontents
	
	\section{Introduction}

	\subsection{Background}

	The (universal) $\imath$quantum group $\tUi =\langle B_i, \tk_i \mid i\in \I \rangle$ is by definition a subalgebra of the Drinfeld double quantum group $\tU$ associated to a Satake diagram, and $(\tU, \tUi)$ forms a quantum symmetric pair; cf. \cite{Let99, Ko14,LW19}. We view Letzter's $\imath$quantum groups as a vast generalization of Drinfeld--Jimbo quantum groups, and quantum groups can be viewed as $\imath$quantum groups of diagonal type. 

	The $\imath$quiver algebra $\iLa$,  introduced in \cite{LW19}, is produced from a quiver $Q =(Q_0, Q_1)$ together with an involution $\btau$ on $Q$. 
	In \cite{LW19}, the first author and Wang introduced  the $\imath$Hall algebras associated to $\iLa$, denoted by $\widetilde{\ch}(\bfk Q,\varrho)$, to realize quasi-split $\imath$quantum groups $\tUi$ of type ADE; see \cite{LW20} for an extension to Kac-Moody setting. This construction is inspired by the Hall algebra realization of quantum groups $\tU$ by Ringel \cite{Rin90} and Bridgeland \cite{Br13}.

	Lusztig in \cite{Lus90,Lus91,Lus93} used perverse sheaves on the varieties of representations of a quiver $Q$ to realize $\U^+$, and produced the canonical basis of $\U^+$ by simple perverse sheaves; cf. also \cite{Ka91} for the crystal basis. Lusztig \cite{Lus90} also used PBW bases to give an elementary construction of canonical bases of $\U^+$; also see \cite[\S11.6]{DDPW}. 
	The relation between Ringel’s realization and Lusztig’s categorification is given by sheaf-function correspondence; see \cite{Lus98}. 
	Nakajima \cite{Na01,Na04} further developed Lusztig's construction, and introduced (graded) Nakajima quiver varieties. 

	Inspired by Bridgeland's construction \cite{Br13}, via (dual) quantum Grothendieck rings of cyclic quiver varieties, Qin \cite{Qin} provided a geometric construction of $\tU$ of type ADE. A bonus of the geometric approach is the construction of a positive basis on $\tU$, which contains as subsets the (mildly rescaled in $\Q(v^{1/2})$) dual canonical bases for halves of the quantum groups \cite{Lus90}. Qin's work was built on the construction of Hernandez-Leclerc \cite{HL15} (who realized half a quantum group  $\U^+$ using graded Nakajima quiver varieties) as well as the concept of quantum Grothendieck rings introduced by Nakajima \cite{Na04} and Varagnolo--Vasserot \cite{VV}. 
	In order to study dual canonical bases of $\tU$ and $\tUi$, following \cite{HL15,Qin} (see also \cite{BG17,LW21b,Sh22}), the quantum groups (and  also $\imath$quantum group) are considered to be over $\Q(v^{1/2})$.
	
	%

	Motivated by the works \cite{Na01,HL15,LeP13},
	Keller and Scherotzke \cite{KS16,Sch19} formulated the notion of regular/singular Nakajima categories $\mcr, \cs$ from the mesh category of the repetition category $\Z Q$ for an acyclic quiver $Q$. 
	Note that $\mcr, \cs$ were called the generalized Nakajima categories {\em loc. cit.}, and they are called Nakajima-Keller-Scherotzke (NKS) categories in \cite{LW21b}. The NKS varieties are by definition (cf. Definition~\ref{def:NKS}) the representation varieties of $\mcr$ and $\cs$.

	For a Dynkin $\imath$quiver $(Q,\varrho)$, the module category of its $\imath$quiver algebra $\Lambda^\imath$ can be realized as a singular NKS category $\cs^\imath$, and the (dual) quantum Grothendieck rings of perverse sheaves over $\cs^\imath$ can be used to realize quasi-split $\imath$quantum groups $\tUi$ of type ADE; see \cite{LW21b}. Qin's construction \cite{Qin} of the Drinfeld double $\tU$ uses such cyclic quiver varieties, which can be viewed as NKS varieties of $\cs^\imath$ of diagonal type; cf. the $\imath$quiver algebra used to formulate Bridgeland's Hall algebra; cf. \cite{LW19}.

	The geometric construction in \cite{Qin,LW21b} provides a favorable basis for $\tU$ and  $\tUi$  with positive integral structure constants, which are called {\em dual canonical basis}. Unlike Lusztig's canonical basis for $\U^+$, the dual canonical basis is defined for the entire $\tU$ and $\tUi$. 
	To avoid confusion, we sometimes call this basis to be {\em dual IC basis} in this paper. 
	In a prequel \cite{LP25}, we use Hall basis and Lusztig's Lemma to produce the {\em dual canonical basis} for the $\imath$Hall algebra $\widetilde{\ch}(\bfk Q,\varrho)$. This dual canonical basis can also be transferred to $\tU$ and  $\tUi$. As an application of BGP reflection functors and Fourier transforms, we prove that this basis is invariant under braid group actions and is independent of the orientation of the quiver; see \cite{LP25}.
	
	\subsection{Goal}
	
	The goal of this paper is to prove that the dual IC basis provided by perverse sheaves coincides with the dual canonical basis via Hall algebras for the quantum group $\tU$ and the $\imath$quantum group $\tUi$ of type ADE. It is known that there exists a natural isomorphism from the $\imath$Hall algebra to the quantum Grothendieck ring, which is induced by the (dual) trace map.
	These two types of dual canonical bases coincide with each other under this natural isomorphism.
	In this manner, we deduce that the dual canonical bases of $\tU$ and $\tUi$ possess almost the same nice properties as Lusztig's canonical basis, including bar-invariant, integral and positive, as well as invariant under braid group actions.
	Furthermore, the positivity of the Hall basis and PBW basis with respect to the dual canonical basis is also established.

	\subsection{Main results}
	From a Cartan matrix $C=(c_{ij})_{i,j\in\I}$, 
	we \cite{LP25} introduce a variant of Drinfeld double quantum group $\hU$, which is generated by
	$E_i,F_i,K_i,K_i'$ ($i\in\I$). The Drinfeld double $\tU$ is constructed from $\hU$ by making $K_i,K_i'$ ($i\in\I$) invertible. Given an involution $\btau\in\aut(C)$, we define the (universal) $\imath$quantum group $\tUi$ to be the subalgebra of $\tU$ generated by $B_i= F_i +  E_{\btau i} \tK_i',
	\tk_i = \tK_i \tK_{\btau i}'$ ($i \in \I$), and the inverses of $\tk_i$. A variant of $\imath$quantum group $\hUi$ is defined to be subalgebra of $\hU$ similarly, but $\tk_i$ ($i\in\I$) are not invertible. 
	
	
	For a Dynkin $\imath$quiver $(Q,\varrho)$, we can associate a generic $\imath$Hall algebra $\widetilde{\ch}(Q,\varrho)$ defined over $\Q(v^{1/2})$ \cite{LW19}. This algebra $\widetilde{\ch}(Q,\varrho)$ has a basis (called Hall basis) 
	\begin{align}
		\label{eq:iHallbasis}\{\K_\alpha*\fu_\lambda\mid \alpha\in\Z^\I,\lambda\in\fp\},
	\end{align}
	where $\fp$ is the set of $\N$-valued functions over the set $\Phi^+$ of positive roots, and $*$ is the (twisted) Hall product. We also define $\widehat{\ch}(Q,\btau)$ to be the generic $\imath$Hall algebra with the Hall basis $\{\K_\alpha*\fu_\lambda\mid \alpha\in\N^\I,\lambda\in\fp\}$. Let $\widehat{\ct}(Q,\varrho)$ be the subalgebra generated by $\K_\alpha$, $\alpha\in\N^\I$. Inspired by \cite{BG17}, we define an action $\diamond$ of $\widehat{\ct}(Q,\varrho)$ on $\widehat{\ch}(Q,\varrho)$ such that $\ov{\K_\alpha\diamond \fu_\lambda}=\K_\alpha\diamond\ov{\fu_\lambda}$. This construction can be carried to $\widetilde{\ch}(Q,\btau)$. 
	
	For the Dynkin $\imath$quiver $(Q,\btau)$, let $\cs^\imath$ and $\mcr^\imath$ be the singular/regular $\imath$NKS category constructed in \cite{LW21b}; see Definition \ref{def:RS for iQG}. 
	Let $\hRi$ be the quantum Grothendieck ring of $\imath$NKS varieties constructed in \cite{LW21b}. It has a basis 
	dual to the basis of intersection cohomology (IC for short) sheaves 
	associated to the strata 
	in the NKS variety $\cm_0(\bw, \mcr^\imath)=\rep(\bw,\cs^\imath)$, called the dual IC basis. 
	We denote by $\tRi$ a certain  localization of $\hRi$; see \S\ref{subsec: graded Groth ring}. 
	
	Let $\cz:=\Z[v^{1/2},v^{-1/2}]$. The integral form of $\hRi$ is the subspace $\hRiZ$ spanned over $\cz$ by the dual IC basis, and we can similarly define the integral form $\tRiZ$ of $\tRi$. From the construction we know that $\hRiZ$ and $\tRiZ$ are algebras over $\mathcal{Z}$. On the other hand, the integral form of the $\imath$Hall algebra $\widetilde{\ch}(Q,\varrho)_\cz$ is the $\cz$-algebra defined over the (free) $\cz$-module spanned by \eqref{eq:iHallbasis}. Similarly for $\widehat{\ch}(Q,\varrho)_\cz$.
	
	By the results of \cite{LW19} and \cite{LW21b} (see Theorem \ref{thm:iQG-sheaf} and Lemma \ref{lem:Hall-iQG}), we can define isomorphisms
	\[\widehat{\Omega}:\widehat{\ch}(Q,\varrho)\longrightarrow\hRi,\quad  \widetilde{\Omega}:\widetilde{\ch}(Q,\varrho)\longrightarrow\tRi.\]
	It is surprising that these isomorphisms preserve integral forms.
	
	\begin{customthm}{{\bf A}} [Theorem~\ref{prop:Hall-sheaf-integ}]\label{thm A}
		The isomorphisms $\widehat{\Omega}:\widehat{\ch}(Q,\varrho)\rightarrow\hRi$ and $  \widetilde{\Omega}:\widetilde{\ch}(Q,\varrho)\rightarrow\tRi$ preserve their integral forms.
	\end{customthm}

	Using the isomorphism between $\imath$quantum groups and $\imath$Hall algebras (see Theorem \ref{thm:iQG-sheaf}), the bar-involution of $\tUi$ (and $\hUi$) induces the bar-involution of $\widetilde{\ch}(Q,\btau)$ (and $\widehat{\ch}(Q,\btau)$).
	The partial order on $\fp$ given by orbit closures \cite{Lus90} induces a partial order $\prec$ on $\N^\I\times\fp$. With this partial order, we can apply Lusztig's Lemma \cite{BZ14} to a rescaled Hall basis $\{\K_\alpha\diamond\mathfrak{U}_\lambda\mid \alpha\in\N^\I,\lambda\in\fp\}$ of $\widehat{\ch}(Q,\btau)$; see \cite{LP25}. This allows us to construct a bar-invariant integral basis $\{\mathfrak{L}_{\alpha,\lambda}:=\K_\alpha\diamond\mathfrak{L}_\lambda\mid\alpha\in\Z^\I,\lambda\in\fp\}$ on $\widetilde{\ch}(Q,\btau)$, called the dual canonical basis.

	We shall compare the dual canonical basis of $\widetilde{\ch}(Q,\varrho)$ with the  dual IC basis of $\tRi$. For this purpose we use functions on the representation variety $\rep(\bw,\mathcal{S}^\imath)$ of $\mathcal{S}^\imath$ defined over the algebraic closure $\F$ of a finite field $\bfk=\F_q$ to give a realization of the $\imath$Hall algebra $\widetilde{\ch}(\bfk Q,\varrho)$ (recall that we have $\rep(\bw,\mathcal{S}^\imath)\cong\mathcal{M}_0(\bw,\mathcal{R}^\imath)$); see \S\ref{subsec:QV function iHA}. We also construct a generic version $\widetilde{\mathbf{M}}^*(Q,\varrho)$ of $\widetilde{\ch}(\bfk Q,\varrho)$ following \cite[\S 9]{Lus90}. 
	
	Since the transversal slice theorem is not available in positive characteristic, we cannot directly define the analogue of $\tRi$ over $\F$ to apply the sheaf-function correspondence. To overcome this, we instead consider the subvariety $\rep^{\Gp}(\bw,\mathcal{S}^\imath)$ of Gorenstein projective $\mathcal{S}^\imath$-modules, which has the advantage that the induced stratification aligns with the orbit stratification, enabling us to define the quantum Grothendieck rings $\tR^{\imath,\Gp}$ over $\F$ verbatim. We also examine the corresponding generic form $\widetilde{\mathbf{F}}^{\Gp,}(Q,\varrho)$ of the semi-derived Hall algebra, which, by the results in \cite[Appendix A.4]{LW19}, is isomorphic to $\widetilde{\mathbf{M}}^*(Q,\varrho)$. The dual trace map then provides an isomorphism from $\widetilde{\mathbf{F}}^{\Gp,*}(Q,\varrho)$ to $\tR^{\imath,\Gp}$, sending the dual canonical basis to the dual IC basis. Combined with the result of \cite{SS16} (extended to $\imath$quivers), the same can be stated for the isomorphism $\widetilde{\Omega}$ in Theorem~\ref{thm A}.
	
	
	
	\begin{customthm}{{\bf B}} [Corollary~\ref{coro:Upsilon maps dCB}, Proposition~\ref{prop:iHA dual trace map}]\label{thm C}
		The isomorphism $\widetilde{\Omega}$ in Theorem~\ref{thm A} sends the dual canonical basis of $\widetilde{\ch}(Q,\varrho)$ to the dual IC basis of $\tRi$.
	\end{customthm}
	
	
	This in turn allows various positivity properties to be stated for the dual canonical basis of $\widetilde{\ch}(Q,\varrho)$, for example the structure constants lie in $\N[v^{\frac{1}{2}},v^{-\frac{1}{2}}]$. 
	
	
	
	
	
	For a Dynkin $\imath$quiver $(Q,\varrho)$, if $(Q',\varrho)$ is constructed from $(Q,\varrho)$ by reversing some arrows, the Fourier transforms $\Phi_{Q',Q}:\widehat{\ch}(\bfk Q,\btau)\rightarrow \widehat{\ch}(\bfk Q',\btau)$ and $\Phi_{Q',Q}:\widetilde{\ch}(\bfk Q,\btau)\rightarrow \widetilde{\ch}(\bfk Q',\btau)$ are constructed in \cite[Theorem {\bf C}]{LP25}.
	Using the isomorphism in Theorem~\ref{thm A}, they can be transferred to quantum Grothendieck rings $\tRi$ and $'\tRi$ of $(Q,\btau)$ and $(Q',\btau)$ respectively, that is, there is an isomorphism $\Psi=\Psi_{Q',Q}:\tRi\rightarrow {'\tRi}$. 
	
	To see that $\Psi$ preserves the dual IC bases, we consider the natural inclusion $i:E_{Q,\bw}\rightarrow\rep(\bw,\mathcal{S}^\imath)$, where $E_{Q,\bw}$ is the variety of representations of $Q$. The variety $E_{Q,\bw}$ can be identified as the fixed point subvariety of a $\G_m$-action, so the pullback functor $i^*$ sends $\mathcal{L}(\bv,\bw)$ into Lusztig's category $\mathcal{Q}_\bw$, as defined in \cite[\S 9]{Lus93}. This observation allows us to define an embedding 
	\[\iota:\hR^+\longrightarrow\tRi\]
	where $\hR^+$ is the quantum Grothendieck ring for the quiver variety of $Q$ (constructed in \cite{HL15}), with $B(\bv,\bw)$ being the basis dual to the IC sheaves $\mathcal{P}(\bv,\bw)$ associated to orbits in $E_{Q,\bw}$. Using Lusztig's Lemma, we can then construct $L(\bv,\bw)$ from $\iota(B(\bv,\bw))$, and this implies the invariance of dual canonical bases under $\Psi$. 
	
	\begin{customthm}{{\bf C}}[Theorem~\ref{FT of CB}] \label{thm E}
		For a different orientation $(Q',\varrho)$ of the $\imath$quiver $(Q,\varrho)$, the Fourier transform $\Psi:\tRi\rightarrow {'\tRi}$ preserves the dual IC basis. The Fourier transform $\Phi_{Q',Q}:\widetilde{\ch}(Q,\varrho)\rightarrow \widetilde{\ch}(Q',\varrho)$ therefore preserves the dual canonical bases.
	\end{customthm}
	
	
	Lusztig in \cite{Lus90} proved the purity of IC sheaves associated to orbits in $E_{Q,\bw}$. This translates into the positivity of the (rescaled) Hall basis with respect to dual canonical basis on $\tR^+$. By using the embedding $\iota$ one can also establish the positivity of the rescaled Hall basis with respect to dual canonical basis. The positivity of PBW basis also follows, see Proposition \ref{prop:iQG PBW to dCB positivity}.
	
	\begin{customthm}{{\bf D}} [Theorem~\ref{iHA dCB positivity}] \label{thm F}
		Transition matrix coefficients of the rescaled Hall basis of $\widetilde{\ch}(Q,\varrho)$ with respect to dual canonical basis belong to $\N[v^{-1}]$.
	\end{customthm}
	
	The dual canonical basis of $\tUi$ is defined to be the image of $L(\bv,\bw)$ under the isomorphism given in Theorem~\ref{thm:iQG-sheaf}. By Theorem~\ref{thm E}, this basis is independent of the orientations of the $\imath$quivers, convincing the uniqueness of the dual canonical bases. 
	
	\subsection{Perspectives}
	
	
	The dual canonical basis of $\tU$ (and also $\tUi$) of type ADE considered in this paper can be viewed as an ideal extension of Lusztig’s dual canonical basis for $\U^+$.
	We expect this basis (integral, bar-invariant, invariant under the braid group actions) also exists for other types, and moreover it is positive for symmetric  types.  
	As $\imath$quantum groups have many more types than quantum groups, even for the quasi-split type, 
	it is a challenging and important open question to find an algebraic characterization and algorithm for the  dual canonical bases for $\tUi$ of all types.
	
	Most of the quasi-split $\imath$quantum groups of Kac-Moody type can be realized via $\imath$Hall algebras \cite{LW20}. We shall construct dual canonical bases by using the Hall bases of $\imath$Hall algebras in this general setting, especially for affine $\imath$quantum groups. It is interesting and important to extend the geometric realization given in \cite{LW21b} to Kac-Moody type, and the perverse sheaves could be the dual canonical basis of $\tUi$; compare with Lusztig's geometric realization of quantum groups of Kac-Moody type \cite{Lus90a,Lus90,Lus93}. 
	
	There has been a completely different geometric construction of the modified quantum groups $\dot\U$ (of type A) and their canonical bases with positivity \cite{BLM90, Lus93}, which is compatible with the Khovanov-Lauda-Rouquier (KLR)  categorification \cite{KL10, R08}. There has also been a geometric construction of the modified $\imath$quantum groups $\dot\U^\imath$ (of type AIII) and their $\imath$canonical bases with positivity \cite{BKLW, LiW18, BW18b}, which is again compatible with a KLR type categorification \cite{BSWW}. It is natural to ask if there is any connection between these constructions for modified quantum groups and canonical bases (respectively, for $\imath$quantum groups and $\imath$canonical bases) and the geometric constructions of the Drinfeld doubles and dual canonical bases in \cite{Qin, SS16,BG17} (respectively, \cite{LW21b} and this paper).

	In another direction, this paper hopefully brings quantum symmetric pairs and their (dual) canonical bases a step closer to quantum cluster algebras (see \cite{S24} for a cluster realization of $\imath$quantum groups of type AI), as there have been various connections of quantum groups to cluster algebras in the works \cite{GLS13,GY21,HL15, KKKO18, Qin14,Qin} and references therein. In fact, using the cluster algebra method, Shen \cite{Sh22} constructed a basis of the quantum group $\U$ which is integral, positive, and also invariant under braid group actions. It is interesting to compare this basis with the dual (double) canonical basis as they share these nice properties. For quantum $\mathfrak{sl}_2$, they coincide with each other (see \cite{Sh22}), but it is mysterious for higher rank.

	\subsection{Organization}
	
	The paper is organized as follows. In \S\ref{sec:QG and iQG} we review the basics of quantum groups and $\imath$quantum groups associated to a Dynkin ($\imath$)quiver. Here we will use different presentations following \cite{BG17}, and the braid group actions are modified accordingly. This presentation will simplify the realization by means of $\imath$Hall algebras and quantum Grothendieck rings, which are reviewed in \S\ref{sec:iHA} and \S\ref{NKS QV section}, respectively. We also extend the result of \cite{SS16} to $\imath$quivers.
	
	
	We compare the dual canonical basis of $\imath$Hall algebra with the dual IC basis of quantum Grothendieck ring in \S\ref{sec:dCB via sheaf}. We first construct the $\imath$Hall algebra via functions on the representation variety of $\Lambda^\imath$, and then construct a generic version as in \cite[\S 9]{Lus90}. By shifting to the subvariety of Gorenstein projective modules and make use of the aforementioned result of \cite{SS16} proved in \S\ref{NKS QV section}, we prove Theorem~\ref{thm C}.
	
	Using the function realization given in \S\ref{sec:dCB via sheaf}, we define in \S\ref{sec:FT of iHA} the Fourier transforms of quantum Grothendieck rings, and proved the invariance of dual canonical basis under these isomorphisms. We also deduce the positivity of transition matrix coefficients of the rescaled Hall basis of $\imath$Hall algebra with respect to the dual canonical basis. 
	
	
	In \S\ref{sec:dCB irank I} we compute the dual canonical bases of $\tUi_v(\mathfrak{sl}_2)$, and establish a closed formula for $L(\bv,\bw)$ in terms of generators in this case.
	
	\vspace{2mm}
	
	\noindent{\bf Acknowledgments.}
	We thank Weiqiang Wang for his collaboration in related projects, also for his helpful comments and stimulating discussions. ML is partially supported by the National Natural Science Foundation of China (No. 12171333, 12261131498). 
	
	\section{Quantum groups and $\imath$quantum groups}\label{sec:QG and iQG}
	
	In this section, we review the basic construction of quantum groups and $\imath$quantum groups, as in \cite{LP25}.
	
	\subsection{Quantum groups}
	\label{subsec:QG}
	
	Let $\I=\{1,\dots,n\}$ be the index set. 
	Let $C=(c_{ij})_{i,j \in \I}$ be the Cartan matrix of of a simply-laced semi-simple Lie algebra $\fg$.
	Let $\Delta^+=\{\alpha_i\mid i\in\I\}$ be the set of simple roots of $\fg$, and denote the root lattice by $\Z^{\I}:=\Z\alpha_1\oplus\cdots\oplus\Z\alpha_n$. Let $\Phi^+$ be the set of positive roots. 
	

	Let $v$ be an indeterminate. 
	Denote, for $r,m \in \N$,
	\[
	[r]=\frac{v^r-v^{-r}}{v-v^{-1}},
	\quad
	[r]!=\prod_{i=1}^r [i], \quad \qbinom{m}{r} =\frac{[m][m-1]\ldots [m-r+1]}{[r]!}.
	\]
	Following \cite{BG17}, the Drinfeld double $\hU := \hU_v(\fg)$ is defined to be the $\Q(v^{1/2})$-algebra generated by $E_i,F_i, \tK_i,\tK_i'$, $i\in \I$, 
	subject to the following relations:  for $i, j \in \I$,
	\begin{align}
		[E_i,F_j]= \delta_{ij}(v^{-1}-v) (\tK_i-\tK_i'),  &\qquad [\tK_i,\tK_j]=[\tK_i,\tK_j']  =[\tK_i',\tK_j']=0,
		\label{eq:KK}
		\\
		\tK_i E_j=v^{c_{ij}} E_j \tK_i, & \qquad \tK_i F_j=v^{-c_{ij}} F_j \tK_i,
		\label{eq:EK}
		\\
		\tK_i' E_j=v^{-c_{ij}} E_j \tK_i', & \qquad \tK_i' F_j=v^{c_{ij}} F_j \tK_i',
		\label{eq:K2}
	\end{align}
	and the Serre relations. 
	
	We define $\tU=\tU_v(\fg)$ to be the $\Q(v^{1/2})$-algebra constructed from $\widehat{\U}$ by making $\tK_i,\tK_i'$ ($i\in\I$) invertible. Then $\tU$ and $\hU$ are $\Z^\I$-graded by setting $\deg E_i=\alpha_i$, $\deg F_i=-\alpha_i$, $\deg K_i=0=\deg K_i'$. 

	The quantum group $\bU$ is defined to be quotient algebra of $\hU$ (also $\tU$) modulo the ideal generated by $K_iK_i'-1$ ($i\in\I$). In fact, $\U$ 
	is 
	the $\Q(v^{1/2})$-algebra generated by $E_i,F_i, K_i, K_i^{-1}$, $i\in \I$, subject to the  relations modified from \eqref{eq:KK}--\eqref{eq:K2} with $\tK_i'$ replaced by $K_i^{-1}$. 
	
	Let $\hU^+$ be the subalgebra of $\hU$ generated by $E_i$ $(i\in \I)$, $\hU^0$ be the subalgebra of $\widehat{\bU}$ generated by $\tK_i, \tK_i'$ $(i\in \I)$, and $\hU^-$ be the subalgebra of $\widehat{\bU}$ generated by $F_i$ $(i\in \I)$, respectively.
	The subalgebras $\tU^+$, $\tU^0$ and $\tU^-$ of $\tU$, and subalgebras $\bU^+$, $\bU^0$ and $\bU^-$ of $\bU$ are defined similarly. Then the algebras $\hU$, $\widetilde{\bU}$ and $\bU$ have triangular decompositions:
	\begin{align*}
		\hU=\hU^+\otimes\hU^0\otimes \hU^-,\qquad 
		\widetilde{\bU} =\widetilde{\bU}^+\otimes \widetilde{\bU}^0\otimes\widetilde{\bU}^-,
		\qquad
		\bU &=\bU^+\otimes \bU^0\otimes\bU^-.
	\end{align*}
	Clearly, ${\bU}^+\cong\widetilde{\bU}^+\cong\hU^+$, ${\bU}^-\cong \widetilde{\bU}^-\cong\hU^-$, $\hU^0\cong\Q(v^{1/2})[\tK_i,\tK_i'\mid i\in\I]$, $\tU^0\cong \Q(v^{1/2})[\tK_i^{\pm1},(\tK_i')^{\pm1}\mid i\in\I]$ and $\bU^0\cong\Q(v^{1/2})[K_i^{\pm1}\mid i\in\I]$. Note that 
	${\bU}^0 \cong \hU^0/(\tK_i \tK_i' -1 \mid   i\in \I)$. 
	For any $\mu=\sum_{i\in\I}m_i\alpha_i\in\Z^\I$, we denote $K_\mu=\prod_{i\in\I} K_i^{m_i}$, $K_\mu'=\prod_{i\in\I} (K_i')^{m_i}$ in $\tU$ (or $\U$); we can view $\tK_\mu,\tK_\mu'$ in $\hU$ if $\mu\in\N^\I$.
	
	\begin{lemma}[cf. \cite{BG17}]\label{QG bar-involution def}
		There exists an anti-involution $u\mapsto \ov{u}$ on $\hU$ (also $\tU$, $\U$) given by $\ov{v^{1/2}}=v^{-1/2}$, $\ov{E_i}=E_i$, $\ov{F_i}=F_i$, and $\ov{K_i}=K_i$, $\ov{K_i'}=K_i'$, for $i\in\I$.
	\end{lemma}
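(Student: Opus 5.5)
We prove the lemma by defining the map on generators of a free algebra, checking that each defining relation is sent to a relation, and then confirming that the resulting map is an involution. Write the desired map as $\sigma$. We view $\hU$ as the quotient of the free $\Q(v^{1/2})$-algebra $\mathcal F$ on $E_i,F_i,K_i,K_i'$ by the two-sided ideal $J$ generated by \eqref{eq:KK}--\eqref{eq:K2} and the Serre relations.

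First, on $\mathcal F$ we define a ring anti-automorphism $\sigma$. It is $\Q$-linear and semilinear over the field automorphism $v^{1/2}\mapsto v^{-1/2}$ of $\Q(v^{1/2})$. It fixes every generator and reverses the order of products. Such a map exists and is unique by the universal property of the free algebra (applied to the opposite algebra with twisted scalars), and $\sigma^2=\mathrm{id}$ on $\mathcal F$. It therefore suffices to show $\sigma(J)\subseteq J$. Since $\sigma$ is an anti-homomorphism, $\sigma(J)$ is again a two-sided ideal, so it is enough to check that $\sigma$ sends each defining relator into $J$.

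Next, we check the relations one by one.
\begin{itemize}
\item The commutativity relations among the $K$'s are symmetric and are preserved.
\item For $K_iE_j-v^{c_{ij}}E_jK_i$, applying $\sigma$ gives $E_jK_i-v^{-c_{ij}}K_iE_j$. This equals $-v^{-c_{ij}}(K_iE_j-v^{c_{ij}}E_jK_i)$, so it lies in $J$. The same computation handles $F_j$ and $K_i'$ in \eqref{eq:EK}--\eqref{eq:K2}.
\item For $[E_i,F_j]-\delta_{ij}(v^{-1}-v)(K_i-K_i')$, applying $\sigma$ gives $F_jE_i-E_iF_j-\delta_{ij}(v-v^{-1})(K_i-K_i')$. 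This is $-1$ times the original relator, so it lies in $J$.
\item The quantum Serre relation $\sum_{r}(-1)^r\qbinom{1-c_{ij}}{r}E_i^{1-c_{ij}-r}E_jE_i^{r}$ is sent to the same sum with indices reversed. The quantum binomials are bar-invariant, and the substitution $r\mapsto 1-c_{ij}-r$ gives back the original relator up to the sign $(-1)^{1-c_{ij}}$. The same holds for the $F$'s.
\end{itemize}
Hence $\sigma$ descends to $\hU$, and $\sigma^2=\mathrm{id}$ there.

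For $\tU$, note that $\sigma(K_i)=K_i$ is invertible and $\sigma$ is an anti-automorphism, so $\sigma$ extends to the localization by setting $\sigma(K_i^{-1})=K_i^{-1}$, and similarly for $K_i'$. For $\U$, the ideal generated by $K_iK_i'-1$ is $\sigma$-stable because $\sigma(K_iK_i'-1)=K_i'K_i-1$, so $\sigma$ descends to $\U$. In both cases the involutivity is inherited.

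No step is a real obstacle; the only point needing care is the Serre relation, where the symmetry of the $q$-binomials under both $v\mapsto v^{-1}$ and $r\mapsto n-r$ is used.
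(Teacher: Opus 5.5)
Your proof is correct. The paper gives no argument of its own and simply cites \cite{BG17}; the standard argument behind that citation is the one you give. You define the $v^{1/2}\mapsto v^{-1/2}$-semilinear anti-automorphism on the free algebra, and you check that each relator in \eqref{eq:KK}--\eqref{eq:K2} and each Serre relator is sent to a scalar multiple of itself. You then pass to the localization $\tU$ and the quotient $\U$.

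The extension to $\tU$ is stated a little quickly. It becomes immediate in either of two ways. You can view $\sigma$ as a homomorphism into the opposite algebra and apply the universal property of localization. Alternatively, you can present $\tU$ with extra generators $K_i^{-1},(K_i')^{-1}$ and relations $K_iK_i^{-1}=1=K_i^{-1}K_i$, which $\sigma$ visibly preserves.
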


	\subsection{The $\imath$quantum groups}
	
	For a  Cartan matrix $C=(c_{ij})$, let $\Aut(C)$ be the group of all permutations $\btau$ of the set $\I$ such that $c_{ij}=c_{\btau i,\btau j}$. An element $\btau\in\Aut(C)$ is called an \emph{involution} if $\btau^2=\Id$.

	For a  Cartan matrix $C=(c_{ij})$, 
	let $\btau$ be an involution in $\Aut(C)$. We define the universal $\imath$quantum groups ${\hU}^\imath:=\hU'_v(\fk)$ (resp. $\tUi:=\tU'_v(\fk)$) to be the $\Q(v^{1/2})$-subalgebra of $\hU$ (resp. $\tU$) generated by
	\begin{equation}
		\label{eq:Bi}
		B_i= F_i +  E_{\btau i} \tK_i',
		\qquad \tk_i = \tK_i \tK_{\btau i}', \quad \forall i \in \I,
	\end{equation}
	(with $\tk_i$ invertible in $\tUi$).
	Let $\hU^{\imath 0}$ be the $\Q(v^{1/2})$-subalgebra of $\hUi$ generated by $\tk_i$, for $i\in \I$. Similarly, let $\tU^{\imath 0}$ be the $\Q(v^{1/2})$-subalgebra of $\tUi$ generated by $\tk_i^{\pm1}$, for $i\in \I$. 

	It is known \cite{Let99, Ko14,LW19} that the algebra $\widetilde{\bU}^\imath$ (resp. $\hUi$) is a right coideal subalgebra of $\widetilde{\bU}$ (resp. $\hU$); we call $(\widetilde{\bU}, \widetilde{\bU}^\imath)$ and $(\hU,\hUi)$ quantum symmetric pairs.
	
	We shall refer to $\hUi$ and $\tUi$ 
	as the universal {\em (quasi-split) $\imath${}quantum groups} (cf. the $\imath$quantum groups defined in  \cite{Let99,Ko14}); they are called {\em split} if $\btau =\Id$.

	\begin{example}
		\label{ex:QGvsiQG}
		Let us explain quantum groups are $\imath$quantum group of diagonal type. 
		Consider the $\Q(v)$-subalgebra $\tUUi$ of $\tUU$
		generated by
		\[
		\ck_i:=\tK_{i} \tK_{i^{\diamond}}', \quad
		\ck_i':=\tK_{i^{\diamond}} \tK_{i}',  \quad
		\cb_{i}:= F_{i}+ E_{i^{\diamond}} \tK_{i}', \quad
		\cb_{i^{\diamond}}:=F_{i^{\diamond}}+ E_{i} \tK_{i^{\diamond}}',
		\qquad \forall i\in \I.
		\]
		Here we drop the tensor product notation and use instead $i^\diamond$ to index the generators of the second copy of $\tU$ in $\tUU$. There exists a $\Q(v)$-algebra isomorphism $\widetilde{\phi}: \tU \rightarrow \tUUi$ such that
		\[
		\widetilde{\phi}(E_i)= \cb_{i},\quad \widetilde{\phi}(F_i)= \cb_{i^{\diamond}}, \quad \widetilde{\phi}(\tK_i)= \ck_i', \quad \widetilde{\phi}(\tK_i')= \ck_i, \qquad \forall  i\in \I.
		\]
	\end{example}

	For any $i\in\I$, we set
	\begin{align}
		\label{eq:bbKi}
		\K_i:=v\tk_i, \text{ if }\varrho i=i;
		\qquad
		\K_j:=\tk_j, \text{ otherwise.}
	\end{align}
	and $\K_\alpha:=\prod_{i\in\I}\K_i^{a_i}$ if $\alpha\in\Z^\I$.
	\begin{lemma}
		\label{iQG bar-involution def}
		There exists an anti-involution $u\mapsto \ov{u}$ on $\hUi$ (also $\tUi$) given by $\ov{v^{1/2}}=v^{-1/2}$, $\ov{B_i}=B_i$, $\ov{\K_i}=\K_i$, for $i\in\I$. In particular, $\ov{\tk_i}=\tk_i$ if $\varrho i\neq i$; $\ov{\tk_i}=v^2\tk_i$ if $\varrho i=i$.
	\end{lemma}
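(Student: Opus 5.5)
The statement cannot be proved by restricting the bar-involution of Lemma~\ref{QG bar-involution def} from $\hU$ to $\hUi$. That restriction is an anti-involution fixing $E_i,F_i,\tK_i,\tK_i'$, so it would send $B_i$ to
\[
F_i+\tK_i'E_{\btau i}=F_i+v^{-c_{i,\btau i}}E_{\btau i}\tK_i'
\]
and would fix $\tk_i$. This is incompatible with the claim $\ov{\tk_i}=v^2\tk_i$ when $\btau i=i$. I would therefore build the anti-involution from a presentation of $\hUi$ (and of $\tUi$) by generators and relations, and check that reversing the order of products while applying $v^{1/2}\mapsto v^{-1/2}$ maps the defining ideal into itself.

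\emph{Step 1: the presentation.} I would take the Serre-type presentation of \cite{LW19} (see also \cite{Ko14,LP25}), with generators $B_i,\tk_i$ ($i\in\I$), rewritten using $\K_i$ from \eqref{eq:bbKi}. It has four kinds of relations.
\begin{itemize}
\item[(a)] $\K_i\K_j=\K_j\K_i$ for all $i,j$.
\item[(b)] $\K_iB_j=v^{c_{\btau i,j}-c_{ij}}B_j\K_i$.
\item[(c)] $B_iB_j-B_jB_i=\delta_{\btau i,j}\,\frac{\K_i-\K_j}{v-v^{-1}}$ when $c_{ij}=0$. The coefficients of $\K_i,\K_j$ are fixed by the normalization in \eqref{eq:Bi}, possibly with an extra power of $v$.
\item[(d)] The $\imath$Serre relations for $c_{ij}=-1$. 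When $\btau i\neq i,j$ this is the usual $\sum_r(-1)^r\qbinom{2}{r}B_i^rB_jB_i^{2-r}=0$. When $\btau i=i$ it reads $B_i^2B_j-[2]B_iB_jB_i+B_jB_i^2=c\,\K_iB_j$ for some $c$; there is also the variant with $\btau i=j$.
\end{itemize}
For $\hUi$ I need this presentation with $\tk_i$ not inverted. I would take it from \cite{LP25}, or deduce it from the $\tUi$ case using the triangular-type (PBW) decomposition over $\hU^{\imath0}$.

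\emph{Step 2: check each relation.} Let $\sigma$ be the anti-automorphism of the free algebra fixing $B_i,\K_i$ with $\sigma(v^{1/2})=v^{-1/2}$.
\begin{itemize}
\item Relation (a) is clearly preserved.
\item Applying $\sigma$ to (b) gives $B_j\K_i=v^{-(c_{\btau i,j}-c_{ij})}\K_iB_j$, which is equivalent to (b).
\item For (c), the left side goes to its negative. The right side also goes to its negative, because $\ov{v-v^{-1}}=-(v-v^{-1})$.
\item In (d) the left side is palindromic and $\ov{\qbinom{2}{r}}=\qbinom{2}{r}$, so it is $\sigma$-invariant. The right side becomes $\ov c\,B_j\K_i=\ov c\,v^{-(c_{ij}-c_{ij})}\K_iB_j=\ov c\,\K_iB_j$, using (b) with $\btau i=i$. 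So I need $\ov c=c$.
\end{itemize}

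\emph{Step 3: the main obstacle.} The only nontrivial point is the condition $\ov c=c$ in (d) and the analogous scalar check in (c). The normalization $\K_i=v\tk_i$ for $\btau i=i$ is exactly designed to make the constant bar-invariant: in terms of $\tk_i$ the constant is $v\cdot(\text{bar-invariant})$, e.g. $v\,\tk_iB_j$ becomes $\K_iB_j$. I would verify this by computing the relations directly inside $\hU$ from \eqref{eq:Bi}, keeping careful track of powers of $v$ coming from \eqref{eq:EK}--\eqref{eq:K2}.

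Once every relation is checked, $\sigma$ descends to an anti-automorphism of $\hUi$ and of $\tUi$. For $\tUi$ I would use $\ov{\K_i^{-1}}=\K_i^{-1}$. The map is an involution since $\sigma^2$ fixes all the generators. Finally, $\ov{\tk_i}=\ov{v^{-1}\K_i}=v\K_i=v^2\tk_i$ when $\btau i=i$.
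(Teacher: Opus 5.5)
\textbf{Verdict.} Your proposal is correct. The paper gives no proof of this lemma; it is recorded as background in the conventions of \cite{BG17,LP25}. So there is no written argument to match against, and the comparison below is with the natural alternatives.

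\textbf{Your route and the deferred scalars.} You define an intrinsic anti-linear anti-automorphism on a Serre-type presentation and check that the defining ideal is preserved. This is the standard approach, and it is the right one here, since, as you note, restricting the bar map of $\hU$ does not work. The scalars you deferred come out as you predicted. In the paper's normalization $[E_i,F_i]=(v^{-1}-v)(\tK_i-\tK_i')$, a direct computation in $\tU$ gives the following.
\begin{itemize}
\item For $\varrho i\neq i$ (here necessarily $c_{i,\varrho i}=0$, because $\varrho$ is a quiver automorphism):
\[
B_{\varrho i}B_i-B_iB_{\varrho i}=(v^{-1}-v)(\K_i-\K_{\varrho i}).
\]
There is no extra power of $v$. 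The constant is $v^{-1}-v$ rather than $(v-v^{-1})^{-1}$, but it is still anti-invariant under bar, so your check of relation (c) goes through.
\item For $\varrho i=i$ and $c_{ij}=-1$:
\[
B_i^2B_j-[2]B_iB_jB_i+B_jB_i^2=(v^{-1}-v)(v^2-1)\,B_j\tk_i=-(v-v^{-1})^2\,B_j\K_i .
\]
The coefficient is bar-invariant exactly because of the factor $v$ in $\K_i=v\tk_i$, as you anticipated.
\item Your ``variant with $\varrho i=j$'' for $c_{ij}=-1$ never occurs in this setting (it would be type $A_{2r}$, which is excluded).
\end{itemize}

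\textbf{Two simplifications.} First, you do not need a separate presentation of $\hUi$ with $\tk_i$ non-invertible. Since $\hU\hookrightarrow\tU$ by the triangular decompositions, $\hUi$ is identified with the subalgebra of $\tUi$ generated by the $B_i$ and $\tk_i$. Your anti-involution of $\tUi$ sends $B_i\mapsto B_i$ and $\tk_i\mapsto\tk_i$ or $v^2\tk_i$, so it visibly preserves that subalgebra. This removes the step you delegated to \cite{LP25} or to a PBW argument.

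Second, within this paper there is a shortcut for $\tUi$: pull back the Verdier-duality anti-involution of $\tRi$ along $\tilde\kappa$ from Theorem~\ref{thm:iQG-sheaf}. The elements $\tilde\kappa(B_i)=L(0,\e_{\sigma\tS_i})$ and $\tilde\kappa(\K_i)=L(\bv^{\varrho i},\bw^i)$ are dual IC basis elements, so they are fixed. That argument is one line. It rests, however, on heavy geometric input: the isomorphism $\tilde\kappa$ and the compatibility of Verdier duality with the twisted product. Your argument is elementary and needs only the presentation from \cite{LW19}, transported to the normalization of \cite{BG17}.
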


	\section{$\imath$Quiver algebras and $\imath$Hall algebras}\label{sec:iHA}
	
	Throughout this paper, we denote $\bfk=\F_q$ the finite field of $q$ elements. 
	
	\subsection{Notations for quivers and representations}
	\label{subsec:Quiver-represent}
	
	Let $Q=(Q_0,Q_1,\ts,\tt)$ be a quiver, where, $Q_0$ is the set of vertices (we sometimes write $\I =Q_0$), $Q_1$ is the set of arrows, and $\ts,\tt:Q_1\rightarrow Q_0$ are source and target maps, respectively. 
	
	Let $\rep_\bfk(Q)$ be the category of finite dimensional representations of $Q$ over $\bfk$, and we identify $\rep_\bfk(Q)$ with the category $\mod(\bfk Q)$ of finite dimensional left modules over the path algebra $\bfk Q$. Similarly, for a (finite-dimensional) quotient algebra $A=\bfk Q/I$, let $\rep(A)$ be the category of finite dimensional representations of $Q$ satisfying the relations in $I$, and identify $\rep(A)$ with $\mod(A)$ of finite dimensional left modules over $A$. Let $\proj(A)$ the subcategory of projective $A$-modules. Denote by ${\rm proj.dim}_AM$ (resp. ${\rm inj.dim}_AM$) the  projective (resp. injective) dimension of an $A$-module $M$.
	

	For a representation $V=(V_i,V_h)_{i\in\I,h\in Q_1}$, the vector $\bd=\dimv V:=(\dim_\bfk V_i)_i\in\N^\I$ is called the dimension vector of $V$.  For a dimension vector $\bd$, let $\rep(\bd,A)$ be the variety of $A$-modules of dimension vector $\bd$. Let $G_\bd:=\prod_{i\in\I}\mathrm{GL}(d_i,\bfk)$. The group $G_\bd$ acts on $\rep(\bd,A)$ by conjugation: 
	\[
	(g_i)_{i\in\I}\cdot (V_i,V_h)=(V_i,g_{\tt(h)}V_\alpha g_{\ts(h)}^{-1}),\quad \forall (V_i,V_h)\in\rep(\bd,A). 
	\]
	Let $\mathfrak{O}_V$ be the $G_\bd$-orbit of $V$.  
	
	Let $S_i$ be the ($1$-dimensional) simple $A$-modules to each $i\in\I$. The Grothendieck group $K_0(\mod(A))$ of $\mod(A)$ is a free abelian group generated by $\widehat{S_i}$ $(i\in\I)$. Denote by $\widehat{M}$ the class of $M$ in $K_0(\mod(A))$. 
	We identify $K_0(\mod(A))$ (especially $K_0(\mod(\bfk Q))$) with the root lattice 
	$\Z^\I=\oplus_{i\in\I} \Z\alpha_i$ (see \S\ref{subsec:QG}) by identifying $\widehat{S_i}$ with the simple root $\alpha_i$, and $\widehat{M}$ with its dimension vector $\dimv M$ for any $M\in\mod(A)$. 

	Let $\langle-,-\rangle_Q$ be the Euler form of $Q$. 
	The Euler form descends to the Grothendieck group $K_0(\mod(\bfk Q))$. We denote by $(-,-)_Q:K_0(\mod(\bfk Q))\times K_0(\mod(\bfk Q))\rightarrow\Z$ the symmetrized Euler form defined by $(\alpha,\beta)_Q=\langle\alpha,\beta\rangle_Q+\langle \beta,\alpha\rangle_Q$.
	
	Let $\cd^b(A)$  be the bounded derived category with suspension functor $\Sigma$. The singularity category  $\cd_{sg}(A)$ of $\mod(A)$ is the Verdier quotient of $\cd^b(A)$ modulo the thick subcategory generated by modules with finite projective dimensions. 
	Denote by $\Gproj(A)$ the (exact) subcategory of Gorenstein projective $A$-modules. 

	\subsection{The $\imath$quivers and doubles}
	\label{subsec:i-quiver}

	An {\em involution} of $Q$ is defined to be an automorphism $\btau$ of the quiver $Q$ such that $\btau^2=\Id$. 
	An involution $\btau$ of $Q$ induces an involution of the path algebra $\bfk Q$, again denoted by $\btau$.
	A quiver together with an involution $\btau$, $(Q, \btau)$, will be called an {\em $\imath$quiver}. In this paper, we focus on Dynkin quivers.
	Associated to the $\imath$quiver $(Q,\varrho)$, the $\imath$quiver algebra $\iLa$ is introduced in \cite{LW19}, which can be described in terms of a certain quiver $\ov{Q}$ and its admissible ideal $\ov{I}$ such that $\iLa \cong \bfk \ov{Q} / \ov{I}$; see \cite[Proposition 2.6]{LW19}.
	We recall $\ov{Q}$ and $\ov{I}$ as follows:
	\begin{itemize}
		\item[(i)] $\ov{Q}$ is constructed from $Q$ by adding a loop $\varepsilon_i$ at the vertex $i\in Q_0$ if $\btau i=i$, and adding an arrow $\varepsilon_i: i\rightarrow \btau i$ for each $i\in Q_0$ if $\btau i\neq i$;
		\item[(ii)] $\ov{I}$ is generated by
		\begin{itemize}
			\item[(1)] (Nilpotent relations) $\varepsilon_{i}\varepsilon_{\btau i}$ for any $i\in\I$;
			\item[(2)] (Commutative relations) $\varepsilon_i\alpha-\btau(\alpha)\varepsilon_j$ for any arrow $\alpha:j\rightarrow i$ in $Q_1$.
		\end{itemize}
	\end{itemize}
	
	We know that $\iLa$ is a 1-Gorenstein algebra.
	
	The following quivers are examples of the quivers $\ov{Q}$ used to describe the $\imath$quiver algebras $\Lambda^\imath$
	associated to non-split $\imath$quivers of type ADE; cf. \cite{LW19}.

	\begin{center}\setlength{\unitlength}{0.7mm}
		\vspace{-1.5cm}
		\begin{equation}
			\label{diag: A}
			\begin{picture}(100,40)(0,20)
				\put(0,10){$\circ$}
				\put(0,30){$\circ$}
				\put(50,10){$\circ$}
				\put(50,30){$\circ$}
				\put(72,10){$\circ$}
				\put(72,30){$\circ$}
				\put(92,20){$\circ$}
				\put(0,6){$r$}
				\put(-2,34){${-r}$}
				\put(50,6){\small $2$}
				\put(48,34){\small ${-2}$}
				\put(72,6){\small $1$}
				\put(70,34){\small ${-1}$}
				\put(92,16){\small $0$}
				
				\put(3,11.5){\vector(1,0){16}}
				\put(3,31.5){\vector(1,0){16}}
				\put(23,10){$\cdots$}
				\put(23,30){$\cdots$}
				\put(33.5,11.5){\vector(1,0){16}}
				\put(33.5,31.5){\vector(1,0){16}}
				\put(53,11.5){\vector(1,0){18.5}}
				\put(53,31.5){\vector(1,0){18.5}}
				\put(75,12){\vector(2,1){17}}
				\put(75,31){\vector(2,-1){17}}
				\color{purple}
				\put(0,13){\vector(0,1){17}}
				\put(2,29.5){\vector(0,-1){17}}
				\put(50,13){\vector(0,1){17}}
				\put(52,29.5){\vector(0,-1){17}}
				\put(72,13){\vector(0,1){17}}
				\put(74,29.5){\vector(0,-1){17}}
				
				\put(-5,20){$\varepsilon_r$}
				\put(3,20){$\varepsilon_{-r}$}
				\put(45,20){\small $\varepsilon_2$}
				\put(53,20){\small $\varepsilon_{-2}$}
				\put(67,20){\small $\varepsilon_1$}
				\put(75,20){\small $\varepsilon_{-1}$}
				\put(92,30){\small $\varepsilon_0$}
				
				\qbezier(93,23)(90.5,25)(92,27)
				\qbezier(92,27)(94,30)(97,27)
				\qbezier(97,27)(98,24)(95.5,22.6)
				\put(95.6,23){\vector(-1,-1){0.3}}
			\end{picture}
		\end{equation}
		\vspace{-0.6cm}
	\end{center}

	\begin{center}\setlength{\unitlength}{0.8mm}
		\begin{equation}
			\label{diag: D}
			\begin{picture}(100,25)(-5,0)
				\put(0,-1){$\circ$}
				\put(0,-5){\small$1$}
				\put(20,-1){$\circ$}
				\put(20,-5){\small$2$}
				\put(64,-1){$\circ$}
				\put(84,-10){$\circ$}
				\put(80,-13){\small${n-1}$}
				\put(84,9.5){$\circ$}
				\put(84,12.5){\small${n}$}

				\put(19.5,0){\vector(-1,0){16.8}}
				\put(38,0){\vector(-1,0){15.5}}
				\put(64,0){\vector(-1,0){15}}
				
				\put(40,-1){$\cdots$}
				\put(83.5,9.5){\vector(-2,-1){16}}
				\put(83.5,-8.5){\vector(-2,1){16}}
				\color{purple}
				\put(86,-7){\vector(0,1){16.5}}
				\put(84,9){\vector(0,-1){16.5}}
				
				\qbezier(63,1)(60.5,3)(62,5.5)
				\qbezier(62,5.5)(64.5,9)(67.5,5.5)
				\qbezier(67.5,5.5)(68.5,3)(66.4,1)
				\put(66.5,1.4){\vector(-1,-1){0.3}}
				\qbezier(-1,1)(-3,3)(-2,5.5)
				\qbezier(-2,5.5)(1,9)(4,5.5)
				\qbezier(4,5.5)(5,3)(3,1)
				\put(3.1,1.4){\vector(-1,-1){0.3}}
				\qbezier(19,1)(17,3)(18,5.5)
				\qbezier(18,5.5)(21,9)(24,5.5)
				\qbezier(24,5.5)(25,3)(23,1)
				\put(23.1,1.4){\vector(-1,-1){0.3}}
				
				\put(-1,9.5){$\varepsilon_1$}
				\put(19,9.5){$\varepsilon_2$}
				\put(59,9.5){$\varepsilon_{n-2}$}
				\put(79,-1){$\varepsilon_{n}$}
				\put(87,-1){$\varepsilon_{n-1}$}
			\end{picture}
		\end{equation}
		\vspace{.8cm}
	\end{center}

	\begin{center}\setlength{\unitlength}{0.8mm}
		\vspace{-3cm}
		\begin{equation}
			\label{diag: E}
			\begin{picture}(100,40)(0,20)
				\put(10,6){\small${6}$}
				\put(10,10){$\circ$}
				\put(32,6){\small${5}$}
				\put(32,10){$\circ$}
				
				\put(10,30){$\circ$}
				\put(10,33){\small${1}$}
				\put(32,30){$\circ$}
				\put(32,33){\small${2}$}
				
				\put(31.5,11){\vector(-1,0){19}}
				\put(31.5,31){\vector(-1,0){19}}
				
				\put(52,22){\vector(-2,1){17.5}}
				\put(52,20){\vector(-2,-1){17.5}}
				
				\put(54.7,21.2){\vector(1,0){19}}
				
				\put(52,20){$\circ$}
				\put(52,16.5){\small$3$}
				\put(74,20){$\circ$}
				\put(74,16.5){\small$4$}
				\color{purple}
				\put(10,12.5){\vector(0,1){17}}
				\put(12,29.5){\vector(0,-1){17}}
				\put(32,12.5){\vector(0,1){17}}
				\put(34,29.5){\vector(0,-1){17}}
				
				\qbezier(52,22.5)(50,24)(51,26.5)
				\qbezier(51,26.5)(53,29)(56,26.5)
				\qbezier(56,26.5)(57.5,24)(55,22)
				\put(55.1,22.4){\vector(-1,-1){0.3}}
				\qbezier(74,22.5)(72,24)(73,26.5)
				\qbezier(73,26.5)(75,29)(78,26.5)
				\qbezier(78,26.5)(79,24)(77,22)
				\put(77.1,22.4){\vector(-1,-1){0.3}}
				
				\put(35,20){$\varepsilon_2$}
				\put(27,20){$\varepsilon_5$}
				\put(13,20){$\varepsilon_1$}
				\put(5,20){$\varepsilon_6$}
				\put(52,30){$\varepsilon_3$}
				\put(73,30){$\varepsilon_4$}
			\end{picture}
		\end{equation}
		\vspace{1cm}
	\end{center}
	
	\begin{example}[$\imath$quivers of diagonal type]
		\label{ex:diagquiver}
		Let $Q$ be an arbitrary quiver, and $Q^{\dbl} =Q\sqcup  Q^{\diamond}$,  where $Q^{\diamond}$ is an identical copy of $Q$  with a vertex set $\{i^{\diamond} \mid i\in Q_0\}$ and an arrow set $\{ \alpha^{\diamond} \mid \alpha \in Q_1\}$. We let $\rm{swap}$ be the involution of $Q^{\rm dbl}$ uniquely determined by $\swa(i)=i^\diamond$ for any $i\in Q_0$. 
		Then $(Q^{\rm dbl},\mathrm{swap})$ is an $\imath$quiver, and its $\imath$quiver algebra is denoted by $\Lambda$; see \cite[Example 2.10]{LW19}. 
	\end{example}

	
	By \cite[Corollary 2.12]{LW19}, $\bfk Q$ is naturally a subalgebra and also a quotient algebra of $\Lambda^\imath$.
	Viewing $\bfk Q$ as a subalgebra of $\Lambda^{\imath}$, we have a restriction functor
	\[
	\res: \mod (\Lambda^{\imath})\longrightarrow \mod (\bfk Q).
	\]
	Viewing $\bfk Q$ as a quotient algebra of $\Lambda^{\imath}$, we obtain a pullback functor
	\begin{equation}\label{eqn:rigt adjoint}
		\iota:\mod(\bfk Q)\longrightarrow\mod(\Lambda^{\imath}).
	\end{equation}
	
	For each $i\in Q_0$, define a $\bfk$-algebra (which can be viewed as a subalgebra of $\iLa$)
	\begin{align}\label{dfn:Hi}
		\BH _i:=\left\{ \begin{array}{cc}  \bfk[\varepsilon_i]/(\varepsilon_i^2) & \text{ if }\btau i=i,
			\\
			\bfk(\xymatrix{i \ar@<0.5ex>[r]^{\varepsilon_i} & \btau i \ar@<0.5ex>[l]^{\varepsilon_{\btau i}}})/( \varepsilon_i\varepsilon_{\btau i},\varepsilon_{\btau i}\varepsilon_i)  &\text{ if } \btau i \neq i .\end{array}\right.
	\end{align}
	Note that $\BH _i=\BH _{\btau i}$ for any $i\in Q_0$. 
	
	Choose one representative for each $\btau$-orbit on $\I=Q_0$, and let
	\begin{align}   \label{eq:ci}
		\ci = \{ \text{the chosen representatives of $\btau$-orbits in $\I$} \}.
	\end{align}
	Define the following subalgebra of $\Lambda^{\imath}$:
	\begin{equation}  \label{eq:H}
		\BH =\bigoplus_{i\in \ci }\BH _i.
	\end{equation}
	Denote by
	\begin{align}
		\res_\BH :\mod(\iLa)\longrightarrow \mod(\BH )
	\end{align}
	the natural restriction functor.
	
	For $i\in \I$, define the indecomposable module over $\BH _i$
	\begin{align}
		\label{eq:E}
		\E_i =\begin{cases}
			\bfk[\varepsilon_i]/(\varepsilon_i^2), & \text{ if }\btau i=i;
			\\
			\xymatrix{\bfk\ar@<0.5ex>[r]^1 & \bfk\ar@<0.5ex>[l]^0} \text{ on the quiver } \xymatrix{i\ar@<0.5ex>[r]^{\varepsilon_i} & \btau i\ar@<0.5ex>[l]^{\varepsilon_{\btau i}} }, & \text{ if } \btau i\neq i.
		\end{cases}
	\end{align}
	Then $\E_i$, for $i\in Q_0$, can be viewed as a $\iLa$-module. 
	
	\begin{lemma}[\text{\cite[Proposition 3.9]{LW19}}]
		\label{cor: res proj}
		For any $M\in\mod(\Lambda^{\imath})$ the following are equivalent: (i) $\pd M<\infty$; 
		(ii) $\ind M<\infty$;
		(iii) $\pd M\leq1$;
		(iv) $\ind M\leq1$;
		(v) $\res_\BH (M)$ is projective as an $\BH $-module.
	\end{lemma}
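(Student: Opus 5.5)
The lemma is cited from \cite[Proposition 3.9]{LW19}. The plan is to prove the cycle of implications (v)$\Rightarrow$(iii)$\Rightarrow$(i)$\Rightarrow$(v), and then to get (ii) and (iv) by duality. The key structural input is that $\iLa$ is a tensor-type algebra. Concretely, $\iLa\cong \BH\otimes_{\bfk}\bfk Q$ up to the twist by $\btau$; more usefully, $\iLa$ is free as a right (and left) $\BH$-module, with $\iLa\cong \bigoplus_{\text{paths }p\text{ in }Q} p\,\BH$. This can be read off from the presentation $\bfk\ov{Q}/\ov{I}$: the commutativity relations $\varepsilon_i\alpha=\btau(\alpha)\varepsilon_j$ let us move every $\varepsilon$ to the right of the $Q$-paths.

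\textbf{(v)$\Rightarrow$(iii).} Take $M$ with $\res_\BH M$ projective. For Dynkin $Q$ there is the standard exact sequence of $\bfk Q$-bimodules
\[0\to\bigoplus_{\alpha:j\to i}\bfk Q e_i\otimes e_j\bfk Q\to\bigoplus_i \bfk Q e_i\otimes e_i\bfk Q\to\bfk Q\to0.\]
Tensoring it over $\BH$ (equivalently, base-changing to $\iLa$) gives an exact sequence of $\iLa$-modules
\[0\to\bigoplus_{\alpha:j\to i}\iLa e_i\otimes_{\bfk} e_jM\to\bigoplus_i\iLa e_i\otimes_{\BH_i}e_iM\to M\to0,\]
where the tensor products are taken over the appropriate local $\BH$-pieces. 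When $\res_\BH M$ is projective, the terms $\iLa\otimes_\BH(\text{projective }\BH\text{-module})$ are projective $\iLa$-modules. So this is a projective resolution of length $\le1$, and $\pd M\le1$.

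\textbf{(i)$\Rightarrow$(v).} Restriction along $\BH\subseteq\iLa$ is exact and sends projectives to projectives, because $\iLa$ is free over $\BH$. So a finite projective resolution of $M$ restricts to a finite projective resolution of $\res_\BH M$. But $\BH$ is self-injective, being a product of the algebras $\bfk[\varepsilon]/(\varepsilon^2)$ and radical-square-zero algebras of 2-cycles, and over a self-injective algebra finite projective dimension forces projectivity. Hence (v). The implication (iii)$\Rightarrow$(i) is trivial.

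\textbf{Injective side.} Apply the same argument to $\iLa^{op}$, which is again an $\imath$quiver algebra, for $(Q^{op},\btau)$. Use the duality $D$, together with the fact that $\res_\BH$ commutes with $D$ and that projective and injective $\BH$-modules coincide. This gives (ii)$\Leftrightarrow$(iv)$\Leftrightarrow$(v).

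\textbf{Main obstacle.} The delicate step is establishing freeness of $\iLa$ over $\BH$ and the exactness of the base-changed bimodule resolution. For the resolution, I would first try to realize $\iLa$ as the tensor algebra $T_\BH(\BH\otimes_{\bfk}\bfk Q_1)$ modulo nothing, i.e. a hereditary-type tensor algebra over $\BH$; the standard resolution of a tensor algebra then gives the length-one sequence directly.
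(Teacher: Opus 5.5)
The paper gives no proof of this lemma; it only quotes \cite[Proposition 3.9]{LW19}. Your overall plan is the standard Geiss--Leclerc--Schr\"oer-type argument, and it is sound: realize $\iLa$ as a tensor algebra over the self-injective algebra $\BH$, extract a length-one projective resolution when $\res_{\BH}M$ is projective, restrict to $\BH$ for the converse, and dualize via $\iLa^{op}$, which is indeed the $\imath$quiver algebra of $(Q^{op},\btau)$. Your steps (iii)$\Rightarrow$(i), (i)$\Rightarrow$(v) and the injective side are correct. One small correction: $\iLa$ is projective but in general not free over $\BH$. For $Q=1\to2$ and $\btau=\Id$ one gets $\iLa_{\BH}\cong(e_1\BH)^{2}\oplus e_2\BH$. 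Projectivity on both sides is all you actually use.

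The step that fails as written is (v)$\Rightarrow$(iii), which is the heart of the lemma.
\begin{itemize}
\item The claim $\iLa\cong\BH\otimes_{\bfk}\bfk Q$ is false for dimension reasons: the two sides have dimensions $2|\I|\dim\bfk Q$ and $2\dim\bfk Q$. In the split case the correct statement is $\iLa\cong\bfk[\varepsilon]/(\varepsilon^2)\otimes_{\bfk}\bfk Q$.
\item ``Tensoring the $\bfk Q$-bimodule resolution over $\BH$'' only makes sense in that split case.
\item The first term of your sequence, written with $\otimes_{\bfk}$, is too large.
\item $T_{\BH}(\BH\otimes_{\bfk}\bfk Q_1)$ is the wrong tensor algebra.
\end{itemize}

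The correct bimodule is the $\BH$-sub-bimodule $B\subseteq\iLa$ generated by the arrows of $Q$. It has basis $\{\alpha,\ \varepsilon_{\tt(\alpha)}\alpha=\btau(\alpha)\varepsilon_{\ts(\alpha)}\mid\alpha\in Q_1\}$. The relations in $\ov I$ give ${}_{\BH}B\cong\bigoplus_{\alpha}\BH e_{\tt(\alpha)}$ and $B_{\BH}\cong\bigoplus_{\alpha}e_{\ts(\alpha)}\BH$, so $B$ is projective on both sides. The natural surjection $T_{\BH}(B)\to\iLa$ is an isomorphism, because $\dim B^{\otimes_{\BH}n}=2\cdot\#\{\text{paths of length }n\text{ in }Q\}$ while $\dim\iLa=2\dim\bfk Q$.

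The standard sequence $0\to\iLa\otimes_{\BH}B\otimes_{\BH}\iLa\to\iLa\otimes_{\BH}\iLa\to\iLa\to0$ of a tensor algebra is exact and splits as a sequence of right $\iLa$-modules. Applying $-\otimes_{\iLa}M$ therefore gives an exact sequence
\[
0\longrightarrow\iLa\otimes_{\BH}B\otimes_{\BH}M\longrightarrow\iLa\otimes_{\BH}M\longrightarrow M\longrightarrow0 .
\]
If $\res_{\BH}M$ is projective, then so is $B\otimes_{\BH}M$, since ${}_{\BH}B$ is projective. Hence both left-hand terms are projective $\iLa$-modules and $\pd M\le1$. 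With this replacement your argument is complete.
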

	

	\subsection{$\imath$Hall algebras}
	\label{subsec:iHall}
	
	Let $\ca$ be an essentially small exact category, linear over the finite field $\bfk=\F_q$.
	Assume that $\ca$ has finite morphism and extension spaces:
	$$|\Hom_\ca(A,B)|<\infty,\quad |\Ext^1_\ca(A,B)|<\infty,\,\,\forall A,B\in\ca.$$
	

	We denote by $\Iso(\ca)$ the set of isoclasses of objects of $\ca$.	Given objects $X,Y,Z\in\ca$, define $\Ext^1_\ca(X,Z)_Y\subseteq \Ext^1_\ca(X,Z)$ to be the subset parameterising extensions with the middle term isomorphic to $Y$. We define the Ringel-Hall algebra (also called Hall algebra) $\ch(\ca)$ to be the $\Q$-vector space whose basis is formed by the isomorphism classes $[X]$ of objects $X$ of $\ca$, with the multiplication
	defined by
	\begin{align}
		\label{eq:mult}
		[X]\cdot [Z]=\sum_{[Y]\in \Iso(\ca)}G_{XZ}^Y[Y],\text{ where }G_{XZ}^Y:=\frac{|\Ext_\ca^1(X,Z)_Y|}{|\Hom_\ca(X,Z)|}.
	\end{align}
	It is well known that
	the algebra $\ch(\ca)$ is associative and unital; see \cite{Rin90,Br13}. 
	
	Let $\sqq$ be a fixed square root of $q$ in $\C$. We define the twisted Hall algebra $\widetilde{\ch}(\Lambda^\imath)$ to be the $\Q(\sqq^{1/2})$-algebra with basis given by isoclasses $[M]$ ($M\in\mod(\iLa)$), with the multiplication twisted by the Euler form $\langle-,-\rangle_Q$:
	$$[M]*[N]=\sqq^{\langle M, N\rangle_{Q}} [M]\cdot [N]= \sqq^{\langle M, N\rangle_{Q}} \frac{|\Ext^1_{\Lambda^\imath}(M,N)_L|}{|\Hom_{\Lambda^\imath}(M,N)|}[L].$$
	Here and below, by a slight abuse of notation, we set $\langle M,N\rangle_N:=\langle \res M,\res N\rangle_Q$ for any $M,N\in\mod(\iLa)$.
	
	Let $\mathcal{I}$ be the subspace (ideal) of $\widetilde{\ch}(\Lambda^\imath)$ spanned by all differences 
	\begin{align}
		\label{def:I}
		[M]-[N], \text{ if $\res_\BH(M)=\res_\BH(N)$ and $M\cong N$ in $\cd_{sg}(\Lambda^\imath)$}.
	\end{align}
	Denote  $\widehat{\ch}(\bfk Q,\varrho):=\widetilde{\ch}(\Lambda^\imath)/\mathcal{I}$.

	Let	\begin{equation}
		\label{eq:Sca}
		\cs := \{ a[K] \in \widehat{\ch}(\bfk Q,\varrho) \mid a\in \Q(\sqq^{1/2})^\times, \pd K\leq1\}.
	\end{equation}
	By \cite{LW19}, the right localization of $\ch(\Lambda^\imath)/\mathcal{I}$ with respect to $\cs$ exists, and will be denoted by $\widetilde{\ch}(\bfk Q,\btau)$ or $\cs\cd\widetilde{\ch}(\Lambda^\imath)$, called the $\imath$Hall algebra (also called the twisted semi-derived Ringel-Hall algebra) of $\mod(\Lambda^\imath)$.

	
	
	We have $[\K_i]*[\K_j]=[\K_j]*[\K_i]=[\K_i\oplus \K_j]$ in $\widehat{\ch}(\bfk Q,\varrho)$ for any $i,j\in\I$; cf. \cite[Lemma 4.7]{LW19}. 
	For any $\alpha=(a_i)_{i\in\I}\in\N^{\I}$, we define in $\widehat{\ch}(\bfk Q,\btau)$: 
	$$[\K_\alpha]=[\oplus_{i\in\I}\K_i^{\oplus a_i}]=\prod_{i\in\I}[\K_i]^{a_i}.$$
	Similarly, one can define $[\K_\alpha]$ in $\widetilde{\ch}(\bfk Q,\btau)$ for $\alpha\in\Z^\I$.

	\begin{lemma}[\text{cf. \cite[Proposition 4.9]{LW19}}]
		\label{basis-iHall}
		
		(1) The algebra $\widetilde{\ch}(\bfk Q,\btau)$ has a (Hall) basis given by
		\begin{align}
			\label{eq:Hallbasis-hat}
			\{[X]*[\K_\alpha]\mid X\in\mod(\bfk Q)\subseteq \mod(\Lambda^\imath), \alpha\in\Z^{\I}\}.
		\end{align}
		
		(2) The algebra $\widehat{\ch}(\bfk Q,\btau)$ has a (Hall) basis given by
		\begin{align}
			\label{eq:Hallbasis-tilde}
			\{[X]*[\K_\alpha]\mid X\in\mod(\bfk Q)\subseteq \mod(\Lambda^\imath), \alpha\in\N^{\I}\}.
		\end{align}
	\end{lemma}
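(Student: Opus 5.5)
The plan is to prove (2) first and then deduce (1) by localization. The guiding observation is that, by the definition \eqref{def:I} of $\mathcal{I}$, the quotient $\widehat{\ch}(\bfk Q,\varrho)=\widetilde{\ch}(\Lambda^\imath)/\mathcal{I}$ has, as a vector space, a basis indexed by the equivalence classes of $\Lambda^\imath$-modules. Here two modules $M,N$ are equivalent when $\res_\BH(M)\cong\res_\BH(N)$ and $M\cong N$ in $\cd_{sg}(\Lambda^\imath)$. Note that $\mathcal{I}$ is spanned by the differences $[M]-[N]$ of equivalent modules. So the basis statement in (2) reduces to two claims. The first is that every equivalence class contains a module of the form $X\oplus\K_\alpha$ with $X\in\mod(\bfk Q)$ and $\alpha\in\N^\I$. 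The second is that distinct pairs $([X],\alpha)$ give distinct classes. Separately, I need the product formula $[X]*[\K_\alpha]=\sqq^{c}[X\oplus\K_\alpha]$ in $\widehat{\ch}(\bfk Q,\varrho)$ for an explicit exponent $c$ given by the Euler form.

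For the product formula, I would compute $\Hom_{\Lambda^\imath}(X,\K_\alpha)$ and $\Ext^1_{\Lambda^\imath}(X,\K_\alpha)$. Since $\pd\K_\alpha\le1$ and $\ind\K_\alpha\le 1$ by Lemma~\ref{cor: res proj}, every extension $0\to\K_\alpha\to Y\to X\to 0$ has middle term $Y$ with $\res_\BH(Y)\cong\res_\BH(X\oplus\K_\alpha)$, because $\res_\BH\K_\alpha$ is projective, hence also injective over the self-injective algebra $\BH$. Moreover $Y\cong X$ in $\cd_{sg}(\Lambda^\imath)$, since $\K_\alpha$ vanishes there. Hence every such middle term is equivalent to $X\oplus \K_\alpha$ modulo $\mathcal{I}$. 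Summing the Hall numbers then gives a power of $\sqq$ times $[X\oplus\K_\alpha]$.

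For the existence claim (spanning), I take $M\in\mod(\Lambda^\imath)$. Using that $\Lambda^\imath$ is $1$-Gorenstein, I realize $\cd_{sg}(\Lambda^\imath)\simeq\underline{\Gproj}(\Lambda^\imath)$. I would then check that every object is isomorphic to $\iota(X)$ for a unique $X\in\mod(\bfk Q)$, using the description of Gorenstein projectives as the modules whose $\varepsilon$-filtration has $\bfk Q$-module layers. Uniqueness would follow from a computation of stable endomorphisms, or from the known equivalence with an orbit category of $\cd^b(\bfk Q)$. Given $X$, I compare $\res_\BH(M)$ with $\res_\BH(X)$. Over $\BH$, both are sums of simples and the projectives $\E_i$, and they have the same non-projective part, since that part is detected in the singularity category. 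I would then try to choose $\alpha\in\N^\I$ with $\res_\BH(M)\cong\res_\BH(X\oplus\K_\alpha)$. The main obstacle is that the dimension vectors may force $M$ to have fewer projective $\BH$-summands than $X$, which would call for a negative $\alpha$. To handle this, I would first replace $M$ by $M\oplus\K_\beta$ with $\beta$ large, which by the product formula is allowed up to units. Then I would argue inside $\widehat{\ch}(\bfk Q,\varrho)$ via an induction on $\dimv M$: a syzygy sequence $0\to K\to P\to M\to0$ with $\pd K\le1$ shows that such a deficit cannot actually occur. I expect this matching of the projective $\BH$-parts, through $\dimv$ bookkeeping, to be the delicate step.

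For linear independence, the equivalence class of $X\oplus\K_\alpha$ determines $[X]$, as the singularity class $\iota(X)$, and then determines $\alpha$, from $\res_\BH$ together with $\dimv X$. So the listed elements are nonzero multiples of distinct basis vectors of the quotient, which proves (2). Finally, for (1), the set $\cs$ satisfies the Ore condition and the $[\K_i]$ are (up to scalars) normal elements, so the right localization is obtained by inverting the $[\K_i]$. Then $\{[X]*[\K_\alpha]\}_{\alpha\in\Z^\I}$ spans, by multiplying by inverses. It is also independent: any relation can be multiplied by a suitable $[\K_\beta]$ with $\beta\gg0$ and transported back into $\widehat{\ch}(\bfk Q,\varrho)$, which injects into $\widetilde{\ch}(\bfk Q,\varrho)$ because $\cs$ consists of non-zero-divisors by the basis in (2).
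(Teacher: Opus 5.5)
Your overall architecture is sound and is the natural one for the paper's definition of $\mathcal{I}$. The quotient $\widehat{\ch}(\bfk Q,\varrho)$ has a basis of equivalence classes. Your product formula $[X]*[\K_\alpha]\in\sqq^{\Z}[X\oplus\K_\alpha]$ is correctly justified: the restricted sequence splits because $\res_\BH\K_\alpha$ is injective over the self-injective algebra $\BH$, and $\K_\alpha\cong 0$ in $\cd_{sg}(\Lambda^\imath)$. The unique $X$ comes from the description of $\cd_{sg}(\Lambda^\imath)$ in \cite{LW19} (Corollary 3.21 there, which the paper also invokes). The passage to (1) by inverting the normal non-zero-divisors $[\K_i]$ is fine. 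This is the same classification of equivalence classes that the paper carries out in Lemma~\ref{lem:strata M_0^reg char}.

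The gap is in spanning for (2). You leave the existence of $\alpha\in\N^\I$ with $\res_\BH(M)\cong\res_\BH(X\oplus\K_\alpha)$ open as ``the delicate step'', and the workaround you sketch would not prove (2). In $\widehat{\ch}(\bfk Q,\varrho)$ the elements $[\K_\beta]$ are not invertible. So knowing that $[M\oplus\K_\beta]$, a scalar multiple of $[M]*[\K_\beta]$, lies in the span of the proposed basis says nothing about $[M]$ itself; that only gives spanning after localization, i.e.\ part (1). The syzygy induction is also never specified.

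The missing observation is that the obstacle cannot occur:
\begin{itemize}
\item All $\varepsilon_i$ act by zero on $\iota(X)$, so $\res_\BH(\iota X)$ is semisimple.
\item Each $\BH_i$ is not semisimple, so its simple modules are not projective. Hence $\res_\BH(X)$ has no projective summand at all.
\item By Lemma~\ref{cor: res proj}, $\res_\BH$ sends projectives to projectives, so it induces a functor $\cd_{sg}(\Lambda^\imath)\to\cd_{sg}(\BH)\simeq\underline{\mod}(\BH)$. Through it, your remark that the non-projective parts agree says exactly that the non-projective part of $\res_\BH(M)$ is $\res_\BH(X)$.
\item By Krull--Schmidt, $\res_\BH(M)\cong\res_\BH(X)\oplus P$ with $P$ projective.
\item The indecomposable projective $\BH$-modules are exactly the pairwise non-isomorphic $\res_\BH(\K_i)=\E_i$, $i\in\I$. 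Hence $P\cong\res_\BH(\K_\alpha)$ for a unique $\alpha\in\N^\I$.
\end{itemize}

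The same remark settles independence. There you must use the isomorphism class of $\res_\BH$, not dimension vectors, since $\dimv\K_i=\dimv\K_{\varrho i}$ when $\varrho i\neq i$. For comparison, in Lemma~\ref{lem:strata M_0^reg char} the paper obtains $\alpha\in\N^\I$ by the dimension-vector route, through the decomposition of Lemma~\ref{lem:DP3}; algebraically it is immediate.
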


	Let $\widetilde{\ct}(\bfk Q,\btau)$ be the subalgebra of $\widetilde{\ch}(\bfk Q,\btau)$ generated by $[\K_\alpha]$, $\alpha\in K_0(\mod(\bfk Q))$, which is a Laurent polynomial algebra in $[\E_i]$, for $i\in \I$. Similarly, one can define the subalgebra $\widehat{\ct}(\bfk Q,\btau)$ of $\widehat{\ch}(\bfk Q,\btau)$, which is a polynomial algebra in $[\E_i]$, for $i\in \I$.

	\begin{lemma}[cf. \text{\cite[Theorem 7.7]{LW19}}]
		\label{lem:Hall-iQG}
		Let $(Q, \btau)$ be a Dynkin $\imath$quiver. Then we have the following isomorphism $\widetilde{\psi}:\tUi|_{v=\sqq}\stackrel{\simeq}{\rightarrow} \widetilde{\ch}(\bfk Q,\btau)$ of $\Q({\sqq^{1/2}})$-algebras, which sends
		\begin{align}
			\label{eq:psi}
			B_i \mapsto \sqq^{-\frac{1}{2}}[S_i], \qquad 
			\tilde{k}_i \mapsto
			\begin{cases}
				[\K_i]&\text{if $\varrho i\neq i$},\\
				\sqq^{-1}[\K_{i}]&\text{if $\varrho i=i$}.
			\end{cases}
		\end{align}
	\end{lemma}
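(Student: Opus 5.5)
This lemma is essentially \cite[Theorem 7.7]{LW19}, adjusted to the present normalization. I would deduce it from that result by a rescaling argument rather than re-derive the Hall algebra computations. In \cite{LW19}, the isomorphism is stated for the universal $\imath$quantum group generated by $B_i=F_i+T_w(E_{\btau i})\tK_i'$-type generators, with $\tk_i$ mapped to a multiple of $[\E_i]$. It is built by checking that the Hall elements $[S_i]$ and $[\E_i]$ satisfy the defining relations of $\tUi$. These are the $\imath$Serre relations and the commutation relations of $\tk_i$ with $B_j$. One then shows bijectivity via the Hall basis of Lemma~\ref{basis-iHall} and a filtered/graded comparison with the PBW-type basis of $\tUi$.

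The plan has four steps.

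First, I write down explicitly the dictionary between the generators used in \cite{LW19} and the present ones $B_i=F_i+E_{\btau i}\tK_i'$, $\tk_i=\tK_i\tK_{\btau i}'$. The Drinfeld double here follows \cite{BG17}, so $[E_i,F_j]$ carries the factor $(v^{-1}-v)$ and the relation signs change. I expect a rescaling $E_i\mapsto c_iE_i$, $F_i\mapsto d_iF_i$, possibly combined with $v\mapsto v^{-1}$, to identify the two versions of $\tU$ and hence of $\tUi$.

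Second, I transport \cite[Theorem 7.7]{LW19} through this identification. I then verify that the images of $B_i$ and $\tk_i$ become $\sqq^{-1/2}[S_i]$ and $[\K_i]$, respectively $\sqq^{-1}[\K_i]$ when $\btau i=i$. The extra factor for $\btau i=i$ comes from the twisted product: $[\E_i]$ with $\btau i=i$ has $\langle \E_i,\E_i\rangle_Q$-type twists differing by one power of $\sqq$ from the case $\btau i\neq i$. This is consistent with the normalization $\K_i=v\tk_i$ in \eqref{eq:bbKi}.

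Third, as a sanity check independent of \cite{LW19}, I verify the rank-one and rank-two relations directly in $\widetilde{\ch}(\bfk Q,\btau)$. For $\btau i=i$, I compute $[S_i]*[S_i]=\sqq^{\langle S_i,S_i\rangle_Q}([S_i\oplus S_i]+(q-1)[\E_i]/q)$-type formulas and compare with the relation $B_iB_j$ when $c_{ij}=0$. I also compare with the $\imath$Serre relation involving $\tk_i$ for $c_{ij}=-1$. For $\btau i\ne i$, I check $[S_i]*[S_{\btau i}]-[S_{\btau i}]*[S_i]$ against the $\tk$-terms. Finally, I confirm that the commutation of $[\K_i]$ with $[S_j]$ matches $\tk_iB_j=v^{c_{\btau i,j}-c_{ij}}B_j\tk_i$.

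The main obstacle is the bookkeeping of the scalars $\sqq^{\pm1/2}$ and $\sqq^{\pm1}$. These arise in the relation $B_i^2B_j-[2]B_iB_jB_i+B_jB_i^2=\pm v\tk_iB_j$ for $\btau i=i$, where the coefficient of the $\tk_i$-term depends on both the \cite{BG17} normalization and the twist by $\langle-,-\rangle_Q$. I would first pin down this coefficient through the $\mathfrak{sl}_2$ split case and the diagonal case of Example~\ref{ex:QGvsiQG}, which fixes the scalars uniquely. Bijectivity then follows unchanged from \cite{LW19}, since rescaling generators preserves it.
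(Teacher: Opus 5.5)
Your route is the one the paper takes, since the paper only cites \cite[Theorem 7.7]{LW19}. Everything therefore rests on converting from the normalization $[E_i,F_j]=\delta_{ij}\frac{\tK_i-\tK_i'}{v-v^{-1}}$ of \cite{LW19} to $[E_i,F_j]=\delta_{ij}(v^{-1}-v)(\tK_i-\tK_i')$ used here, and your Step~1 mechanism for this fails at vertices with $\varrho i=i$.

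The rescaling $E_i\mapsto E_i/(v^{-1}-v)$, $F_i\mapsto F_i/(v-v^{-1})$ does identify the two doubles. However, it carries the $\imath$quantum group of \cite{LW19} onto the subalgebra generated by $F_i-E_{\varrho i}\tK_i'$ and $\tk_i$, not by $B_i=F_i+E_{\varrho i}\tK_i'$. Composing with a diagonal automorphism $E_i\mapsto c_iE_i$, $F_i\mapsto d_iF_i$, $\tK_i\mapsto c_id_i\tK_i$, $\tK_i'\mapsto c_id_i\tK_i'$ turns $F_i-E_{\varrho i}\tK_i'$ into $d_i(F_i-c_ic_{\varrho i}E_{\varrho i}\tK_i')$. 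At a split vertex this requires $c_i^2=-1$, which is impossible over $\Q(v^{1/2})$. Twisting by $v\mapsto v^{-1}$ does not help either, because it forces $\tK\leftrightarrow\tK'$ or $E\leftrightarrow F$.

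Put differently: an isomorphism from the present $\tUi$ to that of \cite{LW19} of the form $B_i\mapsto cB_i$, $\tk_i\mapsto b\tk_i$ must have $b=-c^2(v-v^{-1})^{-2}$ at a split vertex with a neighbour. A rescaling of the double can only multiply $\tk_i=\tK_i\tK_i'$ by a square. So Step~2, and the claim that bijectivity is inherited ``since rescaling preserves it'', have nothing to rest on as stated.

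The missing step is to identify the two algebras abstractly through their Serre-type presentations (Letzter, Kolb; see \cite{LW19}).
\begin{itemize}
\item Over $\Q(v^{1/2},\sqrt{-1})$, the rescaling $E_i\mapsto\frac{\sqrt{-1}}{v-v^{-1}}E_i$, $F_i\mapsto\frac{\sqrt{-1}}{v-v^{-1}}F_i$ does match the two $\imath$quantum groups.
\item In every inhomogeneous relation the $\tk$-term has $B$-degree two less than the other terms, so only $(\sqrt{-1})^2$ appears and the presentation descends to $\Q(v^{1/2})$.
\item Composing the resulting abstract isomorphism with \cite[Theorem 7.7]{LW19} then gives the lemma, once the scalars are checked.
\end{itemize}

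Your calibration of the scalars also does not work. In split rank one, $\tUi=\Q(v^{1/2})[B,\tk^{\pm1}]$ is a polynomial algebra (as noted in \S\ref{sec:dCB irank I}), so $\tk\mapsto c[\K]$ gives an isomorphism for every $c\neq0$. The diagonal $\imath$quivers have no $\varrho$-fixed vertices. Neither test case therefore sees the factor $\sqq^{-1}$.

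Relations homogeneous in $\tk_i$ cannot detect its scalar, so the normalization of $\tk_i$ is fixed only by the inhomogeneous relations:
\begin{itemize}
\item For $\varrho i\neq i$, it is fixed by $B_{\varrho i}B_i-B_iB_{\varrho i}=(v^{-1}-v)(\tk_i-\tk_{\varrho i})$. This is indeed matched by $\sqq^{-1}([S_{\varrho i}]*[S_i]-[S_i]*[S_{\varrho i}])=(\sqq-\sqq^{-1})([\E_{\varrho i}]-[\E_i])$.
\item For $\varrho i=i$, it is fixed only by the rank-two relation $B_i^2B_j-[2]B_iB_jB_i+B_jB_i^2\in\Q(v)\,B_j\tk_i$ with $c_{ij}=-1$. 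You must compute this relation both in the present double and in $\widetilde{\ch}(\bfk Q,\varrho)$.
\end{itemize}
The twist $\langle\E_i,\E_i\rangle_Q$ plays no role in fixing this scalar. What enters the computation is, for example, $[S_i]*[S_i]=\sqq^{-1}[S_i\oplus S_i]+(\sqq-\sqq^{-1})[\E_i]$. Your formula drops the factor $q^{-1}$ on $[S_i\oplus S_i]$.
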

	
	
	Similarly, we have the isomorphism of $\Q(\sqq^{1/2})$-algebras
	$$\widehat{\psi}:\hUi|_{v=\sqq}\longrightarrow \widehat{\ch}(\bfk Q,\btau).$$

	As Example \ref{ex:QGvsiQG} shows, we can view the quantum group $\tU$ as an $\imath$quantum group. Using Example \ref{ex:diagquiver}, we can reformulate Bridgeland's realization $\tU$. 
	\begin{lemma}[Bridgeland's Theorem reformulated] 
		Let $Q$ be a Dynkin quiver. Then we have the following isomorphism of $\Q(\sqq^{1/2})$-algebras
		\begin{align*}
			&\widetilde{\psi}:\tU|_{v=\sqq}\stackrel{\simeq}{\longrightarrow} \widetilde{\ch}(\bfk Q^{\rm dbl},\swa),
			\\
			E_i \mapsto \sqq^{-\frac{1}{2}}[S_i],&\qquad F_i\mapsto \sqq^{-\frac{1}{2}}[S_{i^\diamond}],
			\qquad
			K_i\mapsto [\K_{i^\diamond}],\qquad K_i'\mapsto [\K_i].
		\end{align*}
	\end{lemma}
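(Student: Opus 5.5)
The plan is to deduce this from the general $\imath$quiver realization, Lemma~\ref{lem:Hall-iQG}, applied to the diagonal $\imath$quiver $(Q^{\rm dbl},\swa)$ of Example~\ref{ex:diagquiver}, and to compose it with the isomorphism $\widetilde{\phi}:\tU\to\tUUi$ of Example~\ref{ex:QGvsiQG}. First, $Q^{\rm dbl}$ is Dynkin whenever $Q$ is, and its Cartan matrix is block diagonal $\diag(C,C)$. Hence the ambient Drinfeld double for $(Q^{\rm dbl},\swa)$ is $\tU\otimes\tU$. The involution $\swa$ interchanges $i$ and $i^\diamond$, so the $\imath$quantum group attached to $(C\oplus C,\swa)$ is exactly the subalgebra $\tUUi$ generated by $\cb_i,\cb_{i^\diamond},\ck_i,\ck_i'$. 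These are the elements $B_j=F_j+E_{\swa j}\tK_j'$ and $\tk_j=\tK_j\tK'_{\swa j}$ for $j=i$ and $j=i^\diamond$; thus $\ck_i=\tk_i$ and $\ck_i'=\tk_{i^\diamond}$. Lemma~\ref{lem:Hall-iQG} then gives an isomorphism
\[
\widetilde{\psi}^{\rm dbl}:\tUUi|_{v=\sqq}\longrightarrow\widetilde{\ch}(\bfk Q^{\rm dbl},\swa),
\qquad B_j\mapsto \sqq^{-\frac12}[S_j],\quad \tk_j\mapsto[\K_j],
\]
where no factor $\sqq^{-1}$ appears because $\swa$ has no fixed vertices.

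Next, define $\widetilde{\psi}:=\widetilde{\psi}^{\rm dbl}\circ\widetilde{\phi}|_{v=\sqq}$. It is an isomorphism of $\Q(\sqq^{1/2})$-algebras, being a composition of two isomorphisms; note that $\widetilde{\phi}$ is defined over $\Q(v)$, so it extends to $\Q(v^{1/2})$ by scalar extension and then specializes at $v=\sqq$. Tracking generators gives
\[
E_i\mapsto\cb_i\mapsto\sqq^{-\frac12}[S_i],\qquad
F_i\mapsto\cb_{i^\diamond}\mapsto\sqq^{-\frac12}[S_{i^\diamond}],
\]
\[
K_i\mapsto\ck_i'=\tk_{i^\diamond}\mapsto[\K_{i^\diamond}],\qquad
K_i'\mapsto\ck_i=\tk_i\mapsto[\K_i].
\]
These are exactly the asserted formulas.

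The only step needing care is the identification of $\tUUi$ with the universal $\imath$quantum group of $(Q^{\rm dbl},\swa)$ as defined in \eqref{eq:Bi}. This requires the same normalization conventions in both places, which holds since both come from the \cite{BG17}-style presentation. It also requires that the proof of Lemma~\ref{lem:Hall-iQG} from \cite{LW19} covers non-connected Dynkin $\imath$quivers, which is the case: the argument is local to $\swa$-orbits and to Serre relations between pairs of vertices. I expect this bookkeeping, and not any new computation, to be the main obstacle. If the conventions needed checking, I would verify directly that the images of $E_i,F_i,K_i,K_i'$ satisfy \eqref{eq:KK}--\eqref{eq:K2}. The commutator relation $[E_i,F_j]$ corresponds to the $\imath$Hall computation $[S_i]*[S_{i^\diamond}]-[S_{i^\diamond}]*[S_i]$, which in $\Lambda$ produces $[\K_i]$ and $[\K_{i^\diamond}]$ with the factor $(\sqq^{-1}-\sqq)$.
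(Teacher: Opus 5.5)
Your proposal is correct and follows essentially the route the paper indicates. The paper gives no separate proof. It justifies the lemma by viewing $\tU$ as the diagonal $\imath$quantum group via $\widetilde{\phi}$ (Example~\ref{ex:QGvsiQG}) and applying the $\imath$Hall realization of Lemma~\ref{lem:Hall-iQG} to $(Q^{\rm dbl},\swa)$ (Example~\ref{ex:diagquiver}). Your bookkeeping checks out: $\ck_i=\tk_i$, $\ck_i'=\tk_{i^\diamond}$, and no factor $\sqq^{-1}$ appears because $\swa$ fixes no vertex.
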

	
	Similarly, we have the isomorphism of $\Q(\sqq^{1/2})$-algebras
	\[
	\widehat{\psi}:\hU|_{v=\sqq}\longrightarrow \widehat{\ch}(\bfk Q^{\rm dbl},\swa).
	\]

	
	
	\subsection{Generic $\imath$Hall algebras}
	\label{sub:generic}
	
	For a Dynkin $\imath$quiver $(Q,\btau)$,
	we recall the generic $\imath$Hall algebras defined in \cite[\S9.3]{LW19}.
	Recall that $\Phi^+$ is the set of positive roots.
	For any $\beta\in\Phi^+$, denote by $M_q(\beta)$ its corresponding indecomposable $\bfk Q$-module, i.e., $\dimv M_q(\beta)=\beta$.
	Let $\mathfrak{P}:=\mathfrak{P}(Q)$ be the set of functions $\lambda: \Phi^+\rightarrow \N$.
	Then the modules
	\begin{align}
		\label{def:Mlambda}
		M_q(\lambda):= \bigoplus_{\beta\in\Phi^+}\lambda(\beta) M_q(\beta),\quad \text{ for } \lambda\in\mathfrak{P},
	\end{align}
	provide a complete set of isoclasses of $\bfk Q$-modules.
	
	For $(\alpha,\nu),(\beta,\mu)\in\Z^\I\times\fp$, 
	there exists a polynomial $\boldsymbol{\varphi}^{\lambda,\gamma}_{\mu,\alpha;\nu,\beta}(v)\in\Z[v,v^{-1}]$ such that
	\[\big([\K_\alpha]\ast[M_q(\mu)]\big)\ast\big([\K_\beta]\ast[M_q(\nu)]\big)=\sum_{\lambda\in\fp,\gamma\in\Z^\I}\boldsymbol{\varphi}^{\lambda,\gamma}_{\mu,\alpha;\nu,\beta}({\sqq})[\K_\gamma]\ast[M_q(\lambda)]
	\]
	in $\widetilde{\ch}(\bfk Q,\btau)$. 
	The generic $\imath$Hall algebra $\tMHg$ 
	is defined to be the  $\Q(v^{1/2})$-space with a basis $\{\K_\alpha*\fu_\lambda\mid \alpha\in\Z^\I,\lambda\in\fp\}$ 
	with multiplication
	\begin{align}
		\label{eq:generic-mult}
		(\K_\alpha*\fu_\mu)*(\K_\beta*\fu_\nu)=\sum_{\lambda\in\fp,\gamma\in\Z^\I}\boldsymbol{\varphi}^{\lambda,\gamma}_{\mu,\alpha;\nu,\beta}(v)\K_\gamma*\fu_\lambda.
	\end{align}
	
	For $\widehat{\ch}(\bfk Q,\btau)$, we can construct its generic version $\widehat{\ch}(Q,\btau)$ with a basis $\{\K_\alpha*\fu_\lambda\mid \alpha\in\N^\I,\lambda\in\fp\}$ similarly.
	
	From Lemma \ref{lem:Hall-iQG}, see also \cite[Theorem 9.8]{LW19}, we obtain 
	the isomorphisms of $\Q(v^{1/2})$-algebras
	\begin{alignat*}{2}
		&\widetilde{\psi}:\tUi\longrightarrow \widetilde{\ch}(Q,\btau),&\qquad &\widehat{\psi}:\hUi\longrightarrow \widehat{\ch}(Q,\btau),\\
		&\widetilde{\psi}:\tU\longrightarrow \widetilde{\ch}(Q^{\rm dbl},\swa),&\qquad &\widehat{\psi}:\hU\longrightarrow \widehat{\ch}(Q^{\rm dbl},\swa).
	\end{alignat*}


	\subsection{Dual canonical bases of $\imath$Hall aglebras}\label{subsec:dcb of iHall}
	
	Set $\cz=\Z[v^{1/2},v^{-1/2}]$ and $\cz_\sqq=\Z[\sqq^{1/2},\sqq^{-1/2}]$. We recall the integral form and dual canonical bases of $\imath$Hall algebras defined in \cite{LP25}. 
	
	The integral forms
	$\widehat{\ch}(\bfk Q,\varrho)_{\cz_\sqq}$ and $\widetilde{\ch}(\bfk Q,\varrho)_{\cz_\sqq}$ are free $\cz_\sqq$-module with their bases given by \eqref{eq:Hallbasis-hat} and \eqref{eq:Hallbasis-tilde} respectively. Furthermore, the algebra $\tMHg_{\cz}$ (respectively, $\widehat{\ch}(Q,\btau)_{\cz}$) is defined to be the free $\cz$-module with a basis $\{\K_\alpha*\fu_\lambda\mid \alpha\in\Z^\I, \lambda\in\fp\}$ (respectively, $\{\K_\alpha*\fu_\lambda\mid \alpha\in\N^\I, \lambda\in\fp\}$ ), and the multiplication as in \eqref{eq:generic-mult}.  
	
	Using the  isomorphism of $\Q(v^{1/2})$-algebras
	$$\widetilde{\psi}:\tUi\rightarrow \widetilde{\ch}(Q,\btau),\qquad \widehat{\psi}:\hUi\rightarrow \widehat{\ch}(Q,\btau),$$
	we can define the integral form of $\tUi$ (resp. $\hUi$) as the preimage of $\widehat{\ch}(Q,\varrho)_\cz$ (resp. $\widehat{\ch}(Q,\varrho)_\cz$), which is denoted by $\tUi_\cz$ (resp. $\hUi_\cz$). Both of them are free $\cz$-modules, which are independent of orientations; see \cite[Corollary 5.10]{LP25}. Note that $\hUi_\cz=\tUi_\cz\cap\hUi$.

	The bar-involution of $\widehat{\ch}(Q,\varrho)$ (and also $\widetilde{\ch}(Q,\varrho)$) can be defined similarly to $\widetilde{\ch}(Q)$ by setting
	\[\ov{\fu_{\alpha_i}}=v^{-1}\fu_{\alpha_i},\quad \ov{\K_{\alpha_i}}=\K_{\alpha_i}.\]
	
	\begin{lemma}[\cite{LP25}]\label{lem:bar on iHall integral}
		The bar-involution of $\widetilde{\ch}(Q,\varrho)$ (resp. $\widehat{\ch}(Q,\varrho)$) preserves $\widetilde{\ch}(Q,\varrho)_\cz$ (resp. $\widehat{\ch}(Q,\btau)_\cz$).
	\end{lemma}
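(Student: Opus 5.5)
Since the bar-involution is an anti-involution of the whole algebra $\widetilde{\ch}(Q,\varrho)$, and $\widetilde{\ch}(Q,\varrho)_\cz$ is a $\cz$-subalgebra stable under $\ov{v^{1/2}}=v^{-1/2}$ (so $\ov{\cz}=\cz$), it suffices to check that the bar-involution sends a set of algebra generators of the integral form into the integral form. A single generator-by-generator check is not quite enough, though. The Hall basis elements $\K_\alpha*\fu_\lambda$ are not in general products of $\fu_{\alpha_i}$ and $\K_{\alpha_i}^{\pm1}$ with coefficients in $\cz$, because divided powers intervene. So I will argue in two steps: first on the part coming from $\mod(\bfk Q)$, then on the Cartan part.

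\textbf{Cartan part.} The elements $\K_{\alpha_i}^{\pm1}$ are bar-invariant by definition. Since $\ov{\K_\alpha*x}=\ov{x}*\K_\alpha$ and $\K_\alpha$ is central up to a power of $v$ (this follows from the $q$-commutation relations in the twisted Hall algebra, which are integral), multiplication by $\K_\alpha$ preserves the integral form on both sides. This reduces the claim to showing $\ov{\fu_\lambda}\in\widetilde{\ch}(Q,\varrho)_\cz$ for all $\lambda\in\fp$. The same reduction works for $\widehat{\ch}$ with $\alpha\in\N^\I$. For $\widehat{\ch}$ I must also check that no negative powers $\K_\alpha^{-1}$ appear in $\ov{\fu_\lambda}$.

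\textbf{Main step: $\ov{\fu_\lambda}$.} I order the indecomposables along an adapted ordering of $\Phi^+$, so that $\fu_\lambda$ equals, up to a power of $v^{1/2}$, an ordered product of the elements $\fu_{m\beta}$ (isoclass of $M_q(\beta)^{\oplus m}$). Using BGP reflection functors, which \cite{LP25} shows act on $\widetilde{\ch}(Q,\varrho)$ compatibly with integral forms and with the bar-involution (this is how orientation independence of $\tUi_\cz$ was obtained), each $\fu_{m\beta}$ is carried to $\fu_{m\alpha_i}$ for a sink $i$. This reduces everything to rank-one data. For $\fu_{m\alpha_i}$, in the split case $[S_i]^{*m}$ equals a quantum factorial times $[S_i^{\oplus m}]$ plus terms of the form $[\K_i]*[S_i^{\oplus m-2k}]$, with integral coefficients, coming from the nontrivial extensions inside $\iLa$. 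Inverting this triangular relation gives a closed formula for $\fu_{m\alpha_i}$ as a $\cz$-combination of divided-power-type expressions in $B_i$ and $\K_i$. These expressions are bar-invariant up to explicit powers of $v$, since $\ov{\fu_{\alpha_i}}=v^{-1}\fu_{\alpha_i}$. The non-split case $\varrho i\neq i$ reduces to a rank-two computation of the same kind.

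\textbf{Expected obstacle.} The hardest step is the rank-one formula, in particular showing that the coefficients produced by inverting the triangular relation stay in $\cz$ rather than merely in $\Q(v^{1/2})$. I would first try to get this from an $\imath$divided-power identity expressing $\ov{\fu_{m\alpha_i}}$ directly in the Hall basis, with coefficients given by Gaussian binomials, which lie in $\cz$. A second point to check is that the reflection functors genuinely intertwine the bar-involution rather than only the algebra structure.
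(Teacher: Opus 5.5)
The paper gives no argument for this lemma; it only quotes \cite{LP25}. Judged on its own, your proposal has a genuine gap: the difficulty is misplaced, and the decisive step is assumed rather than proved.

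First, your rank-one analysis uses the wrong normalization. With the structure constants used here, $|\Ext^1(M,N)_L|/|\Hom(M,N)|$, there are no divided powers. For $\varrho i=i$,
\[
[S_i]*[S_i^{\oplus m}]=\sqq^{-m}[S_i^{\oplus(m+1)}]+(\sqq^{m}-\sqq^{-m})[\K_i\oplus S_i^{\oplus(m-1)}],
\]
so the leading coefficient is a unit, not a quantum factorial, and $\fu_{m\alpha_i}\in\cz[\fu_{\alpha_i},\K_i]$. For $\varrho i\neq i$ one simply has $\fu_{m\alpha_i}=v^{\binom{m}{2}}\fu_{\alpha_i}^m$. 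So your ``expected obstacle'' is vacuous. More generally, $\fu_{m\beta}=v^{\binom{m}{2}}\fu_\beta^m$ plus $\cz$-combinations of terms $\fu_X*\K_\alpha$ with $X$ of smaller dimension, so multiplicities play no role.

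What actually prevents $\fu_{\alpha_i},\K_i^{\pm1}$ from generating the integral form is the factor $(v-v^{-1})^{-1}$ in $\fu_M$ for indecomposable non-simple $M$. You can see this in the formula for $\fu_M$ in the proof of Theorem~\ref{prop:Hall-sheaf-integ}.

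Your PBW factorization ``up to a power of $v^{1/2}$'' is also false in the $\imath$Hall algebra. Since $\Ext^1_{\Lambda^\imath}(M,N)\cong\Ext^1_{\bfk Q}(M,N)\oplus\Hom_{\bfk Q}(M,\varrho N)$, ordered products acquire extra terms $\fu_X*\K_\alpha$ of smaller dimension. These must be absorbed by induction on dimension.

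With these repairs, everything reduces to showing $\ov{\fu_\beta}\in\widetilde{\ch}(Q,\varrho)_\cz$ for each indecomposable $M_q(\beta)$. The hat case is then automatic, with no check on negative powers needed, because $\ov{\widehat{\ch}(Q,\varrho)}=\widehat{\ch}(Q,\varrho)$ and $\widehat{\ch}(Q,\varrho)\cap\widetilde{\ch}(Q,\varrho)_\cz=\widehat{\ch}(Q,\varrho)_\cz$.

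The gap sits exactly at this reduction. You assume that the $\imath$reflection isomorphisms intertwine the bar-involutions and respect integral forms, and you attribute this to \cite{LP25}. Nothing in this paper provides it, and the available sources do not give it to you:
\begin{itemize}
\item If you mean to deduce it from the braid-group invariance of the dual canonical basis in \cite{LP25}, that is circular. That basis is produced by Lusztig's lemma, which already requires the bar-involution to preserve the integral form, i.e.\ the present lemma.
\item Orientation-independence of $\tUi_\cz$ does not help either. It concerns the identity map of $\tUi$, realized by Fourier transforms in \cite{LP25}. Reflection functors instead realize braid symmetries.
\item Whether those braid symmetries commute with the bar-involution depends on their normalization; the paper stresses they are modified to fit the \cite{BG17} presentation.
\end{itemize}

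To close the gap, you must check two things directly:
\begin{itemize}
\item $\ov{\Gamma(x)}=\Gamma(\ov{x})$ on the generators $x=\fu_{\alpha_j},\K_j^{\pm1}$. Both sides are antilinear anti-homomorphisms, so generators suffice.
\item Every $M_q(\beta)$ is obtained from a simple by reflections at $\varrho$-orbits.
\end{itemize}
Then $\ov{\fu_\beta}=\Gamma(\ov{\fu_{\alpha_j}})=v^{-1}\fu_\beta$, and no separate integrality statement for $\Gamma$ is needed.

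Alternatively, within this paper the lemma follows a posteriori from Theorem~\ref{prop:Hall-sheaf-integ}, which is proved without it. The map $\widehat{\Omega}$ intertwines the bar-involution with Verdier duality; check this on $\fu_{\alpha_i}\mapsto v^{1/2}L(0,\e_{\sigma\tS_i})$ and $\K_{\alpha_i}\mapsto L(\bv^{\varrho i},\bw^i)$. Since $\hRiZ$ is spanned by the bar-invariant $L(\bv,\bw)$, the integral form is preserved.
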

	
	Following \cite{BG17}, we define an action of $\widehat{\ct}(Q,\varrho)$ on $\widehat{\ch}(Q,\varrho)$ by
	\[\K_\alpha\diamond\fu_\lambda=v^{\frac{1}{2}(\alpha-\varrho\alpha,\,\dimv{M_q(\lambda)})_Q}\K_\alpha\ast\fu_\lambda.\]
	For $\lambda\in\mathfrak{P}$, set
	\begin{equation}\label{eq:HA element H_lambda}
		\mathfrak{U}_\lambda=v^{-\dim\End_{\bfk Q}(M_q(\lambda))+\frac{1}{2}\langle M_q(\lambda),M_q(\lambda)\rangle_Q}\fu_\lambda.
	\end{equation}

	Similarly, we define a partial order on $\N^\I\times\mathfrak{P}$: we say $(\alpha,\lambda)\prec(\beta,\mu)$ if $\alpha+\btau(\alpha)+\dimv M_q(\lambda)=\beta+\btau(\beta)+\dimv M_q(\mu)$ and either $\alpha\prec\beta$ (i.e. $\alpha\neq\beta$ and $\beta-\alpha\in\N^\I$) or $\alpha=\beta$ and $\lambda\prec\mu$. The following theorem is the main result of \cite{LP25}.

	\begin{theorem}[\cite{LP25}]\label{iHA dCB theorem}
		For each $\alpha\in\N^\I$ and $\lambda\in\mathfrak{P}$, there exists a unique element $\mathfrak{L}_{\alpha,\lambda}\in\widehat{\ch}(Q,\varrho)$ such that $\ov{\mathfrak{L}_{\alpha,\lambda}}=\mathfrak{L}_{\alpha,\lambda}$ and
		\[
		\mathfrak{L}_{\alpha,\lambda}-\K_\alpha\diamond \mathfrak{U}_\lambda\in\sum_{(\beta,\mu)}v^{-1}\Z[v^{-1}]\cdot \K_\beta\diamond \mathfrak{U}_\mu.
		\]
		Moreover, $\mathfrak{L}_{\alpha,\lambda}$ satisfies 
		\[
		\mathfrak{L}_{\alpha,\lambda}-\K_\alpha\diamond \mathfrak{U}_\lambda\in\sum_{(\alpha,\lambda)\prec(\beta,\mu)}v^{-1}\Z[v^{-1}]\cdot \K_\beta\diamond \mathfrak{U}_\mu,
		\]
		and $\mathfrak{L}_{\alpha,\lambda}=\K_\alpha\diamond \mathfrak{L}_{0,\lambda}$.
	\end{theorem}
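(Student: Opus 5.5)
We prove Theorem~\ref{iHA dCB theorem} with Lusztig's Lemma in the form of \cite{BZ14}. It is applied to the $\cz$-lattice $\widehat{\ch}(Q,\varrho)_\cz$ with the rescaled Hall basis $\{\K_\alpha\diamond\mathfrak{U}_\lambda\mid(\alpha,\lambda)\in\N^\I\times\fp\}$ and the partial order $\prec$ defined above. Lusztig's Lemma needs three inputs.
\begin{itemize}
\item[(a)] The bar-involution preserves $\widehat{\ch}(Q,\varrho)_\cz$. This is Lemma~\ref{lem:bar on iHall integral}, and $\{\K_\alpha\diamond\mathfrak{U}_\lambda\}$ is a $\cz$-basis of it, since it differs from the Hall basis only by powers of $v^{1/2}$.
\item[(b)] Unitriangularity:
\[
\ov{\K_\alpha\diamond\mathfrak{U}_\lambda}\in\K_\alpha\diamond\mathfrak{U}_\lambda+\sum_{(\alpha,\lambda)\prec(\beta,\mu)}\cz\,\K_\beta\diamond\mathfrak{U}_\mu .
\]
\item[(c)] Each interval $\{(\beta,\mu)\mid(\alpha,\lambda)\preceq(\beta,\mu)\}$ is finite.
\end{itemize}

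Condition (c) is immediate. The quantity $\alpha+\varrho\alpha+\dimv M_q(\lambda)$ is fixed along the order, so only finitely many $(\beta,\mu)$ can occur.

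For (b), we first use $\ov{\K_\alpha\diamond x}=\K_\alpha\diamond\ov{x}$, which is the reason for introducing $\diamond$. The twist $v^{\frac12(\alpha-\varrho\alpha,\,\cdot)_Q}$ is chosen to cancel the $q$-commutation between $\K_\alpha$ and $\fu_\lambda$; this reduces (b) to $\alpha=0$. We then need
\[
\ov{\mathfrak{U}_\lambda}\in\mathfrak{U}_\lambda+\sum\cz\,\K_\beta\diamond\mathfrak{U}_\mu,
\]
where the sum runs over $(0,\lambda)\prec(\beta,\mu)$. That is, either $\beta\neq0$, or $\beta=0$ and $\mu$ degenerates $\lambda$ (orbit closure order).

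To prove this, write $\fu_\lambda$ as a PBW-type ordered product of root vectors $\fu_{M_q(\gamma)}$, taken along an ordering of $\Phi^+$ adapted to $Q$ (a Lusztig/Ringel ordering). Bar reverses the product, since it is an anti-involution. Reordering back uses the Hall multiplication in $\widehat{\ch}$. Modulo terms with nonzero $\K$-part, this multiplication is the Ringel–Hall multiplication of $\bfk Q$. There the standard argument applies: the leading term comes from the direct sum, and all other middle terms are proper degenerations. This argument is from \cite{Lus90}, or \cite[\S11.6]{DDPW}, where it is proved on $\U^+$.

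Terms with a nonzero $\K$-part arise in $\widehat{\ch}$ from extensions via $\Lambda^\imath$. Such a term $\K_\beta\diamond\fu_\mu$ has $\dimv M_q(\mu)=\dimv M_q(\lambda)-\beta-\varrho\beta$. Since $\beta\in\N^\I\setminus\{0\}$, these are automatically $\succ(0,\lambda)$. The normalization of $\mathfrak{U}_\lambda$ makes the coefficient of $\mathfrak{U}_\lambda$ in $\ov{\mathfrak{U}_\lambda}$ exactly $1$, by the usual $\dim\End$ and Euler-form computation. Integrality of all coefficients comes from (a).

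Given (a)–(c), Lusztig's Lemma produces a unique bar-invariant $\mathfrak{L}_{\alpha,\lambda}$ satisfying
\[
\mathfrak{L}_{\alpha,\lambda}-\K_\alpha\diamond\mathfrak{U}_\lambda\in\sum_{(\alpha,\lambda)\prec(\beta,\mu)}v^{-1}\Z[v^{-1}]\,\K_\beta\diamond\mathfrak{U}_\mu.
\]
Uniqueness in the weaker form, with an unrestricted sum over $(\beta,\mu)$, follows by the usual argument. Suppose $x$ is a bar-invariant element of $\sum v^{-1}\Z[v^{-1}]\,\K_\beta\diamond\mathfrak{U}_\mu$, and take a $\prec$-minimal term of $x$. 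By (b), its coefficient $c$ satisfies $\ov{c}=c$, which forces $c=0$. Hence $x=0$.

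Finally, $\K_\alpha\diamond\mathfrak{L}_{0,\lambda}$ is bar-invariant because $\ov{\K_\alpha\diamond x}=\K_\alpha\diamond\ov{x}$. It has the required triangular expansion, since $\K_\alpha\diamond(\K_\beta\diamond\mathfrak{U}_\mu)=\K_{\alpha+\beta}\diamond\mathfrak{U}_\mu$ and $\prec$ is stable under adding $\alpha$. So uniqueness gives $\mathfrak{L}_{\alpha,\lambda}=\K_\alpha\diamond\mathfrak{L}_{0,\lambda}$.

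The main obstacle is (b). We must show that reordering in $\widehat{\ch}$ produces only terms that are either degenerations with $\beta=0$ or terms with $\beta\neq0$, and never terms with $\beta=0$ and $\mu$ not dominating $\lambda$. We must also check that the $v$-powers in $\diamond$ and in $\mathfrak{U}_\lambda$ give a leading coefficient of exactly $1$. I would first verify this on rank-one and rank-two cases ($\varrho i=i$ with the loop $\varepsilon_i$, and a $\varrho$-swapped pair), then use the PBW factorization to reduce to those cases.
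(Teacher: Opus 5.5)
Your overall route is the one the paper attributes to \cite{LP25}. It applies Lusztig's Lemma to the rescaled Hall basis $\{\K_\alpha\diamond\mathfrak{U}_\lambda\}$ with the order $\prec$, reduces to $\alpha=0$ via $\ov{\K_\alpha\diamond x}=\K_\alpha\diamond\ov{x}$, and uses the minimal-term uniqueness argument. Your last two paragraphs (weak uniqueness and $\mathfrak{L}_{\alpha,\lambda}=\K_\alpha\diamond\mathfrak{L}_{0,\lambda}$) are fine.

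The ``main obstacle'' you describe is already settled by what you wrote, once it is phrased correctly. The $\beta=0$ part of $\ov{\mathfrak{U}_\lambda}$ is $\pi(\ov{\mathfrak{U}_\lambda})$, where $\pi:\widehat{\ch}(Q,\varrho)\to\widetilde{\ch}(Q)$ is the quotient homomorphism killing all $\K_\alpha\ast\fu_\mu$ with $\alpha\neq0$ (it appears in the proof of Theorem~\ref{prop:Hall-sheaf-integ}). Both bar-involutions are anti-linear anti-automorphisms fixing $v^{-1/2}\fu_{\alpha_i}$, and $\pi(\K_{\alpha_i})=0$, so $\pi$ commutes with bar. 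The $\beta=0$ block is therefore exactly the $\U^+$ bar-matrix, and every $\beta\neq0$ term is $\succ(0,\lambda)$ by homogeneity. No rank-one or rank-two checks are needed. One correction of direction: the $\U^+$ matrix goes upward because $M\oplus N$ lies in the orbit closure of every other middle term. It is not that the other middle terms are degenerations.

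\textbf{The gap.} It lies in the final step, from (a)--(c) to coefficients in $v^{-1}\Z[v^{-1}]$. Lusztig's Lemma only yields coefficients in the ``negative part'' of the ring where the bar-matrix entries live. You only show those entries lie in $\cz=\Z[v^{1/2},v^{-1/2}]$, and with that input you get $v^{-1/2}\Z[v^{-1/2}]$. For example, an off-diagonal entry $v^{1/2}-v^{-1/2}$ would force the coefficient $-v^{-1/2}$. What you actually need is
\[
\ov{\K_\alpha\diamond\mathfrak{U}_\lambda}\in\K_\alpha\diamond\mathfrak{U}_\lambda+\sum_{(\alpha,\lambda)\prec(\beta,\mu)}\Z[v,v^{-1}]\,\K_\beta\diamond\mathfrak{U}_\mu ,
\]
which is the form the paper invokes in the proof of Lemma~\ref{lem:Hall basis map to M_vw}.

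For $\beta=0$ this follows from $\U^+$. For $\beta\neq0$ it is a parity statement about the normalizations, and it is not automatic. In type $A_5$ (labelling of \eqref{diag: A}), take $\alpha=\alpha_2$ and $\lambda=\alpha_1$. The $\diamond$-twist $\frac12(\alpha-\varrho\alpha,\dimv M_q(\lambda))_Q$ equals $-\frac12$, and it only cancels against the half-integral part of $\frac12\langle M_q(\lambda),M_q(\lambda)\rangle_Q$.

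One way to close the gap:
\begin{itemize}
\item Set $\gamma=\alpha+\varrho\alpha+\dimv M_q(\lambda)$ and check that $\K_\alpha\diamond\mathfrak{U}_\lambda\in v^{\Z}\,v^{\frac12\langle\gamma,\gamma\rangle_Q}[M_q(\lambda)\oplus\K_\alpha]$. This uses $(\alpha,\varrho\alpha)_Q\in2\Z$, i.e.\ that no vertex is joined to its $\varrho$-image.
\item In this normalized basis, products in degrees $\gamma,\gamma'$ have structure constants in $v^{-\frac12(\gamma,\gamma')_Q}\Z[v,v^{-1}]$, and $B_i,\K_i$ are integral powers of $v$ times basis vectors.
\item By a dimension count, the $\Q(v)$-span $V$ of the normalized basis is spanned by suitably twisted monomials in the $B_i,\K_i$. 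The twist exponent is symmetric under reversing a monomial, so $\ov{V}=V$.
\item $V\cap\widehat{\ch}(Q,\varrho)_\cz$ is the $\Z[v,v^{-1}]$-span of the basis, which gives bar-matrix entries with integral powers of $v$.
\end{itemize}
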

	
	Because of the last property, we often write $\mathfrak{L}_\lambda:=\mathfrak{L}_{0,\lambda}$ for $\lambda\in\mathfrak{P}$ and use $\{\K_\alpha\diamond \mathfrak{L}_\lambda\mid \alpha\in\N^\I,\lambda\in\mathfrak{P}\}$ to denote the basis constructed in Theorem~\ref{iHA dCB theorem}. This is called the dual canonical basis of $\widehat{\ch}(Q,\varrho)$. The dual canonical basis of $\widetilde{\ch}(Q,\varrho)$ is defined to be $\{\K_\alpha\diamond \mathfrak{L}_\lambda\mid \alpha\in\Z^\I,\lambda\in\mathfrak{P}\}$.
	
	\section{NKS quiver varieties for $\imath$quivers}\label{NKS QV section}
	
	In this section, we review the Nakajima-Keller-Scherotzke categories (NKS categories for short) and the NKS quiver varieties for Dynkin $\imath$quivers given in \cite{LW21b}. 
	Throughout this section, we assume that $k$ is an algebraically closed field of characteristic zero, and denote by $\mod(k)$ the category of $k$-vector spaces. For a $k$-linear category $\mathcal{C}$, we denote by $\mod(\mathcal{C})$ the category of $\mathcal{C}$–modules, i.e. $k$-linear contravariant functors $\mathcal{C}\to\mod(k)$. Furthermore, for a category $\mathcal{C}$, we denote by $\mathcal{C}_0$ the set of objects of $\mathcal{C}$.

	\subsection{NKS categories}
	\label{subsec:NKS}
	
	Let $Q=(Q_0=\I,Q_1)$ be a Dynkin quiver and set
	\[\ov{Q}_1=\{\bar{h}:j\rightarrow i\mid (h:i\rightarrow j)\in Q_1\}.\] 
	Define the \emph{repetition quiver} $\Z Q$ of $Q$ as follows:
	
	$\triangleright$ the set of vertices is $\{(i,p)\in Q_0\times \Z\}$;
	
	$\triangleright$ an arrow $(\alpha,p):(i,p)\rightarrow (j,p)$ and an arrow $(\bar{\alpha},p):(j,p-1)\rightarrow (i,p)$ are given, for any arrow $\alpha:i\rightarrow j$ in $Q$ and any vertex $(i,p)$.
	
	Define the automorphism $\tau$ of $\Z Q$ to be the shift by one unit to the left, i.e., $\tau(i,p)=(i,p-1)$ for all $(i,p)\in Q_0\times \Z$.
	
	By a slight abuse of notation, associated to $\beta:y\rightarrow x$ in $\Z Q$, we denote by $\bar{\beta}$ the arrow that runs from $\tau x\rightarrow y$. 
	Let $k(\Z Q)$ be the \emph{mesh category} of $\Z Q$, that is, the objects are given by the vertices of $\Z Q$ and the morphisms are $k$-linear combinations of paths modulo the ideal spanned by the mesh relations
	$R_x:=\sum_{\alpha:y\rightarrow x}\alpha\bar{\alpha},$
	where the sum runs through all arrows of $\Z Q$ ending at $x$.

	By a theorem of Happel \cite{Ha2}, there is an equivalence
	\begin{align}
		\label{eq:Happelfunctor}
		H: k(\Z Q)\stackrel{\simeq}{\longrightarrow} \Ind \cd_Q,
	\end{align}
	where $\Ind \cd_Q$ denotes the category of indecomposable objects in the bounded derived category $\cd_Q=\cd^b(kQ)$.
	Using this equivalence, we label once and for all the vertices of $\Z Q$ by the isoclasses of indecomposable objects of $\cd_Q$. 
	Note that the action of $\tau$ on $\Z Q$ corresponds to the action of the AR-translation on $\cd_Q$, and this explains the notation $\tau$.

	Let 
	\begin{align}
		\label{eq:C}
		C=
		\{\text{the vertices labeled by }\Sigma^j\tS_i, \text{ for all } i\in Q_0 \text{ and } j\in\Z \}.
	\end{align}
	Let $\Z Q_C$ be the quiver constructed from $\Z Q$ by adding to every vertex $c \in C$ a new object denoted by $\sigma c$ together with arrows $\tau c \rightarrow \sigma c$ and $\sigma c \rightarrow c$; we refer to $\sigma c$, for $c\in C$, as {\em frozen vertices}.

	The \emph{graded NKS category} $\mcr^{\gr}_C$ is defined to be the $k$-linear category with
	\begin{itemize}
		\item[$\triangleright$] objects: the vertices of $\Z Q_C$;
		\item[$\triangleright$] morphisms: $k$-linear combinations of paths in $\Z Q_C$ modulo the ideal spanned by the mesh relations
		$\sum_{\alpha:y\rightarrow x}\alpha\bar{\alpha},$
		where the sum runs through all arrows of $\Z Q_C$ ending at $x\in\Z Q$ (including the new arrow $\sigma x \rightarrow x$ if $x\in C$).
	\end{itemize}
	These categories were formulated in \cite{KS16, Sch19}; here and below NKS stands for Nakajima-Keller-Scherotzke. The work \cite{KS16} was in turn motivated by \cite{Na01,Na04,HL15,LeP13}; also cf. \cite{Qin}.

	\begin{remark}
		\label{ex:cyclic}
		Let $Q$ be a Dynkin quiver. In Nakajima's original construction \cite{Na01,Na04}, the graded Nakajima category $\mcr^{\gr}$ is defined similarly with
		$C$  the set of all vertices of $\Z Q$. We denote by $\cs^{\gr}$ the full subcategory of $\mcr^\gr$ formed by all $\sigma c$, $c\in \Z Q$. 
	\end{remark}

	Let $(Q,\btau)$ be a Dynkin $\imath$quiver.
	Let $F:\cd_Q\rightarrow \cd_Q$ be a triangulated isomorphism induced by an isomorphism $F$ of $\Z Q$; \eqref{eq:Happelfunctor}.
	In this paper, we only consider $F=\Sigma^2$ and $F=F^\imath=\Sigma\circ \widehat{\varrho}$. 
	Note that the subset $C$ in \eqref{eq:C} is $F$-invariant. The isomorphism $F$ of $\Z Q$ can be uniquely lifted to an isomorphism of $\Z Q_C$  by setting $F(\sigma c)=\sigma(Fc)$ for any $c\in C$, and then the functor $F$ of the mesh category $k(\Z Q)$ can be uniquely lifted to $\mcr^{\gr}_C$, which is also denoted by $F$.
	
	Let
	\[
	\mcr=\mcr_{C,F}:=\mcr^{\gr}_C/F,
	\]
	and let $\cs=\cs_{C,F}$ be the full subcategory of $\mcr$ formed by all $\sigma c$ ($c\in C$), following \cite{Sch19}. Then $\mcr$ and $\cs$ are called the \emph{regular NKS category} and the \emph{singular NKS category} of the pair $(F,C)$. The quotient category 
	$$\cp:=\mcr/( \cs ),$$ which is equivalent to $k(\Z Q/F)$ is called the \emph{preprojective NKS category}.  By our assumption, $\cd_Q/F$ is a triangulated category and $\Ind \cd_Q/F\simeq \cp$.

	\subsection{NKS quiver varieties}
	Let $\cs$ be a singular NKS category, and $\mcr$ its corresponding regular NKS category. An $\mcr$-module $M$ is \emph{stable} (resp. \emph{costable}) if the support of $\soc(M)$ (respectively,  $\Top(M)$) is contained in $\cs_0$. A module is \emph{bistable} if it is both stable and costable.
	
	Let $\bv \in \N^{\mcr_0-\cs_0}$ and $\bw \in \N^{\cs_0}$ be dimension vectors (with finite supports). Denote by $\rep(\bv,\bw,\mcr)$ the variety of $\mcr$-modules of dimension vector $(\bv,\bw)$. Let $\e_x$ denote the characteristic function of $x\in\mcr_0$, which is also viewed as the unit vector supported at $x$. Let $G_\bv:=\prod_{x\in \mcr_0-\cs_0}\GL({\bv(x)}, k)$.
	
	\begin{definition}
		\label{def:NKS}
		The {\em NKS quiver variety}, or simply {\em NKS variety}, $\cm(\bv,\bw)$ is the quotient $\cs t(\bv,\bw)/G_\bv$,  where $\cs t(\bv,\bw)$ is the subset of $\rep(\bv,\bw,\mcr)$ consisting of all stable $\mcr$-modules of dimension vector $(\bv,\bw)$. Define the affine variety
		\begin{equation}
			\label{eq:M0}
			\cm_0(\bv,\bw)=\cm_0(\bv,\bw, \mcr) :=\rep(\bv,\bw,\mcr)\sslash G_\bv
		\end{equation}
		to be the categorical quotient, whose coordinate algebra is $k[\rep(\bv,\bw,\mcr)]^{G_\bv}$.
	\end{definition}
	Then $\cm(\bv,\bw)$ is a pure dimensional smooth quasi-projective variety; see \cite[Theorem 3.2]{Sch19}. 
	
	We define a partial order $\le$ on the set $\N^{\mcr_0}$ as follows:
	\begin{equation}
		\label{eq:leq}
		\text{  $\bv' \leq \bv
			\Leftrightarrow \bv'(x) \le \bv(x), \forall x \in \mcr_0$. Moreover, $\bv' < \bv \Leftrightarrow \bv'\leq \bv$ and $\bv' \neq \bv$.}
	\end{equation}
	Regarding a dimension vector on $\mcr_0-\cs_0$ as a dimension vector on $\mcr_0$ (by extension of zero),
	we obtain by restriction a partial order $\le$ on the set $\N^{\mcr_0-\cs_0}$.
	
	For $\bv', \bv \in \N^{\mcr_0-\cs_0}$ with $\bv'\leq \bv$ and $\bw \in \N^{\cs_0}$,  there is an inclusion
	\[
	\rep(\bv',\bw,\mcr)\longrightarrow \rep(\bv,\bw,\mcr)
	\]
	by taking a direct sum with the semisimple module of dimension vector $\bv-\bv'$. This yields an inclusion
	\[
	\rep(\bv',\bw,\mcr)\sslash G_{\bv'}\longrightarrow \rep(\bv,\bw,\mcr)\sslash G_\bv.
	\]
	Define the affine variety
	\[
	\cm_0(\bw) =\cm_0(\bw, \mcr) :=\colim\limits_{\bv} \cm_0(\bv,\bw)
	\]
	to be the colimit of $\cm_0(\bv,\bw)$ along the inclusions. 
	Then the projection map
	\begin{equation}   \label{eq:pi}
		\pi:\cm(\bv,\bw)\longrightarrow \cm_0(\bv,\bw),
	\end{equation}
	which sends the $G_\bv$-orbit of a stable $\mcr$-module $M$ to the unique closed $G_\bv$-orbit in the closure of $G_\bv M$, is proper; see \cite[Theorem 3.5]{Sch19}. 
	
	Denote by $\cm^{\text{reg}}(\bv,\bw)\subset \cm(\bv,\bw)$ the open subset consisting of the union of closed $G_\bv$-orbits of stable modules, and then
	\[
	\cm_0^{\text{reg}}(\bv,\bw):=\pi(\cm^{\text{reg}}(\bv,\bw))
	\]
	is an open subset of $\cm_0(\bv,\bw)$. \cite[Lemma 3.4]{Sch19} shows that $\cm(\bv,\bw)$ vanishes on all but finitely many dimension vectors $\bv$. Then $\pi$ induces a stratification
	\begin{equation}\label{eq:stratification for M_0}
		\cm_0(\bw)=\bigsqcup_\bv\cm_0^{\text{reg}}(\bv,\bw)
	\end{equation}
	into finitely many smooth locally closed strata $\cm_0^{\text{reg}}(\bv,\bw)$; see \cite[Theorem 3.5]{Sch19} and its proof.
	
	Let $\res:\mod(\mcr)\rightarrow\mod(\cs)$ be the restriction functor. Then $\res$ induces  morphisms of varieties
	\[
	\res:\cm_0(\bw)\longrightarrow \rep(\bw,\cs),\qquad
	\res\circ\pi:\cm(\bv,\bw)\longrightarrow \rep(\bw,\cs).
	\]
	
	Given $\bv \in \N^{\mcr_0-\cs_0}$,  we define a quantum Cartan matrix (cf. \cite{Sch19})
	\begin{align}
		\label{def:Cq}
		\begin{split}
			{\mathcal C}_q \bv:  \mcr_0-\cs_0 & \longrightarrow\Z,
			\\
			({\mathcal C}_q\bv)(x)& =\bv(x)+\bv(\tau x) -\sum_{y\rightarrow x}\bv(y),
			\quad \text{ for }x \in\mcr_0-\cs_0,
		\end{split}
	\end{align}
	where the sum runs over all arrows $y\rightarrow x$ of $\mcr$ with $y\in\mcr_0-\cs_0$.
	Given $\bw \in \N^{\cs_0}$, define a dimension vector
	\[
	\sigma^*\bw:\mcr_0-\cs_0\longrightarrow\N,
	\qquad
	x \mapsto
	\begin{cases}
		\bw(\sigma x), & \text{ if } x\in C,
		\\
		0, & \text{otherwise}.
	\end{cases}
	\]
	Given $\bv \in \N^{\mcr_0-\cs_0}$, define the dimension vector
	\[
	\tau^*\bv: \mcr_0-\cs_0 \longrightarrow \N,
	\qquad
	x \mapsto \bv(\tau x).
	\]
	
	By \cite[Proposition 4.6]{Sch19}, 
	we obtain the following more precise form of a stratification of $\cm_0(\bw)$
	\begin{equation}   \label{eqn:stratification}
		\cm_0(\bw)=\bigsqcup_{\bv:\sigma^*\bw-{\mathcal C}_q\bv\geq0} \cm_0^{\text{reg}}(\bv,\bw).
	\end{equation}
	
	\begin{definition}   \label{def:pair}
		A pair $(\bv,\bw)$ of dimension vectors $\bv \in \N^{\mcr_0-\cs_0}$ and $\bw \in \N^{\cs_0}$ is called $l$\emph{-dominant} if $\sigma^*\bw-{\mathcal C}_q\bv\geq0$, and $(\bv, \bw)$ is called {\em strongly $l$\emph-dominant} if it is $l$\emph-dominant and  $\cm_0^{\reg}(\bv,\bw) \neq \emptyset.$
	\end{definition}
	
	Recall the $\imath$quiver algebra $\Lambda$ of diagonal type in Example \ref{ex:diagquiver}.
	\begin{definition} [NKS regular/singular categories] 
		\label{def:RS for QG}
		Let $Q$ be a Dynkin quiver. Denote by $\mcr$ and $\cs$ the regular and singular NKS categories associated to the 
		pair $(F=\Sigma^2, C)$. 
	\end{definition}
	
	\begin{proposition}[cf. \text{\cite[Proposition 3.2]{LW21b}}]
		\label{prop:2-complexes}
		Let $Q$ be a Dynkin quiver. Let $\mcr$ and $\cs$ be the regular and singular NKS categories as in Definition~\ref{def:RS for QG}. Then $\Gproj(\Lambda)$ is equivalent to $\proj(\mcr)$, $\proj(\Lambda)$ is Morita equivalent to $\cs$, and we have $\Gproj(\Lambda)\simeq \Gproj(\cs)\simeq \proj(\mcr)$
		as Frobenius categories.
	\end{proposition}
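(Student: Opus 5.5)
The plan is to factor the statement into two equivalences,
\[
\Gproj(\Lambda)\simeq\Gproj(\cs)\simeq\proj(\mcr).
\]
The first comes from a Morita equivalence between $\Lambda$ and $\cs$. The second comes from the general Keller--Scherotzke description of regular NKS categories. Recall that $\Lambda$, the $\imath$quiver algebra of $(Q^{\rm dbl},\swa)$, is Bridgeland's algebra: its modules are $\Z/2$-graded (2-periodic) complexes of $kQ$-modules, its Gorenstein projectives are the 2-periodic complexes of projective $kQ$-modules, and $\underline{\Gproj}(\Lambda)\simeq\cd_{sg}(\Lambda)\simeq\cd_Q/\Sigma^2$.

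\textbf{Step 1: $\cs$ is Morita equivalent to $\Lambda$.} The objects of $\cs$ are the $\sigma c$ with $c\in C/\Sigma^2$. Their classes are $\sigma(\Sigma^jS_i)$ for $i\in\I$ and $j\in\{0,1\}$, which gives $2|\I|$ objects, matching the vertices $i,i^\diamond$ of $Q^{\rm dbl}$. I will compute $\Hom_\mcr(\sigma c,\sigma c')$. Apart from the identity, every path factors as
\[
\sigma c\to c\rightsquigarrow \tau c'\to\sigma c'.
\]
By the mesh relations and Happel's equivalence \eqref{eq:Happelfunctor}, the middle part is governed by $\Hom_{\cd_Q/\Sigma^2}(c,\tau c')$ modulo the maps that factor through the frozen arrows. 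Applying Serre duality $\Hom(c,\Sigma\tau c'')\cong D\Hom(c'',c)$ to the simples $S_i$, I expect to find the following:
\begin{itemize}
\item the arrows of $Q$ inside each of the two copies (coming from $\Ext^1(S_i,S_k)$);
\item one arrow $\sigma\Sigma^jS_i\to\sigma\Sigma^{j+1}S_i$ for each $i$ and $j$, which plays the role of $\varepsilon_i$ and $\varepsilon_{i^\diamond}$.
\end{itemize}
The mesh relations at the non-frozen vertices should then produce exactly the nilpotent relations $\varepsilon_i\varepsilon_{i^\diamond}=0$ and the commutativity relations of $\ov I$. This identifies $\cs$, up to taking opposites, with the full subcategory $\proj(\Lambda)$ of indecomposable projective $\Lambda$-modules. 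In particular $\mod(\cs)\simeq\mod(\Lambda)$, and this equivalence restricts to $\Gproj(\Lambda)\simeq\Gproj(\cs)$, since Gorenstein projectivity is defined purely in terms of projectives and $\Hom$/$\Ext$.

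\textbf{Step 2: $\Gproj(\cs)\simeq\proj(\mcr)$.} Here I will invoke Keller--Scherotzke \cite{KS16} (see also \cite{Sch19}). The restricted Yoneda functor
\[
\proj(\mcr)\longrightarrow\mod(\cs),\qquad \mcr(-,x)\longmapsto\mcr(-,x)|_{\cs},
\]
is fully faithful with image $\Gproj(\cs)$ whenever $\cs$ is Iwanaga--Gorenstein. By Step 1, $\cs$ is Iwanaga--Gorenstein because $\Lambda$ is 1-Gorenstein. This functor sends the representables $\mcr(-,\sigma c)$ to the projective $\cs$-modules, so it respects the projective-injective objects. Moreover $\proj(\mcr)$ is Frobenius with stable category $\cp\simeq\Ind(\cd_Q/\Sigma^2)$. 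Hence the equivalence is one of Frobenius categories, compatible with the stable equivalence $\underline{\Gproj}(\Lambda)\simeq\cd_Q/\Sigma^2$. Composing with Step 1 gives $\Gproj(\Lambda)\simeq\proj(\mcr)$.

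The main obstacle is Step 1: computing the $\Hom$ spaces between the frozen vertices in $\mcr=\mcr^{\gr}_C/\Sigma^2$ and checking that no extra relations or extra morphisms appear beyond those of $\Lambda$. I would do this dimension count first, using the graded category $\mcr^{\gr}_C$, where $\Hom(\sigma c,\sigma c')$ is computed by Happel's equivalence together with the frozen mesh relations. Only afterwards would I pass to the $\Sigma^2$-orbit category, taking direct sums over the $\Sigma^2$-orbit. This sum is finite because $Q$ is Dynkin.
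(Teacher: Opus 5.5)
The paper gives no argument of its own for this statement; it only cites \cite[Proposition 3.2]{LW21b}. Your two-step plan is the natural one: identify $\cs$ with $\proj(\Lambda)$, then invoke the Keller--Scherotzke/Scherotzke theorem $\proj(\mcr)\simeq\Gproj(\cs)$. But the proposal does not yet prove the statement. Step~1 is the only part that is not a citation, and it is left as an expectation. Moreover, the tool you propose for the dimension count does not compute what you need.

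Happel's equivalence describes $\cp=\mcr/(\cs)\simeq k(\Z Q/\Sigma^2)$, not $\mcr$ itself. At a vertex $x\in C$ the mesh relation is deformed to $\sum\alpha\bar\alpha+(\tau x\to\sigma x\to x)=0$. Hence $\mcr(c,\tau c')$ only surjects onto $\cp(c,\tau c')\cong\Hom_{\cd_Q/\Sigma^2}(c,\tau c')$, and the kernel consists of the maps factoring through $\cs$. Serre duality on $\cd_Q/\Sigma^2$ therefore gives you at most the irreducible morphisms of $\cs$. The arrows you predict are indeed the right ones, but you do not get the Hom spaces, and the Hom dimensions are exactly what decide whether further relations hold.

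Already for $Q\colon 1\to 2$ this fails. One computes $\Hom_{\cs}(\sigma\tS_2,\sigma\Sigma\tS_1)\cong k$, spanned by the composite through $\sigma\tS_1$. Up to sign this equals the composite through $\sigma\Sigma\tS_2$, which is the commutativity relation. On the other hand, $\Hom_{\cd_Q/\Sigma^2}(\tS_2,\tau\Sigma\tS_1)=\Ext^1(\tS_2,\tS_2)=0$. The nonzero morphism comes from an element of $\mcr(\tS_2,\Sigma\tS_2)$ that vanishes in $\cp$.

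So a Happel-based dimension count cannot verify that there are ``no extra relations or morphisms''. You need one of the following:
\begin{itemize}
\item a genuine computation of $\mcr^{\gr}_C(c,\tau c')$ and of the kernel of composing with the frozen arrows, which is the real content of the Morita equivalence;
\item a different argument. For instance, work on the $\Lambda$ side with the Frobenius category $\Gproj(\Lambda)=C_{\Z/2}(\proj kQ)$. Its indecomposable projective-injectives are the contractible complexes, whose radicals are the $2$-periodic projective resolutions of the simples. Then match its Auslander--Reiten quiver and mesh relations with $\Z Q_C/\Sigma^2$.
\end{itemize}

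You should also fix the variance rather than work \emph{up to taking opposites}. $\Lambda^{\rm op}$ is the algebra attached to $Q^{\rm op}$, so only a covariant identification $\cs\simeq\proj(\Lambda)$ gives $\mod(\cs)\simeq\mod(\Lambda)$.

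A smaller correction concerns Step~2. The Keller--Scherotzke/Scherotzke theorem does not say that restriction $\proj(\mcr)\to\mod(\cs)$ is fully faithful with image $\Gproj(\cs)$ whenever $\cs$ is Iwanaga--Gorenstein. Its hypothesis is admissibility of the pair $(\Sigma^2,C)$, and the Gorenstein property of $\cs$ is one of its conclusions. A homological condition on $\cs$ alone cannot control $\mcr(x,y)$ for non-frozen $x,y$. The pair used here is admissible, so the theorem applies directly. You therefore do not need to import the $1$-Gorenstein property of $\Lambda$ from Step~1, and with this correction Step~2 is fine.
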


	\begin{definition} [$\imath$NKS regular/singular categories] 
		\label{def:RS for iQG}
		Let $(Q,\btau)$ be a Dynkin $\imath$quiver. Denote by $\mcr^\imath$ and $\cs^\imath$ the regular and singular NKS categories associated to the admissible pair $(F^\imath, C)$, where
		\[
		F^\imath=\Sigma \widehat{\btau}, 
		\quad \text{and }
		C=\{\text{the vertices labeled by }\Sigma^j \tS_i, \text{ for all } i\in Q_0 \text{ and } j\in\Z \}.
		\] 
	\end{definition}

	
	\begin{proposition}[cf. \text{\cite[Theorem 3.6]{LW21b}}]
		\label{thm:iNKS}
		Let $(Q,\btau)$ be a Dynkin $\imath$quiver. Let $\mcr^\imath$ and $\cs^\imath$ be the regular and singular NKS categories as in Definition~\ref{def:RS for iQG}. Then $\Gproj(\Lambda^\imath)$ is equivalent to $\proj(\mcr^\imath)$, $\proj(\Lambda^\imath)$ is Morita equivalent to $\cs^\imath$, and there are equivalences of Frobenius categories
		$
		\Gproj(\Lambda^\imath)\simeq \Gproj(\cs^\imath)\simeq \proj(\mcr^\imath).
		$
	\end{proposition}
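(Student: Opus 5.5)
The plan is to first identify the singular category $\cs^\imath$ explicitly with $\proj(\Lambda^\imath)$, and then deduce the statements about $\Gproj$ and $\proj(\mcr^\imath)$ from the general theory of Keller--Scherotzke \cite{KS16,Sch19}. The statement has three parts, and I would prove them in this order: the Morita equivalence $\proj(\Lambda^\imath)\simeq\cs^\imath$; then $\Gproj(\cs^\imath)\simeq\proj(\mcr^\imath)$; then transfer along the Morita equivalence to get $\Gproj(\Lambda^\imath)\simeq\Gproj(\cs^\imath)$.

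\textbf{Step 1: objects of $\cs^\imath$.} Since $F^\imath=\Sigma\widehat{\btau}$ sends $\Sigma^j\tS_i$ to $\Sigma^{j+1}\tS_{\btau i}$, the $F^\imath$-orbits on $C$ are represented by $\tS_i$ with $i\in\I$. Hence the objects of $\cs^\imath$ are $\sigma\tS_i$ for $i\in\I$, matching the vertices of $\ov{Q}$.

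\textbf{Step 2: morphisms of $\cs^\imath$ (the main obstacle).} A morphism $\sigma c\to\sigma c'$ in $\mcr^{\gr}_C$ is a path $\sigma c\to c\to\cdots\to\tau c'\to\sigma c'$ taken modulo the mesh relations. As in \cite{KS16}, the space of such morphisms can be read off from $\Hom_{\cd_Q}(H(c),\Sigma^{m}H(c'))$-type data. I would first carry this out in the graded category, and then sum over $F^\imath$-translates.

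The resulting generators should be as follows. For each arrow $h:i\to j$ of $Q$, the nonzero map between the simples $\tS_i,\tS_j$ in the relevant degree gives an arrow of $\ov{Q}$ between the corresponding vertices. The composite $\sigma\tS_i\to \tS_i\to\cdots\to\tau\Sigma\tS_i\to \sigma\Sigma \tS_i=\sigma F^\imath(\tS_{\btau i})$ gives the arrow $\varepsilon_i$.

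The relations should come out as follows. The commutativity relations $\varepsilon_i\alpha=\btau(\alpha)\varepsilon_j$ should come from the fact that $F^\imath$ commutes with the mesh relations. The nilpotent relations $\varepsilon_i\varepsilon_{\btau i}=0$ should come from the vanishing of the degree-two composite $\Hom(\tS,\Sigma^2\tS)=0$, which holds because $kQ$ is hereditary.

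I expect the delicate part to be showing that there are no further morphisms and no further relations. For this I would compare dimensions with \cite[Proposition 2.6]{LW19}: $\dim e_j\Lambda^\imath e_i$ equals $\dim e_j(kQ)e_i$ plus the contribution through the $\varepsilon$'s. I would check the equality by computing with the mesh category/dimension-vector function in the $\Z Q$ picture. This yields $\End(\bigoplus_i\sigma\tS_i)\cong\Lambda^\imath$, up to $(-)^{op}$ conventions, and hence the Morita equivalence.

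\textbf{Step 3: $\Gproj(\cs^\imath)\simeq\proj(\mcr^\imath)$.} Here I would invoke the Keller--Scherotzke theorem \cite[Sch19]: the restriction $\proj(\mcr)\to\mod(\cs)$, $P\mapsto\res P$, is fully faithful with image $\Gproj(\cs)$. This identifies $\proj(\mcr^\imath)$ with $\Gproj(\cs^\imath)$ as Frobenius categories, the projective-injectives $\res\,\mcr(-,\sigma c)$ corresponding to $\proj(\cs)$. The hypotheses to verify are that $F^\imath$ is admissible, i.e.\ acts freely on $\Z Q$, and that $C$ is $F^\imath$-stable; both are immediate from Definition~\ref{def:RS for iQG}.

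\textbf{Step 4: transfer to $\Lambda^\imath$.} Finally, the Morita equivalence $\mod(\Lambda^\imath)\simeq\mod(\cs^\imath)$ preserves projectives and $\Ext$ groups, so it restricts to $\Gproj(\Lambda^\imath)\simeq\Gproj(\cs^\imath)$ as Frobenius categories. Combining this with Step 3 gives all the asserted equivalences.
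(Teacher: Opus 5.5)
The paper gives no proof of its own here; it quotes \cite[Theorem 3.6]{LW21b}, so your proposal has to stand by itself. Steps 1 and 4 are fine. The substance of the proposition, however, is the Morita equivalence $\proj(\Lambda^\imath)\simeq\cs^\imath$, which is your Step 2. There you only name candidate generators and motivate the relations heuristically. You then postpone ``no further morphisms and no further relations'' to a dimension count, and the method you propose for that count does not work.

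Morphism spaces between frozen vertices of $\mcr^{\mathrm{gr}}_C$ cannot be read off from $\Hom_{\cd_Q}$-data. At each $x\in C$ the mesh relation acquires the extra term $\tau x\to\sigma x\to x$. So paths that vanish in $k(\Z Q)$ become nonzero, and they are identified with composites through intermediate frozen vertices.

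Already for $Q\colon 1\to 2$, the space $\cs^{\mathrm{gr}}(\sigma\tS_2,\sigma\Sigma\tS_1)$ is one-dimensional. It is spanned by the composites $\sigma\tS_2\to\sigma\tS_1\to\sigma\Sigma\tS_1$ and $\sigma\tS_2\to\sigma\Sigma\tS_2\to\sigma\Sigma\tS_1$, which the meshes at $\tS_1$ and at $\Sigma\tS_2$ identify up to sign; this is your commutativity relation. Yet $\Hom_{\cd_Q}(\tS_2,\Sigma^m\tS_1)=0$ for all $m$, and $\Hom_{\cd_Q}(\tS_2,\tau\Sigma\tS_1)=\Ext^1_{kQ}(S_2,S_2)=0$.

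Your reason for $\varepsilon_i\varepsilon_{\varrho i}=0$ is also not the mechanism. If $\varepsilon\varepsilon$ lived in $\Hom_{\cd_Q}(\tS,\Sigma^2\tS)$, then by the same reading $\varepsilon_i$ would live in $\Hom_{\cd_Q}(\tS_i,\Sigma\tS_i)=\Ext^1_{kQ}(S_i,S_i)=0$. In the example, the composite $\sigma\tS_2\to\sigma\Sigma\tS_2\to\sigma\Sigma^2\tS_2$ vanishes because of the mesh at $\Sigma\tS_2$ followed by the mesh at the non-simple vertex $\Sigma P_1$ (here $P_1$ is the projective cover of $S_1$). It has nothing to do with $kQ$ being hereditary.

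What is missing is a structural identification of $\cs^\imath$ that avoids counting paths. One way that works is to reverse the order of your steps and compute the Auslander--Reiten quiver of the Frobenius category $\Gproj(\Lambda^\imath)$.
\begin{itemize}
\item Its stable part is that of $\cd_{sg}(\Lambda^\imath)\simeq\cd_Q/\Sigma\widehat{\varrho}$ \cite{LW19}, namely $\Z Q/F^\imath$.
\item Each indecomposable projective $\Lambda^\imath e_i$ occurs only in the AR sequence starting at $\rad(\Lambda^\imath e_i)=\Omega S_i$, which is $\Sigma^{-1}\tS_i$ in $\cd_{sg}(\Lambda^\imath)$.
\item So the AR quiver is $\Z Q_C/F^\imath$, up to the harmless relabelling $C\mapsto\tau^{-1}C$.
\item Once standardness is checked, the indecomposables form the mesh category $\mcr^\imath$. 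Hence $\Gproj(\Lambda^\imath)\simeq\proj(\mcr^\imath)$ by Yoneda, and $\proj(\Lambda^\imath)\simeq\cs^\imath$ follows by restricting to projective-injectives, with no Hom computation inside $\cs^\imath$.
\end{itemize}

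The graded version of this argument applies to $\Z$-graded complexes of projective $kQ$-modules. There the projective-injectives are $P_i\xrightarrow{1}P_i$, whose radical is the projective resolution of $S_i$. It identifies $\cs^{\mathrm{gr}}$ (up to opposite conventions) with the path category of $Q\times A^\infty_\infty$ modulo commutativity and $d^2=0$. Taking $F^\imath$-orbits then gives exactly $\ov{Q}$ with the relations $\ov{I}$, which is the correct form of your Step 2.

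Finally, in Step 3 the conditions you list are not the hypotheses of the Keller--Scherotzke/Scherotzke theorem. Freeness of $F^\imath$ and $F^\imath$-stability also hold for $C=\emptyset$, where $\proj(\mcr)\neq 0=\Gproj(\cs)$. So the actual admissibility of $(F^\imath,C)$ must be verified; it does hold here.
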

	
	
	\subsection{The stratification functors}
	As the restriction functor $\res:\mod(\mathcal{R})\rightarrow \mod(\mathcal{S})$ is a localization functor, it admits a left and a right adjoint functor, which are denoted by $K_L$ and $K_R$ respectively. The intermediate extension $K_{LR}:\mod(\mathcal{S})\rightarrow\mod(\mathcal{R})$ is the image of the canonical map $K_L\rightarrow K_R$. Note that in this case $K_L$ is right exact and $K_R$ is left exact, $K_{LR}$ preserves monomorphisms and epimorphisms (however, it is not exact in general).
	
	\begin{lemma}[{\cite[Lemma 2.12, Lemma 3.8]{Sch19}}]\label{lem:strata iff KLR}
		An $\mathcal{R}$-module $M$ is bistable if and only if $K_{LR}(\res(M))\cong M$. Furthermore, we have $N\in\mathcal{M}_0^{\reg}(\bv,\bw,\mathcal{R})$ if and only if $\dimv K_{LR}(M)=(\bv,\bw)$.
	\end{lemma}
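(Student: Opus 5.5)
The plan is to work directly with the adjoints of $\res$. Concretely, $K_L(N)=\mcr\otimes_{\cs}N$ and $K_R(N)=\Hom_{\cs}(\mcr,N)$. I will use three standard facts throughout.

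\begin{itemize}
\item[(a)] Since $\cs$ is a full subcategory of $\mcr$, we have $\res K_L\cong\Id\cong\res K_R$. So the counit $K_L\res M\to M$ and the unit $M\to K_R\res M$ become isomorphisms after applying $\res$.
\item[(b)] For a vertex $x\in\mcr_0-\cs_0$ with simple module $S_x$, we have $\res S_x=0$. By adjunction, $\Hom(K_L N,S_x)=\Hom(N,\res S_x)=0$ and $\Hom(S_x,K_RN)=0$. Thus $\Top(K_LN)$ and $\soc(K_RN)$ are supported in $\cs_0$.
\item[(c)] $\res$ is exact.
\end{itemize}

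\textbf{Step 1.} I will show that the counit $\epsilon_M:K_L\res M\to M$ is surjective if and only if $M$ is costable.

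Suppose $M$ is costable and let $C=\Coker\epsilon_M$. By (a) and (c), $\res C=0$. If $C\neq0$, then $\Top(C)$ has a summand $S_x$ with $x\notin\cs_0$. This gives an epimorphism $M\to C\to S_x$, contradicting costability.

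Conversely, suppose $M$ has an epimorphism $M\to S_x$ with $x\notin\cs_0$. By (b), the composite $K_L\res M\to M\to S_x$ is zero, so $\epsilon_M$ is not surjective.

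The dual argument shows that the unit $\eta_M:M\to K_R\res M$ is injective if and only if $M$ is stable. Here one uses $\Ker\eta_M$ and its socle.

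\textbf{Step 2.} By naturality of the adjunction, the canonical map $K_L\res M\to K_R\res M$ equals $\eta_M\circ\epsilon_M$. Its image is $K_{LR}(\res M)$.

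If $M$ is bistable, then $\epsilon_M$ is onto and $\eta_M$ is injective, so the image is exactly $M$, i.e. $K_{LR}(\res M)\cong M$.

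Conversely, suppose $M\cong K_{LR}(N)$ for some $N$. Then $M$ is a quotient of $K_LN$, so by (b) its top lies in $\cs_0$. It is also a submodule of $K_RN$, so its socle lies in $\cs_0$. Hence $M$ is bistable. Moreover $\res K_{LR}(N)=N$, by (a) and (c) applied to the image factorization. This gives the first statement.

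\textbf{Step 3.} For the stratum description, I use that $\res$ identifies $\cm_0(\bw)$ with $\rep(\bw,\cs)$ (this is Sch19's setting), so a point is an $\cs$-module $N$. By definition, $N\in\cm_0^{\reg}(\bv,\bw)$ means $N=\res M$ for a stable $M$ of dimension $(\bv,\bw)$ whose $G_\bv$-orbit is closed.

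The key claim is that a stable $M$ has a closed $G_\bv$-orbit if and only if it is also costable. If $M$ is not costable, there is a surjection $M\to S_x$ with $x\notin\cs_0$. A one-parameter subgroup of $G_\bv$ acting on the kernel/quotient filtration degenerates $M$ to $\Ker\oplus S_x\neq M$ with the same restriction to $\cs$. Hence the orbit is not closed. The converse direction is the one I expect to be the main obstacle. For it I would use King's criterion with respect to the group $G_\bv$ (which does not act on the frozen vertices): a proper degeneration of $M$ in $\rep(\bv,\bw,\mcr)$ must come from a sub/quotient supported off $\cs_0$ in the top, which stability plus costability exclude.

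Granting this, the modules over a point $N$ of the stratum are bistable. By Step 2 they are isomorphic to $K_{LR}(N)$, so $\dimv K_{LR}(N)=(\bv,\bw)$. Conversely, if $\dimv K_{LR}(N)=(\bv,\bw)$, then $K_{LR}(N)$ is bistable with restriction $N$. Its orbit is therefore closed and stable, so $N\in\cm_0^{\reg}(\bv,\bw)$.
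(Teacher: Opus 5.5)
The paper does not prove this lemma but imports it from \cite{Sch19}. Your argument is correct and follows the standard route behind the cited results: the unit $M\to K_R\res M$ is injective iff $M$ is stable, the counit $K_L\res M\to M$ is surjective iff $M$ is costable, the canonical map $K_L\res M\to K_R\res M$ equals $\eta_M\circ\epsilon_M$, and the regular points are exactly the closed $G_\bv$-orbits of bistable modules. The step you only sketched (bistable implies closed orbit) closes via Hilbert--Mumford: if $\lambda$ is a one-parameter subgroup of $G_\bv$ for which $\lim_{t\to 0}\lambda(t)\cdot M$ exists, its weight filtration (with the frozen part in weight $0$) gives $\mcr$-submodules $F_1\subseteq F_0\subseteq M$ with $F_1$ and $M/F_0$ supported on $\mcr_0-\cs_0$, so stability forces $F_1=0$ and costability forces $F_0=M$, whence $\lambda$ acts trivially and the orbit of a bistable module is closed.
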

	
	\begin{lemma}\label{lem:K_LR exact if projdim}
		For any exact sequence of $\mathcal{S}$-modules $0\to P\to N\to M\to 0$ with $P$ of finite projective dimension, we have
		\[\dimv K_{LR}(N)=\dimv K_{LR}(M)+\dimv K_{LR}(P),\]
	\end{lemma}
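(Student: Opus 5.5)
Since $\dimv K_{LR}(X)=(\bv,\bw)$ records the stratum of $\cm_0(\bw)$ in which $X$ lies (Lemma~\ref{lem:strata iff KLR}), the formula is an additivity statement for $\dimv K_{LR}$. The plan is to show that every piece of the sequence has a \emph{nice} $K_{LR}$, namely one that is exact in the appropriate sense. The first step is to describe $K_{LR}(P)$ for $P$ of finite projective dimension. Over $\cs$, which is Morita equivalent to $\proj(\Lambda^\imath)$ by Proposition~\ref{thm:iNKS}, such $P$ has a resolution $0\to P_1\to P_0\to P\to 0$ by projectives (cf.\ Lemma~\ref{cor: res proj}). For a projective $\cs$-module $\Hom_\cs(-,s)$ we have $K_L\Hom_\cs(-,s)=\Hom_\mcr(-,s)$, and this is already bistable, so it equals $K_{LR}$. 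The module $\Hom_\mcr(-,\sigma c)$ is also injective-like, because $\proj(\mcr)$ is Frobenius by Proposition~\ref{thm:iNKS}. Hence $K_L$ is exact on the relevant sequences, and one expects $K_L(P)=K_R(P)=K_{LR}(P)$ for $P$ of finite projective dimension. Equivalently, $K_{LR}(P)$ is projective-injective in $\mod(\mcr)$ restricted to $\cs$.

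The second step uses the adjunctions to prove exactness. Apply $K_L$, which is right exact, to obtain
\[
K_L(P)\to K_L(N)\to K_L(M)\to 0,
\]
and apply $K_R$, which is left exact, to obtain
\[
0\to K_R(P)\to K_R(N)\to K_R(M).
\]
The maps $K_L\to K_R$ then form a morphism between these two sequences. Taking images yields a complex
\[
0\to K_{LR}(P)\to K_{LR}(N)\to K_{LR}(M)\to 0.
\]
In it, the first map is mono because $K_{LR}$ preserves monomorphisms, and the last map is epi because $K_{LR}$ preserves epimorphisms. Exactness in the middle is what must be shown.

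To get middle exactness, I would use that $K_L(P)\cong K_R(P)$, say $=:I$, and that this module is injective relative to the relevant subcategory of $\mod(\mcr)$, i.e.\ relative to modules whose restriction is handled. The key vanishing is $\Ext^1_\mcr(X,K_R(P))=0$ for $X$ with $\res X=0$, or the dual statement for $K_L$. It should follow from $K_R(P)=\Hom_\mcr(-,\cdot)$-type modules being injective at non-frozen vertices via the Frobenius property. Given this, the sequence $0\to K_R(P)\to K_R(N)\to K_R(M)$ becomes right exact up to a module supported off $\cs_0$. A diagram chase then identifies
\[
\ker\bigl(K_{LR}(N)\to K_{LR}(M)\bigr)
\]
with the image of $I$, which is $K_{LR}(P)$. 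Restricting everything to $\cs$ recovers the original exact sequence, so the discrepancy is supported on $\mcr_0-\cs_0$. There it is killed by the stability and costability of $K_{LR}$ modules: a nonzero kernel defect would give a submodule with socle outside $\cs_0$, or a quotient with top outside $\cs_0$, contradicting Lemma~\ref{lem:strata iff KLR}.

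The main obstacle is the first step: proving that $K_L(P)\to K_R(P)$ is an isomorphism, equivalently that $K_{LR}$ is exact on finite projective dimension modules and on their projective resolutions. I would first try to verify it for $P$ projective, using the Frobenius equivalence $\Gproj(\cs^\imath)\simeq\proj(\mcr^\imath)$, which shows that $\Hom_\mcr(-,\sigma c)$ is projective-injective. I would then extend to projective dimension at most $1$ via the resolution and the five lemma. Once this holds, the dimension identity follows by additivity of dimension vectors in the short exact sequence of the second and third steps.
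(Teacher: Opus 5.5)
Your architecture matches the paper's. You use the same three rows $K_L(P)\to K_L(N)\to K_L(M)\to 0$, $0\to K_{LR}(P)\to K_{LR}(N)\to K_{LR}(M)\to 0$ and $0\to K_R(P)\to K_R(N)\to K_R(M)$. The single non-formal input is $K_L(P)\cong K_{LR}(P)\cong K_R(P)$, which the paper does not prove but quotes from \cite[Corollary 4.3]{LW21b}. Once that is available, the chase you mention in passing is all that is needed. If $x\in K_{LR}(N)$ dies in $K_{LR}(M)$, then its image in $K_R(N)$ dies in $K_R(M)$. So that image comes from $K_R(P)=K_{LR}(P)$, and injectivity of $K_{LR}(N)\hookrightarrow K_R(N)$ finishes.

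Your Ext-vanishing and stability discussion is unnecessary, and it misplaces where $\pd P<\infty$ enters. For $\res X=0$, both $\Hom_{\mcr}(X,K_R(M))=0$ and $\Ext^1_{\mcr}(X,K_R(M))=0$ hold for \emph{every} $M$, by adjunction, because $\res$ is exact and $\res K_R\cong\mathrm{id}$. Moreover, the defect at $K_{LR}(N)$ is a subquotient, not a submodule or quotient. Stability of $K_{LR}(N)$ therefore kills it only after you split it off using $\mathrm{im}\bigl(K_{LR}(P)\to K_{LR}(N)\bigr)\cong K_R(P)$, which again relies on the key isomorphism.

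The genuine gap is that you never establish that isomorphism (you write ``one expects''), and your sketch of it does not work as written. Bistability of $\mcr(-,s)$ only gives $K_L(\cs(-,s))=K_{LR}(\cs(-,s))$. To reach $K_R$ you need $\Hom_{\cs}(\res\mcr(-,x),\cs(-,s))\cong\mcr(x,s)$, i.e.\ full faithfulness of $\res$ on $\proj(\mcr)$. As far as I recall, that is how the equivalence $\proj(\mcr)\simeq\Gproj(\cs)$ of Proposition~\ref{thm:iNKS} is realized. The Frobenius structure there is an exact structure on $\proj(\mcr)$ and says nothing directly about injectivity in $\mod(\mcr)$.

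The passage to $\pd P\le 1$ is also not a five-lemma argument, since neither $K_L$ nor $K_R$ is exact. From $0\to P_1\to P_0\to P\to 0$ you need $R^1K_R(P_1)=0$, i.e.\ $\Ext^1_{\cs}(\res\mcr(-,x),P_1)=0$. This holds because $\res\mcr(-,x)$ is Gorenstein projective. Then $K_R(P_0)\to K_R(P)$ is onto, and $K_L(P)\cong\Coker\bigl(K_L(P_1)\to K_L(P_0)\bigr)\cong\Coker\bigl(K_R(P_1)\to K_R(P_0)\bigr)\cong K_R(P)$. Either supply this argument or cite \cite[Corollary 4.3]{LW21b} as the paper does.
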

	\begin{proof}
		Recall that $K_L(P)\cong K_{LR}(P)\cong K_R(P)$ by \cite[Corollary 4.3]{LW21b}. Consider the following commutative diagram
		\[\begin{tikzcd}
			&K_L(P)\ar[r]\ar[d,"\cong"]   &K_L(N)\ar[r]\ar[d,two heads]   &K_L(M)\ar[r]\ar[d,two heads]   &0\\
			0\ar[r]&K_{LR}(P)\ar[r]\ar[d,"\cong"]&K_{LR}(N)\ar[r]\ar[d,hook]&K_{LR}(M)\ar[r]\ar[d,hook]&0\\
			0\ar[r]&K_R(P)\ar[r]         &K_R(N)\ar[r]                 &K_R(M)
		\end{tikzcd}\]
		where the first and last rows are exact, and the middle row is exact except at $K_{LR}(N)$. A simple diagram chasing shows that the middle row is also exact at $K_{LR}(N)$, whence the claim.
	\end{proof}
	
	\subsection{Quantum Grothendieck rings}
	\label{subsec: graded Groth ring}
	
	In this subsection, we assume that $\cs,\mcr$ are the NKS categories defined in Definition \ref{def:RS for QG} and Definition \ref{def:RS for iQG}. By \cite[Lemma 3.14]{LW21b}, we know that 
	$\cm_0(\bw)\cong \rep(\bw,\cs)$ for any dimension vector $\bw$, and identify them below.
	
	Let $X$ be an algebraic variety. We denote by $\cd_c(X):=\cd_c^b(X)$ the bounded derived category of constructible sheaves on $X$. Let $f:X\rightarrow Y$ be a morphism of algebraic varieties. There are induced functors $f^*:\cd_c(Y)\rightarrow \cd_c(X)$, $f_*:\cd_c(X)\rightarrow \cd_c(Y)$ and $f_!:\cd_c(X)\rightarrow \cd_c(Y)$ (direct image with compact support). 
	
	We review the quantum Grothendieck ring and its convolution product, following \cite{VV}; also see \cite{Na01,Na04, Qin}. 
	
	For any two dimension vectors $\alpha,\beta$ of $\cs$, let $V_{\alpha+\beta}$ be a vector space of graded dimension $\alpha+\beta$. Fix a vector subspace $W_0\subset V_{\alpha+\beta}$ of graded dimension $\alpha$, and let
	\[
	F_{\alpha,\beta}:=\{y\in \rep(\alpha+\beta,\cs)\mid y(W_0)\subset W_0\}
	\]
	be the closed subset of $\rep(\alpha+\beta,\cs)$. 
	Then $y \in F_{\alpha,\beta}$ induces a natural linear map $y': V/W_0 \rightarrow V/W_0$, i.e., $y' \in \rep(\beta,\cs)$.
	Hence we obtain the following convolution diagram
	\[
	\rep(\alpha,\cs)\times \rep(\beta,\cs) \stackrel{p}{\longleftarrow} F_{\alpha,\beta}\stackrel{q}{\longrightarrow}\rep(\alpha+\beta,\cs),
	\]
	where $p(y):=(y|_{W_0},y')$ and $q$ is the natural closed embedding.

	Let $\cd_c(\rep(\alpha,\cs))$ be the derived category of constructible sheaves on $\rep(\alpha,\cs)$. We have the following restriction functor (called comultiplication),
	\begin{align}
		\label{eq:Delta1}
		\widetilde{\Delta}^{\alpha+\beta}_{\alpha,\beta}: \cd_c \big(\rep(\alpha+\beta,\cs) \big) \longrightarrow \cd_c \big(\rep(\alpha,\cs) \big)\times \cd_c \big(\rep(\beta,\cs) \big), \quad F\mapsto p_!q^*(F).
	\end{align}
	
	Choose a set $\{\alpha_\bv\}$ such that it parameterizes the connected components 
	of $\cm_0(\bv,\bw,\mathcal{R})$. For any $l$-dominant pair $(\bv,\bw)$, since the restriction of $\pi$ on the regular stratum $\cm_0^{\text{reg}}(\bv,\bw,\mathcal{R})$ is a homeomorphism by \cite[Definition 3.6]{Sch19}, the set $\{\alpha_\bv\}$  naturally parameterizes the connected components of this regular stratum:
	\begin{align}
		\cm_0^{\text{reg}}(\bv,\bw,\mathcal{R})=\bigsqcup_{\alpha_\bv} \cm_0^{\reg;\alpha_\bv}(\bv,\bw,\mathcal{R}).
	\end{align}
	
	Let $\underline{k}_{\cm(\bv,\bw)}$ be the constant sheaf on $\cm(\bv,\bw)$. Denote by $\pi^{\cs}(\bv,\bw)\in \cd_c(\rep(\bw,\cs))$ (or $\pi(\bv,\bw)$ when there is no confusion) the pushforward along $\pi: \cm(\bv,\bw)\rightarrow \cm_0(\bw)\cong \rep(\bw,\cs)$ of $\underline{k}_{\cm(\bv,\bw)}$ with a grading shift:
	\begin{align}
		\label{eq:piPS}
		\pi(\bv,\bw):= \pi_!(\underline{k}_{\cm(\bv,\bw)}) [\dim \cm(\bv,\bw,\mathcal{R})].
	\end{align}
	
	For a strongly $l$-dominant pair $(\bv,\bw)$, let $\cl^{\cs}(\bv,\bw)$ (or $\cl(\bv,\bw)$) be the intersection cohomology (IC for short) complex associated to the stratum $\cm_0^{\reg}(\bv,\bw,\mathcal{R})$ with respect to the trivial local system, that is,
	\begin{align}
		\label{eq:decomp}
		\cl(\bv,\bw)=\IC(\cm_0^{\reg}(\bv,\bw,\mathcal{R}))=\bigoplus_{\alpha_\bv}\IC(\cm_0^{\reg;\alpha_\bv}(\bv,\bw,\mathcal{R})).
	\end{align}
	
	We have a decomposition
	\begin{equation}
		\label{eqn:decomposition theorem}
		\pi(\bv,\bw)=\sum_{\bv':\sigma^*\bw-{\mathcal C}_q\bv'\geq0,\bv'\leq \bv} a_{\bv,\bv';\bw}(v)\cl(\bv',\bw),
	\end{equation}
	where we denote by $\cf^{\oplus m}[d]$ by $mv^{-d}\cf$ using an indeterminate $v$, for any sheaf $\cf$, $m\in\N$, and $d\in\Z$ (to compare with \cite{LW21b}, set $v=t^{-1}$). 
	Moreover, we have $a_{\bv,\bv';\bw}(v)\in\N[v, v^{-1}]$, $a_{\bv,\bv';\bw}(v^{-1})=a_{\bv,\bv';\bw}(v)$, and $a_{\bv, \bv;\bw}(v) =1$. (Any $f(v)\in \N[v, v^{-1}]$ such that $f(v^{-1})=f(v)$ is called \emph{bar-invariant}.)

	For each $\bw\in \N^{\cs_0}$, the Grothendieck group $K_\bw(\mod(\cs))$ is defined as the free abelian group generated by the perverse sheaves $\cl(\bv,\bw)$ appearing in (\ref{eqn:decomposition theorem}), for various $\bv$. It has two distinguished $\Z[v, v^{-1}]$-bases by \eqref{eqn:decomposition theorem}:
	\begin{align}
		\label{eq:bases}
		\begin{split}
			\{\pi(\bv,\bw) \mid \sigma^*\bw-{\mathcal C}_q\bv\geq0, \cm_0^{\text{reg}}(\bv,\bw)\neq\emptyset\};
			\\
			\{\cl(\bv,\bw) \mid \sigma^*\bw-{\mathcal C}_q\bv\geq0, \cm_0^{\text{reg}}(\bv,\bw)\neq\emptyset\}.
		\end{split}
	\end{align}
	Consider the free $\Z[v, v^{-1}]$-module
	\begin{equation}
		\label{eq:Kgr}
		K^{\mathrm{gr}}(\mod(\cs)) := \bigoplus_\bw K_\bw(\mod(\cs)).
	\end{equation}
	Then $\{\widetilde{\Delta}^{\bw}_{\bw_1,\bw_2} \}$ induces a comultiplication $\widetilde{\Delta}$ on $K^{\mathrm{gr}}(\mod(\cs))$.
	
	Introduce a bilinear form $d(\cdot,\cdot)$ on $\N^{\mcr_0-\cs_0}$ by letting
	\begin{equation}\label{definition:d}
		d\big((\bv_1,\bw_1),(\bv_2,\bw_2) \big)=(\sigma^*\bw_1-{\mathcal C}_q\bv_1)\cdot \tau^* \bv_2+\bv_1\cdot \sigma^*\bw_2,
	\end{equation}
	where $\cdot$ denotes the standard inner product, i.e., $\bv' \cdot \bv'' =\sum_{x\in \mcr_0-\cs_0} \bv'(x)\bv''(x)$.
	
	Using the same proof of \cite{VV}, Scherotzke-Sibilla obtained the following result. 
	\begin{proposition}[\text{\cite[Proposition 4.8]{SS16}; see also \cite[Lemma 4.1]{VV},\cite[eq. (11)]{Qin} }]
		The comultiplication $\widetilde{\Delta}$ is coassociative and given by
		\begin{align}
			\label{eqn:comultiplication}
			\widetilde{\Delta}^{\bw}_{\bw_1,\bw_2} \big(\pi(\bv,\bw) \big)=\bigoplus_{\stackrel{\bv_1+\bv_2=\bv}{\bw_1+\bw_2=\bw}} v^{d((\bv_1,\bw_1),(\bv_2,\bw_2))-d((\bv_2,\bw_2),(\bv_1,\bw_1))}\pi(\bv_1,\bw_1)\boxtimes\pi(\bv_2,\bw_2).
		\end{align}
	\end{proposition}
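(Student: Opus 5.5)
The plan is to follow the Varagnolo--Vasserot computation of the restriction functor on pushforwards, adapted to NKS varieties as in \cite{SS16}. Fix $\bw=\bw_1+\bw_2$ and a graded subspace $W_0$ of dimension $\bw_1$. By proper base change applied to the square formed by $q:F_{\bw_1,\bw_2}\hookrightarrow\rep(\bw,\cs)$ and $\pi:\cm(\bv,\bw)\to\rep(\bw,\cs)$, the complex $q^*\pi_!\underline{k}$ is the pushforward of the constant sheaf on $\pi^{-1}(F_{\bw_1,\bw_2})$. This is the closed subvariety of stable $\mcr$-modules $M$ with $\res M$ preserving $W_0$.

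\emph{Decomposition step.} Introduce a $\G_m$-action that scales $W_0$ with weight $1$ and a complement with weight $0$; this action is a one-parameter subgroup of $G_\bw$. The fixed locus of $\cm(\bv,\bw)$ is $\bigsqcup_{\bv_1+\bv_2=\bv}\cm(\bv_1,\bw_1)\times\cm(\bv_2,\bw_2)$, since a stable module with a $\G_m$-grading on its framing splits by weight. The attracting set is precisely $\pi^{-1}(F_{\bw_1,\bw_2})$. The limit map $p$ sends an attracting point to its associated graded. It restricts on each piece to an affine bundle
\[
\mathfrak{Z}_{\bv_1,\bv_2}\longrightarrow\cm(\bv_1,\bw_1)\times\cm(\bv_2,\bw_2),
\]
with fibres given by extension data. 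This is the Bialynicki-Birula decomposition, valid because $\cm(\bv,\bw)$ is smooth quasi-projective and $\pi$ is proper, so limits exist.

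\emph{Fibre dimension step.} Compute the rank of the affine bundle as the dimension of the positive-weight part of the tangent space of $\cm(\bv,\bw)$ at a fixed point. Scherotzke's description of that tangent space as the middle cohomology of a three-term complex (Hom, then the mesh and framing maps, then Ext) splits into weights. The weight-$(+1)$ part is $\Ext$-type data from the second factor to the first. Its dimension equals $d((\bv_2,\bw_2),(\bv_1,\bw_1))$, computed via the quantum Cartan matrix $\mathcal{C}_q$ and the $\tau^*$, $\sigma^*$ terms exactly as in \cite{VV}. Then $p_!\underline{k}_{\mathfrak Z}=\underline{k}[-2\,\mathrm{rank}]$ by the homotopy invariance of compactly supported pushforward along affine fibres.

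\emph{Assembly step.} Combine the three shifts: $[\dim\cm(\bv,\bw)]$ on the left, $[\dim\cm(\bv_1,\bw_1)+\dim\cm(\bv_2,\bw_2)]$ on the right, and $-2\,\mathrm{rank}$. Using $\dim\cm(\bv,\bw)=d((\bv,\bw),(\bv,\bw))$-type formulas, this becomes bilinear bookkeeping. The shift difference reduces to the antisymmetric combination $d((\bv_1,\bw_1),(\bv_2,\bw_2))-d((\bv_2,\bw_2),(\bv_1,\bw_1))$, and the convention $[d]\leftrightarrow v^{-d}$ gives the stated exponent. Coassociativity then follows from the transitivity of taking subquotients along a flag $W_0\subset W_0'$. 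Alternatively, it can be checked on the basis $\pi(\bv,\bw)$ from the formula itself, since $d$ is bilinear.

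The main obstacle is the fibre dimension step. One must check that Scherotzke's tangent complex for the general admissible pair $(F,C)$ gives exactly the bilinear form $d$ with the correct orientation, and that the frozen arrows $\tau c\to\sigma c\to c$ contribute only through $\sigma^*\bw$. The check requires care for $F^\imath$, where $\widehat{\varrho}$ twists the repetition quiver.
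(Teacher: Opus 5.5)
Your route is the one the paper relies on. The paper gives no argument of its own; it invokes the Varagnolo--Vasserot computation as adapted in \cite{SS16}. That computation decomposes $\pi^{-1}(F_{\bw_1,\bw_2})$ into pieces indexed by $\bv_1+\bv_2=\bv$, each an affine bundle over $\cm(\bv_1,\bw_1)\times\cm(\bv_2,\bw_2)$, and reads the rank off the tangent complex. Your Bialynicki-Birula packaging is Nakajima's way of producing the same pieces. Your orientation of the rank, $r=d((\bv_2,\bw_2),(\bv_1,\bw_1))$ when $\bw_1$ is the subobject, is also right. The reason is that the weight-$(+1)$ summand of the tangent complex is exact at both ends, since the full complex is at a stable point, so its middle cohomology has dimension equal to its Euler characteristic.

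The problem is the last sentence of your assembly step. Your three shifts give
\[
p_!q^*\pi(\bv,\bw)=\bigoplus_{\bv_1+\bv_2=\bv}\pi(\bv_1,\bw_1)\boxtimes\pi(\bv_2,\bw_2)\big[d((\bv_1,\bw_1),(\bv_2,\bw_2))-d((\bv_2,\bw_2),(\bv_1,\bw_1))\big].
\]
With the paper's convention $\cf[d]=v^{-d}\cf$, this yields the coefficient $v^{d((\bv_2,\bw_2),(\bv_1,\bw_1))-d((\bv_1,\bw_1),(\bv_2,\bw_2))}$. That is the inverse of the displayed one, so your argument does not "give the stated exponent".

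The rank is not the culprit. Take the double of $A_1$ (i.e.\ $F=\Sigma^2$) with $\bv=\e_{\tS}$, $\bw=2\e_{\sigma\tS}+\e_{\sigma\Sigma\tS}$, $\bw_1=\e_{\sigma\tS}+\e_{\sigma\Sigma\tS}$ and $\bw_2=\e_{\sigma\tS}$.
\begin{itemize}
\item $\cm(\bv,\bw)$ is the blow-up of $\mathbb{A}^2$ at the origin.
\item The piece with $(\bv_1,\bv_2)=(\e_{\tS},0)$ maps isomorphically onto $\cm(\bv_1,\bw_1)\times\cm(\bv_2,\bw_2)\cong\mathbb{A}^1\times\mathrm{pt}$. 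Its rank is $0=d((\bv_2,\bw_2),(\bv_1,\bw_1))$, as you predict.
\item A direct computation of $p_!q^*\pi(\bv,\bw)$ produces the summand $\pi(\bv_1,\bw_1)\boxtimes\pi(\bv_2,\bw_2)[1]$, while $d((\bv_1,\bw_1),(\bv_2,\bw_2))=1$.
\end{itemize}
So the coefficient is $v^{-1}$, not $v$. The displayed exponent comes out only if $\cf[d]$ is read as $v^{d}\cf$. You need to fix this convention explicitly, and carry it consistently into the product on the dual, instead of asserting agreement.

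A smaller point: the attracting cells are only locally closed, so they a priori give a filtration of $p_!q^*\pi(\bv,\bw)$, not a direct sum. That suffices for the identity in $K_\bw(\mod(\cs))$, which is all the paper uses. The direct sum as displayed needs an additional purity or hyperbolic-localization argument.
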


	Denote
	\begin{align}
		\label{eq:Kgr2}
		\begin{split}
			{\bf R}_\bw(\mod(\cs)) &=\Hom_{\Z[v, v^{-1}]} (K_\bw(\mod(\cs)),\Z[v, v^{-1}]),
			\\
			K^{\mathrm{gr}*}(\mod(\cs)) &=\bigoplus_{\bw\in \N^{\cs_0}}{\bf R}_\bw(\mod(\cs)).
		\end{split}
	\end{align}
	Then as the graded dual of a coalgebra, $K^{\mathrm{gr}*}(\mod(\cs) )$ becomes a $\Z[v, v^{-1}]$-algebra, whose multiplication is denoted by $\ast$.
	Note that $K^{\mathrm{gr}*}(\mod(\cs))$ is a $\N^{\cs_0}$-graded algebra (called the {\em quantum Grothendieck ring}).  It has two distinguished bases
	\begin{align}
		\label{eq:bases dual}
		\begin{split}
			\{\chi (\bv,\bw) \mid \sigma^*\bw-{\mathcal C}_q \bv\geq0, \cm_0^{\text{reg}}(\bv,\bw)\neq\emptyset\},
			\\
			\{L (\bv,\bw) \mid \sigma^*\bw-{\mathcal C}_q \bv\geq0, \cm_0^{\text{reg}}(\bv,\bw)\neq\emptyset\},
		\end{split}
	\end{align}
	dual to the two bases in \eqref{eq:bases}, respectively.
	
	By the transversal slice Theorem \cite{Na01, LW21b},  the direct summands appearing in $\pi(\bv,\bw)$ are the shifts of sheaves $\cl(\bv',\bw)$ with $\bv'\leq \bv$; cf. \eqref{eq:leq}. So we have
	\begin{equation}
		\label{equation multiplication}
		L(\bv_1,\bw_1)\ast  L(\bv_2,\bw_2)=\sum_{\bv \geq \bv_1+\bv_2} c_{\bv_1,\bv_2}^\bv(v) L(\bv,\bw_1+\bw_2),
	\end{equation}
	with a leading term $c_{\bv_1,\bv_2}^{\bv_1+\bv_2}(v) L(\bv_1+\bv_2,\bw_1+\bw_2)$; moreover, we have
	\begin{align}
		c_{\bv_1,\bv_2}^\bv(v) & \in \N[v, v^{-1}],
		\label{eq:positive}
		\\
		c_{\bv_1,\bv_2}^{\bv_1+\bv_2}(v) &= v^{d((\bv_1,\bw_1),(\bv_2,\bw_2))-d((\bv_2,\bw_2),(\bv_1,\bw_1))}.
		\label{eqn: leading term}
	\end{align}
	

	\subsubsection{Quantum Grothendieck rings for double framed quivers}
	We assume that $\cs,\mcr$ are the NKS categories defined in Definition \ref{def:RS for QG}.
	
	
	Recall $\e_x$ denotes the characteristic function of $x\in\mcr_0$.  
	Define
	\begin{align}
		\bw^{i}&=\e_{\sigma \tS_{ i}}+\e_{\sigma\Sigma\tS_{i}},
		\qquad
		\bv^{i}= \sum_{z\in\mcr_0-\cs_0} \dim\cp(\tS_i ,z)\e_z,\qquad
		\bv^{\Sigma i}=\Sigma^*\bv^i.
	\end{align}
	Denote
	\begin{align}
		&W^{+}=\bigoplus_{x\in\{\tS_i,i\in Q_0\}}\N \e_{\sigma x},&&
		V^{+}=\bigoplus_{x\in\Ind \mod(kQ),\, x\text{ is not injective}} \N \e_x,\\
		&W^{-}=\Sigma^*W^{+},&&
		V^{-}=\Sigma^*V^{+},\\
		&W^{0}=\bigoplus_{i\in Q_0} \N \bw^{i},&&
		V^{0}=\bigoplus_{i\in Q_0} \N \bv^{i}\oplus \N \bv^{\Sigma i}.
	\end{align}
	
	\begin{lemma}[{\cite{Qin}}]
		Let $\bv\in V^0$, $\bw\in W^0$ be such that $\sigma^*\bw-{\mathcal C}_q \bv=0$. Then we have $(\bv,\bw)\in\bigoplus_{i\in \I}\N(\bv^{i},\bw^{i})+\bigoplus_{i\in \I}\N(\bv^{\Sigma i},\bw^{i})$.
	\end{lemma}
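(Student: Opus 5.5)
The plan is to write $\bv$ and $\bw$ in the given generators and compare the two sides of $\sigma^*\bw={\mathcal C}_q\bv$ coefficient by coefficient, using the linear independence of the vectors $\sigma^*\bw^i$. Since $\bv\in V^0$ and $\bw\in W^0$, we may write
\[
\bv=\sum_{i\in\I}\big(a_i\bv^i+b_i\bv^{\Sigma i}\big),\qquad \bw=\sum_{i\in\I}n_i\bw^i,\qquad a_i,b_i,n_i\in\N .
\]
Since ${\mathcal C}_q$ and $\sigma^*$ are additive, everything reduces to computing ${\mathcal C}_q\bv^i$ and ${\mathcal C}_q\bv^{\Sigma i}$.

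\textbf{Key step (expected main obstacle).} Show that
\[
{\mathcal C}_q\bv^i=\e_{\tS_i}+\e_{\Sigma\tS_i}={\mathcal C}_q\bv^{\Sigma i}.
\]
Here $\bv^i$ is the dimension vector of the representable functor $\cp(\tS_i,-)$ on the preprojective NKS category $\cp\simeq k(\Z Q/\Sigma^2)$. I would use the mesh relations in $\cp$: for each vertex $x$ the mesh (AR) sequence gives a complex
\[
\cp(\tS_i,\tau x)\to\bigoplus_{y\to x}\cp(\tS_i,y)\to\cp(\tS_i,x).
\]
This complex is exact except at the vertices where $\cp(\tS_i,-)$ "starts" and "ends". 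Those are $x=\tS_i$ (the identity is not hit) and the vertex where the Nakayama/Serre-duality defect occurs; in $\cd_Q/\Sigma^2$ the latter is $x=\Sigma\tS_i$, since $\cp(\tS_i,x)\cong D\cp(x,\tau\Sigma\tS_i)$. Taking alternating sums of dimensions, the defects contribute exactly $\e_{\tS_i}+\e_{\Sigma\tS_i}$. This is the standard computation in \cite{Qin} (and in \cite{HL15} for the graded case), and I would follow it.

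The identity for $\bv^{\Sigma i}=\Sigma^*\bv^i$ then follows from $\Sigma$-equivariance of ${\mathcal C}_q$, together with $\Sigma^2\tS_i=\tS_i$ in $\Z Q/\Sigma^2$. On the other side, $\sigma^*\bw^i=\e_{\tS_i}+\e_{\Sigma\tS_i}$ by definition.

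\textbf{Conclusion.} The hypothesis $\sigma^*\bw={\mathcal C}_q\bv$ now reads
\[
\sum_i n_i\big(\e_{\tS_i}+\e_{\Sigma\tS_i}\big)=\sum_i (a_i+b_i)\big(\e_{\tS_i}+\e_{\Sigma\tS_i}\big).
\]
The vertices $\tS_i,\Sigma\tS_i$ ($i\in\I$) are pairwise distinct, so these vectors are linearly independent, and hence $n_i=a_i+b_i$ for all $i$. Therefore
\[
(\bv,\bw)=\sum_i a_i(\bv^i,\bw^i)+\sum_i b_i(\bv^{\Sigma i},\bw^i),
\]
with $a_i,b_i\in\N$, which is the claim.

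The only delicate point is the exactness and defect bookkeeping in the key step. The first thing I would check is that the mesh complex for $\cp(\tS_i,-)$ fails to be exact exactly at $\tS_i$ and $\Sigma\tS_i$, each contributing once; this can be verified directly in type $A_1$ before arguing generally via Serre duality in $\cd_Q/\Sigma^2$.
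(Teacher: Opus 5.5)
Your proof is correct. The paper gives no argument of its own for this lemma; it is quoted from \cite{Qin}. Your direct verification is complete: everything reduces to ${\mathcal C}_q\bv^i={\mathcal C}_q\bv^{\Sigma i}=\e_{\tS_i}+\e_{\Sigma\tS_i}=\sigma^*\bw^i$. After that, comparing coefficients on the disjointly supported vectors $\e_{\tS_i}+\e_{\Sigma\tS_i}$ gives $n_i=a_i+b_i$. You also rightly never need the expression of $\bv$ in terms of the $\bv^i,\bv^{\Sigma i}$ to be unique.

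One small streamlining of the key step. Since $\dim\cp(\tS_i,x)=\sum_{n\in\Z}\dim\Hom_{\cd_Q}(\tS_i,\Sigma^{2n}x)$, you can do the defect count in $\cd_Q$ itself and then sum over $\Sigma^2$-orbits. This avoids any appeal to AR triangles in the orbit category. In $\cd_Q$, apply $\Hom(\tS_i,-)$ to the AR triangle $\tau x\to\bigoplus_{y\to x}y\to x\xrightarrow{h}\Sigma\tau x$. The resulting sequence is exact in the middle, and:
\begin{itemize}
\item its right map fails to be surjective exactly when $x\cong\tS_i$, with one-dimensional cokernel $\End(\tS_i)=k$;
\item its left map has kernel equal to the image of composition with $\Sigma^{-1}h$. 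That composition vanishes on every non-split epimorphism $\tS_i\to\Sigma^{-1}x$, so the kernel is nonzero, and then one-dimensional, exactly when $x\cong\Sigma\tS_i$.
\end{itemize}
This is the same fact that makes $(\bv^i,\bw^i)$ and $(\bv^{\Sigma i},\bw^i)$ the zero-defect pairs carrying $K_i$ and $K_i'$.
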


	\begin{lemma} [$l$-dominant pair decomposition; see {\cite{Qin}}]
		\label{lem:Qin decomposition}
		If $(\bv,\bw)$ is an $l$-dominant pair for $\mcr$, then we have a unique decomposition of $(\bv,\bw)$ into $l$-dominant pairs $(\bv^+,\bw^+) \in (V^+, W^+)$, $(\bv^-,\bw^-) \in (V^-, W^-)$, $(\bv^0,\bw^0) \in (V^0, W^0)$ such that
		\[
		(\bv^+, \bw^+) + (\bv^-, \bw^-) + (\bv^0, \bw^0)= (\bv, \bw),
		\qquad
		\sigma^*\bw^0-{\mathcal C}_q \bv^0=0.
		\]
	\end{lemma}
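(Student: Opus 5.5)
The plan is to split off the "defect" $\sigma^*\bw-{\mathcal C}_q\bv$ piece by piece, attaching to each vertex a standard pair whose defect is the unit vector at that vertex, and then to identify what remains as a zero-defect pair of the kind treated in the preceding lemma.

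\emph{Setting up the defect.} Put $\mathbf{u}:=\sigma^*\bw-{\mathcal C}_q\bv\geq 0$. This is an $\N$-valued vector on $\mcr_0-\cs_0$, which is the vertex set of $\Z Q/\Sigma^2$. Via Happel's equivalence \eqref{eq:Happelfunctor}, these vertices are the indecomposables of $\mod(kQ)\sqcup\Sigma\mod(kQ)$. Split $\mathbf{u}=\mathbf{u}^++\mathbf{u}^-$ according to this disjoint union.

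\emph{Standard pairs.} For each indecomposable $x\in\mod(kQ)$, I would construct a distinguished pair $(\bv_x,\bw_x)\in(V^+,W^+)$ with $\sigma^*\bw_x-{\mathcal C}_q\bv_x=\e_x$.
\begin{itemize}
\item Take $\bw_x=\sum_i \dim x_i\,\e_{\sigma\tS_i}$, i.e.\ $x$ viewed as an $\cs$-module through $\proj(\Lambda)\simeq\cs$ (Proposition~\ref{prop:2-complexes}).
\item Take $(\bv_x,\bw_x)=\dimv K_{LR}(x)$, which is the stratum datum by Lemma~\ref{lem:strata iff KLR}.
\item Concretely, $\bv_x(z)=\dim\cp(x,z)$ for $z$ a non-injective indecomposable in the "interval" between $x$ and its injective hull, as in Hernandez--Leclerc.
\item Verify ${\mathcal C}_q\bv_x=\sigma^*\bw_x-\e_x$ using the mesh relations, which amounts to the Auslander--Reiten sequences in $\mod(kQ)$.
\end{itemize}
Apply $\Sigma^*$ to obtain the analogous pairs $(\bv_x,\bw_x)\in(V^-,W^-)$ for $x\in\Sigma\mod(kQ)$.

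\emph{Definition of the three pieces.} Set
\[
(\bv^\pm,\bw^\pm):=\sum_x \mathbf{u}^\pm(x)(\bv_x,\bw_x),\qquad (\bv^0,\bw^0):=(\bv,\bw)-(\bv^+,\bw^+)-(\bv^-,\bw^-).
\]
By construction $(\bv^\pm,\bw^\pm)$ are $l$-dominant. By linearity, $\sigma^*\bw^0-{\mathcal C}_q\bv^0=0$.

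\emph{Uniqueness.} I would show that ${\mathcal C}_q$ is injective on $\Z^{\mcr_0-\cs_0}$; it is the quantum Cartan matrix specialized at a $2h$-th root with a $\Sigma^2$-period, and a Dynkin computation should give nonzero determinant. Given injectivity, any decomposition of the required form has $\sigma^*\bw^\pm-{\mathcal C}_q\bv^\pm$ supported on $\mod(kQ)$, respectively $\Sigma\mod(kQ)$, since vectors in $V^\pm,W^\pm$ have those supports. Hence these defects equal $\mathbf{u}^\pm$. I would then check that $\bw^\pm$ is forced by $\mathbf{u}^\pm$, e.g.\ by reading off $\bw^+$ from the values of ${\mathcal C}_q\bv^+$ at the simples $\tS_i$. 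Injectivity of ${\mathcal C}_q$ then forces $\bv^\pm$.

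\emph{Main obstacle: nonnegativity and membership of the remainder.} The hard step is to show that $(\bv^0,\bw^0)$ actually lies in $(V^0,W^0)$, in particular that it is nonnegative. Once that holds, the preceding lemma of Qin writes it as an $\N$-combination of $(\bv^i,\bw^i)$ and $(\bv^{\Sigma i},\bw^i)$.
\begin{itemize}
\item \emph{First attempt (module-theoretic).} Realize $(\bv,\bw)$ by a bistable $\mcr$-module $M=K_{LR}(N)$, using nonemptiness of the relevant strata from \eqref{eqn:stratification}. Decompose $N$ as a $\Lambda$-module in $\rep(\bw,\cs)$ into a part of finite projective dimension (direct sums of $\K_i$'s, contributing to $W^0$) and stalk $kQ$-modules in degrees $0$ and $1$. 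Lemma~\ref{lem:K_LR exact if projdim} then gives additivity of $\dimv K_{LR}$ along this decomposition, and the finite projective dimension part yields exactly pairs in $V^0\times W^0$.
\item \emph{Fallback (combinatorial).} If some $l$-dominant pairs are not strongly $l$-dominant, so that no module realizes them, I would argue directly. Compare $\bw$ and $\bw^\pm$ vertex by vertex, showing $\bw(\sigma\tS_i)-\bw^+(\sigma\tS_i)=\bw(\sigma\Sigma\tS_i)-\bw^-(\sigma\Sigma\tS_i)\geq 0$ from the defect equations at the simples. Then deduce $\bv^0\geq0$ by evaluating ${\mathcal C}_q^{-1}$, whose entries are nonnegative dimensions of $\cp$-morphism spaces, on the zero-defect vector.
\end{itemize}
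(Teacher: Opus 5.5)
Your choice of the three pieces is the right one, but the linear algebra you rely on is false: $\mathcal{C}_q$ is \emph{not} injective on $\Z^{\mcr_0-\cs_0}$.

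\textbf{Uniqueness.} Both $(\bv^i,\bw^i)$ and $(\bv^{\Sigma i},\bw^i)$ have zero defect, so $\mathcal{C}_q(\bv^i-\bv^{\Sigma i})=0$. These $|Q_0|$ vectors are linearly independent: at the injective vertices $I_j$ one has $\bv^i(I_j)=\delta_{ij}$, while $\bv^{\Sigma i}(I_j)=\dim\Ext^1_{kQ}(S_i,I_j)=0$. Already for $Q=A_1$ you get $\mathcal{C}_q=\begin{pmatrix}1&1\\1&1\end{pmatrix}$ on $\tS_1,\Sigma\tS_1$. So the $\mathcal{C}_q^{-1}$ in your fallback does not exist. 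Your uniqueness argument also cannot pin down $\bw^\pm$: reading $\bw^+$ off $\mathcal{C}_q\bv^+$ is circular, since $\bv^+$ is the unknown. What is true, and suffices, is injectivity on the $+$ and $-$ blocks separately. The operator $\mathcal{C}_q$ couples the halves $\mod(kQ)$ and $\Sigma\mod(kQ)$ only through the values at $I_j$ and $\Sigma I_j$. This happens via $\tau\Sigma P_j=I_j$, via $\tau P_j=\Sigma I_j$, and via the arrows $I_j\to\Sigma P_k$ and $\Sigma I_j\to P_k$. Since $V^+$ avoids injectives, the defect map $(\bv,\bw)\mapsto\sigma^*\bw-\mathcal{C}_q\bv$ sends the lattice $\Z V^+\times\Z W^+$ into $\Z^{\Ind\mod(kQ)}$. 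Both lattices have rank $|\Phi^+|$, and the map is onto by your standard pairs, hence injective; the same holds for $V^-,W^-$. Together with your support observation, this gives uniqueness.

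\textbf{Existence.} The existence step is also not established. The module-theoretic route presupposes that every $l$-dominant pair is strongly $l$-dominant, but in this framework that is deduced from the decomposition itself (compare Remark~\ref{remk:strongly dominant iff dominant}). Moreover, a $\Lambda$-module does not split as a direct sum of a finite-projective-dimension part and stalk $kQ$-modules. For example, take the $2$-periodic complex $P_1\rightleftarrows P_0$ (one differential zero) given by a minimal projective resolution of an indecomposable non-projective $kQ$-module; it is indecomposable and of neither kind. One only has roofs with projective kernels, as in the proof of Lemma~\ref{lem:strata M_0^reg char}, and nonnegativity then needs a further argument.

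The missing idea is again the injective vertices. Since $\bv^\pm$ vanish at all $I_j$ and $\Sigma I_j$, your remainder satisfies $\bv^0(I_j)=\bv(I_j)=:a_j\ge 0$ and $\bv^0(\Sigma I_j)=\bv(\Sigma I_j)=:b_j\ge 0$. We also have $\bv^i(\Sigma I_j)=0$ and $\bv^{\Sigma i}(\Sigma I_j)=\delta_{ij}$. Hence the pair $(\bv^0,\bw^0)-\sum_j\bigl(a_j(\bv^j,\bw^j)+b_j(\bv^{\Sigma j},\bw^j)\bigr)$ has zero defect and vanishes at every $I_j$ and $\Sigma I_j$. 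By the decoupling above, it splits into zero-defect pairs in $\Z V^+\times\Z W^+$ and $\Z V^-\times\Z W^-$, and these vanish by the block injectivity. Therefore $(\bv^0,\bw^0)=\sum_j a_j(\bv^j,\bw^j)+b_j(\bv^{\Sigma j},\bw^j)\in(V^0,W^0)$. Nonnegativity comes for free, and no nonemptiness of strata is needed.
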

	
	\begin{lemma}[{\cite[Proposition 4.2.3]{Qin}}]
		\label{lem:unique V^+W^+}
		For any $l$-dominant pair $(\bv,\bw)\in(V^+,W^+)$, there exists a unique (up to isomorphism) $kQ$-module $N$ such that $\dimv K_{LR}(N)=(\bv,\bw)$.
	\end{lemma}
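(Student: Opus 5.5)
The plan is to identify the stratum of $\rep(\bw,\cs)$ attached to a pair $(\bv,\bw)\in(V^+,W^+)$ with a single isoclass of $kQ$-modules. This is done by computing $\dimv K_{LR}(N)$ explicitly in terms of $\Hom$-dimensions and then inverting the quantum Cartan matrix $\mathcal C_q$.

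\emph{Step 1: $\cs$-modules with dimension vector in $W^+$.} Since $\bw\in W^+$ is supported on $\{\sigma\tS_i\mid i\in Q_0\}$, an $\cs$-module of dimension vector $\bw$ is a module over the full subcategory of $\cs$ on these objects. I will compute the morphism spaces between the objects $\sigma\tS_i$ in $\mcr$ from the mesh relations; equivalently, via Proposition~\ref{prop:2-complexes}, they are $\Hom$ between the corresponding projective $\Lambda$-modules. This should identify that full subcategory with $kQ$ (up to opposite), so $\rep(\bw,\cs)=E_{Q,\bw}$ and every point is a $kQ$-module $N$. By Lemma~\ref{lem:strata iff KLR}, $N$ lies in $\cm_0^{\reg}(\bv,\bw)$ if and only if $\dimv K_{LR}(N)=(\bv,\bw)$. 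The claim therefore reduces to showing that $N\mapsto \dimv K_{LR}(N)$ is a bijection from isoclasses of $kQ$-modules of dimension vector $\bw$ onto the $l$-dominant pairs $(\bv,\bw)$ with $\bv\in V^+$.

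\emph{Step 2: formula for $K_{LR}(N)$.} I expect $K_{LR}(N)(x)$, for $x\in\mcr_0-\cs_0$, to be the image of $K_L(N)(x)\to K_R(N)(x)$, and $K_R(N)(x)=\Hom_\cs(\mcr(-,x)|_\cs,N)$. For $x=H^{-1}(X)$ with $X$ an indecomposable $kQ$-module, this should give
\[
\bv(x)=\dim\Hom_{kQ}(X,N),
\]
following Hernandez--Leclerc. This value vanishes for $x$ outside $\Ind\mod(kQ)$. On injective $X$ it is governed by $\bw$ through the frozen vertices, which is why only non-injective $X$ contribute to $V^+$. This identification is the step I expect to be the main obstacle. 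I would first compute $K_L$ and $K_R$ on the indecomposable modules $M_q(\beta)$, using the mesh relations, and then pass to direct sums. Additivity is clear for $K_L$ and $K_R$ and follows for their image.

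\emph{Step 3: injectivity and surjectivity.} Apply $\Hom_{kQ}(-,N)$ to the Auslander--Reiten sequence ending at a non-projective $X$. By the standard defect formula, $\sigma^*\bw-\mathcal C_q\bv$ evaluated at the vertex corresponding to $X$ equals the multiplicity in $N$ of the relevant indecomposable ($\tau$-shifted according to the frozen arrows). So $\sigma^*\bw-\mathcal C_q\bv=\lambda\ge0$ encodes $N=M(\lambda)$, which gives injectivity (alternatively, by Auslander's theorem, the numbers $\dim\Hom(X,N)$ determine $N$).

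For surjectivity, take any $l$-dominant $(\bv,\bw)$ with $\bv\in V^+$ and set $\lambda:=\sigma^*\bw-\mathcal C_q\bv\ge0$. Let $N:=M(\lambda)$ and $\bv_N:=\dimv K_{LR}(N)$. Since $\mathcal C_q$ is unitriangular with respect to the order on $\Z Q$ along $\tau$, $\bv$ is uniquely determined by $\mathcal C_q\bv$ among vectors supported on the finite set $V^+$. Because $\mathcal C_q\bv_N=\mathcal C_q\bv$, this forces $\bv_N=\bv$, and uniqueness of $N$ then follows from injectivity. One point needs checking here: the dimension vector of $N$ must be $\bw$. This requires verifying that the frozen components of $\lambda$ are consistent with $\bw$, which I would do via the same AR-sequence computation at the vertices adjacent to the frozen vertices.
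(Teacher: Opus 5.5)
\paragraph{Verdict.} There is a genuine gap. The paper gives no proof of this lemma (it cites Qin, Proposition 4.2.3), so I am judging your argument on its own terms.

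\paragraph{What works.} Your architecture is sound:
\begin{itemize}
\item identify $\rep(\bw,\cs)$ with $E_{Q,\bw}$;
\item compute $\dimv K_{LR}(N)$;
\item show that $\sigma^*\bw-\mathcal C_q\bv$ recovers the multiplicities of $N$;
\item invert $\mathcal C_q$ on vectors supported on $V^+$.
\end{itemize}
Step 1 is fine. Your final dimension-vector check also works: for $x\in V^+$, $\mathcal C_q\e_x=\e_x+\e_{\tau^{-1}x}-\sum_{x\to y}\e_y$ is an AR-sequence combination, so its total dimension vector is zero. The problem is the formula in Step 2, $\bv(x)=\dim\Hom_{kQ}(X,N)$. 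It is false, and all of Step 3 rests on it.

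\paragraph{Counterexample: simples.} For every $i$, the simple $\mcr$-module at the frozen vertex $\sigma\tS_i$ is trivially bistable. By Lemma~\ref{lem:strata iff KLR}, $K_{LR}(S_i)$ is this simple module, so $\bv=0$. Yet $\dim\Hom_{kQ}(P_i,S_i)=1$, and $P_i$ is non-injective whenever $i$ is a sink of a connected quiver with at least two vertices.

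\paragraph{Counterexample: $Q\colon 1\to 2$.} Here $V^+=\N\e_x$, where $x$ is the vertex of $P_2=S_2$. The only morphism of $\cs$ through $x$ is $\sigma\tS_2\to x\to\sigma\tS_1$, which gives the arrow $N_1\to N_2$. One checks $\bv(x)=\operatorname{rank}(N_1\to N_2)$:
\begin{itemize}
\item it is $0$ for $S_1$, $S_2$ and $S_1\oplus S_2$, since each is a sum of bistable simples;
\item it is $1$ for $P_1$, since the nonzero map $N_1\to N_2$ factors through $x$.
\end{itemize}
Your formula instead gives $\dim N_2$. That is $1$ for both $P_1$ and $S_1\oplus S_2$, so it fails to separate the two orbits of dimension vector $(1,1)$, which is exactly what the lemma asserts. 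For $N=S_2$ your $\bv$ even yields $\sigma^*\bw-\mathcal C_q\bv=\e_{P_1}-\e_{S_1}\not\geq 0$ (vertices labelled by modules). This is impossible, because $\dimv K_{LR}(N)$ is always $l$-dominant.

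\paragraph{Source of the error.} $\Hom_\cs(\mcr(-,x)|_\cs,N)$ is $K_R(N)(x)$, the maximal stable extension. $\dim\Hom_{kQ}(X,N)$ is at best its dimension; in the example it is $\dim N_2$. But $K_{LR}(N)(x)$ is the image of
\[
K_L(N)(x)=N\otimes_\cs\mcr(x,-)|_\cs\longrightarrow K_R(N)(x).
\]
Its dimension is the rank of $N$ on the morphisms of $\cs$ that factor through $x$, which is in general strictly smaller.

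\paragraph{Consequences for Step 3.} Both halves of Step 3 lose their support:
\begin{itemize}
\item Injectivity: the Auslander argument uses the wrong numbers. It would also need $\dim\Hom(X,N)$ for injective, non-projective $X$, which $\bv\in V^+$ does not record.
\item Surjectivity: the identity $\sigma^*\bw_N-\mathcal C_q\bv_N=\lambda_N$ and the inclusion $\bv_N\in V^+$ are unproved.
\end{itemize}

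\paragraph{How to repair.} $K_{LR}$ commutes with finite direct sums, so it suffices to treat an indecomposable $N=M(\beta)$. For it:
\begin{itemize}
\item set $\bv:=\mathcal C_q^{-1}\big(\sigma^*\dimv M(\beta)-\e_{x_\beta}\big)$ and check that it is supported on $V^+$;
\item construct an $\mcr$-module with dimension vector $(\bv,\dimv M(\beta))$ restricting to $M(\beta)$, for instance via the rank description above;
\item verify that it is stable and costable, so that by Lemma~\ref{lem:strata iff KLR} it equals $K_{LR}(M(\beta))$.
\end{itemize}
Additivity then gives $\sigma^*\bw_N-\mathcal C_q\bv_N=\lambda_N$ with $\bv_N\in V^+$ for every $N$. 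Injectivity follows immediately, and your surjectivity argument then goes through as written.
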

	
	We have $\mod(\cs)\simeq \mod(\Lambda)$, and we identify them in the following.
	Recall the restriction functor $\res:\mod(\cs) \rightarrow \mod(kQ^{\rm dbl})$. It is natural to identify the Grothendieck groups $K_0(\mod(\cs))$ and $K_0(\mod(kQ^{\rm dbl}))$. Recall the Euler form $\langle-,-\rangle_{Q^{\dbl}}$ of $kQ^{\dbl}$. We define the bilinear forms $\langle-,-\rangle_{Q^{\rm dbl},a}$ as follows: for any dimension vectors $\bw,\bw' \in \N^{\cs_0}$, let
	\begin{align}
		\langle \bw,\bw'\rangle_{Q^{\rm dbl},a}&= \langle \bw,\bw'\rangle_{Q^{\rm dbl}}-\langle \bw',\bw\rangle_{Q^{\rm dbl}}.\label{eqn: antisymmetric bilinear form}
	\end{align}

	Let us fix a square root $v^{1/2}$ of $v$ once for all. As we need a twisting involving $v^{1/2}$ shortly, we shall consider the ring $\cz:=\Z[v^{1/2}, v^{-1/2}]$ and the field $\Q(v^{1/2})$.
	
	The coalgebra $K^{\mathrm{gr}}(\mod(\cs))$ (cf. \eqref{eq:Kgr}) and its graded dual (cf. \eqref{eq:Kgr2}) up to a base change here in the current setting read as follows:
	\begin{gather}
		K^{\mathrm{gr}}(\mod(\cs)) =\bigoplus_{\bw\in W^{+}+W^{-}} K_\bw(\mod(\cs)),
		\label{eq:KS}
		\\
		\hRZ=\bigoplus_{\bw\in W^{+}+W^{-}} \hR_{\cz,\bw},
		\quad \text{where } \hR_{\cz,\bw}  =   \Hom_{\cz} \big(K_\bw(\mod(\cs)),\cz\big).
		\label{eq:tR}
	\end{gather}
	
	Then $(\hRZ, \cdot)$ is the $\Z[v^{1/2}, v^{-1/2}]$-algebra corresponding to the coalgebra $K^{\mathrm{gr}}(\mod(\cs))$ with the {\em twisted} comultiplication
	\begin{equation}
		\label{eq:tw}
		\{\Res^{\bw}_{\bw^1,\bw^2} :=v^{\frac12\langle \bw^1,\bw^2\rangle_{Q^{\dbl},a}} \widetilde{\Res}^{\bw}_{\bw^1,\bw^2}\};
	\end{equation}
	in practice, the product sign $\cdot$ is often omitted.
	
	We shall need the $\Q(v^{1/2})$-algebra obtained by a base change below:
	\begin{align}
		\label{eq:hRQZ}
		\hR  =\Q(v^{1/2}) \otimes_{\cz} \hRZ.
	\end{align}
	We also set $\hR^+$ to be the submodule of $\hR$ generated by $L(\bv,\bw)$, $(\bv,\bw)\in (V^+,W^+)$. It also becomes an algebra under $\Delta^\bw_{\bw_1+\bw_2}$, and the inclusion $\hR^+\hookrightarrow\hR$ is an algebra homomorphism. Similarly we can define $\hR^-$.
	
	We define $\tRZ$ to be the localization of $\tRZ$ with respect to the multiplicatively closed subset generated by $L(\bv^i,\bw^i)$ and $L(\bv^{\Sigma i},\bw^i)$ ($i\in Q_0$). Define 
	\begin{align}
		\label{eq:tRQZ}
		\tR  =\Q(v^{1/2}) \otimes_{\cz} \tRZ.
	\end{align} 
	
	The following theorem recovers the main result of \cite{Qin}.
	
	\begin{theorem}
		Let $Q$ be a Dynkin $\imath$quiver. Then there exists an isomorphism of $\Q(v^{\frac{1}{2}})$-algebras $\tilde{\kappa}:\tU\stackrel{\simeq}{\rightarrow}\tR$ which  sends
		\begin{align*}
			E_i\mapsto L(0,\e_{\sigma\tS_i}),\quad F_i\mapsto L(0,\e_{\sigma \Sigma \ts_i}),\quad K_i\mapsto L(\bv^i,\bw^i),\quad K_i'\mapsto L(\bv^{\Sigma i},\bw^i).
		\end{align*}
	\end{theorem}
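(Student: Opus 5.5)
The plan is to realize $\tR$ as a quotient of a localized quantum torus and prove the isomorphism by comparing with Bridgeland's Hall algebra realization, which is available above. The natural route is to go through the generic Hall algebra $\widetilde{\ch}(Q^{\rm dbl},\swa)$. Since $\widetilde{\psi}:\tU\to\widetilde{\ch}(Q^{\rm dbl},\swa)$ is an isomorphism, it suffices to construct an algebra isomorphism $\widetilde{\ch}(Q^{\rm dbl},\swa)\to\tR$ sending $v^{-1/2}\fu_{S_i}$, $v^{-1/2}\fu_{S_{i^\diamond}}$, $\K_{i^\diamond}$, $\K_i$ to $L(0,\e_{\sigma\tS_i})$, $L(0,\e_{\sigma\Sigma\tS_i})$, $L(\bv^i,\bw^i)$, $L(\bv^{\Sigma i},\bw^i)$ respectively, up to the normalizations fixed by the twist \eqref{eq:tw}.

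The first step is the dual trace map. Via Proposition~\ref{prop:2-complexes} we have $\mod(\cs)\simeq\mod(\Lambda)$ and $\cm_0(\bw)\cong\rep(\bw,\cs)$. Each $\chi(\bv,\bw)$, being dual to the pushforward $\pi(\bv,\bw)$, pairs with the class of a module $M$ by counting (Poincaré polynomials of) the fibres $\pi^{-1}(M)$. These fibres are quiver Grassmannian-type varieties of submodules with prescribed dimension vector. By the coproduct formula \eqref{eqn:comultiplication}, twisted as in \eqref{eq:tw}, the multiplication on the $\chi$-side corresponds to the Hall product of extension counts. Comparing exponents, one checks that $d(\cdot,\cdot)$ together with the antisymmetric correction $\langle-,-\rangle_{Q^{\rm dbl},a}$ reproduces exactly the Euler-form twist $\sqq^{\langle M,N\rangle_Q}$ in $\widetilde{\ch}(\Lambda)$. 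This gives a homomorphism from the (unlocalized) Hall algebra of $\Lambda$-modules to $\hR$.

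The second step is to check that this homomorphism factors through the ideal $\mathcal{I}$ and sends the Hall basis $\fu_\lambda*\K_\alpha$ to a basis. Lemma~\ref{lem:K_LR exact if projdim} is the key input. For $P$ of finite projective dimension, the strata are additive in $K_{LR}$-dimension vectors, so $[M]$ and $[N]$ with the same $\res_\BH$-class and isomorphic in $\cd_{sg}$ have the same image. In particular $[\K_i]$ goes to $L(\bv^i,\bw^i)$ or $L(\bv^{\Sigma i},\bw^i)$, which is a single stratum with $\sigma^*\bw-\mathcal C_q\bv=0$, hence a point stratum with trivial IC sheaf. For modules $N$ of $kQ$ or $kQ^\diamond$, Lemma~\ref{lem:unique V^+W^+} matches them bijectively with $l$-dominant pairs in $(V^\pm,W^\pm)$. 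Lemma~\ref{lem:Qin decomposition} then decomposes an arbitrary $l$-dominant pair as $(+)+(-)+(0)$. Using the leading term formula \eqref{eqn: leading term} and triangularity \eqref{equation multiplication} with respect to $\le$, the images of the Hall basis elements $[N^+]*[N^-]*[\K_\alpha]$ are unitriangular with respect to $\{L(\bv,\bw)\}$. This gives injectivity, and surjectivity onto $\hR$.

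The third step is localization. The $L(\bv^i,\bw^i)$ and $L(\bv^{\Sigma i},\bw^i)$ are $q$-central by \eqref{eqn: leading term}, so localizing both sides gives $\tR\cong\widetilde{\ch}(Q^{\rm dbl},\swa)$. Composing with $\widetilde{\psi}$ then yields $\tilde\kappa$, and the images of the generators can be read off from the simple modules, where $L(0,\e_{\sigma\tS_i})$ is the dual of the skyscraper at $0$.

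I expect the main obstacle to be the exponent bookkeeping in the first step. One must verify that the twisted VV-coproduct matches the twisted Hall multiplication on the nose, including the $v^{1/2}$-normalizations of $E_i,F_i$, and this requires computing $d$ on pairs coming from $K_{LR}$ of modules in terms of $\Hom/\Ext$ dimensions. I would handle this as in Qin's computation, using the dimension formula for $\cm(\bv,\bw)$ and $\mathcal C_q$ interpreted as a Euler form on $\mcr$.
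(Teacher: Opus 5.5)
The paper gives no proof of this statement; it is quoted from \cite{Qin}. Your route, Bridgeland's Hall algebra followed by a dual trace map, is the Hall-algebraic route of \cite{SS16}, which the paper itself adapts to $\imath$quivers in \S\ref{subsec:Gproj reduction} and \S\ref{subsec:dCB compare}. As written, however, your Step 1 has a genuine gap.

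\textbf{The gap in Step 1.} It is not the exponent bookkeeping. That part is easy: in Proposition~\ref{iHA by const function}, the twist $\frac12\langle-,-\rangle_{Q,a}$ together with the normalization $\sqq^{\dimv M\cdot\dimv M-\frac12\langle M,M\rangle}$ does all the work, and $d(\cdot,\cdot)$ never enters.

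The real problem is this. Your map pairs $[M]$ with $K_\bw$ through stalks over $\C$ (for $\pi(\bv,\bw)$, the cohomology of $\pi^{-1}(M)$). The Hall product, by contrast, is built from $\F_q$-point counts (cf.\ Lemma~\ref{res of const function iHA}). The phrase ``counting (Poincar\'e polynomials of) the fibres'' silently identifies these two things, and that identification is the whole difficulty. To justify it you must:
\begin{itemize}
\item realize the strata $\cm_0^{\reg}(\bv,\bw)$, the complexes $\cl(\bv,\bw)$ with trivial local systems, and the decomposition \eqref{eqn:decomposition theorem} over $\F_q$;
\item show they match the characteristic-zero data;
\item control the Frobenius weights on stalks.
\end{itemize}
None of this is formal on $\rep(\bw,\cs)$. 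There a stratum is in general a union of several $G_\bw$-orbits (cf.\ Lemma~\ref{lem:strata M_0^reg char}). The tool that controls the strata is the transversal slice theorem, which also underlies \eqref{equation multiplication}, and it is not available in positive characteristic. This is exactly the obstruction stated at the start of \S\ref{subsec:dCB compare}.

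Note also that \eqref{eqn:comultiplication} cannot supply the link with extension counts. It only says that the $\chi(\bv,\bw)$ multiply like a quantum torus. The link comes from a stalkwise computation of $p_!q^*$, which is a statement about functions over $\F_q$.

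\textbf{The missing idea: reduction to Gorenstein projectives.} Work on the open subvariety $\rep^{\Gp}(\bw,\cs)$. There the induced stratification is the orbit stratification (the analogue of Lemma~\ref{lem:Gproj unique strata}), and equivariant local systems on orbits are trivial. So Lusztig's orbit-wise sheaf--function argument applies over $\F_q$, using formal coefficients in $\mathbf{A}$ and only Gabber purity plus bar-invariance, as in Theorem~\ref{thm:IC coefficient is zeta_vv}. It identifies the localized Hall algebra of $\Gproj(\Lambda)$, which is Bridgeland's algebra by \cite[Theorem A15]{LW19}, with the ring $\tR^{\Gp}$ built on $\rep^{\Gp}$.

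You then still have to prove separately that restriction induces $\tR^{\Gp}\cong\tR$, as in Theorem~\ref{tRi reduction to Gp}:
\begin{itemize}
\item multiplication by $L(\bv^0,\bw^0)$ sends each $L(\bv,\bw)$ to a single basis element up to a power of $v^{1/2}$ (the analogue of Lemma~\ref{lem: multiply by L^0});
\item the sequences $0\to P_X\to G_X\to X\to0$, with $X$ a $kQ^{\dbl}$-module, then place every remaining $L(\bv,\bw)$ in the image.
\end{itemize}
With this inserted, your Steps 2 and 3 go through essentially as you describe.

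\textbf{Two smaller points.}
\begin{itemize}
\item $q$-centrality of $L(\bv^i,\bw^i)$ does not follow from the leading-term formula \eqref{eqn: leading term} alone. You need the lower terms to vanish, as in Lemma~\ref{lem: multiply by L^0}, or else import $q$-centrality from the Hall side once the isomorphism is known.
\item $\cm_0^{\reg}(\bv^i,\bw^i)$ is a one-dimensional orbit (a punctured line), not a point. Its closure is smooth, so $\cl(\bv^i,\bw^i)$ is still a shifted constant sheaf.
\end{itemize}
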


	\subsubsection{Quantum Grothendieck rings for $\imath$quivers}
	Let $(Q,\btau)$ be a Dynkin $\imath$quiver and $\cs^\imath$, $\mcr^\imath$ and $\cp^\imath$ be the $\imath$NKS categories defined in Definition \ref{def:RS for iQG} and Theorem \ref{thm:iNKS}. We define
	\begin{align}
		\label{eq:v^i}
		\bw^{i}&=\e_{\sigma \tS_{ i}}+\e_{\sigma\tS_{\btau i}},
		&&
		\bv^{i}= \sum_{z\in\mcr^\imath_0-\cs^\imath_0} \dim\cp^\imath(\tS_i ,z)\e_z,
		\\
		\label{eq:VW+}
		{^\imath W^{+}}&=\bigoplus_{x\in\{\tS_i,i\in Q_0\}}\N \e_{\sigma x},&&
		{^\imath V^{+}}=\bigoplus_{
			x\in\Ind \mod(kQ),\, x\text{ is not injective}} \N \e_x,\\
		\label{eq:VW0}
		{^\imath W^{0}}&=\bigoplus_{i\in Q_0} \N \bw^{i},&&
		{^\imath V^{0}}=\bigoplus_{i\in Q_0} \N \bv^{i}.
	\end{align}
	
	\begin{lemma}[\text{\cite[Lemma 6.13]{LW21b}}] 
		\label{lem:R0 subalgebra}
		Let $\bv\in {}^\imath V^0$, $\bw\in {}^\imath W^0$ be such that $\sigma^*\bw-{\mathcal C}_q \bv=0$. Then we have $(\bv,\bw)\in\bigoplus_{i\in \I}\N(\bv^{i},\bw^{i})$.
	\end{lemma}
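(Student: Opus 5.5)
The plan is to translate the condition $\sigma^*\bw-\mathcal{C}_q\bv=0$ into a statement about modules, and then exploit the structure of $\mod(\mathcal{R}^\imath)$. By Lemma~\ref{lem:strata iff KLR} and the stratification \eqref{eqn:stratification}, an $l$-dominant pair $(\bv,\bw)$ with $\sigma^*\bw-\mathcal{C}_q\bv=0$ should correspond to a point of $\mathcal{M}_0^{\reg}(\bv,\bw)$ given by $K_{LR}(M)$ for some $\mathcal{S}^\imath$-module $M$. The vanishing of $\sigma^*\bw-\mathcal{C}_q\bv$ should mean that $K_{LR}(M)$ is projective as an $\mathcal{R}^\imath$-module, equivalently that $M$ has finite projective dimension over $\mathcal{S}^\imath\simeq\Lambda^\imath$. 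The reason is that $\sigma^*\bw-\mathcal{C}_q\bv$ measures the failure of the mesh (Koszul-type) complex to be exact. I would therefore first identify, via Proposition~\ref{thm:iNKS}, the modules with this vanishing property as the $\Lambda^\imath$-modules $M$ with $\res_{\BH}(M)$ projective (Lemma~\ref{cor: res proj}).

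Next, I would compute the pairs attached to the basic building blocks. For each $i\in\I$, the module $\E_i$ has $\dimv$ equal to $\bw^i=\e_{\sigma\tS_i}+\e_{\sigma\tS_{\btau i}}$. I would check that $\dimv K_{LR}(\E_i)=(\bv^i,\bw^i)$, using the description of $K_L(P)\cong K_R(P)$ for finite projective dimension modules \cite[Corollary 4.3]{LW21b} and computing the $\mathcal{P}^\imath$-part as $\dim\mathcal{P}^\imath(\tS_i,z)$.

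Then I would take an arbitrary $M$ of finite projective dimension with $\dimv M=\bw\in{}^\imath W^0$. Since $\res_\BH M$ is a projective $\BH$-module with dimension vector in ${}^\imath W^0$, it is $\bigoplus_i \E_i^{a_i}$. I would filter $M$ by short exact sequences whose terms are the $\E_i$, which are again of finite projective dimension (this uses 1-Gorensteinness, so that finite projective dimension modules are closed under extensions). Applying Lemma~\ref{lem:K_LR exact if projdim} repeatedly then shows that $\dimv K_{LR}(M)=\sum_i a_i(\bv^i,\bw^i)$. Because $\bw$ determines the $a_i$, this gives $(\bv,\bw)\in\bigoplus_i\N(\bv^i,\bw^i)$.

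The main obstacle I expect is the first step: showing rigorously that $\sigma^*\bw-\mathcal{C}_q\bv=0$ forces the corresponding module to have finite projective dimension, and in particular that the relevant stratum is nonempty. An alternative is to avoid modules and argue purely combinatorially. In that approach, $\bv$ is determined by $\bw$ through the linear system $\mathcal{C}_q\bv=\sigma^*\bw$ on the finite quiver $\mathcal{R}^\imath_0-\mathcal{S}^\imath_0$. If $\mathcal{C}_q$ is injective there, this pins $\bv$ down as the unique solution $\sum a_i\bv^i$, which already solves the system since each $(\bv^i,\bw^i)$ does, and linearity finishes. Injectivity of $\mathcal{C}_q$ on the finite orbit quiver $\Z Q/F^\imath$ is the delicate point. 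I would attempt it by relating $\mathcal{C}_q$ to the Euler form of the triangulated orbit category $\cd_Q/F^\imath$ and its Grothendieck group, which should be torsion-free-compatible; I would fall back on the module-theoretic argument if this fails.
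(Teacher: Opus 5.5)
There is a genuine gap. Neither of your routes is carried through, and both miss that the hypothesis $\bv\in{}^\imath V^0$ already hands you $\bv$. By definition ${}^\imath V^0=\bigoplus_{i}\N\bv^i$, so $\bv=\sum_i a_i\bv^i$ with $a_i\in\N$. What remains to be determined is $\bw$, and $\bw$ is recovered from $\sigma^*\bw$ because $\sigma^*$ is injective ($\bw(\sigma x)=(\sigma^*\bw)(x)$ for $x\in C$).

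You need one input, $\mathcal{C}_q\bv^i=\sigma^*\bw^i$ for all $i$. Your alternative already takes this for granted. It is the identity $\bw^\bot=\bw^i-\mathcal{C}_q\bv^i\sigma^{-1}=0$ used in the proof of Lemma~\ref{lem: multiply by L^0}. It can be checked by applying $\Hom(\tS_i,-)$ to the AR triangles of $\cd_Q/F^\imath$, which gives $(\mathcal{C}_q\bv^i)(x)=\delta_{x,\tS_i}+\delta_{x,\Sigma\tS_i}$, with $\Sigma\tS_i\cong\tS_{\varrho i}$ in the orbit category. Linearity then gives
\[\sigma^*\bw=\mathcal{C}_q\bv=\sum_i a_i\,\mathcal{C}_q\bv^i=\sigma^*\Big(\sum_i a_i\bw^i\Big),\]
so $\bw=\sum_i a_i\bw^i$ and $(\bv,\bw)=\sum_i a_i(\bv^i,\bw^i)$. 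No module theory is needed.

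Your combinatorial alternative runs in the wrong direction: it solves for $\bv$ from $\bw$. The injectivity of $\mathcal{C}_q$ it needs fails as soon as $\varrho\neq\Id$. Indeed $\mathcal{C}_q\bv^i=\sigma^*\bw^i=\sigma^*\bw^{\varrho i}=\mathcal{C}_q\bv^{\varrho i}$, while $\bv^i\neq\bv^{\varrho i}$. To see the latter, note that under $\tilde\kappa$ in Theorem~\ref{thm:iQG-sheaf}, $\tilde{k}_i$ and $\tilde{k}_{\varrho i}$ go to the distinct dual IC basis elements $L(\bv^{\varrho i},\bw^i)$ and $L(\bv^i,\bw^i)$, which have the same $\bw$. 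For the same reason your remark ``$\bw$ determines the $a_i$'' is false. It is harmless in the module route, since there the $a_i$ come from Krull--Schmidt for $\res_\BH M$. Also, with the paper's conventions $\dimv K_{LR}(\K_i)=(\bv^{\varrho i},\bw^i)$ rather than $(\bv^i,\bw^i)$; this too is harmless.

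The module route leaves open exactly the step with content: that every such pair equals $\dimv K_{LR}(M)$ for some $M$, i.e.\ that the stratum is nonempty. In this paper that nonemptiness (Remark~\ref{remk:strongly dominant iff dominant}) is deduced from the very lemma you are proving, so you cannot borrow it, and you give no independent argument. Your heuristic for that step is also wrong. Projective $\mcr^\imath$-modules restrict to Gorenstein projective $\cs^\imath$-modules (Proposition~\ref{thm:iNKS}), and a Gorenstein projective module of finite projective dimension is projective. So $K_{LR}(\K_i)$ is not a projective $\mcr^\imath$-module whenever $\K_i$ is not projective. The correct condition is only that $M$ has finite projective dimension.
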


	\begin{lemma} [$l$-dominant pair decomposition; see  \text{\cite[Lemma 6.10]{LW21b}}]
		\label{lem:DP3}
		If $(\bv,\bw)$ is an $l$-dominant pair for $\mcr^\imath$, then we have a unique decomposition of $(\bv,\bw)$ into $l$-dominant pairs $(\bv^+,\bw^+) \in ({^\imath V^+}, {^\imath W^+})$, $(\bv^0,\bw^0) \in ({}^\imath V^0, {}^\imath W^0)$ such that
		\[
		(\bv^+, \bw^+) + (\bv^0, \bw^0)= (\bv, \bw),
		\qquad
		%
		\sigma^*\bw^0-{\mathcal C}_q \bv^0=0.
		\]
	\end{lemma}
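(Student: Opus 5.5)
The plan is to reduce the statement to linear algebra on dimension vectors, following the argument of Qin (Lemma~\ref{lem:Qin decomposition}) for the double framed quiver but adapted to the quotient by $F^\imath=\Sigma\widehat{\btau}$.

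\emph{Step 1: describe the vertices.} Since $F^\imath(\tS_{\btau i})=\Sigma\tS_i$, the set $C/F^\imath$ is represented by $\{\tS_i\mid i\in Q_0\}$. Hence the frozen vertices of $\mcr^\imath$ are exactly the $\sigma\tS_i$, and ${}^\imath W^+=\N^{\cs^\imath_0}$. The non-frozen vertices $\mcr^\imath_0-\cs^\imath_0$ are the indecomposables of $\cd_Q/F^\imath$. A fundamental domain for $F^\imath$ is $\Ind\mod(kQ)$. I will split these vertices into the non-injective ones, which span ${}^\imath V^+$, and the injective ones $I_i$, $i\in Q_0$.

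\emph{Step 2: compute $\bv^i$.} Using the mesh relations in $\cp^\imath\simeq k(\Z Q/F^\imath)$, I will show $\sigma^*\bw^i-{\mathcal C}_q\bv^i=0$. The reason is that $\cp^\imath(\tS_i,-)$ is the projective $\cp^\imath$-module at $\tS_i$, whose mesh resolution terminates at $F^\imath\tS_{\btau i}$. This produces exactly the contributions at $\sigma\tS_i$ and $\sigma\tS_{\btau i}$. Next I will show that the restrictions of the $\bv^i$ to the injective vertices $\{I_j\}$ form a unitriangular, hence unimodular, matrix. Concretely, $\bv^i(I_j)=\dim\Hom(\tS_i,I_j)=\delta_{ij}$ in the fundamental domain, up to possible contributions from the next $F^\imath$-translate, which I expect to be ordered triangularly.

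\emph{Step 3: uniqueness.} Any admissible $\bv^0=\sum_i a_i\bv^i$ must satisfy $\bv^0(I_j)=\bv(I_j)$, because $\bv^+$ vanishes at injective vertices. Unitriangularity then forces the $a_i$, and hence $\bw^0=\sum_i a_i\bw^i$ as well.

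\emph{Step 4: existence.} With the $a_i$ so defined, set $\bv^+=\bv-\bv^0$ and $\bw^+=\bw-\bw^0$. Then $\sigma^*\bw^+-{\mathcal C}_q\bv^+=\sigma^*\bw-{\mathcal C}_q\bv\geq 0$ automatically, since ${\mathcal C}_q\bv^0=\sigma^*\bw^0$. So it remains to check three nonnegativity conditions: $a_i\ge 0$, $\bv^+\ge 0$, and $\bw^+(\sigma\tS_i)=\bw(\sigma\tS_i)-a_i-a_{\btau i}\ge0$.

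This last step is the main obstacle. My first attempt will evaluate the $l$-dominance inequality $(\sigma^*\bw-{\mathcal C}_q\bv)(x)\ge0$ at the vertices $x=\tS_i$ and along the $\tau$-orbit from $I_i$ back to $\tS_{\btau i}$. I will then sum these inequalities telescopically along sectional paths, which is additivity of ${\mathcal C}_q$ on AR triangles. This should show that $\bv$ dominates $\bv^0$ coordinatewise and that $\bw(\sigma\tS_i)$ bounds $a_i+a_{\btau i}$.

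If the direct combinatorics becomes unwieldy, the fallback is representation-theoretic. For strongly $l$-dominant pairs, realize $(\bv,\bw)=\dimv K_{LR}(M)$ via Lemma~\ref{lem:strata iff KLR}. Then split $M$ using a short exact sequence $0\to P\to M\to M'\to 0$, where $P$ is a sum of the $\E_i$ (finite projective dimension, Lemma~\ref{cor: res proj}) and $M'$ is a $kQ$-module. Lemma~\ref{lem:K_LR exact if projdim} then gives additivity of $\dimv K_{LR}$ over this sequence, with $\dimv K_{LR}(\E_i)=(\bv^i,\bw^i)$. The remaining non-strongly $l$-dominant case would still be handled by the inequalities above.
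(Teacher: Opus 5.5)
\textbf{There is a genuine gap in Step 4.} The paper quotes this lemma from [LW21b] without proof, so I judge your argument on its own.

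Steps 1–3 are sound. Step 2 is in fact sharper than you state: $\Hom_{\cd_Q}(\tS_i,\Sigma^n I_j)=0$ for $n\neq 0$ because $I_j$ is injective, so $\bv^i(I_j)=\delta_{ij}$ exactly. Hence any decomposition has $a_i=\bv(I_i)$. This gives uniqueness and makes $a_i\geq 0$ automatic.

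The gap is Step 4, which is the whole content of the lemma. You must prove $\bv-\sum_i\bv(I_i)\bv^i\geq 0$ at \emph{every} non-frozen vertex, and $\bw(\sigma\tS_k)\geq a_k+a_{\varrho k}$. Saying that telescoping the local inequalities $(\sigma^*\bw-\mathcal{C}_q\bv)(x)\geq 0$ along sectional paths ``should show'' this is not an argument. A lower bound for $\bv$ at an arbitrary vertex needs a global inversion of $\mathcal{C}_q$, and you never say which inequalities combine to give it.

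The fallback does not close the gap, for two reasons.
\begin{enumerate}
\item A sequence $0\to P\to M\to M'\to 0$ with $P\in\add\{\E_i\}$ and $M'\in\mod(kQ)$ need not exist. Take $Q\colon 1\xrightarrow{\alpha}2$, $\varrho=\mathrm{id}$, and let $M_1=ke\oplus k\varepsilon_1 e$, $M_2=kf$, $\alpha(e)=f$, $\alpha(\varepsilon_1 e)=0$, $\varepsilon_2=0$. The only submodule of $M$ in $\add\{\E_1,\E_2\}$ is $0$, yet $\varepsilon_1 M\neq 0$. Choosing a better representative of the stratum would need Lemma~\ref{lem:strata M_0^reg char}, whose proof uses the present lemma.
\item Writing $(\bv,\bw)=\dimv K_{LR}(M)$ presupposes strong $l$-dominance. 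This paper derives that from the very lemma at hand (Remark~\ref{remk:strongly dominant iff dominant}), and your leftover case is sent back to the unproved inequalities.
\end{enumerate}

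\textbf{The missing idea is to invert $\mathcal{C}_q$ on the $+$ part, where it is elementary.} Put $h:=\sigma^*\bw-\mathcal{C}_q\bv\geq 0$. Via the fundamental domain $\Ind\mod(kQ)$, regard $h$ as the $kQ$-module $M_h=\bigoplus_x x^{\oplus h(x)}$. Set $\bw_h(\sigma\tS_k)=\dim(M_h)_k$ and, for $x\in\Ind\mod(kQ)$,
\[
\bv_h(x)=\sum_k\dim(M_h)_k\,\dim\Hom(\tS_k,x)-\dim\Hom(M_h,x).
\]
This function has the following properties.
\begin{itemize}
\item $\bv_h\geq 0$, by left exactness of $\Hom(-,x)$ along a composition series of $M_h$.
\item $\bv_h(I_j)=0$.
\item $\sigma^*\bw_h-\mathcal{C}_q\bv_h=h$. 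To see this, let $G_N:=\sum_{n\geq 0}(-1)^n\dim\Hom_{\cd_Q}(\Sigma^n N,-)$ on $\Z Q$; then $\mathcal{C}_q G_N=\e_N$, since $\mathcal{C}_q\dim\Hom_{\cd_Q}(N,-)=\e_N+\e_{\Sigma N}$. The function $\sum_k\dim(M_h)_k\,G_{\tS_k}-G_{M_h}$ equals $\bv_h$ on $\mod(kQ)$ and vanishes on $\Sigma^m\mod(kQ)$ for $m\neq 0$, by additivity of the Euler form. Everything lies in the fundamental domain, so this passes to $\Z Q/F^\imath$.
\end{itemize}

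Next, the map $(\bu,\bw'')\mapsto\sigma^*\bw''-\mathcal{C}_q\bu$ is injective on $\Z^{{}^\imath V^+}\oplus\Z^{\cs^\imath_0}$. For $\bu$ supported on non-injective modules, $\mathcal{C}_q\bu$ stays in $\Ind\mod(kQ)$, where the covering $\Z Q\to\Z Q/F^\imath$ is injective. So you may lift to $\Z Q$. There the left-bounded solution of $\mathcal{C}_q\bu=\sum_k c_k\e_{\tS_k}$ is unique (knit from the left) and takes the value $c_j$ at $I_j$. Vanishing at the injectives forces $\bw''=0$, and then $\bu=0$.

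Apply this to $\bigl(\bv-\bv_h-\sum_k\bv(I_k)\bv^k,\ \bw-\bw_h-\sum_k\bv(I_k)\bw^k\bigr)$, which vanishes at the injectives and lies in the kernel. It shows that your $(\bv^+,\bw^+)$ equals $(\bv_h,\bw_h)$. Nonnegativity is then automatic, including $\bw(\sigma\tS_k)-a_k-a_{\varrho k}=\dim(M_h)_k$. This is the correspondence between $kQ$-modules and $l$-dominant pairs in $({}^\imath V^+,{}^\imath W^+)$ behind Lemma~\ref{lem:unique V^+W^+}. The paper also uses it implicitly whenever it writes $(\bv_\lambda,\bw_\lambda)$ with $\sigma^*\bw_\lambda-\mathcal{C}_q\bv_\lambda=\lambda$.
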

	
	\begin{remark}\label{remk:strongly dominant iff dominant}
		If $(\bv,\bw)=(\bv^0,\bw^0)+(\bv^+,\bw^+)$ is the decomposition of an $l$-dominant pair $(\bv,\bw)$ given by Lemma~\ref{lem:DP3}, then by Lemma~\ref{lem:unique V^+W^+} we can find a $kQ$-module $M$ such that $\dimv K_{LR}(M)=(\bv^+,\bw^+)$. Also, by Lemma~\ref{lem:R0 subalgebra} there exists an $\mathcal{S}^\imath$-module $P$ such that $\dimv K_{LR}(P)=(\bv^0,\bw^0)$. Then we have $\dimv K_{LR}(P\oplus M)=(\bv,\bw)$ (see Lemma~\ref{lem:K_LR exact if projdim}), so $\mathcal{M}_0^{\reg}(\bv,\bw,\mathcal{R}^\imath)\neq\emptyset$. In other words, every $l$-dominant pair is strongly $l$-dominant.
	\end{remark}
	
	We have $\mod(\cs^\imath)\simeq \mod(\Lambda^\imath)$, and we will identify them in the following. Recall the restriction functor $\res:\mod(\cs^\imath) \rightarrow \mod(kQ)$. It is natural to identify the Grothendieck groups $K_0(\mod(\cs^\imath))$ and $K_0(\mod(kQ))$. Then we can define $\langle \bw,\bw'\rangle_{Q,a}$ for dimension vectors $\bw,\bw'\in\N^{\cs_0^\imath}$ similar to \eqref{eqn: antisymmetric bilinear form}.

	Like the case for $(Q^{\rm dbl},\mathrm{swap})$, the coalgebra $K^{\mathrm{gr}}(\mod(\cs^\imath))$ (cf. \eqref{eq:Kgr}) and its graded dual (cf. \eqref{eq:Kgr2}) up to a base change here in the current setting read as follows:
	\begin{gather}
		K^{\mathrm{gr}}(\mod(\cs^\imath)) =\bigoplus_{\bw\in W^{+}} K_\bw(\mod(\cs^\imath)),
		\label{eq:KSi}
		\\
		\hRiZ=\bigoplus_{\bw\in W^{+}} \hR^{\imath}_{\cz,\bw},
		\quad \text{ where } \hR^{\imath}_{\cz,\bw}  =   \Hom_{\cz} \big(K_\bw(\mod(\cs^\imath)),\cz\big).
		\label{eq:tRi}
	\end{gather}
	
	Then $(\hRiZ, \cdot)$ is the $\cz$-algebra corresponding to the coalgebra $K^{\mathrm{gr}}(\mod(\cs^\imath))$ with the {\em twisted} comultiplication
	\begin{equation}
		\label{eq:tw}
		\{\Res^{\bw}_{\bw^1,\bw^2} :=v^{\frac12\langle \bw^1,\bw^2\rangle_{Q,a}} \widetilde{\Res}^{\bw}_{\bw^1,\bw^2}\};
	\end{equation}
	in practice, the product sign $\cdot$ is often omitted.
	
	We shall need the $\Q(v^{1/2})$-algebra obtained by a base change below:
	\begin{align}
		\label{eq:hRiQZ}
		\hRi  =\Q(v^{1/2}) \otimes_{\cz} \hRiZ.
	\end{align}

	We define $\tRiZ$ to be the localization of $\hRiZ$ with respect to the multiplicatively closed subset generated by $L(\bv^i,\bw^i)$ ($i\in Q_0$). (It is well defined by \cite[Lemmas 5.2 and 5.3]{LW21b}.) Define \begin{align}
		\label{eq:tRiQZ}
		\tRi  =\Q(v^{1/2}) \otimes_{\cz} \tRiZ.
	\end{align} 
	Note that there is a natural inclusion $\hR^+\hookrightarrow\hRi$. But it is not an algebra homomorphism in general.
	
	\begin{theorem}[{\cite[Theorem 6.25]{LW21b}}]
		\label{thm:iQG-sheaf}
		Let $(Q,\varrho)$ be a Dynkin $\imath$quiver. Then there exists an isomorphism of $\Q(v^{\frac{1}{2}})$-algebras $\tilde{\kappa}:\tUi\stackrel{\simeq}{\rightarrow}\tRi$ which  sends
		\begin{align}
			\label{eq:kappa}
			B_i\mapsto L(0,\e_{\sigma\tS_i}),  \qquad
			\tilde{k}_i \mapsto
			\begin{cases}
				L(\bv^{\varrho i},\bw^i)&\text{if $\varrho i\neq i$},\\
				v^{-1}L(\bv^i,\bw^i)&\text{if $\varrho i=i$}.
			\end{cases}
		\end{align}
	\end{theorem}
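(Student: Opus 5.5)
Theorem~\ref{thm:iQG-sheaf} is cited from \cite[Theorem 6.25]{LW21b}, so I will not reprove it from scratch. Instead, the plan is to reduce it to the diagonal-type case via the generators-and-relations comparison used there.

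First, I will define $\tilde\kappa$ on the generators of $\tUi$ by \eqref{eq:kappa} and check that it respects the defining relations. For the relations involving $\tk_i$, the key input is that $L(\bv^i,\bw^i)$ corresponds to the stratum of the projective-injective (i.e.\ finite projective dimension) module $\E_i$. I will use Lemma~\ref{lem:K_LR exact if projdim} together with the leading term formula \eqref{eqn: leading term}. Since $\cm_0^{\reg}(\bv^i,\bw^i)$ is a single closed point stratum with no lower strata, the product $L(\bv^i,\bw^i)\ast L(\bv,\bw)$ has only its leading term. This shows the $L(\bv^i,\bw^i)$ are (up to powers of $v^{1/2}$) central or $q$-commuting in the way the $\tk_i$ do, and hence invertible after localization. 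For the generators $B_i\mapsto L(0,\e_{\sigma\tS_i})$, the relations are the $\imath$Serre relations. I will verify these by transporting along the $\imath$Hall isomorphism of Lemma~\ref{lem:Hall-iQG} rather than directly.

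Concretely, I will compare the comultiplication \eqref{eqn:comultiplication} on the basis $\pi(\bv,\bw)$ with the Hall multiplication on $\widetilde{\ch}(\bfk Q,\btau)$ through a sheaf-function correspondence. The dual basis element $\chi(\bv,\bw)$ should correspond to a characteristic function of the stratum with $\dimv K_{LR}=(\bv,\bw)$. Structure constants of $\ast$ on the $\chi$'s are then counts of submodules, i.e.\ Hall numbers. The twist $v^{\frac12\langle-,-\rangle_{Q,a}}$ in \eqref{eq:tw}, combined with $d(\cdot,\cdot)$, should reproduce the Euler form twist in the $\imath$Hall product. This yields an algebra homomorphism $\widehat{\ch}(Q,\btau)\to\hRi$ sending $[S_i]$ to $v^{1/2}L(0,\e_{\sigma\tS_i})$ and $[\K_i]$ to $L(\bv^{\btau i},\bw^i)$. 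Composing with $\widehat\psi$ gives $\tilde\kappa$ on $\hUi$, which I will then extend to the localizations.

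For bijectivity, I will use Lemma~\ref{lem:DP3} and Remark~\ref{remk:strongly dominant iff dominant}. Strongly $l$-dominant pairs are indexed by pairs (a $kQ$-module $M$ via Lemma~\ref{lem:unique V^+W^+}, an element of $\N^\I$). This matches the Hall basis \eqref{eq:Hallbasis-tilde}, so graded dimensions agree and a triangularity argument gives surjectivity.

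The main obstacle is the sheaf-function comparison itself: the convolution is defined over characteristic zero, while Hall numbers live over $\F_q$. Exact matching of the $v$-powers in $d$ with Euler forms is also delicate. I would first handle this on the generators only, where the strata are explicit. If necessary, I would follow \cite{LW21b} and restrict to Gorenstein projective loci, where strata coincide with orbits.
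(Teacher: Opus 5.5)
The paper gives no proof of this statement; it is quoted from \cite{LW21b}. Your proposal therefore has to stand on its own, and two load-bearing steps fail.

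The first failure is in the $\tk_i$ step, which rests on a false picture of the stratum. Since $i$ and $\varrho i$ are never adjacent when $\varrho i\neq i$, the variety $\rep(\bw^i,\cs^\imath)$ is $\{x\in M_2(k)\mid x^2=0\}$ if $\varrho i=i$, and $\{(a,b)\in k^2\mid ab=0\}$ if $\varrho i\neq i$. So $\cm_0^{\reg}(\bv^i,\bw^i)$ is an open orbit (of $\K_i$ or $\K_{\varrho i}$), not a closed point. The stratum $\bv=0$, i.e.\ the origin, which is the module $S_i\oplus S_{\varrho i}$, lies in its closure. In the rank-one case of \S\ref{sec:dCB irank I}, $\pi\colon\cm(1,2)\to\cm_0(2)$ is the Springer resolution $T^*\P^1\to\{x^2=0\}$, and $\pi(1,2)=\cl(1,2)\oplus\cl(0,2)$ by Proposition~\ref{L_k,n decomposition}.

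What actually makes multiplication by $L(\bv^i,\bw^i)$ monomial is the opposite property. Because $\sigma^*\bw^i-\mathcal{C}_q\bv^i=0$, the fibre of $\pi\colon\cm(\bv_1,\bw^i)\to\cm_0(\bw^i)$ over a point of this stratum is the central fibre for $\cm(\bv_1-\bv^i,0)$. That variety is empty unless $\bv_1=\bv^i$. Hence $\bv^i$ is maximal, and $\cl(\bv^i,\bw^i)$ occurs in no $\pi(\bv_1,\bw^i)$ other than $\pi(\bv^i,\bw^i)$. Even then you need the induction over the $\cl(\bv,\bw+\bw^i)$ carried out in the proof of Lemma~\ref{lem: multiply by L^0}. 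Note also that $\K_i$ is not projective in general; it only has projective dimension at most $1$.

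The second failure is more serious: the mechanism you give for the $\imath$Serre relations uses the wrong basis. $\chi(\bv,\bw)$ is dual to $\pi(\bv,\bw)=\pi_!(\underline{k}_{\cm(\bv,\bw)})[\dim\cm(\bv,\bw)]$. Its trace function is, up to normalization, $x\mapsto|\pi^{-1}(x)(\F_q)|$, not the characteristic function of a stratum; in the example above it equals $q+1$ at the origin and $1$ on rank-one $x$. Accordingly, products of the $\chi$'s are computed by \eqref{eqn:comultiplication}, i.e.\ by the form $d$ and the geometry of the maps $\pi$, not by counting submodules.

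The Hall numbers of $\widehat{\ch}(\bfk Q,\varrho)$ are the structure constants of the basis dual to the characteristic functions $\delta_{(\bv,\bw)}$ of strata. These are the $\xi_{(\bv,\bw)}$ and $H_{\bv,\bw}$ of \S\ref{subsec:QV function iHA}. That basis is related to $\{L(\bv,\bw)\}$ by the matrix of Frobenius traces on the stalks of the IC sheaves. The Euler-form twist comes from three sources: the fibre factor $q^{\bw_1\cdot\bw_2}$ of Lemma~\ref{res of const function iHA}, the $\langle\cdot,\cdot\rangle_{Q,a}$-twist, and the normalization in Proposition~\ref{iHA by const function}. The form $d$ plays no role.

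Even with the right basis, you are missing two inputs:
\begin{itemize}
\item You need that strata are exactly the classes identified in \eqref{def:I} (Lemma~\ref{lem:strata M_0^reg char}). Only then do strata-constant functions model $\widehat{\ch}(\bfk Q,\varrho)$ rather than $\widetilde{\ch}(\Lambda^\imath)$.
\item You need IC sheaves over $\F_q$ whose traces are constant on strata and which compute $\hRi$. These cannot be obtained directly, because the transversal slice theorem is not available in positive characteristic.
\end{itemize}
The second input is not an optional refinement. It is exactly what forces the passage to $\rep^{\Gp}(\bw,\cs^\imath)$, where strata are orbits (Lemma~\ref{lem:Gproj unique strata}, Theorem~\ref{tRi reduction to Gp}). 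Checking only ``on generators'' does not avoid this. The $\imath$Serre relations live in degrees such as $(1-c_{ij})\e_{\sigma\tS_i}+\e_{\sigma\tS_j}$, where the full product is needed.

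Your graded dimension count via Lemma~\ref{lem:DP3} and Remark~\ref{remk:strongly dominant iff dominant} is fine. But the homomorphism property, which is the actual content of the theorem, is not established. The corrected version of your comparison is essentially the machinery assembled in \S\ref{sec:dCB via sheaf}.
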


	\begin{lemma}\label{lem: multiply by L^0}
		For any strongly $l$-dominant pairs $(\bv,\bw)$ and $(\bv^0,\bw^0)\in ({^\imath V^0},{^\imath W^0})$ such that $\sigma^*\bw^0-\mathcal{C}_q\bv^0=0$, we have 
		\[L(\bv^0,\bw^0)\cdot L(\bv,\bw)\in v^{\frac{1}{2}\Z}L(\bv+\bv^0,\bw+\bw^0).\]
	\end{lemma}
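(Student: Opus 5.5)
The plan is to work dually, on the coalgebra side. Since $\{L(\bv',\bw')\}$ is dual to the IC basis $\{\cl(\bv',\bw')\}$, the coefficient of $L(\bv',\bw+\bw^0)$ in $L(\bv^0,\bw^0)\cdot L(\bv,\bw)$ equals, up to the twist $v^{\frac12\langle\bw^0,\bw\rangle_{Q,a}}\in v^{\frac12\Z}$, the multiplicity of $\cl(\bv^0,\bw^0)\boxtimes\cl(\bv,\bw)$ in $\widetilde{\Res}^{\bw+\bw^0}_{\bw^0,\bw}\big(\cl(\bv',\bw+\bw^0)\big)$, counted with shifts. So it suffices to show three things:
\begin{enumerate}
\item only $\bv'=\bv+\bv^0$ contributes;
\item the restriction of $\cl(\bv+\bv^0,\bw+\bw^0)$ to the $(\bw^0,\bw)$ component contains $\cl(\bv^0,\bw^0)\boxtimes\cl(\bv,\bw)$ exactly once, up to a single shift;
\item no other term $\cl(\bv^0{}',\bw^0)\boxtimes\cl(\bv'',\bw)$ enters in a way that produces further $L$'s in the product.
\end{enumerate}

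\emph{The stratum of $\bw^0$ is a single point.} By Lemma~\ref{lem:R0 subalgebra}, $(\bv^0,\bw^0)=\sum_i n_i(\bv^i,\bw^i)$. Since $\sigma^*\bw^0-\mathcal C_q\bv^0=0$, the stratum $\cm_0^{\reg}(\bv^0,\bw^0)$ is the orbit of the module $P=\bigoplus_i\K_i^{n_i}$, which has projective dimension $\leq 1$. By Lemma~\ref{cor: res proj}, $\res_\BH P$ is projective. The module $P$ is rigid, so its orbit is open in $\rep(\bw^0,\cs^\imath)$. The degeneration order then shows that $(\bv^0,\bw^0)$ is the maximal $l$-dominant $\bv$ for $\bw^0$. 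Consequently $L(\bv^0,\bw^0)$ is, up to a power of $v$, the function $\chi$ supported on that open orbit, and $\cl(\bv^0,\bw^0)$ is the constant sheaf, shifted.

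\emph{Geometric step.} Consider the convolution diagram $F_{\bw^0,\bw}$ and restrict to the open locus where the sub lies in the orbit of $P$. Because $\Ext^1(M,P)=0=\Ext^1(P,M)$ for $\pd P\le1$ over the 1-Gorenstein algebra $\Lambda^\imath$ (this is the fact used in \cite{LW21b} for Theorem~\ref{thm:iQG-sheaf}), every extension $0\to P\to N\to M\to 0$ splits, so $N\cong P\oplus M$. Lemma~\ref{lem:K_LR exact if projdim} then gives $\dimv K_{LR}(P\oplus M)=(\bv^0+\bv,\bw^0+\bw)$. Combined with Lemma~\ref{lem:strata iff KLR}, the map $M\mapsto P\oplus M$ embeds $\cm_0^{\reg}(\bv,\bw)$ into $\cm_0^{\reg}(\bv+\bv^0,\bw+\bw^0)$. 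Over the open $P$-orbit, $p$ is a vector bundle (the $\Hom(M,P)$ part), and the preimage of each stratum is a single stratum. Hence $p_!q^*$ of an IC complex restricted there is a shifted IC complex, which gives items (1) and (2).

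\emph{Main obstacle: item (3).} We must rule out contributions supported on the smaller strata of $\rep(\bw^0,\cs^\imath)$, that is, of non-projective-dimension-$\le1$ subs. Dually, these do not matter: in the product we pair against $L(\bv^0,\bw^0)\boxtimes L(\bv,\bw)$, which only sees the coefficient of $\cl(\bv^0,\bw^0)\boxtimes\cl(\bv,\bw)$. So the real task is to show that for $\bv'\neq\bv+\bv^0$ the restriction of $\cl(\bv',\bw+\bw^0)$ has no $\cl(\bv^0,\bw^0)\boxtimes\cl(\bv,\bw)$ summand. This follows by restricting to the open $P$-orbit on the first factor and applying the stratum correspondence above: strata of $\bw+\bw^0$ meeting $P\oplus(\cdot)$ are exactly the $\bv^0+\bv''$. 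Since IC complexes are determined by their restriction to the generic point of their support, the coefficient is $0$ unless $\bv'=\bv^0+\bv$.

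If this geometric route is awkward, I would fall back on a shortcut. The claim holds for each generator $L(\bv^i,\bw^i)$ by the leading-term formula \eqref{eqn: leading term}, provided one knows that these generators are $v$-central up to the twist and that $L(\bv^i,\bw^i)$ times any dual-IC element is dual-IC. Induction on $\sum n_i$ then completes the proof.
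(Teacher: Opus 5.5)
Your argument has a genuine gap at the step ``Hence $p_!q^*$ of an IC complex restricted there is a shifted IC complex''. Both item (1) and item (3) rest on it.

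Compatibility of strata tells you something, but not enough. An extension of $M$ by a module in $U=\cm_0^{\reg}(\bv^0,\bw^0)$ lies in the stratum $\bv^0+\bv''$ when $M$ lies in $\bv''$. From this you only get that the restriction of $\widetilde{\Delta}^{\bw+\bw^0}_{\bw^0,\bw}\cl(\bv',\bw+\bw^0)$ to $U\times\rep(\bw,\cs^\imath)$ has the form $\cl(\bv^0,\bw^0)|_U\boxtimes G$. Here $G$ is a sum of shifts of $\cl(\bv'',\bw)$ with $\bv''\le\bv'-\bv^0$, and $\cl(\bv'-\bv^0,\bw)$ occurs once.

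It says nothing about summands $\cl(\bv'',\bw)$ with $\bv''<\bv'-\bv^0$, which are supported on smaller strata. These are exactly what produce the unwanted terms $L(\bv',\bw+\bw^0)$ with $\bv'>\bv+\bv^0$ in the product. The term $\bv'=\bv+\bv^0$ is already handled by \eqref{eqn: leading term}. ``IC complexes are determined by their generic restriction'' does not help, because $G$ is not known to be an IC complex.

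What is missing is a comparison of stalks. The stalk of $\cl(\bv',\bw+\bw^0)$ along the stratum $\bv^0+\bv''$ must match, up to a uniform shift, the stalk of $\cl(\bv'-\bv^0,\bw)$ along $\bv''$. The paper gets this from the transversal slice theorem, using that $\sigma^*\bw^0-\mathcal{C}_q\bv^0=0$ makes the normal slices agree:
\begin{itemize}
\item it reduces to $(\bv^i,\bw^i)$;
\item it shows that among the $\pi(\bv_1,\bw^i)$ only $\pi(\bv^i,\bw^i)$ contains $\cl(\bv^i,\bw^i)$;
\item it uses $a_{\bv,\bv';\bw}=a_{\bv-\bv',0;\bw-\mathcal{C}_q\bv\sigma^{-1}}$ to match multiplicities in $\pi(\bv,\bw+\bw^i)$ with those in $\pi(\bv-\bv^i,\bw)$;
\item it inducts on $\bv-\bv^i$ to pass from the $\pi$'s to the $\cl$'s.
\end{itemize}
Your fallback does not close the gap. \eqref{eqn: leading term} only gives the top coefficient. ``$L(\bv^i,\bw^i)$ times a dual IC element is dual IC'' is the statement being proved. $v$-centrality plus bar-invariance only makes each coefficient bar-invariant after a fixed shift, not zero.

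Several supporting claims are also false as stated, though they can be repaired. Take $Q=1\to2$ and $\varrho=\Id$.
\begin{itemize}
\item $\Ext^1_{\Lambda^\imath}(\K_1,\K_2)\cong k^2$, so $\K_1\oplus\K_2$ is not rigid.
\item The sequence $0\to\K_2\to P_1\to\K_1\to0$, with Lemma~\ref{lem:K_LR exact if projdim}, puts the projective $P_1$ in the same stratum as $\K_1\oplus\K_2$. So the stratum is not a single orbit.
\item $\Ext^1_{\Lambda^\imath}(S_1,\K_2)\cong k$, so extensions by modules of projective dimension $\le1$ need not split. The corresponding Hall-algebra identity holds only modulo the ideal $\mathcal{I}$.
\end{itemize}
The correct replacements are as follows. $U$ is the open locus of modules of finite projective dimension with fixed $\res_\BH$, so $\cl(\bv^0,\bw^0)|_U$ is still a shifted constant sheaf. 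Lemma~\ref{lem:K_LR exact if projdim} gives $\dimv K_{LR}(N)=\dimv K_{LR}(P')+\dimv K_{LR}(M)$ without any splitting. Since $\ind P'\le1$, the quantity $\dim\Hom(M,P')-\dim\Ext^1(M,P')$ is constant on $U\times\rep(\bw,\cs^\imath)$, so $p$ is indeed a vector bundle there.

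With these fixes, the stalk of your restricted complex at $(P',M)$ is a shift of $\cl(\bv',\bw+\bw^0)_N$ for any extension $N$. You are then left with exactly the slice comparison described above.
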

	\begin{proof}
		By induction it suffices to prove the assertion for $(\bv^0,\bw^0)=(\bv^i,\bw^i)$ for some $i\in Q_0$. For this we need to consider $\widetilde{\Delta}^{\bw+\bw^i}_{\bw^i,\bw}\pi(\bv,\bw+\bw^i)$: by \eqref{eqn:comultiplication} we have
		\begin{equation}\label{lem: multiply by L^0-1}
			\widetilde{\Delta}^{\bw+\bw^i}_{\bw^i,\bw+\bw^i}\pi(\bv,\bw+\bw^i)=\bigoplus_{\bv_1+\bv_2=\bv}v^{d((\bv_1,\bw^i),(\bv_2,\bw))-d((\bv_2,\bw),(\bv_1,\bw^i))}\pi(\bv_1,\bw^i)\boxtimes\pi(\bv_2,\bw).
		\end{equation}
		
		First note that if for some vector $(\bv_1,\bw^i)$ we have $\mathcal{M}(\bv_1,\bw^i,\mathcal{R}^\imath)\neq\emptyset$ and $\mathcal{L}(\bv^i,\bw^i)$ appears in the decomposition of $\pi(\bv_1,\bw^i)$, then $\mathcal{M}_0^{\reg}(\bv^i,\bw^i,\mathcal{R}^\imath)$ is contained in the image of $\mathcal{M}(\bv_1,\bw^i,\mathcal{R}^\imath)$ under $\pi:\mathcal{M}(\bv_1,\bw^i,\mathcal{R}^\imath)\rightarrow\mathcal{M}_0(\bv_1,\bw^i,\mathcal{R}^\imath)$. By the transversal slice theorem \cite[Proposition 3.16]{LW19}, the fiber over a point $x\in\mathcal{M}_0^{\reg}(\bv^i,\bw^i,\mathcal{R}^\imath)$ is isomorphic to the fiber of $\pi:\mathcal{M}(\bv_1^\bot,\bw^\bot,\mathcal{R}^\imath)\rightarrow\mathcal{M}_0(\bv_1^\bot,\bw^\bot,\mathcal{R}^\imath)$ over the origin, where 
		\[\bv^\bot_1=\bv_1-\bv^i,\quad \bw^\bot=\bw^i-{\mathcal C}_q\bv^i\sigma^{-1}=0.\]
		However, we know that $\mathcal{M}(\bv_1^\bot,0,\mathcal{R}^\imath)\neq\empty$ only if $\bv_1^\bot=0$, so the only possibility is $\bv_1=\bv^i$. Moreover, as for the component $\pi(\bv^i,\bw^i)\boxtimes\pi(\bv-\bv^i,\bw)$, we note that
		\begin{align*}
			d(\bw)&:=d((\bv^i,\bw^i),(\bv-\bv^i,\bw^i))-d((\bv-\bv^i,\bw),(\bv^i,\bw^i))\\
			&=\bv^i\cdot\sigma^*\bw-(\sigma^*\bw-\mathcal{C}_q\bv)\cdot\tau^*\bv^i-\sigma^*\bw^i\cdot(\bv-\bv^i+\tau^*\bv^i)\\
			&=(\bv^i-\tau^*\bv^i)\cdot\sigma^*\bw+\mathcal{C}_q\bv\cdot\tau^*\bv^i-\mathcal{C}_q\bv^i\cdot\bv+\sigma^*\bw^i\cdot(\bv^i-\tau^*\bv^i)\\
			&=(\bv^i-\tau^*\bv^i)\cdot\sigma^*\bw+\sigma^*\bw^i\cdot(\bv^i-\tau^*\bv^i)
		\end{align*}
		where we have used the equality $(\mathcal{C}_q\bv_1)\cdot\tau^*\bv_2=\bv_1\cdot(\mathcal{C}_q\bv_2)$ for any $\bv_1,\bv_2\in\N^{\mathcal{R}^\imath_0-\mathcal{S}^\imath_0}$. 
		
		With these observations, we then deduce from \eqref{lem: multiply by L^0-1} that
		\begin{align*}
			\widetilde{\Delta}^{\bw+\bw^i}_{\bw^i,\bw}\pi(\bv,\bw+\bw^i)&=v^{d(\bw)}\pi(\bv^i,\bw^i)\boxtimes\pi(\bv-\bv^i,\bw)+\cdots\\
			&=v^{d(\bw)}\mathcal{L}(\bv^i,\bw^i)\boxtimes\pi(\bv-\bv^i,\bw)+\cdots,
		\end{align*}
		where we have omitted terms with first component $\neq\mathcal{L}(\bv^i,\bw^i)$. Plugging into \eqref{eqn:decomposition theorem}, we then get
		\begin{align*}
			\sum_{0\leq\bv'\leq\bv}\widetilde{\Delta}^{\bw+\bw^i}_{\bw^i,\bw}a_{\bv,\bv';\bw+\bw^i}\mathcal{L}(\bv',\bw+\bw^i)=v^{d(\bw)}\sum_{0\leq\bv''\leq\bv-\bv^i}a_{\bv-\bv^i,\bv'';\bw}\mathcal{L}(\bv^i,\bw^i)\boxtimes\mathcal{L}(\bv'',\bw)+\cdots.
		\end{align*}
		By the transversal slice theorem we have $a_{\bv,\bv';\bw}=a_{\bv-\bv',0;\bw-{\mathcal C}_q\bv\sigma^{-1}}$, and $\mathcal{L}(\bv^i,\bw^i)$ appears in the decomposition of $\widetilde{\Delta}^{\bw+\bw^i}_{\bw^i,\bw}\mathcal{L}(\bv',\bw+\bw^i)$ only if $\bv^i\leq\bv'$. Therefore we can write
		\begin{equation}\label{lem: multiply by L^0-2}
			\begin{aligned}
				&\sum_{\bv^i\leq\bv'\leq\bv}\widetilde{\Delta}^{\bw+\bw^i}_{\bw^i,\bw}a_{\bv-\bv',0;\bw+\bw^i-{\mathcal C}_q\bv'\sigma^{-1}}\mathcal{L}(\bv',\bw+\bw^i)\\
				&=v^{d(\bw)}\sum_{0\leq\bv''\leq\bv-\bv^i}a_{\bv-\bv^i-\bv'',0;\bw-{\mathcal C}_q\bv''\sigma^{-1}}\mathcal{L}(\bv^i,\bw^i)\boxtimes\mathcal{L}(\bv'',\bw)+\cdots.
			\end{aligned}
		\end{equation}
		We also note that by Remark~\ref{remk:strongly dominant iff dominant}, $(\bv-\bv^i,\bw)$ is strongly $l$-dominant if and only if $(\bv,\bw+\bw^i)$ is strongly $l$-dominant.
		
		To finish the proof, it suffices to show that for any $\bv\geq\bv^i$ with $(\bv,\bw+\bw^i)$ is strongly $l$-dominant,
		\[\widetilde{\Delta}^{\bw+\bw^i}_{\bw^i,\bw}\mathcal{L}(\bv,\bw+\bw^i)=v^{d(\bw)}\mathcal{L}(\bv^i,\bw^i)\boxtimes\mathcal{L}(\bv-\bv^i,\bw)+\cdots.\]
		For this we proceed by induction on the height of $\bv-\bv^i$. The case $\bv=\bv^i$ follows directly from \eqref{lem: multiply by L^0-2}. Now let $\bv>\bv^i$ and assume the assertion for any $\bv'$ such that $\bv>\bv'\geq\bv^i$. Then subtracting the terms $\widetilde{\Delta}^{\bw+\bw^i}_{\bw^i,\bw}\mathcal{L}(\bv',\bw+\bw^i)$, $\bv^i\leq\bv'<\bv$ from \eqref{lem: multiply by L^0-2} then implies the assertion for $\mathcal{L}(\bv,\bw+\bw^i)$. This completes the induction step.
	\end{proof}
	\subsection{Reduction to Gorenstein projectives}\label{subsec:Gproj reduction}
	
	In \cite{SS16} the authors consider the subvariety of $\rep(\bw,\mathcal{S})$ consisting of Gorenstein projective $\mathcal{S}$-modules, and show that its quantum Grothendieck ring recovers $\tR$. In this subsection we will extend this result to $\mathcal{S}^\imath$. Let $\rep^{\Gp}(\bw,\mathcal{S}^\imath)$ be the subvariety of $\rep(\bw,\mathcal{S}^\imath)$ consisting of Gorenstein projective $\mathcal{S}^\imath$-modules. By \cite[Lemma 4.5]{SS16}, this is an open subvariety of $\rep(\bw,\mathcal{S}^\imath)$. 
	
	The following lemma allows us to identify the stratification of $\rep^{\Gp}(\bw,\mathcal{S}^\imath)$ inherited from $\rep(\bw,\mathcal{S}^\imath)$. One should compare with Lemma~\ref{lem:strata M_0^reg char}.
	
	\begin{lemma}\label{lem:Gproj unique strata}
		The stratification of $\rep^{\Gp}(\bw,\mathcal{S}^\imath)$ induced by \eqref{eqn:stratification} coincides with the orbit stratification.
	\end{lemma}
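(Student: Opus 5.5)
The plan is to show two things. First, each stratum $\cm_0^{\reg}(\bv,\bw,\mcr^\imath)\cap\rep^{\Gp}(\bw,\cs^\imath)$ is a union of $G_\bw$-orbits. Second, it contains exactly one orbit whenever it is nonempty.

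The first claim is automatic. The stratum of a point $M\in\rep(\bw,\cs^\imath)$ is determined by $\dimv K_{LR}(M)$ (Lemma~\ref{lem:strata iff KLR}), and this vector depends only on the isoclass of $M$. So the strata are $G_\bw$-stable, and the open subvariety $\rep^{\Gp}(\bw,\cs^\imath)$ is itself a union of orbits. It therefore suffices to show: if $M,N$ are Gorenstein projective $\cs^\imath$-modules with $\dimv K_{LR}(M)=\dimv K_{LR}(N)$, then $M\cong N$.

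\textbf{Step 1: identify $K_{LR}$ on Gorenstein projectives.} By Proposition~\ref{thm:iNKS} we have $\Gproj(\cs^\imath)\simeq\proj(\mcr^\imath)$, and this equivalence is realized by $K_L$ (equivalently $K_{LR}$) restricted to $\Gproj(\cs^\imath)$. I would verify that for $M\in\Gproj(\cs^\imath)$ one has $K_L(M)\cong K_{LR}(M)\cong K_R(M)$, and that this module is projective over $\mcr^\imath$ with $\res K_{LR}(M)\cong M$. The argument would follow \cite[Corollary 4.3]{LW21b}, which treats the finite-projective-dimension case. It follows that $K_{LR}(M)$ is a direct sum of indecomposable projectives $\mcr^\imath(-,x)$, $x\in\mcr^\imath_0$. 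These are exactly the indecomposable Gorenstein projectives: the $x\in\mcr^\imath_0-\cs^\imath_0$ give the non-projective ones, and the frozen $x=\sigma c$ give the projective $\cs^\imath$-modules.

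\textbf{Step 2: recover $M$ from the dimension vector.} Write $K_{LR}(M)\cong\bigoplus_x \mcr^\imath(-,x)^{m_x}$. Then $\dimv K_{LR}(M)=\sum_x m_x\,\dimv\mcr^\imath(-,x)$, and I must show that the multiplicities $m_x$ are determined by this vector. The natural tool is the quantum Cartan matrix. By the mesh relations in $\mcr^\imath$ (Auslander--Reiten type sequences in $\proj(\mcr^\imath)$), the quantity $\sigma^*\bw-\mathcal{C}_q\bv$ evaluated on the vector of $\mcr^\imath(-,x)$ should pick out $x$. The frozen summands should then be read off from $\bw$ minus the contribution of the non-frozen ones. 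Concretely, I expect $\sigma^*\bw-\mathcal{C}_q\bv=\sum_{x\notin\cs^\imath_0} m_x\,\e_x$ up to a shift by $\tau$, together with a separate count on frozen vertices. Once the $m_x$ are known, $K_{LR}(M)$ is determined, hence so is $M\cong\res K_{LR}(M)$.

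\textbf{Main obstacle.} The hardest part is Step 2: proving the linear independence, or triangularity, of the vectors $\dimv\mcr^\imath(-,x)$ in the $\imath$-setting, where $F^\imath=\Sigma\widehat{\btau}$ makes the orbit category cyclic. If the direct mesh computation becomes messy, I would fall back on a different argument. Gorenstein projective modules are determined by their images in the stable category $\ul{\Gproj}(\cs^\imath)\simeq\cd_{sg}(\iLa)$, together with their projective parts. Here $\bv$ records $\dim\Hom$ from the indecomposables, which determines the object by Auslander's theorem for the finite-type category $\cp^\imath$. The projective part is then fixed by $\bw$.
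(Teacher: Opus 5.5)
Your reduction is the same as the paper's: strata are unions of $G_\bw$-orbits, so it suffices to show that a Gorenstein projective $\mathcal{S}^\imath$-module $M$ is determined by $\dimv K_{LR}(M)$. The gap is Step~1, which is false as soon as $M$ is not projective.

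Every module of the form $K_{LR}(N)$ is bistable. It is a quotient of $K_L(N)$, whose top lies over $\mathcal{S}^\imath_0$, and a submodule of $K_R(N)$, whose socle lies over $\mathcal{S}^\imath_0$ (cf.\ Lemma~\ref{lem:strata iff KLR}). By contrast, for a non-frozen vertex $x$ the projective $\mathcal{R}^\imath(-,x)$ has top $S_x$, so it is not costable. Hence $K_{LR}(M)$ never has such a summand, and it is projective only when $M$ is.

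The quasi-inverse of restriction $\proj(\mathcal{R}^\imath)\to\Gproj(\mathcal{S}^\imath)$ is $K_R$ (by adjunction and full faithfulness), not $K_{LR}$. The isomorphisms $K_L\cong K_{LR}\cong K_R$ of \cite[Corollary 4.3]{LW21b} genuinely need finite projective dimension. A concrete failure occurs in the split rank-one case of \S\ref{sec:dCB irank I}:
\begin{itemize}
\item $\mathcal{S}^\imath\simeq k[\epsilon]/(\epsilon^2)$, and the simple module $S$ is Gorenstein projective;
\item $\rep(1,\mathcal{S}^\imath)$ is a point, so $\dimv K_{LR}(S)=(0,1)$;
\item $K_R(S)=\mathcal{R}^\imath(-,\tS_1)$ has dimension vector $(1,1)$.
\end{itemize}
So Step~2, which expands $\dimv K_{LR}(M)$ in dimension vectors of indecomposable projective $\mathcal{R}^\imath$-modules, has nothing to stand on. In fact, for non-frozen $x$, $\bv(x)=\dim K_{LR}(M)(x)$ is the dimension of the space of maps $\res\mathcal{R}^\imath(-,x)\to M$ that factor through $\proj(\mathcal{S}^\imath)$.

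Your fallback does not repair this as written, for two reasons.
\begin{itemize}
\item By the last remark, $\bv$ does not record $\dim\ul{\Hom}$ from the indecomposables, so Auslander's theorem on $\ul{\Gproj}(\mathcal{S}^\imath)$ cannot be applied directly.
\item $\bw$ alone does not fix the projective part when $\varrho\neq\Id$. The modules $\Lambda^\imath e_i$ and $\Lambda^\imath e_{\varrho i}$ are non-isomorphic but both have dimension vector $\dimv P_i+\dimv P_{\varrho i}$; only $\bv$ separates them.
\end{itemize}

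The missing mechanism is the one the paper uses:
\begin{enumerate}
\item Write $M=P\oplus G$ with $G$ projective-free. Then $\dimv K_{LR}(P)\in({}^\imath V^0,{}^\imath W^0)$.
\item Let $X$ be the unique $kQ$-module with $G\cong X$ in $\cd_{sg}(\iLa)$. Take a sequence $0\to P_X\to G_X\to X\to 0$ with $G_X$ Gorenstein projective, $P_X$ projective, and $G_X\cong P'\oplus G$.
\item Lemma~\ref{lem:K_LR exact if projdim} then shows that the $({}^\imath V^+,{}^\imath W^+)$-component of $\dimv K_{LR}(M)$ equals $\dimv K_{LR}(X)$. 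This determines $X$ by Lemma~\ref{lem:unique V^+W^+}, and hence $G$.
\item Finally, $P$ is recovered from the full vector $\dimv K_{LR}(P)$ via the pairs $(\bv^i,\bw^i)$ (Lemma~\ref{lem:R0 subalgebra}).
\end{enumerate}
Your instinct that $\sigma^*\bw-\mathcal{C}_q\bv$ detects the stable part is right, but only after this passage from $G$ to the $kQ$-module $X$.
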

	
	\begin{proof}
		For simplicity, for an $\mathcal{R}^\imath$-module $X$ we write $\dimv X=\dimv^0X+\dimv^+X$ for the decomposition given by Lemma~\ref{lem:DP3}, where $\dimv^0X\in({^\imath V^0},{^\imath W^0})$ and $\dimv^+X\in({^\imath V^+},{^\imath W^+})$.
		
		It suffices to prove that any Gorenstein projective $\mathcal{S}^\imath$-module $M$ is determined (up to isomorphism) by $\dimv K_{LR}(M)$. Now $M$ can be written as $P\oplus G$, where $P$ is a projective $\Lambda^\imath$-module and $G$ has no projective summands. We then have
		\[\dimv K_{LR}(M)=\dimv K_{LR}(P)+\dimv K_{LR}(G).\]
		Since $P$ is a projective $\Lambda^\imath$-module, we have $\dimv K_{LR}(P)\in ({^\imath V^0},{^\imath W^0})$ by \cite[Corollary 4.3]{LW21b}, so $\dimv^+K_{LR}(M)=\dimv^+K_{LR}(G)$. By \cite[Theorem 3.18]{LW19}, we can find a unique $kQ$-module $X$ such that $G\cong X$ in $\cd_{sg}(\Lambda^{\imath})\simeq \ul{\Gproj}(\Lambda^\imath)$. If we choose a short exact sequence
		\[\begin{tikzcd}
			0\ar[r]&P_X\ar[r]&G_X\ar[r]&X\ar[r]&0
		\end{tikzcd}\]
		such that $G_X$ is Gorenstein projective and $P_X$ is projective, then we have $G_X\cong X\cong G$ in $\cd_{sg}(\Lambda^{\imath})\simeq \ul{\Gproj}(\Lambda^\imath)$. Since $G$ has no projective summands, this means there exists a projective $\Lambda^\imath$-modules $P'$ such that $G_X\cong P'\oplus G$. In particular,
		\[\dimv K_{LR}(P')+\dimv K_{LR}(X)=\dimv K_{LR}(G_X).\]
		Again, we know that $\dimv K_{LR}(P')$ is in $({^\imath V^0},{^\imath W^0})$, so 
		\[\dimv^+K_{LR}(G)=\dimv^+K_{LR}(G_X).\]
		On the other hand, from Lemma~\ref{lem:K_LR exact if projdim} we get
		\[\dimv K_{LR}(G_X)=\dimv K_{LR}(P_X)+\dimv K_{LR}(X),\]
		so $\dimv^+K_{LR}(G_X)=\dimv^+K_{LR}(X)$. 
		
		To summarize, we have proved that for any Gorenstein projective $\mathcal{S}^\imath$-module $M=P\oplus G$, with $G$ having no projective summands, there exists a unique $kQ$-module $X$ such that
		\[\dimv^+K_{LR}(M)=\dimv^+K_{LR}(X).\]
		Using Lemma~\ref{lem:unique V^+W^+}, we know that $X$ is uniquely determined by $\dimv^+K_{LR}(M)$. Moreover, $G$ is determined by $X$ in the sense that it is the component of $G_X$ with no projective summands. Also note that once $G$ is determined, the dimension vector $\dimv K_{LR}(P)=\dimv K_{LR}(M)-\dimv K_{LR}(G)$ is also determined, and $P$ is uniquely determined (up to isomorphism) by $\dimv K_{LR}(P)$ (see \cite[Corollary 4.3]{LW21b} and Lemma~\ref{lem:R0 subalgebra}, note that indecomposable projective $\mathcal{S}^\imath$-modules form a basis of the Grothendieck group $K_0(\mathcal{P}^{\leq1}(\mathcal{S}^\imath))$). The lemma is therefore proved.
	\end{proof}

	For any Gorenstein projective $\mathcal{S}^\imath$-module $M$, by Lemma~\ref{lem:Gproj unique strata} there exists a unique strongly $l$-dominant pair $(\bv,\bw)$ such that $\dimv K_{LR}(M)=(\bv,\bw)$. We denote by $\mathcal{L}(M)$ the pullback of $\mathcal{L}(\bv,\bw)$ to the orbit of $M$, which is also the intersection cohomology complex associated to the constant local system on $\mathfrak{O}_M$. Let $i:\rep^{\Gp}(\bw,\mathcal{S}^\imath)\hookrightarrow \rep(\bw,\mathcal{S}^\imath)$ denotes the natural inclusion. Then $i^*$ commutes with the restriction functor $\Delta^{\bw}_{\bw_1,\bw_2}$ \cite[Lemma 4.9]{SS16}. We denote by $\tR^{\imath,\Gp}$ the quantum Grothendieck ring defined for $\rep^{\Gp}(\bw,\mathcal{S}^\imath)$ and let $L(M)$ be the dual basis of $\mathcal{L}(M)$. Then there is a natural embedding
	\[\hR^{\imath,\Gp}\hookrightarrow \hRi,\quad L(M)\mapsto L(\bv,\bw)\]
	where $\dimv K_{LR}(M)=(\bv,\bw)$. The following theorem is proved in \cite{SS16} for $\tR$, and it can be extended to $\tRi$.

	\begin{theorem}\label{tRi reduction to Gp}
		The pullback map induces an isomorphism $\tR^{\imath,\Gp}\stackrel{\sim}{\to}\tRi$, which preserves dual canonical basis.
	\end{theorem}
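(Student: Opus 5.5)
We follow the strategy of \cite{SS16}. Restricting along the open immersion $i:\rep^{\Gp}(\bw,\mathcal{S}^\imath)\hookrightarrow\rep(\bw,\mathcal{S}^\imath)$ gives $i^*\mathcal{L}(\bv,\bw)=\mathcal{L}(M)$ if the stratum $\cm_0^{\reg}(\bv,\bw,\mcr^\imath)$ meets the Gorenstein projective locus (with $M$ the unique Gorenstein projective module in it, by Lemma~\ref{lem:Gproj unique strata}), and $i^*\mathcal{L}(\bv,\bw)=0$ otherwise; IC sheaves restrict to IC sheaves on open subsets. Since $i^*$ commutes with $\Delta^{\bw}_{\bw_1,\bw_2}$ \cite[Lemma 4.9]{SS16}, the induced map $K^{\gr}(\mod\cs^\imath)\to K^{\gr,\Gp}$ is a coalgebra homomorphism. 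Its transpose is therefore an algebra homomorphism $\hR^{\imath,\Gp}\to\hRi$, and it sends $L(M)$ to $L(\bv,\bw)$ with $\dimv K_{LR}(M)=(\bv,\bw)$. This map is the natural embedding recorded before the theorem, so it is injective and compatible with the dual IC bases.

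Next we identify its image: the $L(\bv,\bw)$ whose strata contain a Gorenstein projective module. Every $\Lambda^\imath$-module $N$ lies in an exact sequence $0\to P_N\to G_N\to N\to 0$ with $G_N$ Gorenstein projective and $\pd P_N\le1$. By Lemma~\ref{lem:K_LR exact if projdim}, $\dimv K_{LR}(G_N)=\dimv K_{LR}(N)+\dimv K_{LR}(P_N)$, and $\dimv K_{LR}(P_N)\in({^\imath V^0},{^\imath W^0})$ by \cite[Corollary 4.3]{LW21b}. Hence for any strongly $l$-dominant $(\bv,\bw)$ there is $(\bv^0,\bw^0)$ with $\sigma^*\bw^0-\mathcal C_q\bv^0=0$ such that $(\bv+\bv^0,\bw+\bw^0)$ is the stratum of a Gorenstein projective module. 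By Lemma~\ref{lem:R0 subalgebra}, $(\bv^0,\bw^0)$ is an $\N$-combination of the $(\bv^j,\bw^j)$. The strata $(\bv^j,\bw^j)$ themselves come from projective modules, so the $L(\bv^j,\bw^j)$ lie in the image.

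Then we show the localized map is onto. Lemma~\ref{lem: multiply by L^0} gives $L(\bv^0,\bw^0)\cdot L(\bv,\bw)\in v^{\frac12\Z}L(\bv+\bv^0,\bw+\bw^0)$. So the image of $\hR^{\imath,\Gp}$ contains $L(\bv^0,\bw^0)L(\bv,\bw)$ up to a power of $v^{1/2}$. After inverting the $L(\bv^j,\bw^j)$, every $L(\bv,\bw)$ lies in the image, as a power of $v^{1/2}$ times a product of inverted $L(\bv^j,\bw^j)$ and a basis element of the Gorenstein projective ring. We therefore define $\tR^{\imath,\Gp}$ as the localization at the $L(\bv^j,\bw^j)$, and the map extends to the localizations. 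Injectivity persists because $\hRi$ embeds in $\tRi$ ($L(\bv^i,\bw^i)$ is a non-zero-divisor, \cite[Lemmas 5.2, 5.3]{LW21b}). This gives the isomorphism $\tR^{\imath,\Gp}\cong\tRi$. The dual canonical basis of either side is $\{L(\bv^0,\bw^0)^{\pm}\cdot L(\bv,\bw)\}$ normalized, and by the same lemma the isomorphism matches these bases up to powers of $v^{1/2}$ that are fixed by the twisting convention.

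The main obstacle is the third step. We must check that the powers of $v^{1/2}$ from Lemma~\ref{lem: multiply by L^0} are consistent on both sides, so that basis elements correspond exactly rather than up to a scalar. We must also check that the localization is well defined on the Gorenstein projective side. For the scalar issue, I would compute both products using the same $d(\bw)$ formula: the Gorenstein projective convolution is the restriction of the original one, so the exponents coincide.
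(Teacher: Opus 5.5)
Your route is essentially the paper's: the transpose of $i^*$ gives the embedding $L(M)\mapsto L(\bv,\bw)$, and Gorenstein projective approximations together with Lemma~\ref{lem:K_LR exact if projdim} and Lemma~\ref{lem: multiply by L^0} give surjectivity after localizing. The localization step, however, has a genuine gap.

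\textbf{The gap.} You claim that the strata $(\bv^j,\bw^j)$ ``come from projective modules'', so that $L(\bv^j,\bw^j)$ lies in the image of $\hR^{\imath,\Gp}$, and you then localize $\hR^{\imath,\Gp}$ at these elements. This is false in general.
\begin{itemize}
\item By Lemma~\ref{lem:strata M_0^reg char}, the stratum of $(\bv^j,\bw^j)$ consists of modules that vanish in $\cd_{sg}(\Lambda^\imath)$, i.e.\ modules of finite projective dimension. A Gorenstein projective module of finite projective dimension is projective.
\item In degree $\bw^j$ the modules in this stratum are the $\E_i$, and $\E_i$ is projective only when $i$ is a sink of $Q$.
\item For example, take $Q\colon 1\to 2$ with $\varrho=\Id$. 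Every indecomposable projective $\Lambda^\imath$-module has socle $S_2$. Hence no nonzero module of dimension vector $2\alpha_1$ embeds in a projective, and $\rep^{\Gp}(2\alpha_1,\mathcal{S}^\imath)=\emptyset$. The Gorenstein projective ring is zero in that degree, so $L(\bv^1,\bw^1)$, the class of $\E_1$, is not in its image.
\end{itemize}
Consequently ``localizing $\hR^{\imath,\Gp}$ at the $L(\bv^j,\bw^j)$'' is not defined. Your surjectivity argument also writes each $L(\bv,\bw)$ using inverses of the $L(\bv^j,\bw^j)$, and you have not shown these lie in the image.

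\textbf{The fix.} This is what the paper does, citing \cite[Lemma 5.9]{SS16} for the $L(\bv^0,\bw^0)$ part, and it also repairs your scheme.
\begin{itemize}
\item Localize $\hR^{\imath,\Gp}$ at the classes $L(P)$ with $P$ projective.
\item Their images are invertible in $\tRi$. Indeed, $\dimv K_{LR}(P)$ is an $\N$-combination of the $(\bv^j,\bw^j)$ by \cite[Corollary 4.3]{LW21b} and Lemma~\ref{lem:R0 subalgebra}. So Lemma~\ref{lem: multiply by L^0} shows that $L(P)$ is, up to a power of $v^{1/2}$, a product of $L(\bv^j,\bw^j)$'s. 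Hence the embedding extends to the localization.
\item To recover $L(\bv^j,\bw^j)$ itself, take a projective resolution $0\to P_1\to P_0\to \E_i\to 0$ of the module in that stratum; it exists since $\pd \E_i\le 1$.
\item Lemma~\ref{lem:K_LR exact if projdim} and Lemma~\ref{lem: multiply by L^0} then give $L(P_1)\,L(\bv^j,\bw^j)\in v^{\frac12\Z}L(P_0)$. So both $L(\bv^j,\bw^j)$ and its inverse lie in the image of the localized Gorenstein projective ring.
\end{itemize}
Only after this does your approximation argument yield surjectivity. The paper needs the approximation only for $kQ$-modules $X$, via $0\to P_X\to G_X\to X\to 0$.
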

	\begin{proof}
		In view of Lemma~\ref{lem: multiply by L^0} it suffices to show that for any $(\bv^0,\bw^0)\in({^\imath V^0},{^\imath W^0})$ and $(\bv^+,\bw^+)\in({^\imath V^+},{^\imath W^+})$, the elements $L(\bv^0,\bw^0)$ and $L(\bv^+,\bw^+)$ belong to the image of $\tR^{\imath,\Gp}$. The case for $L(\bv^0,\bw^0)$ is proved exactly as in \cite[Lemma 5.9]{SS16}, so we concentrate on $L(\bv^+,\bw^+)$. In this case let $X$ be the unique $k Q$-module with $\dimv K_{LR}(X)=(\bv^+,\bw^+)$, and choose a short exact sequence $0\to P_X\to G_X\to X\to0$ 
		such that $G_X$ is Gorenstein projective and $P_X$ is projective. In this case
		\[\dimv K_{LR}(P_X)+\dimv K_{LR}(X)=\dimv K_{LR}(G_X)\]
		and $\dimv K_{LR}(P_X)\in({^\imath V^0},{^\imath W^0})$, so by Lemma~\ref{lem: multiply by L^0},
		\[L(P_X)L(\bv^+,\bw^+)\in v^{\frac{1}{2}\Z}L(G_X).\]
		We can therefore write $L(\bv^+,\bw^+)\in v^{\frac{1}{2}\Z}L(P_X)^{-1}L(G_X)$, and the assertion follows.
	\end{proof}

	
	\subsection{Dual canonical bases of $\tU^+$}
	\label{dCB of HA subsec}
	
	For a Dynkin quiver $Q$, we let $\widetilde{\ch}(\bfk Q)$ be the twisted Hall algebra of $\mod(\bfk Q)$ over $\Q(\sqq^{1/2})$, that is, 
	$$[M]\cdot [N]=\sqq^{\langle M,N\rangle_Q}\sum_{[L]}\frac{|\Ext^1(M,N)_L|}{|\Hom(M,N)|}[L],\quad \forall M,N\in\mod(\bfk Q).$$
	Let $\widetilde{\ch}(Q)$ be the generic version; see \cite[\S4.1]{LP25}. Then $\widetilde{\ch}(Q)$ has a basis $\{\fu_\lambda\mid \lambda\in\fp\}$. There exists an isomorphism of $\Q(v^{1/2})$-algebras
	\begin{align*}
		\psi^+:\tU^+\longrightarrow \widetilde{\ch}(Q),\quad E_i\mapsto v^{-\frac{1}{2}}\fu_{\alpha_i},\quad\forall i\in\I.
	\end{align*}

	Similarly, there exists an isomorphism of $\Q(v^{1/2})$-algebras 
	\begin{align*}
		\kappa^+:\tU^+\longrightarrow \widetilde{\mathbf{R}}^+,\quad E_i\mapsto L(0,\e_{\sigma\tS_i}),\quad \forall i\in\I.
	\end{align*}
	Note that there is an algebra isomorphism 
	\[\Omega^+:\widetilde{\ch}(Q)\longrightarrow \tR^+,\quad \fu_{\alpha_i}\mapsto v^{\frac{1}{2}}L(0,\e_{\sigma\tS_i}),\quad\forall i\in\I.\]
	So we obtain a commutative diagram
	\begin{equation}\label{eq:diagram of kappa psi Omega}
		\begin{tikzcd}[row sep=5mm,column sep=5mm]
			{}&\tR^+\\
			\U^+\ar[ru,"\kappa^+"]\ar[rd,swap,"\psi^+"]&{}\\
			{}&\widetilde{\ch}(Q)\ar[uu,swap,"\Omega^+"]
		\end{tikzcd}
	\end{equation}
	
	For $\lambda\in\fp$, we view $\mathfrak{U}_\lambda$ defined in \eqref{eq:HA element H_lambda} to be in $\widetilde{\ch}(Q)$. The bar-involution of $\tU^+$ induces the one of $\widetilde{\ch}(Q)$, which is determined by $\ov{\fu_{\alpha_i}}=\fu_{\alpha_i}$, for $i\in\I$. 
	\begin{theorem}
		\label{thm:dualCB-U+}
		For $\lambda\in\fp$, there exists a unique element $\mathfrak{C}_\lambda\in\widetilde{\ch}(Q)$ such that $\ov{\mathfrak{C}_\lambda}=\mathfrak{C}_\lambda$ and 
		\[\mathfrak{C}_\lambda-\mathfrak{U}_\lambda\in\sum_{\mu\in\mathfrak{P}}v^{-1}\Z[v^{-1}]\mathfrak{U}_\mu.\]
		Moreover, $\mathfrak{C}_\lambda$ satisfies
		\[\mathfrak{C}_\lambda-\mathfrak{U}_\lambda\in\sum_{\lambda\prec\mu}v^{-1}\Z[v^{-1}]\mathfrak{U}_\mu.\]
	\end{theorem}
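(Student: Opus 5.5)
This is an instance of Lusztig's Lemma (in the form of \cite{BZ14}) applied to the basis $\{\mathfrak{U}_\lambda\mid\lambda\in\fp\}$ of $\widetilde{\ch}(Q)$ with the orbit-closure order $\prec$ on $\fp$. It is also the diagonal-free part of Theorem~\ref{iHA dCB theorem}. The plan is to verify the two hypotheses of Lusztig's Lemma and then read off existence, uniqueness and the triangularity refinement.

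\textbf{Step 1 (triangularity of the bar-involution).} For each $\lambda$ I want
\[
\ov{\mathfrak{U}_\lambda}\in\mathfrak{U}_\lambda+\sum_{\lambda\prec\mu}\cz\,\mathfrak{U}_\mu .
\]
The argument I would use is Lusztig's. Write $\mathfrak{U}_\lambda$, up to a power of $v^{1/2}$, as a monomial in PBW root vectors, ordered by an adapted ordering of $\Phi^+$. Take a product of divided powers of simples $\fu_{\alpha_i}$ following a reduced expression adapted to $Q$ (the monomial basis). It is bar-invariant and expands as $\mathfrak{U}_\lambda$ plus terms $\mathfrak{U}_\mu$ with $\mathfrak{O}_{M(\mu)}$ degenerating from $\mathfrak{O}_{M(\lambda)}$, i.e. $\lambda\prec\mu$. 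This holds because the Hall product of semisimple-supported extensions only produces modules in the closure of the generic extension.

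Inverting this unitriangular relation gives the triangularity above. The coefficients lie in $\cz$ because the Hall polynomials are integral; compare Lemma~\ref{lem:bar on iHall integral}. The exact normalization $v^{-\dim\End+\frac12\langle\cdot,\cdot\rangle}$ in \eqref{eq:HA element H_lambda} is precisely what makes the leading coefficient equal to $1$ rather than a power of $v$. I expect this normalization check to be the main obstacle. I would verify it via the formula $\dim\mathfrak{O}_{M(\lambda)}=\dim E_{Q,\bd}-\dim\End+\langle\bd,\bd\rangle$, since $\dim\End-\langle\bd,\bd\rangle=\dim\Ext^1$ is the codimension of the orbit. Alternatively one can shortcut through diagram \eqref{eq:diagram of kappa psi Omega} and Lusztig's geometric result: the bar-involution corresponds to Verdier duality, and $\Omega^+(\mathfrak{U}_\lambda)$ is the dual of the constant sheaf on the orbit, suitably shifted.

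\textbf{Step 2 (finiteness).} For fixed dimension vector $\bd=\dimv M_q(\lambda)$ only finitely many $\mu$ satisfy $\lambda\preceq\mu$. So $\prec$ restricted to each weight space is a finite poset, and the bar-involution preserves weight spaces.

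\textbf{Step 3 (Lusztig's Lemma).} Given Steps 1 and 2, Lusztig's Lemma yields a unique bar-invariant $\mathfrak{C}_\lambda$ with
\[
\mathfrak{C}_\lambda-\mathfrak{U}_\lambda\in\sum_{\lambda\prec\mu}v^{-1}\Z[v^{-1}]\,\mathfrak{U}_\mu .
\]
The construction is by induction downward along $\prec$, solving $\ov{p_{\lambda\mu}}-p_{\lambda\mu}=\sum(\dots)$ one coefficient at a time.

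For uniqueness under the weaker condition (sum over all $\mu$), suppose two such elements exist. Their difference $x$ is bar-invariant and lies in $\sum_\mu v^{-1}\Z[v^{-1}]\mathfrak{U}_\mu$. Expand $x$ in the basis $\mathfrak{C}_\mu$, which is unitriangular with off-diagonal entries in $v^{-1}\Z[v^{-1}]$. Then $x=\sum_\mu c_\mu\mathfrak{C}_\mu$ with each $c_\mu$ bar-invariant and lying in $v^{-1}\Z[v^{-1}]$. To see this, look at a $\prec$-minimal $\mu$ with $c_\mu\neq0$: its coefficient in the $\mathfrak{U}$-expansion is exactly $c_\mu$. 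A bar-invariant element of $v^{-1}\Z[v^{-1}]$ is $0$, so $x=0$.
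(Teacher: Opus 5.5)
Your overall architecture is the one behind this theorem: Lusztig's Lemma \cite{BZ14} applied to $\{\mathfrak{U}_\lambda\}$ with the orbit-closure order. The paper states the theorem without proof, as the $\tU^+$-analogue of Theorem~\ref{iHA dCB theorem} from \cite{LP25}. Your Steps 2 and 3 are fine, and the triangularity you aim for in Step 1 is true. The gap is the argument you give for Step 1; the normalization check you flag is not the real obstacle.

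\emph{The bar map is an anti-involution} (Lemma~\ref{QG bar-involution def}). A product of divided powers of distinct simples is sent to the reversed product, so it is not bar-invariant.

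\emph{The direction is backwards}, which is the more serious problem. In the paper's convention, $\lambda\prec\mu$ means $\mathfrak{O}_{M(\lambda)}\subset\ov{\mathfrak{O}_{M(\mu)}}$; compare $\psi^+(\mathfrak{B}_\lambda)-\mathfrak{E}_\lambda\in\sum_{\mu\prec\lambda}v^{-1}\Z[v^{-1}]\mathfrak{E}_\mu$ and the order in \S\ref{subsec:dCB compare}. So the terms correcting $\mathfrak{U}_\lambda$ are \emph{more generic} orbits. By contrast, ``the closure of the generic extension'' only controls \emph{more degenerate} ones. Your phrase ``$\mathfrak{O}_{M(\mu)}$ degenerating from $\mathfrak{O}_{M(\lambda)}$, i.e.\ $\lambda\prec\mu$'' conflates the two directions.

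\emph{The diagonal is not unitriangular.} $\fu_\lambda$ is a dual PBW basis (cf.\ \eqref{eq:TE}). In a monomial it is the split, most degenerate term that has a unit coefficient, while every non-split term carries a factor divisible by $v^2-1$. Inverting monomial expansions therefore gives a relation triangular in the wrong direction with non-unit diagonal, which cannot feed Lusztig's Lemma.

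For example, take $Q\colon 1\to 2$ and $u_i=\psi^+(E_i)=\mathfrak{U}_{\alpha_i}$. One finds $u_2u_1=v^{1/2}\mathfrak{U}_{S_1\oplus S_2}$ and $u_1u_2=v^{-1/2}\mathfrak{U}_{S_1\oplus S_2}+(v^{1/2}-v^{-3/2})\mathfrak{U}_{P_1}$. Since $\ov{u_1u_2}=u_2u_1$, this yields $\ov{\mathfrak{U}_{S_1\oplus S_2}}=\mathfrak{U}_{S_1\oplus S_2}+(v-v^{-1})\mathfrak{U}_{P_1}$ and $\mathfrak{C}_{S_1\oplus S_2}=\mathfrak{U}_{S_1\oplus S_2}-v^{-1}\mathfrak{U}_{P_1}$. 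The correction is by the generic orbit, so the bound you actually need is the opposite one: every extension degenerates to the split extension.

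Your opening move is the right start: write $\fu_\lambda$, up to a power of $v$, as an ordered product of powers of root vectors $\fu_{M_\beta}$ along a representation-directed order, so that only split extensions occur. Instead of passing to monomials in simples, do the following.
\begin{enumerate}
\item Show each $\mathfrak{U}_\beta$ is bar-invariant, e.g.\ inductively via the commutator formula from Lemma~\ref{lem:short exact sequence}, as used in the proof of Theorem~\ref{prop:Hall-sheaf-integ}.
\item Apply the anti-involution, which reverses the product.
\item In the reversed product, the split term appears with a power of $v$, equal to $1$ by the normalization \eqref{eq:HA element H_lambda}. All other terms are non-split extensions, hence strictly more generic, with coefficients in $\cz$.
\end{enumerate}
Alternatively, transpose Lusztig's triangularity for his PBW basis through the pairing \eqref{eq:hopf-Hall}; this underlies the paper's identification $\psi^+(v^{\frac12 n_\lambda}\mathfrak{B}_\lambda^*)=\mathfrak{C}_\lambda$. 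You could also actually carry out your geometric shortcut via Verdier duality, as is done for $H_{[M]}$ in \S\ref{subsec:dCB compare}.
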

	
	Let $\mathbf{B}^-$ be 
	Lusztig's canonical basis of $\U^-$ (via the generators $\cf_i:=\frac{E_i}{v-v^{-1}}$ ($i\in\I$)). 
	Following \cite{Ka91}, we define a Hopf pairing on $\U^-\otimes\U^+$ by
	\begin{align} 
		\label{eq:hopf}(F_i,E_j)_{K}=\delta_{ij}(v-v^{-1}).
	\end{align}
	
	For $b\in\mathbf{B}^-$, we denote by $\delta_b\in\U^+$ the dual basis of $b$ under the pairing $(\cdot,\cdot)_K$. We define a norm function $N:\Z^\I\rightarrow\Z$ by
	\[N(\alpha)=\frac{1}{2}(\alpha,\alpha)_Q-\eta(\alpha).\]
	where $\eta:\Z^\I\rightarrow\Z$ is the augmentation map defined by $\eta(\sum_ia_i\alpha_i)=\sum_ia_i$ (cf. \cite{GLS13,HL15}). The rescaled dual canonical basis of $\U^+$ is then defined to be
	\[\widetilde{\mathbf{B}}^+:=\{v^{\frac{1}{2}N(-\deg(b))}\delta_b\mid b\in\mathbf{B}^-\}.\]

	\begin{theorem}[{\cite{HL15, Qin}}]\label{dCB of U^+ by L}
		Under the isomorphism $\kappa^+:\tU^+\rightarrow\tR^+$, $E_i\mapsto L(0,\e_{\sigma\tS_i})$, the basis $\{L(\bv,\bw)\mid (\bv,\bw)\in(V^+,W^+)\}$ gets identified with the rescaled dual canonical basis $\widetilde{\mathbf{B}}^+$ of $\tU^+$. 
	\end{theorem}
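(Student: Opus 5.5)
The plan is to reduce the statement to Lusztig's geometric construction of the canonical basis of $\U^-$ via perverse sheaves on $E_{Q,\bw}$, and then to match normalizations.

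\emph{Step 1.} For $\bw\in W^+$, the $\cs$-modules of dimension vector $\bw$ are exactly the $kQ$-modules of dimension vector $\bw$, viewed through the quotient $\Lambda\to kQ$. So $\rep(\bw,\cs)$ identifies with Lusztig's space $E_{Q,\bw}$. By Lemma~\ref{lem:unique V^+W^+}, the stratum $\cm_0^{\reg}(\bv,\bw)$ attached to an $l$-dominant pair $(\bv,\bw)\in(V^+,W^+)$ is a single $G_\bw$-orbit $\mathfrak{O}_N$, with $N$ the unique $kQ$-module satisfying $\dimv K_{LR}(N)=(\bv,\bw)$. Consequently $\cl(\bv,\bw)=\IC(\ol{\mathfrak{O}_N})$, and these are precisely the simple perverse sheaves of Lusztig's category $\cq_\bw$ (for $Q$ Dynkin every simple object of $\cq_\bw$ is such an orbit IC sheaf).

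\emph{Step 2.} For $\bw_1,\bw_2\in W^+$, the convolution diagram defining $\widetilde\Delta^\bw_{\bw_1,\bw_2}=p_!q^*$ is literally Lusztig's restriction diagram on $E_{Q,\bw}$. Hence $K^{\mathrm{gr}}$ restricted to $W^+$, with the twisted comultiplication \eqref{eq:tw}, is Lusztig's Grothendieck group $K(\cq)$ with its restriction, up to an explicit power $v^{\frac12\langle\bw_1,\bw_2\rangle_{Q,a}}$ and the shift convention in \eqref{eq:piPS}. Lusztig's theorem then identifies $(K(\cq),\mathrm{Ind},\mathrm{Res})$ with $\U^-$, with simple perverse sheaves corresponding to the canonical basis $\mathbf{B}^-$ and restriction corresponding to the twisted coproduct. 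Taking graded duals, $\tR^+$ becomes the dual of $\U^-$ under a nondegenerate pairing, hence isomorphic to $\U^+$. The basis $L(\bv,\bw)$, being dual to $\cl(\bv,\bw)$, corresponds to $\{\delta_b\}$ up to scalars. I will check that this identification agrees with $\kappa^+$ on the generators $E_i\mapsto L(0,\e_{\sigma\tS_i})$. Since $\rep(\alpha_i,\cs)$ is a point, this comes down to checking that the pairing \eqref{eq:hopf} matches the perverse-sheaf pairing on degree-one pieces, up to the factor $(v-v^{-1})$ absorbed by $\cf_i$.

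\emph{Step 3: normalization, the main obstacle.} It remains to show that the scalar relating $L(\bv,\bw)$ and $\delta_b$ is exactly $v^{\frac12N(-\deg b)}$. Three things must be compared:
\begin{itemize}
\item the twist $v^{\frac12\langle-,-\rangle_{Q,a}}$ against Lusztig's twist $v^{-(\cdot,\cdot)/2}$-type factors in the coproduct;
\item the Kashiwara pairing against Lusztig's pairing, which produces the factors $(1-v^{2})^{-\eta(\alpha)}$-type contributions and hence the $\eta$-term in $N$;
\item the cohomological shift by $\dim\cm(\bv,\bw)$ in \eqref{eq:piPS}.
\end{itemize}
My first attempt will avoid tracking the constant directly. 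Both families are characterized by two properties: bar-invariance, for a suitable twisted bar-involution on $\tR^+$, and unitriangularity with off-diagonal coefficients in $v^{-1}\Z[v^{-1}]$ relative to the dual PBW/orbit basis $\chi(\bv,\bw)$. The rescaling by $v^{\frac12N}$ is exactly what turns Kashiwara's dual canonical basis into a bar-invariant basis for the twisted structure. Once both are shown bar-invariant and triangular, Lusztig's Lemma gives equality. To fix the constant I will compute rank one, where $L(0,n\e_{\sigma\tS_i})$ is the dual of the constant sheaf on a point and $\delta_{\cf_i^{(n)}}$ is explicit. Multiplicativity of the twist along $\deg$ then forces the exponent to be the quadratic-minus-linear function $N$.
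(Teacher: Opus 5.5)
Your Steps 1 and 2 are sound. For $\bw\in W^+$ all $\varepsilon$-arrows act by zero, so $\rep(\bw,\cs)=E_{Q,\bw}$. Lemmas~\ref{lem:strata iff KLR} and~\ref{lem:unique V^+W^+} make each stratum a single orbit, so $\cl(\bv,\bw)$ is Lusztig's simple perverse sheaf, and $p_!q^*$ is Lusztig's restriction up to shift and twist. But these steps only give $\kappa^+(\delta_b)=c_b\,L(\bv_b,\bw_b)$ with unknown scalars $c_b$. The content of the statement is exactly the value $c_b=v^{-\frac12N(-\deg b)}$, and Step 3 does not establish it.

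The Lusztig-lemma characterization you propose uses the wrong standard basis. $\chi(\bv,\bw)$ is dual to $\pi(\bv,\bw)$, which is a proper pushforward of a shifted constant sheaf on the smooth variety $\cm(\bv,\bw)$ and hence Verdier self-dual. So $\chi(\bv,\bw)$ is itself bar-invariant. Dualizing \eqref{eqn:decomposition theorem} gives $L(\bv',\bw)=\sum_{\bv\geq\bv'}a_{\bv,\bv';\bw}(v)\,\chi(\bv,\bw)$, with bar-invariant coefficients in $\N[v,v^{-1}]$ that are nonzero off the diagonal in general. Already in type $A_2$, for suitable $\bw$, the map $\pi$ over the open stratum of $E_{Q,\bw}\cong\mathbb{A}^2$ is the blow-up of the origin. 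Hence $L$ is not $v^{-1}\Z[v^{-1}]$-unitriangular with respect to $\chi$, and Lusztig's lemma relative to $\chi$ would simply return $\chi$.

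The basis that is triangular to $L$ in the required sense is the one dual to shifted constant sheaves on strata, the analogue of $H_{\bv,\bw}$ in \S\ref{subsec:QV function iHA}. It is not available in the characteristic-zero group $K_\bw$ without mixed sheaves, purity and point counting, which is precisely what \S\ref{sec:dCB via sheaf} has to set up in the $\imath$ case. Moreover, showing that $\kappa^+$ sends $v^{\frac12 n_\lambda}\mathfrak{E}^*_\lambda$ onto that basis is again an exact normalization statement, so the constant is not avoided.

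Your rank-one fallback is also insufficient. It only sees weights $n\alpha_i$, whereas $N(\beta)=\frac12(\beta,\beta)_Q-\eta(\beta)$ contains the cross terms $b_ib_j(\alpha_i,\alpha_j)_Q$ with $i\neq j$.

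The ingredients you list do suffice if you arrange them differently.

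1. In Steps 1--2, track only the shape of the scalars. The shift relating $p_!q^*$ to Lusztig's restriction, the twist $v^{\frac12\langle-,-\rangle_{Q,a}}$, and the comparison of Lusztig's form with \eqref{eq:hopf} all contribute factors depending only on $\beta=-\deg b$. These factors have the form $\gamma(\beta)v^{f(\beta)}$ with $\gamma$ multiplicative in $\beta$.
2. Since $\kappa^+(E_i)=L(0,\e_{\sigma\tS_i})$, $\delta_b=E_i$ for $b$ of degree $-\alpha_i$, and $N(\alpha_i)=0$, the multiplicative part is forced to be a monomial. So $\kappa^+(v^{\frac12N(\beta)}\delta_b)=v^{m(\beta)}L(\bv_b,\bw_b)$ for some $m(\beta)\in\frac12\Z$.
3. $\kappa^+$ intertwines the bar anti-involution of $\tU^+$ with Verdier duality on $\tR^+$, since both are antilinear anti-automorphisms fixing corresponding generators.
4. $L(\bv_b,\bw_b)$ is Verdier self-dual, and $v^{\frac12N(\beta)}\delta_b$ is bar-invariant; the latter is exactly what the rescaling by $N$ is for, since $\ov{\mathfrak{B}^*_\lambda}=v^{n_\lambda}\mathfrak{B}^*_\lambda$.
5. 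Hence $v^{m(\beta)}=v^{-m(\beta)}$, so $m(\beta)=0$.

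Alternatively, carry out the shift and twist bookkeeping exactly.
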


	\begin{proposition}\label{dCB of HA is L}
		We have $\Omega^+(\mathfrak{C}_\lambda)=L(\bv_\lambda,\bw_\lambda)$ for any $\lambda\in\fp$, where $(\bv_\lambda,\bw_\lambda)\in(V^+,W^+)$ is such that $\sigma^*\bw_\lambda-\mathcal{C}_q\bv_\lambda=\lambda$. 
	\end{proposition}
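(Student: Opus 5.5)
Both $\mathfrak{C}_\lambda$ and $L(\bv_\lambda,\bw_\lambda)$ are characterized by bar-invariance together with a triangularity condition, so the plan is to show that $\Omega^+$ carries one characterization to the other. The first step is to compare the two bar-involutions. On $\widetilde{\ch}(Q)$ the bar-involution is fixed by $\ov{\fu_{\alpha_i}}=\fu_{\alpha_i}$. It is the one transported from $\tU^+$ via $\psi^+$. Since $\psi^+(E_i)=v^{-1/2}\fu_{\alpha_i}$ and $\ov{E_i}=E_i$, this convention must be read after the appropriate rescaling; I would fix the convention through \eqref{eq:diagram of kappa psi Omega}. On $\tR^+$ the bar-involution is the one transported by $\kappa^+$, and every $L(\bv,\bw)$ is fixed by it. This follows from Theorem~\ref{dCB of U^+ by L}, because the rescaled dual canonical basis $\widetilde{\mathbf{B}}^+$ is bar-invariant, up to the standard twist by $v^{\frac12 N}$ which makes it invariant under the twisted bar. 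Commutativity of \eqref{eq:diagram of kappa psi Omega} then shows that $\Omega^+$ intertwines the two bar-involutions. Hence $(\Omega^+)^{-1}(L(\bv_\lambda,\bw_\lambda))$ is bar-invariant.

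The second step is triangularity. I need to show that
\[
(\Omega^+)^{-1}\big(L(\bv_\lambda,\bw_\lambda)\big)-\mathfrak{U}_\lambda\in\sum_{\mu}v^{-1}\Z[v^{-1}]\,\mathfrak{U}_\mu .
\]
Once this holds, the uniqueness part of Theorem~\ref{thm:dualCB-U+} gives the claim. To prove it, I would identify $\Omega^+(\mathfrak{U}_\lambda)$ geometrically with the dual of the pushforward, i.e. with the element $\chi(\bv_\lambda,\bw_\lambda)$ of the dual basis to $\pi(\bv,\bw)$. Concretely, I expect the following. The variety $\rep(\bw,\cs)$ restricted to $kQ$-modules is the representation space $E_{Q,\bw}$. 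By Lemma~\ref{lem:unique V^+W^+}, the strata $\cm_0^{\reg}(\bv,\bw)$ with $(\bv,\bw)\in(V^+,W^+)$ are exactly the orbits $\mathfrak{O}_{M(\lambda)}$, with $\sigma^*\bw-\mathcal{C}_q\bv=\lambda$. The function-sheaf correspondence of \cite{Lus98} then matches $\fu_\lambda$ with the characteristic function of the orbit. The normalization $v^{-\dim\End+\frac12\langle M,M\rangle}$ in \eqref{eq:HA element H_lambda} should correspond precisely to the shift by the orbit dimension, since $\dim\mathfrak{O}_M=\dim G_\bw-\dim\End(M)$. This makes $\Omega^+(\mathfrak{U}_\lambda)$ the dual of the shifted constant sheaf on the orbit.

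With that identification, the transition coefficients between $\{\Omega^+(\mathfrak{U}_\mu)\}$ and $\{L(\bv,\bw)\}$ are stalk Poincaré polynomials of IC sheaves on orbit closures. Their vanishing pattern (IC stalks at smaller orbits live in strictly negative perverse degrees) gives the diagonal coefficient $1$ and off-diagonal coefficients in $v^{-1}\Z[v^{-1}]$, under the convention $v=t^{-1}$. Inverting this unitriangular matrix preserves the condition, which yields the desired expansion.

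The main obstacle is the second step: pinning down the exact power of $v^{1/2}$ relating $\Omega^+(\mathfrak{U}_\lambda)$ to the orbit-dual element, and checking that it is consistent with the twist $v^{\frac12\langle\cdot,\cdot\rangle_a}$ in \eqref{eq:tw}. I would handle this by verifying it on simples, $\lambda=\alpha_i$. There $\Omega^+(\fu_{\alpha_i})=v^{1/2}L(0,\e_{\sigma\tS_i})$ and $\mathfrak{U}_{\alpha_i}=v^{-1/2}\fu_{\alpha_i}$, so $\Omega^+(\mathfrak{U}_{\alpha_i})=L(0,\e_{\sigma\tS_i})$, which is consistent. I would then use multiplicativity: products of semisimple and direct-sum terms in Hall algebras match the leading-term formula \eqref{eqn: leading term}. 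Alternatively, one can avoid this bookkeeping by invoking Theorem~\ref{dCB of U^+ by L} and the known fact that Lusztig's PBW-type basis, rescaled by $v^{\frac12 N}$, corresponds to $\mathfrak{U}_\lambda$ under $\psi^+$. The Lusztig triangularity of dual PBW versus dual canonical bases then supplies the required $v^{-1}\Z[v^{-1}]$ expansion directly.
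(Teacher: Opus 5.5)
Your fallback at the end of the second step is exactly the paper's argument, and it is complete. The theorem of \cite{HL15,Qin} identifies $\{L(\bv,\bw)\}$ with $\widetilde{\mathbf{B}}^+=\{v^{\frac12 n_\lambda}\mathfrak{B}^*_\lambda\}$ under $\kappa^+$; the matching of labels $\lambda\leftrightarrow(\bv_\lambda,\bw_\lambda)$ is part of that cited identification. The element $v^{\frac12 n_\lambda}\mathfrak{B}^*_\lambda$ is bar-invariant. Lusztig's triangularity of $\mathfrak{B}^*_\lambda$ against the dual PBW basis, together with \eqref{eq:TE} and $n_\mu=n_\lambda$ for $\mu$ of the same degree, gives $\psi^+(v^{\frac12 n_\lambda}\mathfrak{B}^*_\lambda)\in\mathfrak{U}_\lambda+\sum_{\lambda\prec\mu}v^{-1}\Z[v^{-1}]\mathfrak{U}_\mu$. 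By Theorem~\ref{thm:dualCB-U+} this element is $\mathfrak{C}_\lambda$, and $\Omega^+=\kappa^+\circ(\psi^+)^{-1}$ finishes. Your Step 1 followed by this fallback is the same proof in a different order.

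Your primary geometric route is genuinely different from the paper's proof of this proposition. It is the $+$-analogue of Lemma~\ref{lem:Hall basis map to M_vw} and Theorem~\ref{thm:IC coefficient is zeta_vv}, which the paper proves for $\imath$Hall algebras. As written, however, its key identification is wrong: $\Omega^+(\mathfrak{U}_\lambda)$ is not $\chi(\bv_\lambda,\bw_\lambda)$. Dualizing \eqref{eqn:decomposition theorem} gives $L(\bv',\bw)=\sum_{\bv\geq\bv'}a_{\bv,\bv';\bw}\chi(\bv,\bw)$. Whenever $\cl(\bv',\bw)$ with $\bv'<\bv$ occurs in $\pi(\bv,\bw)$, its coefficient is a nonzero bar-symmetric Laurent polynomial, hence not in $v^{-1}\Z[v^{-1}]$. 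So with $\chi$ the triangularity you need fails. The correct element is the functional sending $\cl(\bv',\bw)$ to its shifted graded stalk at a point of $\mathfrak{O}_{M(\lambda)}$; this is not one of the bases in \eqref{eq:bases dual}, and pushforward sheaves play no role.

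Showing that these functionals multiply like the $\mathfrak{U}_\lambda$ needs weights, not just stalk dimensions over $\C$. First pass to $\F_q$ and use $[M]\mapsto\sqq^{\dimv M\cdot\dimv M-\frac12\langle M,M\rangle_Q}\xi_{[M]}$ (Remark~\ref{rmk:F^*(Q)}). This sends $\mathfrak{U}_\lambda$ to $\sqq^{\dim\mathfrak{O}_{M(\lambda)}}\xi_{[M(\lambda)]}$, since $\dim\mathfrak{O}_M=\dimv M\cdot\dimv M-\dim\End(M)$. That computation, not the leading-term formula \eqref{eqn: leading term}, is what fixes the power of $v^{\frac12}$. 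Then invoke Lusztig's purity of IC stalks on orbit closures \cite{Lus90} and the characteristic-$0$/characteristic-$p$ comparison on $E_{Q,\bw}$ \cite{Lus98}.

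With these repairs the geometric route works, and it has real advantages. It produces the labeling intrinsically, and with Lusztig's parity vanishing it even gives coefficients in $v^{-1}\N[v^{-1}]$. In effect it reproves the needed part of \cite{HL15,Qin} from Lusztig's geometry. The paper's route is much shorter but rests entirely on \cite{HL15,Qin} and Lusztig's PBW triangularity.
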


	For any $\lambda\in\fp$, we define a PBW basis of $\widetilde{\ch}(Q)$ by 
	\[
	\mathfrak{E}_\lambda=v^{\dim \End(M_q(\lambda))-\frac{1}{2}\dim M_q(\lambda)}\frac{\fu_\lambda}{a_\lambda(v)},
	\]
	here $a_\lambda(v)\in\Z[v,v^{-1}]$ is the polynomial such that $a_\lambda(\sqq)=|\aut(M_q(\lambda))|$; see e.g. \cite[Lemma 10.19]{DDPW}. The corresponding canonical basis $\mathfrak{B}_\lambda\in \U^+$ then satisfies
	\[
	\psi^+(\mathfrak{B}_\lambda)-\mathfrak{E}_\lambda\in\sum_{\mu\prec\lambda}v^{-1}\Z[v^{-1}]\mathfrak{E}_\mu.
	\]
	Let $\mathfrak{E}_\lambda^*$ ($\lambda\in\fp$) be the dual PBW basis with respect to the paring of $\widetilde{\ch}(Q)$:
	\begin{align}
		\label{eq:hopf-Hall}
		([M],[N])_K=\delta_{M,N}|\aut(M)|.
	\end{align}
	Then one can check that 
	\begin{align}
		\label{eq:TE}
		\mathfrak{U}_\lambda=v^{\frac{1}{2}n_\lambda}\mathfrak{E}_\lambda^*, \text{ where $n_\lambda=N(\sum_k\lambda(\beta_k)\beta_k)$}.
	\end{align}
	
	Let $\mathfrak{B}_\lambda^*$ ($\lambda\in\fp$) be the dual canonical basis of $\U^+$ with respect to the paring \eqref{eq:hopf}. Then we have 
	\begin{align*}
		\psi^+(\mathfrak{B}_\lambda^*)\in \mathfrak{E}_\lambda^*+\sum_{\lambda\prec\mu}v^{-1}\Z[v^{-1}]\mathfrak{E}_\mu^*,\quad \ov{\mathfrak{B}_\lambda^*}=v^{n_\lambda}\mathfrak{B}_\lambda^*.
	\end{align*}
	In view of \eqref{eq:TE} and Theorem~\ref{thm:dualCB-U+}, this means $\psi^+(v^{\frac{1}{2}n_\lambda}\mathfrak{B}_\lambda^*)=\mathfrak{C}_\lambda$. 

	\section{Dual canonical bases via perverse sheaves}\label{sec:dCB via sheaf}
	
	In this section we will relate the construction made in \S\ref{subsec:dcb of iHall} with quantum Grothendieck rings by proving that the dual canonical basis of $\widetilde{\ch}(Q,\varrho)$ is mapped to the basis $\{L(\bv,\bw)\mid\text{$(\bv,\bw)$ is strongly $l$-dominant}\}$ of $\tRi$ under the isomorphism $\widetilde{\Omega}:\widetilde{\ch}(Q,\btau)\rightarrow\tRi$.
	
	
	\subsection{Integral forms}

	Using the isomorphisms given in Theorem \ref{thm:iQG-sheaf} and Lemma \ref{lem:Hall-iQG}, we obtain two isomorphisms of $\Q(v^{1/2})$-algebras:
	\begin{align}
		\widetilde{\Omega}:&\widetilde{\ch}(Q,\btau)\longrightarrow \tR^{\imath},\qquad \widehat{\Omega}:\widehat{\ch}(Q,\btau)\longrightarrow \hRi,
	\end{align}
	defined by $
	\fu_{\alpha_i}\mapsto v^{\frac{1}{2}}L(0,1_{\sigma S_i})$ and $\K_{\alpha_i}\mapsto L(\mathbf{v}^{\varrho i},\mathbf{w}^i)$. We shall prove that $\widetilde{\Omega}$ and $\widehat{\Omega}$ induce isomorphisms of integral forms.


	\begin{lemma}
		\label{lem: hom for Dynkin}
		Let $(Q,\btau)$ be a  Dynkin $\imath$quiver.  For any indecomposable $\bfk Q$-modules $M$ and $N$ with their supports on the same connected component of $Q$, if
		$\Hom_{\bfk Q}(M,N)=0$, then $\dim_\bfk\Hom_{\bfk Q}(M,\btau (N))\leq1$.
	\end{lemma}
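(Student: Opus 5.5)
We reduce to a case analysis on the involution restricted to a connected component of $Q$, since $\btau$ is a quiver automorphism of a Dynkin quiver. If $\btau$ acts as the identity on the component containing the supports of $M$ and $N$, then $\btau(N)\cong N$ and $\Hom(M,\btau N)=0$, so there is nothing to prove. If $\btau$ swaps two components, then $\btau(N)$ is supported on a different component from $M$, and again $\Hom(M,\btau N)=0$. So the only case that needs work is a connected Dynkin quiver $Q$ with a nontrivial involution $\btau$, i.e. $Q$ of type $A_{2r+1}$, $D_n$ or $E_6$ with the diagram involution, together with an orientation compatible with $\btau$. (In type $A_{2r}$ no nontrivial orientation-preserving involution fixes the arrow set as a quiver automorphism except those swapping the middle edge, which cannot preserve arrows; I would check this and exclude it.)

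In the remaining case the plan is to bound $\dim\Hom(M,\btau N)$ using the fact that for Dynkin quivers $\dim\Hom(M,X)$ for indecomposables is computed by the Auslander--Reiten quiver (the mesh category $k(\Z Q)$ via Happel's equivalence \eqref{eq:Happelfunctor}), and equals $\langle \dimv M,\dimv X\rangle_Q$ whenever $\Ext^1(M,X)=0$. First I note that $\dim\Hom(M,X)\le 1$ for all indecomposables in type $A$, so the type $A$ case is immediate. In types $D$ and $E$, the indecomposables $M$ with some $\dim\Hom(M,X)=2$ are few: Hom-dimension $2$ (or $3$ in $E$) only occurs between modules whose dimension vectors have large entries at the branch vertex. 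So I would argue as follows: suppose $\dim\Hom(M,\btau N)\ge 2$. Then $\btau N$ lies in the ``hammock'' of $M$ in the region where the additive hammock function takes value $\ge 2$. Since $\btau$ acts on the AR quiver as a symmetry fixing the branch point, and the hammock function starting at $M$ is computed by the mesh relations, I would compare the hammock of $M$ with the hammock of $\btau M$ and show that the region of value $\ge2$ is $\btau$-stable; then $N$ itself lies in that region (as $\btau$ applied to $\btau N$ lies in the hammock of $\btau M$, and one transfers back), giving $\Hom(M,N)\neq0$, a contradiction.

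The main obstacle is this transfer step: the hammock of $M$ is not $\btau$-stable in general, only the hammock of $\btau M$ is its image. I would handle it concretely: for $D_n$ the involution swaps the two short legs $n-1,n$, and the only indecomposables with $\dim\Hom\ge2$ have dimension vector support containing both $n-1$ and $n$, hence are themselves (up to the AR combinatorics) $\btau$-symmetric in the relevant coordinates; one checks via $\langle\dimv M,\dimv\btau N\rangle_Q=\langle\dimv M,\dimv N\rangle_Q$ when both $\dimv M$ and $\dimv N$ are $\btau$-invariant on the fixed part. For $E_6$ I would simply run through the AR quiver (36 indecomposables, finitely many orientations up to symmetry) by the mesh computation. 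This finite verification, organized by the hammock functions, is what I expect to take the bulk of the work.
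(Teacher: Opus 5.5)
Your reduction to a connected component with nontrivial $\varrho$ and your type $A_{2r+1}$ case (where $\dim\Hom$ between indecomposables is at most $1$) are fine. The gap is in types $D_n$ and $E_6$: there you do not prove anything, and the mechanism you propose is false.

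Your organizing claim is that $\{X:\dim\Hom(M,X)\ge 2\}$ is $\varrho$-stable. This fails in $E_6$. Use the labelling of \eqref{diag: E}, where $\varrho$ swaps $1\leftrightarrow 6$ and $2\leftrightarrow 5$. Take $M=P_2$ and $X$ indecomposable with $\dimv X=\alpha_1+2\alpha_2+2\alpha_3+\alpha_4+\alpha_5$, which is a root of the $D_5$ subdiagram on $\{1,\dots,5\}$. Then $\dim\Hom(P_2,X)=2$, but $\dim\Hom(P_2,\varrho X)=(\dimv X)_5=1$.

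The $D_n$ argument also does not hold up. Your assertion that modules involved in a $\ge 2$-dimensional Hom have support containing both $n-1$ and $n$ is unproved. As stated it is false for the source: in the orientation of \eqref{diag: D}, $\dimv P_{n-2}=\alpha_1+\cdots+\alpha_{n-2}$, yet $\dim\Hom(P_{n-2},X)=2$ for $\dimv X=\alpha_{n-3}+2\alpha_{n-2}+\alpha_{n-1}+\alpha_n$. The Euler-form sentence is vacuous: if $\dimv N$ is $\varrho$-invariant then $\varrho N\cong N$ and there is nothing to check. Finally, deferring $E_6$ to an unperformed run through the AR quiver for every orientation leaves the hardest case open.

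The missing idea is the one that removes your ``transfer step'' altogether. Since $\varrho$ is a quiver automorphism, it commutes with the AR translation $\tau$ on $\mathcal{D}^b(\bfk Q)$. So after applying a power of $\tau$ you may assume $M=P_l$ is projective; if $\tau^kN$ leaves $\mod(\bfk Q)$, both Hom spaces vanish. Now $\dim\Hom(P_l,X)=(\dimv X)_l$. Writing $\dimv N=(a_i)$, the hypothesis becomes $a_l=0$ and the claim becomes $a_{\varrho l}\le 1$.

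This is trivial if $\varrho l=l$. If $\varrho l\neq l$ it is a short root check:
\begin{itemize}
\item In $A_{2r+1}$, all coefficients of positive roots are $\le 1$.
\item In $D_n$, $l\in\{n-1,n\}$, and coefficients there are $\le 1$.
\item In $E_6$ with $l\in\{1,6\}$, these are end vertices, so the coefficient is $\le 1$.
\item In $E_6$ with $l=2$, the condition $a_2=0$ forces the root's (connected) support into $\{1\}$ or into the $A_4$ subdiagram $\{3,4,5,6\}$, so $a_5\le 1$; the case $l=5$ is symmetric.
\end{itemize}
What is true is this asymmetric implication $a_l=0\Rightarrow a_{\varrho l}\le 1$. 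The $E_6$ example above shows that the symmetric statement you aimed for does not hold.
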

	
	\begin{proof}
		By applying the AR-translation, we can assume that $M=P_l$ for some indecomposable projective $\bfk Q$-module $P_l$.
		
		Let $\dimv N= (a_i)_{i\in\I}$. Then $\dimv \btau (N)= (a_{\btau i})_{i\in\I}$. It follows from $\Hom_{\bfk Q}(M,N)=0$ that $a_l=0$. Moreover, $\dim_\bfk\Hom_{\bfk Q}(M,\btau (N))=\dim_\bfk\Hom_{\bfk Q}(P_l,\btau( N))=a_{\btau l}$. It is equivalent to prove that $a_{\btau l}\leq1$. The proof is broken into the following two cases.
		
		{\bf Case (i)} $\btau l=l$. Then  $\dim_\bfk\Hom_{\bfk Q}(M,\btau (N))=0$.
		
		{\bf Case (ii)} $\btau l\neq l$. Then $\btau\neq\Id$. If $(Q,\btau)$ is of diagonal type (see Example \ref{ex:diagquiver}), we have $\Hom_{\bfk Q}(M,\varrho (N))=0$ by noting that their supports are not on the same connected component of $Q$.
		
		It remains to consider the case that $Q$ is connected. Then $Q$ is of type $A_{2r+1}$, $D_n$ and $E_6$.
		If $Q$ is of type $A_{2r+1}$, we get $a_{\btau l}\leq1$, then $\dim_\bfk\Hom_{\bfk Q}(M,\btau (N))\leq1$.
		If $Q$ is of type $D_n$ with its underlying graph as \eqref{diag: D} 
		shows, then $l=n-1$ or $n$. So $a_{\btau l}\leq1$. If $Q$ is of type $E_6$ with its underlying graph as \eqref{diag: E} 
		shows, then $l=1,2,5,6$. If $l=1$, from the root system of $E_6$, we know $a_6\leq1$; if $l=2$, similarly, $a_5\leq1$. 
		
		In conclusion,  $\dim_\bfk\Hom_{\bfk Q}(M,\btau (N))\leq1$.
	\end{proof}
	
	\begin{lemma}[cf. \cite{Rin3}]
		\label{lem:short exact sequence}
		Let $k$ be any field, and $Q$ be of type ADE.
		For any indecomposable $kQ$-module $M$, there exist two indecomposable $kQ$-modules $M_1$ and $M_2$ such that the following hold:
		\begin{align}
			\label{eqn: split cond 1}
			&\Hom(M_1,M_2)=0=\Hom(M_2,M_1),\qquad \dim\Ext^1_{kQ}(M_2,M_1)=1;
			\\\label{eqn: split cond 2}
			&\text{there exists a short exact sequence
			} 0\rightarrow M_1\rightarrow M\rightarrow M_2\rightarrow0.
		\end{align}
		Furthermore, in this case, we have $\Ext^1_{kQ}(M_1,M_2)=0$.
	\end{lemma}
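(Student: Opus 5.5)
We argue by induction on the length of the indecomposable $kQ$-module $M$. By Gabriel's theorem $M$ is determined by its dimension vector $\beta=\dimv M\in\Phi^+$, and $\End(M)=k$, $\Ext^1(M,M)=0$. If $M$ is simple there is nothing to decompose, so we assume $\beta$ is not simple; the statement is then understood for non-simple $M$. The plan follows Ringel's argument \cite{Rin3}. Choose $M_1$ to be an indecomposable submodule of $M$ of minimal proper length with $\Hom(M_1,M)\neq 0$ realized by a monomorphism. Concretely, take a simple root $\alpha_i$ with $\beta-\alpha_i\in\Phi^+$ and $\langle\beta-\alpha_i,\alpha_i\rangle_Q$ appropriate. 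Set $\{\dimv M_1,\dimv M_2\}=\{\alpha_i,\beta-\alpha_i\}$, with the order chosen so that $\Hom(M_1,M)\neq0$ and $\Hom(M,M_2)\neq0$. Such an $i$ exists because for a non-simple positive root there is a sink or source $i$ of the support with $(\beta,\alpha_i)_Q=1$.

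The first step is to show that the composite $M_1\to M\to M_2$ fits into a short exact sequence. For indecomposables over a Dynkin quiver, any nonzero map from an indecomposable $M_1$ with $\Hom(M_1,M)\neq0$ and $\dim M_1$ simple-type is injective. More generally, use that $\Hom(X,Y)\neq0$ forces $\Ext^1(Y,X)=0$ in the directed AR quiver. Next we show $\Hom(M_1,M_2)=0=\Hom(M_2,M_1)$ using directedness of $\mod kQ$: $M_1$ precedes $M$, which precedes $M_2$, so $\Hom(M_2,M_1)=0$. Also $\Hom(M_1,M_2)=0$ because a nonzero map would factor through, or compete with, the nonsplit extension. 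To get this we compute $\langle\dimv M_2,\dimv M_1\rangle_Q$. Since $(\beta,\beta)_Q=2=(\beta_1,\beta_1)_Q+(\beta_2,\beta_2)_Q+(\beta_1,\beta_2)_Q$ with $\beta_j=\dimv M_j$, we get $(\beta_1,\beta_2)_Q=-2$, i.e. $\langle\beta_1,\beta_2\rangle_Q+\langle\beta_2,\beta_1\rangle_Q=-2$.

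The second step uses the fact that for indecomposables in Dynkin type at most one of $\Hom,\Ext^1$ is nonzero in each direction. We then combine $\langle\beta_2,\beta_1\rangle_Q=\dim\Hom(M_2,M_1)-\dim\Ext^1(M_2,M_1)$, its mirror $\langle\beta_1,\beta_2\rangle_Q$, the vanishing of Homs, and the nonsplitness of the sequence, which forces $\Ext^1(M_2,M_1)\neq0$. This yields $\dim\Ext^1(M_2,M_1)-\dim\Hom(M_1,M_2)+\dim\Ext^1(M_1,M_2)=2$. We aim to show that $\dim\Ext^1(M_2,M_1)=1$ and $\Ext^1(M_1,M_2)=0$.

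The main obstacle is rigorously excluding $\Hom(M_1,M_2)\neq0$ or $\Ext^1(M_1,M_2)\neq0$. Our first approach is a path argument in the AR quiver, a directed graph with no oriented cycles: $\Hom(M_1,M)\neq0$ and $\Hom(M,M_2)\neq0$ put $M_1$ before $M_2$, so $\Ext^1(M_1,M_2)=D\Hom(M_2,\tau M_1)=0$. The count above then gives $\dim\Ext^1(M_2,M_1)=2+\dim\Hom(M_1,M_2)-0$. This is inconsistent with the dimension bound unless we control $\Hom(M_1,M_2)$. To settle this we choose the splitting more carefully, taking $M_1$ to be a quasi-simple piece, namely the image of a minimal right almost split component. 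Alternatively we can reduce by reflection functors, which preserve all Hom/Ext dimensions among non-projectives, to the case where $\alpha_i$ is a simple projective or injective, and check the claim directly there.
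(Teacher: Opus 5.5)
Your proposal has a genuine gap: choosing a splitting with one simple end, $\{\dimv M_1,\dimv M_2\}=\{\alpha_i,\beta-\alpha_i\}$, does not work in general.

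Take $Q$ of type $D_4$ with arrows $1\to 2$, $3\to 2$, $4\to 2$. Let $M$ be the indecomposable of dimension vector $\beta=\alpha_1+2\alpha_2+\alpha_3+\alpha_4$, i.e.\ three pairwise distinct lines in $k^2$. Then $\beta-\alpha_i\in\Phi^+$ only for $i=2$, and $S_2$ is not a quotient of $M$, so you are forced to take $M_1=S_2$. But $S_2$ is projective, so $\Hom(S_2,M/S_2)$ is the nonzero space of $M/S_2$ at vertex $2$, and $\dim\Ext^1(M/S_2,S_2)=2$. Over $k=\F_2$ the quotient $M/S_2$ is even decomposable, since every line of $\F_2^2$ is one of the three. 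Here the lemma holds only with two non-simple pieces, e.g.\ $M_1\cong P_1$ (dimension vector $\alpha_1+\alpha_2$) and $M_2=M/M_1$ (dimension vector $\alpha_2+\alpha_3+\alpha_4$).

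The same quiver refutes two of your auxiliary claims. First, $X=S_2$ and $Y$ indecomposable of dimension vector $\alpha_1+\alpha_2+\alpha_3+\alpha_4$ satisfy $\Hom(X,Y)\neq 0$ and $\dim\Ext^1(Y,X)=2$. The correct directedness fact is only that $\Hom(X,Y)$ and $\Ext^1(X,Y)$ are not both nonzero. Second, for the orientation $1\to 2\to 3$, $2\to 4$, the only $i$ with $(\beta,\alpha_i)_Q=1$ is the central vertex, which is neither a sink nor a source.

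The ``inconsistency'' you then run into is an arithmetic slip. Expanding $(\beta,\beta)_Q=(\beta_1,\beta_1)_Q+2(\beta_1,\beta_2)_Q+(\beta_2,\beta_2)_Q$ gives $(\beta_1,\beta_2)_Q=-1$, not $-2$. Your directedness arguments for $\Hom(M_2,M_1)=0$ and $\Ext^1(M_1,M_2)=0$ are fine (and you are right that simple $M$ must be excluded). With the correct value they give $\dim\Ext^1(M_2,M_1)=1+\dim\Hom(M_1,M_2)$.

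So everything reduces to producing $0\to M_1\to M\to M_2\to 0$ with $M_1,M_2$ indecomposable and $\Hom(M_1,M_2)=0$. That is exactly the step you never prove (``factor through, or compete with'' is not an argument), and it is false for your choice. Your fallbacks do not supply it either: the almost-split choice is unspecified, and reflecting until $\alpha_i$ is simple projective lands exactly in the configuration above. The paper gives no proof and only cites Ringel, so your argument has to stand on its own.

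One route that works is to reflect $M$ itself rather than a simple piece.
\begin{itemize}
\item Restrict to the support so that $M$ is sincere, and take a sink $i$. Let $R_i^{\pm}$ be the BGP reflection functors and $S_i'$ the simple at $i$ for the reflected quiver $Q'$.
\item Decompose $M'=R_i^+M$ by induction: first on the number of vertices, then on the number of admissible sink reflections needed to kill $M$. If $M'$ is simple, then $\dimv M=\alpha_i+\alpha_j$ and $0\to S_i\to M\to S_j\to 0$ works.
\item Transport the sequence back with $R_i^-$, which is exact on such sequences and fully faithful on modules without summand $S_i'$.
\item The only obstruction is $M_2'\cong S_i'$. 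This forces $(M_1')_i=0$ and $\langle\alpha_i,\dimv M_1'\rangle_{Q'}=-1$, hence $(\dimv M)_i=0$, contradicting sincerity.
\end{itemize}
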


	\begin{theorem}
		\label{prop:Hall-sheaf-integ}
		The isomorphisms of  $\Q(v^{1/2})$-algebras $\widetilde{\Omega}:\widetilde{\ch}(Q,\btau)\rightarrow \tR^{\imath}$,
		$\widehat{\Omega}:\widehat{\ch}(Q,\btau)\rightarrow \hRi$  induces isomorphisms of $\cz$-algebras $\widetilde{\Omega}:\widetilde{\ch}(Q,\btau)_\cz\rightarrow \tRiZ$ and 
		$\widehat{\Omega}:\widehat{\ch}(Q,\btau)_\cz\rightarrow \hRiZ$.
	\end{theorem}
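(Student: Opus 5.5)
We need to show $\widehat{\Omega}(\widehat{\ch}(Q,\btau)_\cz)=\hRiZ$; the statement for $\widetilde{\Omega}$ then follows by localizing. The reason is that $\K_{\alpha_i}\mapsto L(\bv^{\varrho i},\bw^i)$, and these elements are invertible on both sides after localization, so we may simply adjoin their inverses.

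The plan is to compare generating sets. By Lemma~\ref{basis-iHall} and the definition of the integral form, $\widehat{\ch}(Q,\btau)_\cz$ is the free $\cz$-module on $\K_\alpha*\fu_\lambda$. The first step is to show it is generated as a $\cz$-algebra by the $\K_{\alpha_i}$ and by the elements $\fu_\beta$ with $\beta\in\Phi^+$ (that is, $\fu_{\lambda}$ for $\lambda=\e_\beta$ indecomposable). The argument is that the ordered products $\fu_{\beta_1}^{*\lambda(\beta_1)}*\cdots*\fu_{\beta_N}^{*\lambda(\beta_N)}$, taken along a representation-directed order of $\Phi^+$, equal $\fu_\lambda$ up to a unit in $\cz$, since $\Hom$ and $\Ext$ vanish in the appropriate direction. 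The remaining products of $\fu_\beta$'s with $\fu_\beta$'s and $\K$'s have structure constants in $\cz$ by definition of $\widehat{\ch}(Q,\btau)_\cz$. On the geometric side, $\hRiZ$ is a $\cz$-algebra with $\cz$-basis $L(\bv,\bw)$. By Lemma~\ref{lem:DP3} and Lemma~\ref{lem: multiply by L^0}, each $L(\bv,\bw)$ is a $v^{\frac12\Z}$-multiple of $L(\bv^0,\bw^0)\cdot L(\bv^+,\bw^+)$, and $L(\bv^0,\bw^0)$ is a monomial in the $L(\bv^{i},\bw^{i})$ by Lemma~\ref{lem:R0 subalgebra}. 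So it suffices to handle the $+$-part.

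Next I would show that $\widehat{\Omega}(\fu_\beta)\in\hRiZ$ for every $\beta\in\Phi^+$, and conversely that each $L(\bv^+,\bw^+)$ lies in $\widehat{\Omega}(\widehat{\ch}(Q,\btau)_\cz)$. I would prove $\widehat{\Omega}(\fu_\beta)\in\hRiZ$ by induction on the height of $\beta$, using Lemma~\ref{lem:short exact sequence}. Write $0\to M_1\to M\to M_2\to 0$ with $M=M_q(\beta)$. Then $\fu_{M_2}*\fu_{M_1}-v^{c}\fu_{M_1}*\fu_{M_2}$ equals a unit multiple of $\fu_M$ plus correction terms $\K_\gamma*\fu_\mu$. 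The key point is that the $\imath$Hall product of two $\bfk Q$-modules picks up extra terms from $\Hom_{\bfk Q}(M_1,\btau M_2)$ and similar spaces, producing $\K$-terms. Lemma~\ref{lem: hom for Dynkin} guarantees that these $\Hom$ spaces have dimension at most $1$. The coefficient of $\fu_M$ is then $(q-1)/(q-1)$-type, i.e.\ a unit, and not a non-invertible factor such as $v-v^{-1}$. This lets us solve for $\fu_M$ with coefficients in $\cz$. Since $\widehat{\Omega}$ is an algebra map and the induction hypothesis covers $\fu_{M_1},\fu_{M_2}$, we get $\widehat{\Omega}(\fu_M)\in\hRiZ$, provided the correction terms are themselves images of integral elements of smaller height, which the induction supplies.

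For the converse I would use Theorem~\ref{dCB of U^+ by L} and Proposition~\ref{dCB of HA is L} via the diagram \eqref{eq:diagram of kappa psi Omega}. For $(\bv^+,\bw^+)$, the element $L(\bv^+,\bw^+)\in\tR^+$ equals $\Omega^+(\mathfrak{C}_\lambda)$, which is a $\cz$-combination of $\mathfrak{U}_\mu$, hence of the ordered products of $\fu_\beta$. However, $\hR^+\hookrightarrow\hRi$ is not an algebra map, so the ordered product computed in $\hRi$ differs from the one in $\tR^+$. I would compare them through the restriction to $E_{Q,\bw}$ and the stalks of $\cl(\bv,\bw)$ on $\rep(\bw,\cs^\imath)$. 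This shows $L(\bv^+,\bw^+)$ is the image of a unitriangular $\cz$-combination of the elements $\K_\gamma*\fu_\mu$. Unitriangularity comes from the leading term \eqref{eqn: leading term}, and induction on the partial order $\prec$ then finishes the argument.

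The main obstacle I expect is the unit-coefficient computation in the first inclusion: controlling the extra $\K$-terms in the $\imath$Hall commutator and checking that the coefficient of $\fu_M$ is invertible in $\cz$. This is exactly the step that requires Lemma~\ref{lem: hom for Dynkin}.
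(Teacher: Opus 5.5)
There is a genuine gap, and it sits in the step you yourself flagged: the coefficient of $\fu_M$ is not a unit.

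\textbf{Why the leading coefficient is not a unit.} You have $\Hom(M_2,M_1)=0$ and $\Ext^1_{\bfk Q}(M_2,M_1)\cong\bfk$. You also have $\Hom_{\bfk Q}(M_2,\varrho M_1)=0$, since otherwise there is a cycle in the representation-directed category $\mod(\bfk Q)$. Hence all $q-1$ non-split extensions have middle term $M$, and $\langle M_2,M_1\rangle_Q=-1$. This gives
\[\fu_{M_2}*\fu_{M_1}=v^{-1}\fu_{M_1\oplus M_2}+(v-v^{-1})\fu_M,\qquad \fu_{M_1}*\fu_{M_2}=\fu_{M_1\oplus M_2}+(\text{terms }\fu_X*\K_\alpha).\]
Every $\cz$-combination of these two products has $\fu_M$-coefficient in $(v-v^{-1})\cz$, so solving for $\fu_M$ forces a division by $v-v^{-1}$. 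Already for type $A_2$, the root vector $\fu_P$ does not lie in the $\cz$-subalgebra generated by $\fu_{S_1},\fu_{S_2}$. Consequently your induction only yields $\widehat\Omega(\fu_M)\in(v-v^{-1})^{-1}\hRiZ$.

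Lemma~\ref{lem: hom for Dynkin} does nothing for this leading coefficient. Its role is to make $\Ext^1_{\iLa}(M_1,M_2)\cong\Hom_{\bfk Q}(M_1,\varrho M_2)$ at most one-dimensional. Then the single correction term carries a factor $q-1$ and stays integral after dividing by $v-v^{-1}$.

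\textbf{The missing ingredient.} It is geometric: you must show that
\[\widehat\Omega(\fu_{M_2})\widehat\Omega(\fu_{M_1})-v^{-1}\widehat\Omega(\fu_{M_1})\widehat\Omega(\fu_{M_2})\]
is divisible by $v-v^{-1}$ inside $\hRiZ$. Expand $\widehat\Omega(\fu_{M_k})$ in the dual IC basis with $\cz$-coefficients, by induction. The claim then reduces to three facts.
\begin{itemize}
\item $L(\bv'',\bw'')L(\bv',\bw')=v^{1/2}\sum c^{\bv}_{\bv'',\bv'}L(\bv,\bw'+\bw'')$.
\item $L(\bv',\bw')L(\bv'',\bw'')=v^{-1/2}\sum c^{\bv}_{\bv',\bv''}L(\bv,\bw'+\bw'')$. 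The $v^{\pm1/2}$ twists come from $\langle\bw',\bw''\rangle_{Q,a}=1$.
\item $c^{\bv}_{\bv'',\bv'}(v)=c^{\bv}_{\bv',\bv''}(v^{-1})$, which is forced by bar-invariance of the $L(\bv,\bw)$, i.e.\ Verdier self-duality of the $\mathcal L(\bv,\bw)$.
\end{itemize}
Together these make $\bigl(v^{1/2}c^{\bv}_{\bv'',\bv'}-v^{-3/2}c^{\bv}_{\bv',\bv''}\bigr)/(v-v^{-1})\in\cz$. Your proposal supplies nothing that plays this role.

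\textbf{Effect on your converse.} The gap propagates there too. Inverting a unitriangular expansion of $\widehat\Omega(\K_\gamma*\fu_\mu)$ in the $L$'s requires the off-diagonal entries to lie in $\cz$, and that is precisely the first inclusion. Once integrality is known, the paper gets unitriangularity without any stalk analysis on $E_{Q,\bw}$. It uses the algebra homomorphisms $\widehat\ch(Q,\varrho)\to\widetilde\ch(Q)$ and $\hRi\to\hR^+$, which kill the $\K$-terms and the $L(\bv,\bw)$ with nonzero $0$-part respectively, and which intertwine $\widehat\Omega$ with $\Omega^+$. Combined with Proposition~\ref{dCB of HA is L}, this gives the triangularity directly.

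\textbf{A minor point.} Your Step~1 claim that ordered products of the $\fu_\beta$ equal $\fu_\lambda$ up to a unit is also false in $\widehat\ch(Q,\varrho)$. The summand $\Hom_{\bfk Q}(M,\varrho N)$ of $\Ext^1_{\iLa}(M,N)$ produces $\K$-terms even when $\Ext^1_{\bfk Q}(M,N)=0$. This one is repairable by induction on dimension.
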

	
	\begin{proof}
		From its definition we know that $\widehat{\Omega}$ restricts to an embedding $\widehat{\ch}(Q,\btau)_\cz\hookrightarrow \hRi$. First, we prove that $\widehat{\Omega}(\fu_M)\in \hRiZ$ for any $M\in\mod(\bfk Q)\subseteq\mod(\Lambda^\imath)$. We prove it by induction on the dimension of $M$. If $M$ is simple, it is obvious.
		
		By abuse of notation, we denote by $\fu_M\in\widehat{\ch}(Q,\btau)_\cz$ the corresponding element of $M\in\mod(\bfk Q)$. 
		For general $M$, if $M$ is decomposable, that is, $M=\bigoplus_{i=1}^{n} M_i$ with $M_i$ indecomposable and $n\geq 2$. Denote by $N=\bigoplus_{i=2}^{n}M_i$. Since $Q$ is of Dynkin type, we can assume that $\Ext^1_{\bfk Q}(M_1,N)=0$. If $\Ext^1_{\Lambda^\imath}(M_1,N)\neq0$, then for any $L\in\mod(\Lambda^\imath)$ such that
		$L\notin \mod(\bfk Q)$ and $|\Ext^1_{\Lambda^\imath}(M_1,N)_L|\neq 0$, we have $[L]=[X\oplus\E_\alpha]$ in 
		$\widehat{\ch}(Q,\btau)$ for some $X\in\mod(\bfk Q)$ and $\alpha\in \N^\I$. Then
		\begin{align*}
			\fu_{M_1}\ast\fu_N=v^{-\langle M_1,N\rangle_Q}\big(\fu_M+\sum_{[X]\in\Iso(\mod(\bfk Q)),\alpha\in\N^\I} f_{X,\alpha}(v) \fu_X\ast\K_{\alpha} \big),
		\end{align*}
		with $f_{X,\alpha}(v)\in\Z[v,v^{-1}]$. 
		By the inductive assumption, it is easy to see that $\widehat{\Omega}(\fu_M)\in \hRiZ$.
		
		If $M$ is indecomposable, Lemma \ref{lem:short exact sequence} shows that we can find indecomposable $\bfk Q$-modules $M_1,M_2$ which admit a short exact sequence
		$0\rightarrow M_1\rightarrow M\rightarrow M_2\rightarrow0$. Furthermore, $\Hom(M_2,M_1)=0=\Hom(M_1,M_2)$ and $\Ext_{\bfk Q}^1(M_1,M_2)=0$, $\Ext^1_{\bfk Q}(M_2,M_1)=\bfk$. On the other hand, $\Ext^1_{\Lambda^\imath}(M_2,M_1)\cong\Ext^1_{\bfk Q}(M_2,M_1)\oplus \Hom_{\bfk Q}(M_2,\btau (M_1))$; see e.g. \cite[Lemma 2.9]{LR24}. We claim that $\Hom_{\bfk Q}(M_2,\btau (M_1))=0$. Otherwise,
		we have a cycle in $\mod(\bfk Q)$: $M_1\rightarrow M\rightarrow M_2\rightarrow \btau (M_1)\rightarrow \btau (M)\rightarrow \btau (M_2)\rightarrow M_1$, contradicts to that $\bfk Q$ is representation-directed. We then get
		\begin{align}
			\label{eqn:M2M1}
			\fu_{M_2}\ast\fu_{M_1}&=v^{-1}\fu_{M_1\oplus M_2}+(v-v^{-1})\fu_M,\\
			\label{eqn:M1M2}
			\fu_{M_1}\ast\fu_{M_2}&=\fu_{M_1\oplus M_2} + \sum_{[X]\in\Iso(\mod(\bfk Q)),\alpha\in\N^\I} f_{X,\alpha}(v) \fu_{X}\ast\K_\alpha
		\end{align}
		for some $f_{X,\alpha}(v) \in\Z[v,v^{-1}]$ such that $(v^2-1)\mid f_{X,\alpha}(v)$, i.e., there is $g_{X,\alpha}(v)\in\Z[v,v^{-1}]$ such that $f_{X,\alpha}(v)=(v^2-1)g_{X,\alpha}(v)$. Note that
		$$\Ext^1_{\Lambda^\imath}(M_1,M_2)\cong\Ext^1_{\bfk Q}(M_1,M_2)\oplus \Hom_{\bfk Q}(M_1,\btau (M_2))=\Hom_{\bfk Q}(M_1,\btau (M_2)).$$
		Lemma \ref{lem: hom for Dynkin} shows that $\dim_\bfk\Ext^1_{\Lambda^\imath}(M_1,M_2)\leq 1$, so there exists at most one $L\notin \mod(\bfk Q)$ such that
		$|\Ext^1_{\Lambda^\imath}(M_1,M_2)_L|\neq 0$. If such $L$ exists, then  $|\Ext^1_{\Lambda^\imath}(M_1,M_2)_L|=q-1$. We therefore conclude that
		\begin{align}
			\fu_M=\frac{1}{v-v^{-1}}(\fu_{M_2}\ast\fu_{M_1}-v^{-1}\fu_{M_1}\ast\fu_{M_2})+s_{M_1,M_2}v^{m} \fu_{X}\ast\K_\alpha
		\end{align}
		for some $X$, $\alpha\in\N^\I$, $m\in\Z$, and $s_{M_1,M_2}=0$ or $1$. By inductive assumption, we have $\widehat{\Omega}(\fu_{X}\ast\K_\alpha)=\widehat{\Omega}(\fu_X)\cdot \widehat{\Omega}(\K_\alpha)\in \hRiZ$.
		
		Let $\bw'=\dimv M_1$ and $\bw''=\dimv M_2$. By definition of $\widehat{\Omega}$ and \eqref{equation multiplication}, we know
		$$\widehat{\Omega}(\fu_{M_1})=\sum_{\bv'}a_{\bv'}(v) L(\bv',\bw'),\qquad \widehat{\Omega}(\fu_{M_2})=\sum_{\bv''}b_{\bv''}(v) L(\bv'',\bw''),$$
		and then
		by inductive assumption, we can see $a_{\bv'}(v),a_{\bv''}(v)\in\mathcal{Z}$. %
		Note that $\langle\bw',\bw''\rangle_{Q,a}=1$. For any $L(\bv',\bw')$, $L(\bv'',\bw'')$, by \eqref{equation multiplication}, we have \begin{align*}
			L(\bv'',\bw'')\cdot L(\bv',\bw')=&v^{1/2}\sum_{\bv\geq \bv'+\bv''}c_{\bv'',\bv'}^{\bv}(v) L(\bv,\bw'+\bw''),\\
			L(\bv',\bw')\cdot L(\bv'',\bw'')=&v^{-1/2}\sum_{\bv\geq \bv'+\bv''}c_{\bv',\bv''}^{\bv}(v) L(\bv,\bw'+\bw'')
		\end{align*}
		for some $c_{\bv',\bv''}^{\bv}(v),c_{\bv'',\bv'}^{\bv}(v)\in\N[v,v^{-1}]$. Then 
		\begin{align*}
			&\frac{1}{v-v^{-1}}\widehat{\Omega}(\fu_{M_2}\ast\fu_{M_1}-v^{-1}\fu_{M_1}\ast\fu_{M_2})\\
			=&\frac{1}{v-v^{-1}}\Big(\sum_{\bv',\bv''}a_{\bv''}(v)a_{\bv'}(v) (L(\bv'',\bw'')\cdot L(\bv',\bw')-v^{-1}L(\bv',\bw')\cdot L(\bv'',\bw''))\Big)\\
			=&\frac{1}{v-v^{-1}}\sum_{\bv',\bv''}a_{\bv''}(v)a_{\bv'}(v)\sum_{\bv\geq \bv'+\bv''}(v^{\frac{1}{2}}c_{\bv'',\bv'}^{\bv}(v)-v^{-\frac{3}{2}}c_{\bv',\bv''}^{\bv}(v))\cdot L(\bv,\bw'+\bw'').
		\end{align*}
		A simple calculation shows that
		\[
		\frac{v^{\frac{1}{2}}c_{\bv'',\bv'}^{\bv}(v)-v^{-\frac{3}{2}}c_{\bv',\bv''}^{\bv}(v)}{v-v^{-1}}\in\mathcal{Z}
		\]
		by noting that $c_{\bv'',\bv'}^{\bv}(v)=c_{\bv',\bv''}^{\bv}(v^{-1})$; cf. \eqref{eqn:comultiplication} and \eqref{eq:tw}. So $\widehat{\Omega}(\fu_M)\in\hRiZ$.
		
		To see that $\widehat{\Omega}(\widehat{\ch}(Q,\btau))=\hRiZ$, we define a partial order on strongly $l$-dominant pairs for $\mathcal{R}^\imath$ as follows: recall that by Lemma~\ref{lem:DP3}, every $l$-dominant pair $(\bv,\bw)$ for $\mathcal{R}^\imath$ can be uniquely written as $(\bv,\bw)=(\bv^0,\bw^0)+(\bv^+,\bw^+)$, where $(\bv^0,\bw^0)\in (V^0,W^0)$ and $(\bv^+,\bw^+)\in({^\imath V^+},{^\imath W^+})$. We then say that $(\bv,\bw)\prec(\tilde{\bv},\tilde{\bw})$ if $\bw=\tilde{\bw}$ and 
		\begin{itemize}
			\item either $(\bv^0,\bw^0)\prec(\tilde{\bv}^0,\tilde{\bw}^0)$ (that is, $(\bv^0,\bw^0)\neq(\tilde{\bv}^0,\tilde{\bw}^0)$ and $(\tilde{\bv}^0-\bv^0,\tilde{\bw}^0-\bw^0)\in {^\imath V^+}$), or
			\item $(\bv^0,\bw^0)=(\tilde{\bv}^0,\tilde{\bw}^0)$ (hence $\bw^+=\tilde{\bw}^+$) and $\bv^+<\tilde{\bv}^+$.
		\end{itemize}
		Consider the quotient maps 
		\begin{align}
			\label{eq:iHA quotient}
			\pi:&\widehat{\ch}(Q,\varrho)\longrightarrow \widetilde{\ch}(Q),\quad \K_\alpha\ast\fu_\lambda\mapsto \begin{cases}
				\fu_\lambda&\text{if $\alpha=0$},\\
				0&\text{otherwise},
			\end{cases}\\
			\label{eq:tRi quotient}
			\pi:&\hRi\longrightarrow \hR^+,\quad L(\bv,\bw)\mapsto \begin{cases}
				L(\bv,\bw)&\text{if $(\bv,\bw)\in({^\imath V^+},{^\imath W^+})$},\\
				0&\text{otherwise}.
			\end{cases}
		\end{align}
		Note that these maps are homomorphisms of $\Q(v^{1/2})$-algebras; cf. \cite[Proposition 3.8]{LP25}, Lemma~\ref{lem: multiply by L^0}, and the proof of \cite[Proposition 6.19]{LW21b}. We have a commutative diagram
		\[\begin{tikzcd}
			\widehat{\ch}(Q,\varrho)\ar[d,swap,"\pi"]\ar[r,"\widehat{\Omega}"]&\hRi\ar[d,"\pi"]\\
			\widetilde{\ch}(Q)\ar[r,"\Omega^+"]&\hR^+
		\end{tikzcd}\]
		By the relation between the basis $\mathfrak{C}_\lambda$ and $\mathfrak{U}_\lambda$ of $\widetilde{\ch}(Q)$ (together with Proposition~\ref{dCB of HA is L}), we see that 
		\[\Omega^+(\mathfrak{U}_\lambda)\in L(\bv_\lambda,\bw_\lambda)+\sum_{\substack{(\tilde{\bv},\tilde{\bw})\in({^\imath V^+},^{\imath W^+})\\(\bv_\lambda,\bw_\lambda)\prec(\tilde{\bv},\tilde{\bw})}} \mathcal{Z}\cdot L(\tilde{\bv},\tilde{\bw}),\quad\forall (\bv,\bw)\in({^\imath V^+},{^\imath W^+})\]
		where $(\bv_\lambda,\bw_\lambda)$ is such that $\sigma^*\bw_\lambda-\mathcal{C}_q\bv_\lambda=\lambda$. From the definition of our quotient maps, it follows that
		\[\widehat{\Omega}(\mathfrak{U}_\lambda)\in L(\bv_\lambda,\bw_\lambda)+\sum_{(\bv_\lambda,\bw_\lambda)\prec(\tilde{\bv},\tilde{\bw})}\mathcal{Z}\cdot L(\tilde{\bv},\tilde{\bw})\]
		The assertion now follows by induction on dimension.
	\end{proof}
	
	\begin{corollary}
		Let $(Q,\varrho)$ be a Dynkin $\imath$quiver. Then there exist isomorphisms of $\cz$-algebras  
		$\widetilde{\psi}:\tUi_\cz\stackrel{\simeq}{\rightarrow}\widetilde{\ch}(Q,\btau)_\cz$ sending  $B_i,\tk_i$ as in \eqref{eq:psi},  
		and $\widetilde{\kappa}:\tUi_\cz\stackrel{\simeq}{\rightarrow}\tRiZ$ sending  $B_i,\tk_i$ as in \eqref{eq:kappa}.
	\end{corollary}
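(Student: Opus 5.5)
The plan is to obtain this corollary by composing isomorphisms that are already available and then restricting them to integral forms.

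First, recall that $\tUi_\cz$ is \emph{defined} as the preimage of $\widetilde{\ch}(Q,\btau)_\cz$ under the $\Q(v^{1/2})$-algebra isomorphism $\widetilde{\psi}:\tUi\to\widetilde{\ch}(Q,\btau)$ (the generic version of Lemma~\ref{lem:Hall-iQG}; see \S\ref{subsec:dcb of iHall}). Hence $\widetilde{\psi}$ restricts to a bijection $\tUi_\cz\to\widetilde{\ch}(Q,\btau)_\cz$. Since $\widetilde{\ch}(Q,\btau)_\cz$ is a $\cz$-subalgebra (its structure constants $\boldsymbol{\varphi}(v)$ lie in $\Z[v,v^{-1}]$), its preimage $\tUi_\cz$ is a $\cz$-subalgebra, and the restriction is an isomorphism of $\cz$-algebras. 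The generators $B_i,\tk_i$ go to the images prescribed in \eqref{eq:psi}, read generically. One small point should be checked: these images lie in the integral form. This holds because $v^{-1/2}\fu_{\alpha_i}$ and $\K_i$ (or $v^{-1}\K_i$) belong to $\widetilde{\ch}(Q,\btau)_\cz$, as $v^{\pm1/2}\in\cz$.

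Second, we define $\widetilde{\kappa}$ on $\tUi_\cz$ as the composite $\widetilde{\Omega}\circ\widetilde{\psi}$. We must check that this composite agrees with the map $\tilde{\kappa}$ of Theorem~\ref{thm:iQG-sheaf}, and it suffices to do so on the generators $B_i,\tk_i$. For $B_i$, we have $\widetilde{\Omega}(v^{-1/2}\fu_{\alpha_i})=v^{-1/2}\cdot v^{1/2}L(0,\e_{\sigma\tS_i})=L(0,\e_{\sigma\tS_i})$. For $\tk_i$, we compare the normalization $\K_i=v\tk_i$ (when $\btau i=i$) with the definition $\K_{\alpha_i}\mapsto L(\bv^{\btau i},\bw^i)$, which recovers \eqref{eq:kappa}. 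These identities are exactly how $\widetilde{\Omega}$ was constructed from Theorem~\ref{thm:iQG-sheaf} and Lemma~\ref{lem:Hall-iQG}, so the two maps coincide as $\Q(v^{1/2})$-algebra isomorphisms.

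Finally, Theorem~\ref{prop:Hall-sheaf-integ} shows that $\widetilde{\Omega}$ restricts to a $\cz$-algebra isomorphism $\widetilde{\ch}(Q,\btau)_\cz\to\tRiZ$. Composing it with the first step yields $\widetilde{\kappa}:\tUi_\cz\xrightarrow{\simeq}\tRiZ$. The only substantive input is Theorem~\ref{prop:Hall-sheaf-integ}. Everything else is bookkeeping of the normalizations $v^{\pm1/2}$ and the two conventions for $\tk_i$ versus $\K_i$, and I expect that sign and power check on $\tk_i$ for $\btau$-fixed $i$ to be the only place requiring care.
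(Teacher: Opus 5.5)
Your proposal is correct and follows the same route as the paper, which gives no separate proof: the corollary is immediate from the definition of $\tUi_\cz$ as the preimage of $\widetilde{\ch}(Q,\varrho)_\cz$ under $\widetilde{\psi}$, together with Theorem~\ref{prop:Hall-sheaf-integ} applied to $\widetilde{\Omega}=\widetilde{\kappa}\circ\widetilde{\psi}^{-1}$. Your generator check on $B_i$ and $\tk_i$ agrees with \eqref{eq:psi} and \eqref{eq:kappa}, although it is automatic, since $\widetilde{\Omega}$ was defined as exactly this composite.
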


	\subsection{$\imath$Hall algebras via functions}\label{subsec:QV function iHA}
	
	In this subsection the quiver varieties will be assumed to be defined over $\F$, the algebraic closure of a finite field $\F_q$. By considering $\F$-vector spaces with $\F_q$-structures, we may assume that $\rep(\bw,\mathcal{S}^\imath)$ and $\mathcal{M}_0^{\reg}(\bv,\bw,\mathcal{R}^\imath)$ have $\F_q$-structures.
	
	We fix a prime number $\ell$, invertible in $\F_q$. Given $\bw\in\N^{\mathcal{S}^\imath_0}$, let $\mathcal{F}_\bw$ be the vector space of $\ov{\Q}_\ell$-valued functions on the set of $\F_q$-rational points of $\rep(\bw,\mathcal{S}^\imath)$ which are constant on the set of $\F_q$-rational points of any $G_\bw$-orbit on $\rep(\bw,\mathcal{S}^\imath)$.
	
	For dimension vectors $\mathbf{w}_1,\mathbf{w}_2$ of $\N^{\mathcal{S}^\imath_0}$, we fix an $\I$-graded vector space $V$ of dimension $\bw=\bw_1+\bw_2$ and a subspace $W\subseteq V$ of dimension $\bw_2$. Let 
	\[F_{\bw_1,\bw_2}=\{y\in\rep(\bw,\mathcal{S}^\imath)\mid y(W)\subseteq W\}.\]
	Then we have the following convolution diagram: 
	\[\begin{tikzcd}
		\rep(\mathbf{w}_1,\mathcal{S}^\imath)\times\rep(\mathbf{w}_2,\mathcal{S}^\imath) & F_{\bw_1,\bw_2}\ar[r,"q"]\ar[l,swap,"p"] & \rep(\mathbf{w}_1+\bw_2,\mathcal{S}^\imath)
	\end{tikzcd}\]
	Using this we can define a restriction map $\widetilde{\res}_{\mathbf{w}_1,\mathbf{w}_2}^{\mathbf{w}}$ on $\mathcal{F}_{\bw}$ as follows:
	\[
	\widetilde{\res}_{\mathbf{w}_1,\mathbf{w}_2}^{\bw}:\mathcal{F}_{\bw} \longrightarrow \mathcal{F}_{\mathbf{w}_1}\times \mathcal{F}_{\mathbf{w}_2},\quad f\mapsto p_!q^*(f).
	\]
	As we shall see, this is related to the restriction functor defined in \S\ref{subsec: graded Groth ring}, hence defines a comultiplication on $\mathcal{F}(Q,\varrho)=\bigoplus_{\bw}\mathcal{F}_{\bw}$. For a representation $M$ in $\rep(\bw,\mathcal{S}^\imath)$ defined over $\F_q$, we denote by $\delta_{[M]}$ the characteristic function on the orbit $\mathfrak{O}_M$ of $M$. It is clear that these functions form a basis of $\mathcal{F}_\mathbf{w}$.
	
	\begin{lemma}\label{res of const function iHA}
		Given $M\in\rep(\bw_1,\mathcal{S}^\imath)(\F_q)$ and $N\in\rep(\bw_2,\mathcal{S}^\imath)(\F_q)$, we have
		\[\widetilde{\res}_{\mathbf{w}_1,\mathbf{w}_2}^{\bw}(\delta_{[L]})(M,N)=q^{\mathbf{w}_1\cdot\mathbf{w}_2}\frac{|\Ext^1_{\mathcal{S}^\imath}(M,N)_L|}{|\Hom_{\mathcal{S}^\imath}(M,N)|}.\]
	\end{lemma}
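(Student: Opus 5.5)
We compute the left-hand side directly from the definition $\widetilde{\res}^{\bw}_{\bw_1,\bw_2}(f)=p_!q^*(f)$, where $p_!$ on functions means summation over the fibres of $p$ on $\F_q$-rational points. For $(M,N)$ we have
\[
\widetilde{\res}^{\bw}_{\bw_1,\bw_2}(\delta_{[L]})(M,N)=\#\{y\in F_{\bw_1,\bw_2}(\F_q)\mid p(y)=(M,N),\ y\cong L\},
\]
since $q$ is a closed embedding and $q^*\delta_{[L]}$ is the indicator of those $y\in F_{\bw_1,\bw_2}$ whose underlying module is isomorphic to $L$. Here we fix the $\F_q$-rational points of $M$ on $W$ and of $N$ on $V/W$, matching the convention that $W$ has dimension vector $\bw_2$; the roles in the convolution diagram should be arranged so that $M$ is the submodule and $N$ the quotient, or vice versa. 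The formula is stated with $\Ext^1(M,N)$, which classifies extensions $0\to N\to L\to M\to 0$, so $N$ sits on $W$ and $M$ on $V/W$. I will adopt this reading and check consistency with $p(y)=(y|_{W},y')$ at the end.

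\textbf{Parametrising the fibre.} Choose a graded complement $V=W\oplus U$, so $U\cong V/W$. An element $y$ in the fibre is block upper triangular: for each arrow $h$ of the quiver of $\cs^\imath$, $y_h=\begin{pmatrix} N_h & \xi_h\\ 0& M_h\end{pmatrix}$ with the diagonal blocks fixed. The relations of $\cs^\imath$ (equivalently of $\iLa$, which is given by a quiver with relations) impose conditions on $\xi=(\xi_h)_h$. Since the relations are homogeneous quadratic/commutativity relations, these conditions are affine-linear in $\xi$, in fact linear given fixed $M,N$. Thus the fibre is identified with the space $Z(M,N)$ of such $\xi$, which is the space of $1$-cocycles in the standard complex
\[
0\to \textstyle\bigoplus_{x}\Hom(M_x,N_x)\xrightarrow{\ d\ } Z(M,N)\to \Ext^1(M,N)\to0,
\]
with kernel of $d$ equal to $\Hom_{\cs^\imath}(M,N)$. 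This is the usual description: $Z/\operatorname{Im}d\cong\Ext^1$, and $\ker d=\Hom$. The main point to justify, and the step I expect to need care, is that this holds for $\cs^\imath$ as a category with relations, so that the ``cocycles'' are exactly the solutions of the linearized relations. I would argue it directly: $\xi$ gives a module structure on $W\oplus U$ extending $N$ and $M$ iff relations hold, and two such are isomorphic as extensions iff they differ by $d(\phi)$ for $\phi\in\bigoplus_x\Hom(M_x,N_x)$.

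\textbf{Counting.} The group $\bigoplus_x\Hom(M_x,N_x)$ has $q^{\bw_1\cdot\bw_2}$ elements (the vertices being objects of $\cs^\imath$, with dimension vectors $\bw_1,\bw_2$). It acts on $Z(M,N)$ by translation via $d$ with stabiliser $\Hom(M,N)$. The middle term of $\xi$ is the module $L_\xi$, and the isomorphism class of $L_\xi$ depends only on the class of $\xi$ in $\Ext^1$. Hence
\[
\#\{\xi\in Z(M,N): L_\xi\cong L\}=|\Ext^1(M,N)_L|\cdot|\operatorname{Im}d|=|\Ext^1(M,N)_L|\cdot\frac{q^{\bw_1\cdot\bw_2}}{|\Hom(M,N)|},
\]
which is the claimed formula. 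Finally, I would double-check that $\bw_1\cdot\bw_2$ is the standard dot product over $\cs^\imath_0$. This is symmetric, so the orientation ambiguity above only affects which of $M,N$ plays which role, not the power of $q$.
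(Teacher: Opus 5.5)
Your proposal is correct and follows essentially the same route as the paper. The paper also writes points of the fibre $D(M,N)$ as block upper-triangular matrices with $N$ as the submodule and $M$ as the quotient, establishes the exact sequence $0\to\Hom_{\mathcal{S}^\imath}(M,N)\to\prod_{i}\Hom(W_i,V_i)\to D(M,N)\to\Ext^1_{\mathcal{S}^\imath}(M,N)\to 0$, and counts the preimage of $\Ext^1_{\mathcal{S}^\imath}(M,N)_L$ exactly as you do; your observation that the relations impose linear conditions on the off-diagonal blocks is used only implicitly there.
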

	\begin{proof}
		For an $\F$-variety $X$ with an $\F_q$-structure, we will use $|X|$ to denote the number of $\F_q$-rational points of $X$. By definition, $\widetilde{\res}_{\mathbf{w}_1,\mathbf{w}_2}^{\bw}(\delta_{[L]})(M,N)$ is computed as 
		\[\widetilde{\res}_{\mathbf{w}_1,\mathbf{w}_2}^{\bw}(\delta_{[L]})(M,N)=|D(M,N)\cap\mathfrak{O}_L|\]
		where $\mathfrak{O}_L$ is the orbit of $L$ and $D(M,N)$ is the fiber of $p$ at $(M,N)$. Assume that $N=(V,x_h, x_i)_{h\in Q_1,i\in Q_0}$ and $M=(W,y_h,y_i)_{h\in Q_1,i\in Q_0}$. For a linear space $U$ of $\dim V+\dim W$, we fix an isomorphism $U\cong V\oplus W$ and identify them in the following. Then an element of $D(M,N)$ consists of maps
		\[z_h=\begin{pmatrix}
			x_h&w_h\\
			0&y_h
		\end{pmatrix},\quad z_i=\begin{pmatrix}
			x_i&\eta_i\\
			0&y_i
		\end{pmatrix}
		\]
		for any $h=(i\rightarrow j)\in Q_1$, $i\in\I$, and
		\[w_h:W_i\longrightarrow V_j,\quad \eta_i:W_i\longrightarrow V_{\varrho j},
		\]
		so we may denote any element $K$ of $D(M,N)$ by $K=(U,w,\eta)$, where $w=(w_h)_{h\in Q_1}$, $\eta=(\eta_i)_{i\in\I}$. 
		We claim that there is an exact sequence
		\[\begin{tikzcd}[column sep=6mm]
			0\ar[r]&\Hom_{\mathcal{S}^\imath}(M,N)\ar[r]&\prod_{i\in\I}\Hom(W_i,V_i)\ar[r,"\alpha"]&D(M,N)\ar[r,"\pi"]&\Ext_{\mathcal{S}^\imath}^1(M,N)\ar[r]&0
		\end{tikzcd}\]
		In fact, the image of $K=(U,w,\eta)$ and $\widetilde{K}=(U,\tilde{w},\tilde{\eta})$ in $\Ext_{\mathcal{S}^\imath}^1(M,N)$ are equal if and only if there exists a commutative diagram
		\[\begin{tikzcd}
			0 \ar[r] & N \ar[r] \ar[d,equal] &        K  \ar[r] \ar[d,"\cong"'] & M \ar[r] \ar[d,equal] & 0 \\
			0 \ar[r] & N \ar[r]              &\widetilde{K} \ar[r]            & M \ar[r]              & 0
		\end{tikzcd}\]
		In other words, the middle isomorphsim is given by linear maps of the form
		\[\begin{pmatrix}
			1&s_i\\
			0&1
		\end{pmatrix}:U_i\rightarrow U_i\]
		where $s_i:W_i\rightarrow V_i$ satisfy the following equations:
		\begin{align*}
			w_h-\tilde{w}_h             &= x_hs_i-s_jy_h, \text{ for any } (h:i\rightarrow j)\in Q_1,           \\
			\eta_i-\tilde{\eta}_i       &= x_is_i-s_{\varrho i}y_i, \text{ for any }i\in\I.
		\end{align*}
		The map $\alpha:\prod_i\Hom(M_i,N_i)\rightarrow D(M,N)$ is then defined by
		\[(s_i)_{i\in\I}\mapsto (U,(x_hs_i-s_jy_h)_{(h:i\rightarrow j)\in Q_1},(x_is_i-s_{\varrho i}y_i)_{i\in\I}),\]
		which satisfies $\ker(\pi)=\Im(\alpha)$. This map is not injective, and its kernel is exactly $\Hom_{\mathcal{S}^\imath}(M,N)$. We therefore get the desired exact sequence.
		
		Now the inverse image $\pi^{-1}(\Ext_{\mathcal{S}^\imath}^1(M,N)_L)$ is equal to $D(M,N)\cap\mathfrak{O}_L$, so by taking $\F_q$-rational points we find
		\begin{align*}
			|D(M,N)\cap\mathfrak{O}_L|&=|\Ext_{\mathcal{S}^\imath}^1(M,N)_L|\cdot\frac{|\prod_{i\in\I}\Hom(M_i,N_i)|}{|\Hom_{\mathcal{S}^\imath}(M,N)|}= q^{\bw_1\cdot\bw_2}\frac{|\Ext_{\mathcal{S}^\imath}^1(M,N)_L|}{|\Hom_{\mathcal{S}^\imath}(M,N)|},
		\end{align*}
		which proves the lemma.
	\end{proof}
	
	Fix a square root $\sqq$ of $q$ (also a square root of $\sqq$) in $\ov{\Q}_\ell$. We define 
	\[\res_{\mathbf{w}_1,\mathbf{w}_2}^{\bw} = \sqq^{\frac{1}{2}\langle\mathbf{w}_1,\mathbf{w}_2\rangle_{Q,a}} \cdot \widetilde{\res}_{\mathbf{w}_1,\mathbf{w}_2}^{\bw}.\]
	By taking graded dual, the maps $\res_{\mathbf{w}_1,\mathbf{w}_2}^{\bw}$ defines a multiplication on $\mathcal{F}^*(Q,\varrho)=\bigoplus\mathcal{F}^*_{\bw}$, which is denoted by $\star$. Let $\xi_{[M]}$ be the basis of $\mathcal{F}^*(Q,\varrho)$ dual to $\delta_{[M]}$.
	
	\begin{proposition}\label{iHA by const function}
		Let $\widetilde{\mathcal{H}}(\Lambda^\imath)$ be the twisted Hall algebra of the $\imath$quiver algebra $\Lambda^\imath$. Then we have an algebra isomorphism 
		\[\Upsilon:\widetilde{\mathcal{H}}(\Lambda^\imath) \longrightarrow \mathcal{F}^*(Q,\varrho),\quad [M]\mapsto \sqq^{(\dimv M)\cdot(\dimv M)-\frac{1}{2}\langle M,M\rangle_Q}\xi_{[M]}.\]
	\end{proposition}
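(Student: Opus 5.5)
The map $\Upsilon$ sends basis to basis up to nonzero scalars: the isoclasses $[M]$ of $\Lambda^\imath$-modules of dimension vector $\bw$ correspond bijectively to the $G_\bw$-orbits on $\rep(\bw,\mathcal{S}^\imath)(\F_q)$, via $\mod(\cs^\imath)\simeq\mod(\Lambda^\imath)$. So $\Upsilon$ is a linear isomorphism, and the whole proof is to check multiplicativity on basis elements by comparing structure constants.

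First I compute the product in $\mathcal{F}^*(Q,\varrho)$. Since $\star$ is the graded dual of $\res$, for basis elements we have
\[
\xi_{[M]}\star\xi_{[N]}=\sum_{[L]}\big(\res^{\bw}_{\bw_1,\bw_2}(\delta_{[L]})\big)(M,N)\,\xi_{[L]}.
\]
I need to be careful about which factor the subspace $W$ corresponds to. In the proof of Lemma~\ref{res of const function iHA} the submodule is $N$ and the quotient is $M$. So the convention matches the Hall product $[M]\cdot[N]$, which counts extensions $0\to N\to L\to M\to 0$. Applying Lemma~\ref{res of const function iHA},
\[
\xi_{[M]}\star\xi_{[N]}=\sqq^{\frac12\langle\bw_1,\bw_2\rangle_{Q,a}}\,q^{\bw_1\cdot\bw_2}\sum_{[L]}\frac{|\Ext^1(M,N)_L|}{|\Hom(M,N)|}\,\xi_{[L]}.
\]
Here Hom and Ext may be computed in $\mathcal{S}^\imath$ or in $\Lambda^\imath$, since the two module categories are equivalent.

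Next I compare with $\Upsilon([M]*[N])$. Write $c(M)=(\dimv M)\cdot(\dimv M)-\frac12\langle M,M\rangle_Q$. The identity to verify reduces to the exponent equality
\[
\langle M,N\rangle_Q+c(L)=c(M)+c(N)+\tfrac12\langle\bw_1,\bw_2\rangle_{Q,a}+2\,\bw_1\cdot\bw_2 .
\]
This uses $\dimv L=\bw_1+\bw_2$ and $q=\sqq^2$. I expand $c(L)$ bilinearly:
\begin{itemize}
\item the dot-product part gives $c$-contributions from $M$ and $N$ plus the cross term $2\,\bw_1\cdot\bw_2$;
\item the Euler-form part gives $-\tfrac12\big(\langle\bw_1,\bw_2\rangle_Q+\langle\bw_2,\bw_1\rangle_Q\big)$ as its cross term.
\end{itemize}
The $\langle M,N\rangle_Q$ on the left combines with this to $\tfrac12\big(\langle\bw_1,\bw_2\rangle_Q-\langle\bw_2,\bw_1\rangle_Q\big)=\tfrac12\langle\bw_1,\bw_2\rangle_{Q,a}$. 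This matches the right-hand side exactly.

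The only place I expect friction is conventions rather than substance. I need the Euler form $\langle-,-\rangle_Q$ on $\Lambda^\imath$-modules and the form $\langle-,-\rangle_{Q,a}$ to be evaluated on the same underlying $\res$-dimension vectors; this follows from the identification $K_0(\mod(\cs^\imath))\cong K_0(\mod(kQ))$. I also need the dot product $\bw\cdot\bw$ on $\N^{\cs^\imath_0}$ to be the one appearing as $\prod_i\Hom(W_i,V_i)$ in Lemma~\ref{res of const function iHA}, which holds by construction. If the submodule/quotient ordering turned out reversed, the $Q,a$ term would flip sign, so I would re-check the placement of $W$ in the convolution diagram first. Once the exponents agree, unit and associativity are automatic, because $\Upsilon$ is a bijective map preserving products.
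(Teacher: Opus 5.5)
Your proposal is correct and is essentially the paper's proof. You compute $\Upsilon([M])\star\Upsilon([N])$ via Lemma~\ref{res of const function iHA}, absorb the factors $\sqq^{\frac12\langle\bw_1,\bw_2\rangle_{Q,a}}$ and $q^{\bw_1\cdot\bw_2}=\sqq^{2\bw_1\cdot\bw_2}$, and verify the same exponent identity by expanding $(\dimv L)\cdot(\dimv L)-\frac12\langle L,L\rangle_Q$ bilinearly. Your added remarks on bijectivity and on the submodule/quotient convention fill in points the paper leaves implicit; the bijectivity claim tacitly uses that the $\F_q$-points of a $G_\bw$-orbit form a single $G_\bw(\F_q)$-orbit, which follows from Lang's theorem because automorphism groups of modules are connected.
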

	\begin{proof}
		We first recall that $\widetilde{\mathcal{H}}(\Lambda^\imath)$ is defined by the following multiplication:
		\[[M]\ast[N]=\sqq^{\langle M,N\rangle_Q}\sum_{[L]}\frac{|\Ext^1_{\Lambda^\imath}(M,N)_L|}{|\Hom_{\Lambda^\imath}(M,N)|}[L].\]
		If we set $\bw_1=\dimv M$ and $\bw_2=\dimv N$, then
		\begin{align*}
			&\Upsilon([M])\star\Upsilon([N])
			\\
			&=\sqq^{\bw_1\cdot\bw_1-\frac{1}{2}\langle \bw_1,\bw_1\rangle_Q+\bw_2\cdot\bw_2-\frac{1}{2}\langle \bw_2,\bw_2\rangle_Q}\xi_{[M]}\star\xi_{[N]}\\
			&=\sqq^{\bw_1\cdot\bw_1-\frac{1}{2}\langle \bw_1,\bw_1\rangle_Q+\bw_2\cdot\bw_2-\frac{1}{2}\langle \bw_2,\bw_2\rangle_Q+\frac{1}{2}\langle\bw_1,\bw_2\rangle_{Q,a}+2\bw_1\cdot\bw_2}\sum_{[L]}\frac{|\Ext^1_{\Lambda^\imath}(M,N)_L|}{|\Hom_{\Lambda^\imath}(M,N)|}\xi_{[L]}\\
			&=\sqq^{(\bw_1+\bw_2)\cdot(\bw_1+\bw_2)-\frac{1}{2}\langle\bw_1+\bw_2,\bw_1+\bw_2\rangle_Q+\langle\bw_1,\bw_2\rangle_Q}\sum_{[L]}\frac{|\Ext^1_{\Lambda^\imath}(M,N)_L|}{|\Hom_{\Lambda^\imath}(M,N)|}\xi_{[L]}\\
			&=\sqq^{\langle\bw_1,\bw_2\rangle_Q}\sum_{[L]}\frac{|\Ext^1_{\Lambda^\imath}(M,N)_L|}{|\Hom_{\Lambda^\imath}(M,N)|}\Upsilon([L])=\Upsilon([M]\ast[N]).\qedhere
		\end{align*}
	\end{proof}
	
	\begin{remark}\label{rmk:F^*(Q)}
		Similar to $\cf^*(Q,\varrho)$, we can define $\cf^*(Q)$ (with the same twist) by considering representations of $Q$. Similar to Proposition \ref{iHA by const function}, see also \cite[\S9.3]{Lus90},
		one can prove that there is  
		an algebra isomorphism 
		\[\Upsilon^+:\widetilde{\mathcal{H}}(\bfk Q) \longrightarrow \mathcal{F}^*(Q),\quad [M]\mapsto \sqq^{\dimv M\cdot\dimv M-\frac{1}{2}\langle M,M\rangle_Q}\xi_{[M]}.\]
	\end{remark}
	
	Recall the notations $\cd_{sg}(\Lambda^\imath)$ and $\res_\BH$ in \S\ref{subsec:i-quiver}. By identifying $\mod(\Lambda^\imath)$ and $\mod(\cs^\imath)$, we can prove the following characterization on the stratification of $\rep(\bw,\mathcal{S}^\imath)$.
	
	\begin{lemma}\label{lem:strata M_0^reg char}
		Two $\mathcal{S}^\imath$-modules $M,N$ lie in the same stratum of $\rep(\bw,\mathcal{S}^\imath)$ if and only if $M\cong N$ in $\mathcal{D}_{sg}(\Lambda^\imath)$ and $\res_\BH(M)=\res_\BH(N)$. 
	\end{lemma}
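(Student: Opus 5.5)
The plan is to characterize the stratum containing an $\mathcal{S}^\imath$-module $M$ through the invariant $\dimv K_{LR}(M)$. By Lemma~\ref{lem:strata iff KLR}, $M$ and $N$ lie in the same stratum if and only if $\dimv K_{LR}(M)=\dimv K_{LR}(N)$. It therefore suffices to show that $\dimv K_{LR}(M)$ is determined by the pair $\bigl(\text{class of } M \text{ in } \mathcal{D}_{sg}(\Lambda^\imath),\ \res_\BH(M)\bigr)$, and that this pair can conversely be read off from $\dimv K_{LR}(M)$.

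First I reduce to Gorenstein projective modules. Since $\Lambda^\imath$ is $1$-Gorenstein, every $M$ admits a short exact sequence $0\to P\to G\to M\to 0$ with $G$ Gorenstein projective and $\pd P\le 1$. Lemma~\ref{lem:K_LR exact if projdim} then gives
\[\dimv K_{LR}(G)=\dimv K_{LR}(M)+\dimv K_{LR}(P).\]
Moreover $M\cong G$ in $\mathcal{D}_{sg}(\Lambda^\imath)$. By Lemma~\ref{cor: res proj}, $\res_\BH(P)$ is projective, so $\res_\BH(G)$ and $\res_\BH(M)$ differ by a known projective $\BH$-module.

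Next I use the decomposition $\dimv=\dimv^0+\dimv^+$ from Lemma~\ref{lem:DP3}, as in the proof of Lemma~\ref{lem:Gproj unique strata}. Since $\dimv K_{LR}(P)$ lies in $({^\imath V^0},{^\imath W^0})$ (by \cite[Corollary 4.3]{LW21b} and Lemma~\ref{lem:R0 subalgebra}), we get $\dimv^+K_{LR}(M)=\dimv^+K_{LR}(G)$. The argument of Lemma~\ref{lem:Gproj unique strata} shows this equals $\dimv^+K_{LR}(X)$, where $X$ is the unique $kQ$-module with $X\cong M$ in $\mathcal{D}_{sg}(\Lambda^\imath)$ \cite[Theorem 3.18]{LW19}. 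By Lemma~\ref{lem:unique V^+W^+}, $X$ and $\dimv^+$ determine each other. So $\dimv^+K_{LR}(M)$ encodes exactly the singularity class of $M$.

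It remains to see that, once $X$ is fixed, $\dimv^0K_{LR}(M)$ determines, and is determined by, $\res_\BH(M)$. The $\bw$-component is $\dimv M$, which is fixed. Write $\dimv^0 K_{LR}(M)=\sum_i n_i(\bv^i,\bw^i)$. The frozen part gives
\[\bw=\dimv M=\dimv X+\sum_i n_i\bw^i.\]
The main obstacle is matching the integers $n_i$ with the multiplicities of the summands $\E_i$ in $\res_\BH(M)$ relative to $\res_\BH(X)$, which is semisimple. When $\varrho i\ne i$ the vector $\bw^i$ does not pin down $n_i$ and $n_{\varrho i}$ separately, so I cannot simply read $n_i$ off from $\bw$. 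I would instead compute $\dimv K_{LR}$ on $\BH$-data directly. Decompose $\res_\BH(M)=\bigoplus_i \E_i^{m_i}\oplus(\text{semisimple})$. Then use the exact sequence $0\to P\to G\to M\to0$, together with the fact that $K_{LR}(\E_i)$ has dimension vector $(\bv^{\varrho i},\bw^i)$ or $(\bv^i,\bw^i)$ under the normalization of \eqref{eq:kappa}. With this I would show $n$ equals $m$ minus the $\E$-multiplicities already forced by $X$. Equivalently, $\bv$ evaluated at the vertices $\tS_i$ records $\operatorname{rank}$ of $\varepsilon_i$ on $M_i$, since $\dim\cp^\imath(\tS_i,z)$ detects that map.

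Granting this identification, both directions of the lemma follow from Lemma~\ref{lem:strata iff KLR}.
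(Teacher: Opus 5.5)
Your reduction is sound as far as it goes. Via Lemma~\ref{lem:strata iff KLR}, the approximation $0\to P\to G\to M\to 0$ and the argument of Lemma~\ref{lem:Gproj unique strata}, you correctly get that $\dimv^+K_{LR}(M)=\dimv K_{LR}(X)$ determines, and is determined by, the class of $M$ in $\mathcal{D}_{sg}(\Lambda^\imath)$. But the other half of the lemma is exactly the step you ``grant'': once $X$ is fixed, $\dimv^0K_{LR}(M)$ and $\res_\BH(M)$ must determine each other. So as written, neither implication is actually proved.

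The mechanism you suggest for that step is also wrong. The entry of $\bv$ at $\tS_i$ is not the rank of $\varepsilon_i$ on $M_i$, because the vectors $\bv^j=\sum_z\dim\cp^\imath(\tS_j,z)\e_z$ are also supported at vertices $\tS_i$ adjacent to $j$. For $Q\colon 1\to 2$ with $\varrho=\Id$ one finds $\bv^1=\e_{\tS_1}+\e_{\tS_2}$, since $\Ext^1_{kQ}(S_1,S_2)\neq 0$. Hence $\dimv K_{LR}(\E_1)=(\bv^1,\bw^1)$ has a nonzero entry at $\tS_2$, although $\E_1$ vanishes at vertex $2$.

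The missing idea is that you never need to extract the $n_i$ from $\bw$ or from ranks. On modules of finite projective dimension, $\dimv K_{LR}$ is additive (Lemma~\ref{lem:K_LR exact if projdim}) and carries exactly the same information as $\res_\BH$ (\cite[Corollary 4.3]{LW21b}). In particular $\E_i$ and $\E_{\varrho i}$ are separated by their $\bv$-parts even though they share $\bw^i$, which removes your worry about $\bw^i$. So restrict the following to $\BH$:
the sequence $0\to P\to G\to M\to 0$;
the sequence $0\to P_X\to G_X\to X\to 0$;
the isomorphism $G\oplus P'\cong G_X\oplus P''$, with $P',P''$ projective, coming from $G\cong G_X$ in $\underline{\Gproj}(\Lambda^\imath)$.
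The sequences split, because the restricted kernels are projective-injective over the self-injective algebra $\BH$. Krull--Schmidt then gives $\res_\BH(M)\oplus\res_\BH(P\oplus P')\cong\res_\BH(X)\oplus\res_\BH(P_X\oplus P'')$, and Lemma~\ref{lem:K_LR exact if projdim} gives the parallel identity for $\dimv K_{LR}$. Comparing the two, and cancelling $\res_\BH(P\oplus P')$, yields $\res_\BH(M)\cong\res_\BH(X)\oplus\res_\BH(\K_\alpha)$ with $\dimv K_{LR}(\K_\alpha)=\dimv^0K_{LR}(M)$.

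This is essentially the paper's proof. The paper phrases it with common middle terms $0\to K_1\to A\to M\to 0$ and $0\to K_2\to A\to N\to 0$ with projective kernels, and likewise compares $M$ with $X$ in the converse.
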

	\begin{proof}
		First we assume that $M\cong N$ in $\mathcal{D}_{sg}(\Lambda^\imath)$ and $\res_{\BH}(M)\cong\res_{\BH}(N)$. Then we can find short exact sequences of $\Lambda^\imath$-modules
		$$0\longrightarrow K_1\longrightarrow A\longrightarrow M\longrightarrow0,\qquad 0\longrightarrow K_2\longrightarrow A\longrightarrow N\longrightarrow0$$
		where $K_1,K_2$ are projective $\Lambda^\imath$-modules. From Lemma~\ref{lem:K_LR exact if projdim}, these give us
		\begin{equation}\label{prop:strata M_0^reg char-1}
			\dimv K_{LR}(M)+\dimv K_{LR}(K_1)=\dimv K_{LR}(A)=\dimv K_{LR}(N)+\dimv K_{LR}(K_2).
		\end{equation}
		After restricting to $\mod(\BH)$ the two exact sequences split, so they give
		\[\res_{\BH}(M)\oplus\res_{\BH}(K_1)=\res_{\BH}(A)=\res_{\BH}(N)\oplus\res_{\BH}(K_2).\]
		Since we have already assumed $\res_{\BH}(M)\cong\res_{\BH}(N)$, this implies $\res_{\BH}(K_1)\cong\res_{\BH}(K_2)$, and in particular (see \cite[Corollary 4.3]{LW21b})
		\[\dimv K_{LR}(K_1)=\dimv K_{LR}(K_2).\]
		Combining with \eqref{prop:strata M_0^reg char-1} we then get $\dimv K_{LR}(M)=\dimv K_{LR}(N)$.
		
		Conversely, let $M$ be a $\Lambda^\imath$-module and $\dimv K_{LR}(M)=(\bv,\bw)$. By \cite[Corollary 3.21]{LW19}, we can find a unique $\bfk Q$-module $X$ such that $X\cong M$ in $\mathcal{D}_{sg}(\Lambda^\imath)$, so there are short exact sequences
		$$0\longrightarrow K_3\longrightarrow B\longrightarrow M\longrightarrow0,\qquad 0\longrightarrow K_4\longrightarrow B\longrightarrow X\longrightarrow0$$
		such that $K_3,K_4$ are projective. Then Lemma~\ref{lem:K_LR exact if projdim} implies
		\begin{equation}\label{prop:strata M_0^reg char-2}
			\dimv K_{LR}(M)+\dimv K_{LR}(K_3)=\dimv K_{LR}(K_4)+\dimv K_{LR}(X).
		\end{equation}
		If we write $(\bv,\bw)=(\bv^0,\bw^0)+(\bv^+,\bw^+)$ for the decomposition of $(\bv,\bw)$, then we find
		\[\dimv K_{LR}(X)=(\bv^+,\bw^+).\]
		In other words, the $\bfk Q$-module $X$ is uniquely determined by $(\bv,\bw)$ (here Lemma~\ref{lem:unique V^+W^+} is applied). Now note that
		\[(\bv,\bw)=\dimv K_{LR}(M)=\dimv K_{LR}(X)+\dimv K_{LR}(K_4)-\dimv K_{LR}(K_3).\]
		Since $\dimv K_{LR}(X)=(\bv^+,\bw^+)$, the uniqueness of the decomposition implies
		\[\dimv K_{LR}(K_4)-\dimv K_{LR}(K_3)=(\bv^0,\bw^0)\in({^\imath V^0},{^\imath W^0}).\]
		In particular, we can find $\alpha\in\N^\I$ such that
		\[\dimv K_{LR}(\K_\alpha)=\dimv K_{LR}(K_4)-\dimv K_{LR}(K_3).\]
		Now by restricting to $\mod(\BH)$, the two short exact sequences split and give an isomorphism
		\[\res_{\BH}(M)\oplus\res_{\BH}(K_3)\cong\res_{\BH}(X)\oplus\res_{\BH}(K_4).\]
		From Krull-Schmidt theorem we then find $\res_{\BH}(M)\cong\res_{\BH}(X)\oplus\res_{\BH}(\K_\alpha)$. We have therefore proved that for any $\mathcal{S}^\imath$-module $M$, there are $\alpha\in\N^\I$ and a unique $\bfk Q$-module $X$ such that $X\oplus\K_\alpha\cong M$ in $\mathcal{D}_{sg}(\Lambda^\imath)$ and $\res_{\BH}(M)=\res_{\BH}(X\oplus\K_\alpha)$. Since $X$ and $\alpha$ are uniquely determined by $\dimv K_{LR}(M)$, this completes the proof of the lemma.
	\end{proof}
	
	For each strongly $l$-dominant pair $(\bv,\bw)$ for $\mathcal{R}^\imath$, we denote by $\delta_{(\bv,\bw)}$ the constant function on $\mathcal{M}_0^{\reg}(\mathbf{v},\mathbf{w},\mathcal{R}^\imath)$, and by $\mathcal{M}_\mathbf{w}$ to be the subspace of $\mathcal{F}_\mathbf{w}$ spanned by the $\delta_{(\bv,\bw)}$. From Lemma~\ref{lem:strata M_0^reg char} and \cite[Lemma 3.7]{LP25}, we see that $\res^{\bw}_{\bw_1,\bw_2}$ can be restricted to 
	\[\res^{\bw}_{\bw_1,\bw_2}:\mathcal{M}_{\bw}\longrightarrow \mathcal{M}_{\bw_1}\times\mathcal{M}_{\bw_2}.\]
	Let $\mathcal{M}^*(Q,\varrho)=\bigoplus_\bw\mathcal{M}_\bw^*$. Then $\res^{\bw}_{\bw_1,\bw_2}$ endows $\mathcal{M}^*(Q,\varrho)$ with an algebra structure, and there is a homomorphism
	\begin{equation}\label{dual trace map diagram}
		\pi:\mathcal{F}^*(Q,\varrho)\longrightarrow \mathcal{M}^*(Q,\varrho)
	\end{equation}
	If we denote by $\xi_{(\mathbf{v},\mathbf{w})}$ the basis dual to $\delta_{(\bv,\bw)}$, then $\pi(\xi_{[M]})=\xi_{(\mathbf{v},\mathbf{w})}$ if $\dimv K_{LR}(M)=(\bv,\bw)$. The following lemma is now immediate.
	
	\begin{lemma}\label{iHA isomorphic to M^*}
		The isomorphism $\Upsilon$ induces an isomorphism 
		\[\ov{\Upsilon}:\widehat{\mathcal{H}}(Q,\varrho)\longrightarrow\mathcal{M}^*(Q,\varrho)\]
		and there is a commutative diagram
		\[\begin{tikzcd}
			\mathcal{F}^*(Q,\varrho)\ar[r,two heads,"\pi"] & \mathcal{M}^*(Q,\varrho)\\
			\widetilde{\mathcal{H}}(\Lambda^\imath)\ar[r]\ar[u,"\Upsilon"]&\widehat{\mathcal{H}}(Q,\varrho)\ar[u,swap,"\ov{\Upsilon}"]
		\end{tikzcd}\]
	\end{lemma}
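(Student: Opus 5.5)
The plan is to check two things: that the composite $\pi\circ\Upsilon:\widetilde{\mathcal{H}}(\Lambda^\imath)\to\mathcal{M}^*(Q,\varrho)$ is a surjective algebra homomorphism, and that its kernel is exactly the ideal $\mathcal{I}$ of \eqref{def:I}. Granting this, $\ov{\Upsilon}$ is the induced map on $\widetilde{\mathcal{H}}(\Lambda^\imath)/\mathcal{I}=\widehat{\mathcal{H}}(\bfk Q,\varrho)$, and the commutativity of the square is built into the construction. The generic version $\widehat{\mathcal{H}}(Q,\varrho)$ is then handled by the usual specialization argument: all structure constants are polynomial in $\sqq$.

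First, $\pi$ is an algebra homomorphism. It is the graded dual of the inclusion $\mathcal{M}_\bw\hookrightarrow\mathcal{F}_\bw$, and $\res^{\bw}_{\bw_1,\bw_2}$ maps $\mathcal{M}_\bw$ into $\mathcal{M}_{\bw_1}\times\mathcal{M}_{\bw_2}$, as noted above. Surjectivity is also clear: each stratum $\mathcal{M}_0^{\reg}(\bv,\bw,\mathcal{R}^\imath)$ is nonempty by Remark~\ref{remk:strongly dominant iff dominant}, so $\xi_{(\bv,\bw)}$ is the image of $\xi_{[M]}$ for any $M$ in that stratum.

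Next, $\mathcal{I}$ maps to zero. Take $M,N$ with $\res_\BH(M)\cong\res_\BH(N)$ and $M\cong N$ in $\cd_{sg}(\Lambda^\imath)$. Then $\dimv M=\dimv N$. By Lemma~\ref{lem:strata M_0^reg char}, $M$ and $N$ lie in the same stratum, so $\pi(\xi_{[M]})=\pi(\xi_{[N]})$. The scalar in $\Upsilon$ depends only on $\dimv M$ (through $\dimv M\cdot\dimv M$ and $\langle M,M\rangle_Q$, computed after restriction to $Q$). Hence $\pi\Upsilon([M]-[N])=0$.

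Conversely, the kernel of $\pi\Upsilon$ is spanned by differences $c_M[M]-c_N[N]$ with $M,N$ in the same stratum, where $c_M,c_N$ are the normalizing scalars; since these scalars agree, the kernel is spanned by $[M]-[N]$. By the other direction of Lemma~\ref{lem:strata M_0^reg char}, each such difference lies in $\mathcal{I}$. So $\ker(\pi\Upsilon)=\mathcal{I}$, and $\ov{\Upsilon}$ is an isomorphism.

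For the generic statement, I would use the bases on both sides. The Hall basis $\{[X]*[\K_\alpha]\}$ of Lemma~\ref{basis-iHall}(2) corresponds to the strata via $\dimv K_{LR}(X\oplus\K_\alpha)$, which is a bijection by Lemma~\ref{lem:DP3} and Lemma~\ref{lem:unique V^+W^+}. The structure constants on both sides are specializations at $\sqq$ of polynomials, so one defines $\mathcal{M}^*$ generically with these constants and passes to the generic algebras.

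I expect the main obstacle to be bookkeeping rather than conceptual. Specifically, one must verify that $[X]*[\K_\alpha]$ equals a power of $\sqq$ times $[X\oplus\K_\alpha]$ in $\widehat{\mathcal{H}}(\bfk Q,\varrho)$, so that its image is an invertible scalar multiple of a single $\xi_{(\bv,\bw)}$. I would try to deduce this from \cite[Lemma 3.7]{LP25} together with the fact that $\Ext^1(X,\K_\alpha)$ contributes only modules stratum-equivalent to $X\oplus\K_\alpha$.
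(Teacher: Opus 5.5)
Your argument is correct and matches the paper, which calls the lemma immediate. The reasoning it has in mind is yours: $\pi\circ\Upsilon$ is a surjective algebra homomorphism sending $[M]$ to a scalar depending only on $\dimv M$ times $\xi_{(\bv,\bw)}$, so by Lemma~\ref{lem:strata M_0^reg char} its kernel is exactly $\mathcal{I}$. Your last two paragraphs on a ``generic statement'' are unnecessary here, since in this lemma $\widehat{\mathcal{H}}(Q,\varrho)$ means $\widetilde{\ch}(\Lambda^\imath)/\mathcal{I}=\widehat{\ch}(\bfk Q,\varrho)$ at the fixed $q$ (the target $\mathcal{M}^*(Q,\varrho)$ depends on $q$), and the generic passage is handled separately later through $\widetilde{\mathbf{M}}^*(Q,\varrho)$ (Lemma~\ref{lem:iHA to bfM}).
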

	
	The $\imath$Hall algebra $\widetilde{\ch}(\bfk Q,\varrho)$ is the localization of $\widehat{\mathcal{H}}(Q,\varrho)$ with respect to $[K]$, with $K$ of finite projective dimension. From Lemma~\ref{iHA isomorphic to M^*}, we see the \cite[Lemma A.4]{LW19} is also valid for $\mathcal{M}^*(Q,\varrho)$, so we can form the localization of $\mathcal{M}^*(Q,\varrho)$ with respect to $\xi_{(\bv^0,\bw^0)}$, $(\bv^0,\bw^0)\in ({^\imath V^0},{^\imath W^0})$, which we denote by $\widetilde{\mathcal{M}}^*(Q,\varrho)$. We then obtain an induced isomorphism
	\begin{equation}\label{iHA to const function}
		\widetilde{\Upsilon}:\widetilde{\ch}(\bfk Q,\varrho) \longrightarrow \widetilde{\mathcal{M}}^*(Q,\varrho).
	\end{equation}

	
	Inspired by Lusztig's construction \cite[\S9.4]{Lus90}, we now introduce the generic version of $\widetilde{\mathcal{M}}^*(Q,\varrho)$. Let $\mathbf{A}$ be the group ring of units in $\ov{\Q}_\ell$. For $\bw\in\N^{\mathcal{S}^\imath_0}$, we define $\mathbf{M}_\bw$ to be the free $\mathbf{A}$-module generated by symbols $\tilde{\delta}_{(\bv,\bw)}$,  
	with $(\bv,\bw)$ being strongly $l$-dominant pairs. Inside $\mathbf{M}_\bw$ there are naturally defined elements
	\begin{equation}\label{eq:formal trace on K group}
		\mathcal{L}_{\bv,\bw}:=\sum_{\bv'}p_{\bv',\bv}^{\bw}\tilde{\delta}_{(\bv',\bw)}
	\end{equation}
	where $p_{\bv',\bv}$ is the (formal) alternating sums of eigenvalues (in $\ov{\Q}_\ell^*$) of Frobenius map on the stalks of the cohomology sheaves of $\mathcal{L}(\bv,\bw)$ at any $\F_q$-rational point of $\mathcal{M}_0^{\reg}(\bv,\bw,\mathcal{R}^\imath)$ (the sum is taken in $\mathbf{A}$ rather than $\ov{\Q}_\ell$). Note that $p_{\bv,\bv}^{\bw}=1$, and $p_{\bv',\bv}^{\bw}=0$ whenever the condition $\bv'\leq\bv$ is not satisfied. In particular, the $\mathcal{L}_{\bv,\bw}$ form a basis for $\mathbf{M}_\bw$.
	
	For dimension vectors $\bw_1,\bw_2$ and $\bw=\bw_1+\bw_2$, recall that we have defined a function $\res^{\bw}_{\bw_1,\bw_2}\delta_{(\bv,\bw)}$ in $\mathcal{M}_{\bw_1}\times\mathcal{M}_{\bw_2}$. By the Hall polynomial property of $\widetilde{\mathcal{H}}(Q,\varrho)$, there exist polynomials $f_{\bv_1,\bv_2}^{\bv}(v)\in\Z[v,v^{-1}]$ such that  
	\[\res^{\bw}_{\bw_1,\bw_2}\delta_{(\bv,\bw)}=\sum_{\bv_1,\bv_2}f_{\bv_1,\bv_2}^{\bv}(\sqq)\delta_{(\bv_1,\bw_1)}\otimes\delta_{(\bv_2,\bw_2)}.\]
	We now define an $\mathbf{A}$-bilinear map
	\[\res^{\bw}_{\bw_1,\bw_2}:\mathbf{M}_\bw\longrightarrow \mathbf{M}_{\bw_1}\times\mathbf{M}_{\bw_2},\quad \tilde{\delta}_{(\bv,\bw)}\mapsto\sum_{\bv_1,\bv_2}f_{\bv_1,\bv_2}^{\bv}(\sqq)\tilde{\delta}_{(\bv_1,\bw_1)}\otimes\tilde{\delta}_{(\bv_2,\bw_2)}\]
	where the coefficients $f_{\bv_1,\bv_2}^{\bv}(\sqq^\frac{1}{2})$ are now considered as elements in $\mathbf{A}$. Let $\mathbf{M}^*(Q,\varrho)=\bigoplus_\bw\mathbf{M}_\bw^*$ and denote by $\tilde{\xi}_{(\bv,\bw)}$ the basis dual to $\tilde{\delta}_{(\bv,\bw)}$. Then it is clear that the restriction map endows $\mathbf{M}^*(Q,\varrho)$ with an algebra structure, and we can form the localization $\widetilde{\mathbf{M}}(Q,\varrho)$ with respect to $\tilde{\xi}_{(\bv^i,\bw^i)}$ ($i\in\I$), as in \S\ref{subsec:QV function iHA}.
	
	There is a natural group homomorphism $\mathbf{M}_\bw\rightarrow\mathcal{M}_\bw$ sending $\tilde{\delta}_{(\bv,\bw)}$ to $\delta_{(\bv,\bw)}$ and on coefficients is given by the ring homomorphism $\mathbf{A}\rightarrow\ov{\Q}_\ell$ that is the identity on $\ov{\Q}_\ell^*$. It follows easily from the definition that the induced map $\widetilde{\mathbf{M}}(Q,\varrho)\rightarrow \widetilde{\mathcal{M}}(Q,\varrho)$ is an algebra homomorphism.
	
	If we replace $q$ by a power $q^s$, then $\widetilde{\mathbf{M}}(Q,\varrho)$ and $\widetilde{\mathcal{M}}(Q,\varrho)$ become $\widetilde{\mathbf{M}}(Q,\varrho)_s$ and $\widetilde{\mathcal{M}}(Q,\varrho)_s$ and we have as above a ring homomorphism $\widetilde{\mathbf{M}}(Q,\varrho)_s\rightarrow \widetilde{\mathcal{M}}(Q,\varrho)_s$. On the other hand, there is a ring homomorphism $\widetilde{\mathbf{M}}(Q,\varrho)\rightarrow\widetilde{\mathbf{M}}(Q,\varrho)_s$ that is the identity on each basis element and on coefficients is given by the ring homomorphism $\mathbf{A}\rightarrow\mathbf{A}$ defined by raising to the $s$-th power on $\ov{\Q}_\ell^*$. 
	Composing this we then obtain a ring homomorphism $\widetilde{\mathbf{M}}(Q,\varrho)\rightarrow \widetilde{\mathcal{M}}(Q,\varrho)_s$. Thus we have infinitely many homomorphisms of $\widetilde{\mathbf{M}}(Q,\varrho)$ to the various $\widetilde{\mathcal{M}}(Q,\varrho)_s$. Using these, one can deduce various properties of $\widetilde{\mathbf{M}}(Q,\varrho)$ from the corresponding properties of $\widetilde{\mathcal{M}}(Q,\varrho)_s$. For example, from \eqref{iHA to const function} we obtain the following.
	
	\begin{lemma}\label{lem:iHA to bfM}
		There is an isomorphism of algebras 
		\[\widetilde{\Upsilon}:\widetilde{\ch}(Q,\varrho)_\mathcal{Z}\otimes_{\mathcal{Z}}\mathbf{A}\longrightarrow \widetilde{\mathbf{M}}^*(Q,\varrho)\]
		defined as in Proposition~\ref{iHA by const function}. Here we view $\mathcal{Z}$ as a subring of $\mathbf{A}$ by fixing a square root of $q$ in $\ov{\Q}_\ell^*$).
	\end{lemma}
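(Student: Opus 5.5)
The plan is to follow Lusztig's generic-algebra argument \cite[\S9.4]{Lus90}. I will build $\widetilde{\Upsilon}$ on a generic basis, check it is a homomorphism by specializing at infinitely many finite fields, and read off bijectivity from a triangularity statement.

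First, the definition. On the Hall basis I set
\[
\widetilde{\Upsilon}(\K_\alpha*\fu_\lambda)=v^{\bw\cdot\bw-\frac12\langle \bw,\bw\rangle_Q}\,\tilde\xi_{(\bv,\bw)},
\]
where $(\bv,\bw)=\dimv K_{LR}(M_q(\lambda)\oplus\K_\alpha)$ for $\alpha\in\N^\I$. This pair is independent of $q$, by Lemma~\ref{lem:unique V^+W^+} and Lemma~\ref{lem:K_LR exact if projdim}. The recipe is the one in Proposition~\ref{iHA by const function} and Lemma~\ref{iHA isomorphic to M^*}, with $v^{1/2}\mapsto\sqq^{1/2}\in\mathbf{A}$. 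By Lemma~\ref{lem:strata M_0^reg char}, the strata of $\rep(\bw,\mathcal{S}^\imath)$ are exactly the classes defining the ideal $\mathcal I$, and each stratum contains a unique representative $X\oplus\K_\alpha$ with $X\in\mod(\bfk Q)$. Hence, before localization, this assignment is a bijection from the Hall basis $\{\K_\alpha*\fu_\lambda\mid\alpha\in\N^\I\}$ of $\widehat{\ch}(Q,\varrho)$ onto the basis $\{\tilde\xi_{(\bv,\bw)}\}$ of $\mathbf{M}^*(Q,\varrho)$, up to unit scalars. It is therefore an $\mathbf{A}$-module isomorphism $\widehat{\ch}(Q,\varrho)_\cz\otimes\mathbf{A}\to\mathbf{M}^*(Q,\varrho)$.

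Second, multiplicativity. I compare structure constants. On the Hall side they are the Hall polynomials $\boldsymbol\varphi(v)$ of \eqref{eq:generic-mult}, which are defined by specializing at $\sqq$ for every $q$. On the $\mathbf M^*$ side they are the polynomials $f^{\bv}_{\bv_1,\bv_2}$, dualized and twisted as in the definition of $\res$. For each $s\ge1$, Proposition~\ref{iHA by const function} and Lemma~\ref{iHA isomorphic to M^*} over $\F_{q^s}$ say that the specializations agree at $v^{1/2}=\sqq^{s/2}$. An identity between Laurent polynomials in $v^{1/2}$ that holds at infinitely many distinct values is an identity. So the two sets of structure constants coincide in $\mathbf A$ after substituting the fixed $\sqq^{1/2}$, and $\widetilde\Upsilon$ is an algebra isomorphism on the hatted versions.

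Third, localization. The elements $\K_{\alpha_i}$ map to unit multiples of $\tilde\xi_{(\bv^i,\bw^i)}$. This uses $\dimv K_{LR}(\K_i)=(\bv^{\varrho i},\bw^i)$ from \cite[Corollary 4.3]{LW21b}; the indexing is harmless because $\bw^i=\bw^{\varrho i}$. Both localizations are then taken with respect to corresponding Ore sets. The Ore property on the $\mathbf M^*$ side transfers through the isomorphism, as in \cite[Lemma A.4]{LW19}. The universal property of localization then extends the isomorphism to $\widetilde{\ch}(Q,\varrho)_\cz\otimes\mathbf A\cong\widetilde{\mathbf M}^*(Q,\varrho)$.

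The main obstacle is bookkeeping rather than conceptual. One must make sure that $\mathcal Z\hookrightarrow\mathbf A$ is compatible with the $s$-th power specializations, meaning that the chosen $\sqq^{1/2}$ raised to the $s$ is the square root of $q^s$ used over $\F_{q^s}$. One must also check that the twist exponents are independent of $q$. I expect the compatibility of the square-root choices to need the most care.
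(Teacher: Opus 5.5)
Your proposal is correct and follows the paper's route. The paper likewise deduces the lemma, in the manner of \cite[\S9.4]{Lus90}, from the finite-field isomorphism \eqref{iHA to const function} via the specialization homomorphisms $\widetilde{\mathbf{M}}(Q,\varrho)\to\widetilde{\mathcal{M}}(Q,\varrho)_s$ for all $s\geq1$; that is exactly your comparison of structure constants at infinitely many values, with your basis bijection and localization step spelling out what the paper leaves implicit. One small correction: in the non-split case $\K_\alpha*\fu_\lambda$ is not literally the class of $\K_\alpha\oplus M_q(\lambda)$ in $\widehat{\ch}(\bfk Q,\varrho)$, since the two differ by a power of $v$ depending on $\alpha-\varrho\alpha$ and $\dimv M_q(\lambda)$, so your displayed exponent needs this extra factor; because that power is independent of $q$, the argument is unaffected.
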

	
	For any strongly $l$-dominant pair $(\bv,\bw)$, we define elements $H_{\bv,\bw}$ and $L_{\bv,\bw}$ in $\widetilde{\mathbf{M}}^*(Q,\varrho)$ such that 
	\[H_{\bv,\bw}=\sqq^{\dim\mathcal{M}_0^{\reg}(\bv,\bw,\mathcal{R}^\imath)}\tilde{\xi}_{(\bv,\bw)}\]
	and $L_{\bv,\bw}$ is the dual basis to $\mathcal{L}_{\bv,\bw}$ defined in \eqref{eq:formal trace on K group}. If we write
	\begin{equation}\label{eq:M_bv into L_vw}
		H_{\bv,\bw}=\sum_{\bv'}h_{\bv,\bv'}^{\bw}L_{\bv',\bw}
	\end{equation}
	then $h_{\bv,\bv'}^{\bw}=\sqq^{\dim\mathcal{M}_0^{\reg}(\bv,\bw,\mathcal{R}^\imath)}p_{\bv,\bv'}^{\bw}$ and $h_{\bv,\bv'}^{\bw}=0$ if $\bv\leq\bv'$ is not satisfied.
	
	Note that Verdier duality induces an anti-automorphism $\bar{\cdot}:\widetilde{\mathbf{M}}^*(Q,\varrho)\rightarrow\widetilde{\mathbf{M}}^*(Q,\varrho)$ that is anti-linear with respect to the ring homomorphism $\bar{\cdot}:\mathbf{A}\rightarrow\mathbf{A}$ takes each element in $\ov{\Q}_\ell^*$ to its inverse. Moreover, we find that
	\begin{equation}\label{eq:Upsilon compatible with bar}
		\widetilde{\Upsilon}(\ov{u})=\ov{\widetilde{\Upsilon}(u)},\quad \forall u\in\widetilde{\ch}(Q,\varrho)\otimes_\mathcal{Z}\mathbf{A}.
	\end{equation}
	In fact, it suffices to check this on generators.
	
	From the property of IC sheaves we know that each $L_{\bv,\bw}$ is invariant under bar involution. Moreover, by \eqref{eq:M_bv into L_vw} we have
	\begin{equation}\label{eq:bar of M_vw}
		\ov{H_{\bv,\bw}}=\sum_{\bv\leq\bv'}b_{\bv,\bv'}^{\bw}H_{\bv',\bw}
	\end{equation}
	where $b_{\bv,\bv'}^\bw\in\mathbf{A}$ and $b_{\bv,\bv}^\bw=1$.
	
	\begin{lemma}\label{lem:Hall basis map to M_vw}
		For any $\alpha\in\Z^\I$ and $\lambda\in\fp$, we have
		\begin{equation}
			\widetilde{\Upsilon}(\K_\alpha\diamond \mathfrak{U}_\lambda)=H_{\bv_\alpha+\bv_\lambda,\bw_\alpha+\bw_\lambda},
		\end{equation}\label{lem:Hall basis map to M_vw-1}
		where $\widetilde{\Upsilon}(\K_\alpha)=H_{\bv_\alpha,\bw_\alpha}$ and $(\bv_\lambda,\bw_\lambda)$ is such that $\sigma^*\bw_\lambda-\cc_q\bv_\lambda=\lambda$. Moreover, 
		\begin{equation}\label{lem:Hall basis map to M_vw-2}
			H_{\bv_\alpha,\bw_\alpha}\cdot H_{\bv_\lambda,\bw_\lambda}\in\sqq^{\frac{1}{2}\Z} H_{\bv_\alpha+\bv_\lambda,\bw_\alpha+\bw_\lambda}.
		\end{equation}
	\end{lemma}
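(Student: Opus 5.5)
The plan is to compute $\widetilde{\Upsilon}$ on each factor separately, and then to show that the product of the two resulting dual delta functions is a single delta function up to a power of $\sqq^{1/2}$.

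\textbf{Step 1: the $\K_\alpha$ factor.} By Proposition~\ref{iHA by const function}, $[\K_\alpha]$ is sent to a power of $\sqq$ times $\xi_{[\K_\alpha]}$. By Lemma~\ref{lem:strata M_0^reg char}, the projective module $\K_\alpha$ is the unique module in its stratum up to the relation defining $\widehat{\ch}$. Hence
\[
\widetilde{\Upsilon}(\K_\alpha)\in \sqq^{\frac12\Z}\,\tilde\xi_{(\bv_\alpha,\bw_\alpha)},
\qquad (\bv_\alpha,\bw_\alpha)=\dimv K_{LR}(\K_\alpha)\in({^\imath V^0},{^\imath W^0}).
\]
The stratum $\mathcal{M}_0^{\reg}(\bv_\alpha,\bw_\alpha,\mathcal{R}^\imath)$ is the orbit of $\K_\alpha$. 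I would verify that the normalization $\K_i=v\tk_i$ (for $\varrho i=i$), together with $\sqq^{\dim \mathfrak{O}}$, yields exactly $H_{\bv_\alpha,\bw_\alpha}$; the paper takes this as the definition of $(\bv_\alpha,\bw_\alpha)$ anyway.

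\textbf{Step 2: the $\mathfrak{U}_\lambda$ factor.} The stratum of $M_q(\lambda)$ in $\rep(\bw_\lambda,\mathcal{S}^\imath)$ is the set of modules $N$ with $N\cong M_q(\lambda)$ in $\cd_{sg}$ and $\res_\BH N=\res_\BH M_q(\lambda)$ (Lemma~\ref{lem:strata M_0^reg char}). By the relation $\mathcal{I}$, all classes $[N]$ in this stratum are identified in $\widehat{\ch}$. Therefore $\pi(\xi_{[M_q(\lambda)]})=\tilde\xi_{(\bv_\lambda,\bw_\lambda)}$. Next I would compare the exponent
\[
-\dim\End(M_q(\lambda))+\tfrac12\langle M_q(\lambda),M_q(\lambda)\rangle_Q+\bw_\lambda\cdot\bw_\lambda-\tfrac12\langle M_q(\lambda),M_q(\lambda)\rangle_Q
\]
with $\dim\mathcal{M}_0^{\reg}(\bv_\lambda,\bw_\lambda)$. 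I expect that this stratum is the $G_{\bw_\lambda}$-saturation of the orbit of $M_q(\lambda)$ inside the Gorenstein part. More precisely, its dimension should be $\bw\cdot\bw-\dim\End_{\Lambda^\imath}$ corrected by the $\K$-directions, and one must check that $\dim\End_{\Lambda^\imath}(M_q(\lambda))$ agrees with $\dim\End_{\bfk Q}$ up to that correction. This dimension count is where I expect the real work to lie. My first attempt would compute $\dim\mathcal{M}_0^{\reg}(\bv,\bw)$ via the transversal slice $\mathcal{M}_0^{\reg}\cong$ (orbit of the corresponding $\mcr^\imath$-module), using $\dim = \bw\cdot\bw - \dim \End$ and the formula $\Ext^1_{\Lambda^\imath}=\Ext^1_{\bfk Q}\oplus\Hom_{\bfk Q}(-,\varrho-)$ of \cite{LR24}. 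If that fails, I would instead prove the identity by transporting the statement of the lemma for $\alpha=0$ through $\pi$ to $\tR^+$, where it is Lusztig's orbit dimension formula.

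\textbf{Step 3: the product formula \eqref{lem:Hall basis map to M_vw-2}.} Dually, I would compute $\res^{\bw}_{\bw_\alpha,\bw_\lambda}\delta_{(\bv,\bw)}$ at the point $(\K_\alpha,M_q(\lambda))$ using Lemma~\ref{res of const function iHA}. For this I need the following facts:
\begin{itemize}
\item $\Ext^1_{\Lambda^\imath}(\K_\alpha,M_q(\lambda))=0$, since $\K_\alpha$ is projective;
\item $\Hom_{\Lambda^\imath}(\K_\alpha,M_q(\lambda))$ has computable size;
\item by Lemma~\ref{lem:K_LR exact if projdim}, the only stratum receiving a contribution is $(\bv_\alpha+\bv_\lambda,\bw_\alpha+\bw_\lambda)$.
\end{itemize}
Consequently the product is a single basis element times $\sqq^{\frac12\Z}$, which gives \eqref{lem:Hall basis map to M_vw-2}. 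The same argument works for $\alpha\in\Z^\I$ after localization, by writing $\K_\alpha=\K_\beta^{-1}\K_\gamma$ with $\beta,\gamma\in\N^\I$.

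\textbf{Step 4: matching exponents.} Finally, the $\diamond$-twist $v^{\frac12(\alpha-\varrho\alpha,\dimv M_q(\lambda))_Q}$ has to cancel the resulting power of $\sqq$ exactly, so that $\widetilde{\Upsilon}(\K_\alpha\diamond\mathfrak{U}_\lambda)$ is precisely $H_{\bv_\alpha+\bv_\lambda,\bw_\alpha+\bw_\lambda}$. I would check this as in Lemma~\ref{lem: multiply by L^0}, by the same $d(\bw)$-type computation with $\tau^*\bv^i$ and $\sigma^*\bw$, and by using that the dimension of a stratum is additive under $\oplus\K_\alpha$ up to the term $\bw_\alpha\cdot\bw_\lambda$ coming from Lemma~\ref{res of const function iHA}.
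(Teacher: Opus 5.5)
\textbf{The gap is Step 4.} Steps 1--3 at best give $\widetilde{\Upsilon}(\K_\alpha\diamond\mathfrak{U}_\lambda)\in\sqq^{\frac12\Z}H_{\bv_\alpha+\bv_\lambda,\bw_\alpha+\bw_\lambda}$. This is also the second assertion, which the paper takes from \cite[Lemma A8]{LW19}. To force the exponent to be $0$ by matching exponents, you would need $\dim\mathcal{M}_0^{\reg}(\bv_\alpha+\bv_\lambda,\bw_\alpha+\bw_\lambda,\mathcal{R}^\imath)$, and you have no way to compute it. Such strata are in general unions of several orbits, and your additivity heuristic is false. Take $Q\colon 1\to 2$, $\varrho=\Id$, $\alpha=\alpha_1$, $M_q(\lambda)=S_2$. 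The strata of $\E_1$, $S_2$ and $\E_1\oplus S_2$ have dimensions $2$, $0$ and $3$; the last contains the non-split extension of $\E_1$ by $S_2$, whose orbit is $3$-dimensional. So the defect is $1$, while $\bw_\alpha\cdot\bw_\lambda=0$.

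The missing idea is a bar-invariance argument, which makes any dimension count unnecessary. Write $\widetilde{\Upsilon}(\K_\alpha\diamond\mathfrak{U}_\lambda)=\sqq^{n}H_{\bv,\bw}$. The paper then combines three facts:
\begin{itemize}
\item $\widetilde{\Upsilon}$ intertwines the bar involutions, by \eqref{eq:Upsilon compatible with bar};
\item $\ov{\K_\alpha\diamond\mathfrak{U}_\lambda}$ is $\K_\alpha\diamond\mathfrak{U}_\lambda$ plus other Hall basis elements (from \cite{LP25}), and these map to multiples of other $H$'s;
\item $\ov{H_{\bv,\bw}}=H_{\bv,\bw}+\sum_{\bv'>\bv}b^{\bw}_{\bv,\bv'}H_{\bv',\bw}$, by \eqref{eq:bar of M_vw}.
\end{itemize}
Comparing the coefficients of $H_{\bv,\bw}$ gives $\sqq^{-n}=\sqq^{n}$, so $n=0$. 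The same argument would even remove the need for your Step 2 computation.

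\textbf{Errors in Steps 1--3.} Two intermediate claims are wrong, though both can be repaired.
\begin{itemize}
\item $\K_\alpha$ is not projective; it only has projective dimension $\le 1$, and $\Ext^1_{\Lambda^\imath}(\K_\alpha,M_q(\lambda))$ need not vanish. In the example above, $\Ext^1_{\Lambda^\imath}(\E_1,S_2)\cong\bfk$. The single-stratum conclusion of Step 3 still holds. Every middle term $L$ of $0\to M\to L\to\K_\alpha\to 0$ is isomorphic to $M$ in $\cd_{sg}(\Lambda^\imath)$. It also satisfies $\res_\BH L\cong\res_\BH(M\oplus\K_\alpha)$, because the sequence splits over $\BH$. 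So Lemma~\ref{lem:strata M_0^reg char} applies. Lemma~\ref{lem:K_LR exact if projdim} does not, since it has the finite-projective-dimension module as the submodule. However, the resulting coefficient is $|\Ext^1|/|\Hom|=q^{-\langle\K_\alpha,M\rangle_{\Lambda^\imath}}$, not $|\Hom|^{-1}$.
\item The stratum of $\K_\alpha$ is not just the orbit of $\K_\alpha$. In the same example, the indecomposable projective $P_1$ lies in the stratum of $\E_1\oplus\E_2$, and its orbit has dimension $6$ against $4$.
\end{itemize}

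\textbf{Step 2 is easy.} The identity you flag as the hard part is immediate. If $N$ lies in the stratum of $M_q(\lambda)$, then $\res_\BH N$ is semisimple. So every $\varepsilon_i$ acts by zero and $N$ is a $\bfk Q$-module. Then $N\cong M_q(\lambda)$ in $\cd_{sg}(\Lambda^\imath)$ forces $N\cong M_q(\lambda)$, by \cite[Corollary 3.21]{LW19}. Hence the stratum is a single orbit, of dimension $\bw_\lambda\cdot\bw_\lambda-\dim\End_{\bfk Q}(M_q(\lambda))$, with no correction from $\K$-directions. This is exactly the identity the paper uses.
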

	\begin{proof}
		From the definition of $\widetilde{\Upsilon}$ and $\mathfrak{U}_\lambda$, it is easy to see that 
		\begin{align*}
			\widetilde{\Upsilon}(\mathfrak{U}_\lambda)&=\sqq^{-\dim\End_{\bfk Q}(M_q(\lambda))+\frac{1}{2}\langle M_q(\lambda),M_q(\lambda)\rangle_Q}\widetilde{\Upsilon}(\fu_\lambda)
			\\
			&=\sqq^{-\dimv M_q(\lambda)\cdot \dimv M_q(\lambda)+\frac{1}{2}\langle M_q(\lambda),M_q(\lambda)\rangle_Q+\dim\mathcal{M}_0^{\reg}(\bv_\lambda,\bw_\lambda,\mathcal{R}^\imath)}\widetilde{\Upsilon}(\fu_\lambda)\\
			&=\sqq^{\dim\mathcal{M}_0^{\reg}(\bv_\lambda,\bw_\lambda,\mathcal{R}^\imath)}\tilde{\xi}_{(\bv^\lambda,\bw^\lambda)}=H_{\bv_\lambda,\bw_\lambda}.
		\end{align*}
		We also note that $\widetilde{\Upsilon}$ sends $\K_i$ to $H_{\bv^{\varrho i},\bw^i}=L_{\bv^{\varrho i},\bw^i}$, so for each $\alpha\in\Z^\I$ one can find a unique pair $(\bv_\alpha,\bw_\alpha)$ such that $\widetilde{\Upsilon}(\K_\alpha)=H_{\bv_\alpha,\bw_\alpha}$. Then from \cite[Lemma A8]{LW19} we see that
		\[\widetilde{\Upsilon}(\K_\alpha\diamond \mathfrak{U}_\lambda)=\sqq^{n}H_{\bv_\alpha+\bv_\lambda,\bw_\alpha+\bw_\lambda}\]
		for some $n\in\Z$. To see that $n=0$, note that in the $\imath$Hall algebra $\widetilde{\ch}(Q,\varrho)$ we have 
		\begin{equation}\label{lem:Hall basis map to M_vw-1}
			\ov{\K_\alpha\diamond \mathfrak{U}_\lambda}\in \K_\alpha\diamond \mathfrak{U}_\lambda+\sum\Z[v,v^{-1}]\K_\beta\diamond \mathfrak{U}_\mu.
		\end{equation}
		Combining this with \eqref{eq:Upsilon compatible with bar} and \eqref{eq:bar of M_vw}, we obtain the claim.
	\end{proof}
	
	From the proof of Lemma~\ref{lem:Hall basis map to M_vw} and \eqref{lem:Hall basis map to M_vw-1}, we know that each $b_{\bv,\bv'}^{\bw}$ belongs to $\Z[\sqq,\sqq^{-1}]$. We can therefore apply Lusztig's Lemma \cite[Theorem 1.1]{BZ14} to show the following.
	
	\begin{lemma}\label{lem:unique zeta_vv}
		There exists a unique family of elements $\zeta_{\bv,\bv'}^{\bw}$ such that $\zeta_{\bv,\bv}^{\bw}=1$, $\zeta_{\bv,\bv'}^{\bw}\in\sqq^{-1}\Z[\sqq^{-1}]$ if $\bv\neq\bv'$, and
		\begin{equation}\label{lem:unique zeta_vv-1}
			\sum_{\bv\leq\bv'\leq\bv''}b_{\bv,\bv'}^{\bw}\zeta_{\bv',\bv''}^{\bw}=\ov{\zeta_{\bv,\bv''}^{\bw}}.
		\end{equation}
	\end{lemma}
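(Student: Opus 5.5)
The plan is to run the standard Lusztig's Lemma argument (cf. \cite[Theorem 1.1]{BZ14}, \cite{Lus90}), with $\bw$ fixed throughout. The relevant index set is the finite set of strongly $l$-dominant pairs $(\bv,\bw)$; it is finite because $\mathcal{M}(\bv,\bw,\mathcal{R}^\imath)$ vanishes for all but finitely many $\bv$. We order this set by $\le$ from \eqref{eq:leq}. By \eqref{eq:bar of M_vw}, the matrix $B=(b^{\bw}_{\bv,\bv'})$ is unitriangular for this order, and by the remark after Lemma~\ref{lem:Hall basis map to M_vw} its entries lie in $\Z[\sqq,\sqq^{-1}]$. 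Since the bar map on $\widetilde{\mathbf{M}}^*(Q,\varrho)$ is an involution and the $H_{\bv,\bw}$ form a basis, we get $B\,\ov{B}=I$. Written out, this is
\[
\sum_{\bv\le\bv'\le\bv''} b^{\bw}_{\bv,\bv'}\,\ov{b^{\bw}_{\bv',\bv''}}=\delta_{\bv,\bv''}.
\]
This identity is the only input needed.

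Existence and uniqueness go by induction on the length of the interval $[\bv,\bv'']$, with $\bv''$ fixed and $\bv$ descending. For $\bv=\bv''$ set $\zeta^{\bw}_{\bv'',\bv''}=1$. For $\bv<\bv''$, suppose $\zeta^{\bw}_{\bv',\bv''}$ is already defined for all $\bv<\bv'\le\bv''$. Equation \eqref{lem:unique zeta_vv-1} can then be rewritten as
\[
\ov{\zeta^{\bw}_{\bv,\bv''}}-\zeta^{\bw}_{\bv,\bv''}=\sum_{\bv<\bv'\le\bv''}b^{\bw}_{\bv,\bv'}\zeta^{\bw}_{\bv',\bv''}=:r_{\bv,\bv''}.
\]
The key step is to check that $\ov{r_{\bv,\bv''}}=-r_{\bv,\bv''}$. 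To do this, apply the bar map to $r$, substitute the inductive relations $\ov{\zeta_{\bv',\bv''}}=\sum_{\bv'\le\bv'''} b_{\bv',\bv'''}\zeta_{\bv''',\bv''}$, and use $B\ov{B}=I$ to collapse the resulting double sum. This is the same routine manipulation as in \cite{BZ14}.

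Once $r$ is anti-invariant, write $r=\sum_n r_n\sqq^n$ with $r_{-n}=-r_n$, so in particular $r_0=0$. The unique solution in $\sqq^{-1}\Z[\sqq^{-1}]$ is $\zeta_{\bv,\bv''}=\sum_{n<0} r_n\sqq^{n}$. Integrality holds because $r$ has integer coefficients. Uniqueness follows because the difference of two solutions is bar-invariant and lies in $\sqq^{-1}\Z[\sqq^{-1}]$, which forces it to be zero.

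The main point needing care is the anti-invariance computation. One must keep track of the fact that the bar map acts as the inverse on units of $\mathbf{A}$, so that it restricts to $\sqq\mapsto\sqq^{-1}$ on $\Z[\sqq^{\pm1}]$. One must also check that only the finitely many $\bv'$ with $\bv\le\bv'\le\bv''$ appear, which follows from the triangularity of $B$.
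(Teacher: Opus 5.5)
Your argument is sound up to one sign slip, but it does not follow the paper's route.

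\textbf{The paper's route.} The paper never constructs $\zeta$ directly. It applies Lusztig's Lemma \cite[Theorem 1.1]{BZ14} as a black box to get the unique unitriangular family $\eta^{\bw}_{\bv,\bv'}$, with off-diagonal entries in $\sqq^{-1}\Z[\sqq^{-1}]$, satisfying $\ov{\eta}\,B=\eta$ in matrix form. These $\eta$ are the coefficients of the bar-invariant elements $\sum_{\bv'}\eta^{\bw}_{\bv,\bv'}H_{\bv',\bw}$. The paper then sets $\zeta=\eta^{-1}$. Inverting $\ov{\eta}B=\eta$ gives $B\zeta=\ov{\zeta}$, which is \eqref{lem:unique zeta_vv-1}. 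Unitriangularity with off-diagonal entries in $\sqq^{-1}\Z[\sqq^{-1}]$ survives inversion. Uniqueness transfers because any solution $\zeta$ yields the solution $\zeta^{-1}$ of the $\eta$-problem.

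\textbf{Comparison.} You instead rerun the inductive proof of Lusztig's Lemma directly on $B\zeta=\ov{\zeta}$. This is self-contained and avoids the inversion. The cost is the explicit anti-invariance check for $r_{\bv,\bv''}$, which is exactly where the involutivity hypothesis of \cite{BZ14} is used. Your collapse of the double sum needs that identity in the form $\ov{B}B=I$, which you get from $B\ov{B}=I$ by applying bar entrywise. The paper's version is shorter. It also makes explicit that $\zeta$ records the coefficients of $H_{\bv,\bw}$ in the bar-invariant basis, which is exactly the shape of \eqref{eq:M_bv into L_vw} compared in Theorem~\ref{thm:IC coefficient is zeta_vv}.

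\textbf{The sign slip.} From $\ov{\zeta}-\zeta=r$ with $\zeta\in\sqq^{-1}\Z[\sqq^{-1}]$, $\ov{\zeta}$ must be the positive-degree part of $r$ and $-\zeta$ its negative-degree part. So the solution is $\zeta_{\bv,\bv''}=-\sum_{n<0}r_n\sqq^{n}=\sum_{n>0}r_n\sqq^{-n}$. Your formula $\sum_{n<0}r_n\sqq^n$ has the opposite sign: for $r=\sqq-\sqq^{-1}$ it gives $-\sqq^{-1}$, whereas the actual solution is $\sqq^{-1}$. Existence, integrality and uniqueness are unaffected once this is corrected.
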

	\begin{proof}
		The result of \cite[Theorem 1.1]{BZ14} (see also \cite[eq. (7.5)]{BZ14}) provides us a unique family of elements $\eta_{\bv,\bv'}^{\bw}$ such that $\eta_{\bv,\bv}^{\bw}=1$, $\eta_{\bv,\bv'}^{\bw}\in\sqq^{-1}\Z[\sqq^{-1}]$ if $\bv\neq\bv'$, and
		\[ \sum_{\bv\leq\bv'\leq\bv''}\ov{\eta_{\bv,\bv'}^{\bw}}b_{\bv',\bv''}^{\bw}=\eta_{\bv,\bv''}^{\bw}.\]
		It then suffices to take $(\zeta_{\bv,\bv'}^{\bw})_{\bv,\bv'}$ to be the inverse matrix of $(\eta_{\bv,\bv'}^{\bw})_{\bv,\bv'}$.
	\end{proof}
	
	We now prove that the coefficient $h_{\bv,\bv'}^{\bw}$ is uniquely determined in this way.
	
	\begin{theorem}\label{thm:IC coefficient is zeta_vv}
		For each $\bv\leq\bv'$, we have $h_{\bv,\bv'}^{\bw}=\zeta_{\bv,\bv'}^{\bw}$. In particular, $\zeta_{\bv,\bv'}^{\bw}\in\sqq^{-1}\Z[\sqq^{-1}]$ if $\bv\neq\bv'$.
	\end{theorem}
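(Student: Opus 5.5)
The plan is to show that the matrix $(h_{\bv,\bv'}^{\bw})$ has the three properties that characterize $(\zeta_{\bv,\bv'}^{\bw})$ in Lemma~\ref{lem:unique zeta_vv}, and then invoke that uniqueness. The properties are: unitriangularity, the bar-compatibility relation \eqref{lem:unique zeta_vv-1}, and the degree condition $h_{\bv,\bv'}^{\bw}\in\sqq^{-1}\Z[\sqq^{-1}]$ for $\bv\neq\bv'$. Unitriangularity is immediate: $p^{\bw}_{\bv,\bv}=1$ and $p^{\bw}_{\bv,\bv'}=0$ unless $\bv\le\bv'$. The normalization $\sqq^{\dim\mathcal{M}_0^{\reg}(\bv,\bw,\mathcal{R}^\imath)}$ is built into $H_{\bv,\bw}$, so that $h^{\bw}_{\bv,\bv}=1$ after matching with the stalk of $\mathcal{L}(\bv,\bw)$ on its own open stratum. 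That stalk is $\ov{\Q}_\ell[\dim]$, whose Frobenius trace combined with the shift gives exactly the normalizing power.

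For the bar relation, I would apply the bar involution to \eqref{eq:M_bv into L_vw}. Since each $L_{\bv',\bw}$ is bar-invariant (Verdier self-duality of IC sheaves), this gives $\ov{H_{\bv,\bw}}=\sum_{\bv'}\ov{h^{\bw}_{\bv,\bv'}}L_{\bv',\bw}$. On the other hand, expanding $\ov{H_{\bv,\bw}}$ by \eqref{eq:bar of M_vw} and then each $H_{\bv',\bw}$ by \eqref{eq:M_bv into L_vw} gives $\sum_{\bv\le\bv'\le\bv''}b^{\bw}_{\bv,\bv'}h^{\bw}_{\bv',\bv''}L_{\bv'',\bw}$. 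Comparing coefficients in the basis $\{L_{\bv'',\bw}\}$ yields exactly \eqref{lem:unique zeta_vv-1} with $\zeta$ replaced by $h$. Here the $b^{\bw}_{\bv,\bv'}$ lie in $\Z[\sqq^{\pm1}]$, as noted after Lemma~\ref{lem:Hall basis map to M_vw}.

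The main obstacle is the degree condition. The entry $h^{\bw}_{\bv,\bv'}$ is $\sqq^{\dim\mathcal{M}_0^{\reg}(\bv,\bw)}$ times the alternating Frobenius trace on the stalk of $\mathcal{L}(\bv',\bw)$ at a point of the smaller stratum $\mathcal{M}_0^{\reg}(\bv,\bw)$, with $\bv<\bv'$. I would proceed in three steps:
\begin{enumerate}
\item Use the IC support condition: for $\bv\ne\bv'$, the stalk cohomology $\mathcal{H}^j\mathcal{L}(\bv',\bw)$ at such a point vanishes unless $j<-\dim\mathcal{M}_0^{\reg}(\bv,\bw)$.
\item Establish purity, i.e.\ that Frobenius acts on $\mathcal{H}^j$ with all eigenvalues equal to $q^{j/2}$ up to the chosen sign conventions, so that it is pure of weight $j$ and Tate. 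For the Gorenstein projective part, the strata are single orbits by Lemma~\ref{lem:Gproj unique strata}, and $\rep^{\Gp}$ admits a contracting $\G_m$-action compatible with the orbits. This lets one argue as in Lusztig's purity proof \cite{Lus90}, passing through Theorem~\ref{tRi reduction to Gp} to transfer from $\rep^{\Gp}$ to all of $\rep(\bw,\mathcal{S}^\imath)$. Alternatively, in characteristic zero, one can use the pointwise purity of $\pi(\bv,\bw)$ coming from the transversal slice theorem together with the decomposition theorem \eqref{eqn:decomposition theorem}, and then spread out to large $q$.
\item Combine the two: each term of $h^{\bw}_{\bv,\bv'}$ has the form $\sqq^{\dim\mathcal{M}_0^{\reg}(\bv,\bw)}\cdot(\pm\sqq^{j})$ with $j<-\dim\mathcal{M}_0^{\reg}(\bv,\bw)$, hence lies in $\sqq^{-1}\Z[\sqq^{-1}]$, keeping track of the sign convention $v=t^{-1}$.
\end{enumerate}
The most delicate point is justifying purity over $\F_q$ without the transversal slice theorem. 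That is precisely why the reduction to $\rep^{\Gp}$ and Theorem~\ref{tRi reduction to Gp} are needed.

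With the three properties established, the uniqueness in Lemma~\ref{lem:unique zeta_vv} forces $h^{\bw}_{\bv,\bv'}=\zeta^{\bw}_{\bv,\bv'}$. The ``in particular'' clause then follows directly from the degree condition proved in the second step.
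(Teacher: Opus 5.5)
Your unitriangularity and bar-relation steps are fine. Deriving $\ov{h^{\bw}_{\bv,\bv''}}=\sum_{\bv\le\bv'\le\bv''}b^{\bw}_{\bv,\bv'}h^{\bw}_{\bv',\bv''}$ from the bar-invariance of the $L_{\bv',\bw}$ is exactly what the paper does.

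\textbf{The gap is the degree condition.} To invoke the uniqueness in Lemma~\ref{lem:unique zeta_vv}, you need $h^{\bw}_{\bv,\bv'}\in\sqq^{-1}\Z[\sqq^{-1}]$ as an element of $\mathbf{A}$. That means every Frobenius eigenvalue on every stalk of $\mathcal{L}(\bv',\bw)$ at an $\F_q$-point of $\mathcal{M}_0^{\reg}(\bv,\bw,\mathcal{R}^\imath)$ must be \emph{exactly} a power of $\sqq$: pointwise purity together with Tate type. That is essentially the content of the theorem (its ``in particular'' clause), not an available input, and neither route you sketch supplies it.
\begin{itemize}
\item A contracting $\G_m$-action on $\rep(\bw,\mathcal{S}^\imath)$ or $\rep^{\Gp}(\bw,\mathcal{S}^\imath)$ only controls the stalk at the attracting fixed point. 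Controlling stalks at arbitrary points of arbitrary strata needs contracting actions on transversal slices, which is precisely the slice theorem the paper says is unavailable in positive characteristic.
\item Even pointwise purity would only fix absolute values of eigenvalues, not that they are actual powers of $\sqq$.
\item Theorem~\ref{tRi reduction to Gp} is a statement about the characteristic-zero rings $\tR^{\imath,\Gp}$ and $\tRi$. It says nothing about Frobenius eigenvalues at $\F_q$-points of a stratum, in particular at points that are not Gorenstein projective.
\item Spreading out from characteristic zero would at best cover large characteristic. It would also need Tate-type information on the fibres of $\pi$, which the decomposition theorem alone does not give.
\end{itemize}

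\textbf{The missing idea is that exact purity is not needed; Gabber's weight inequality suffices if you argue on the difference.} Put $\mu^{\bw}_{\bv,\bv''}=h^{\bw}_{\bv,\bv''}-\zeta^{\bw}_{\bv,\bv''}$. Subtracting \eqref{lem:unique zeta_vv-1} from your bar relation gives $\ov{\mu^{\bw}_{\bv,\bv''}}-\mu^{\bw}_{\bv,\bv''}=\sum_{\bv<\bv'\le\bv''}b^{\bw}_{\bv,\bv'}\mu^{\bw}_{\bv',\bv''}$. By induction on the maximal length of a chain from $\bv$ to $\bv''$, each $\mu^{\bw}_{\bv,\bv''}$ is therefore bar-invariant.

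On the other hand, Gabber's theorem plus your support condition (step 1) gives only an inequality: all eigenvalues entering $p^{\bw}_{\bv,\bv''}$ have absolute value at most $q^{-(\dim\mathcal{M}_0^{\reg}(\bv,\bw,\mathcal{R}^\imath)+1)/2}$. Since $\zeta^{\bw}_{\bv,\bv''}\in\sqq^{-1}\Z[\sqq^{-1}]$, it follows that $\mu^{\bw}_{\bv,\bv''}$ is a formal combination of elements of $\ov{\Q}_\ell^*$ of absolute value $\le q^{-1/2}$. The bar involution on $\mathbf{A}$ inverts elements of $\ov{\Q}_\ell^*$, so such an element is bar-invariant only if it is $0$.

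Hence $h=\zeta$. The integrality of $h$ is an \emph{output}, inherited from $\zeta$ via Lemma~\ref{lem:unique zeta_vv}, rather than something you must establish beforehand. This is the argument the paper uses.
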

	\begin{proof}
		From the fact that $L_{\bv,\bw}$ is bar-invariant and \eqref{eq:M_bv into L_vw}, we find that
		\begin{equation}\label{thm:IC coefficient is zeta_vv-1}
			\ov{h_{\bv,\bv''}^{\bw}}=\sum_{\bv\leq\bv'\leq\bv''}b_{\bv,\bv'}^{\bw}h_{\bv',\bv''}^{\bw}.
		\end{equation}
		Substracting this from \eqref{lem:unique zeta_vv-1}, we obtain
		\begin{equation}\label{thm:IC coefficient is zeta_vv-2}
			\ov{h_{\bv,\bv''}^{\bw}}-\ov{\zeta_{\bv,\bv''}^{\bw}} = \sum_{\bv\leq\bv'\leq\bv''}b_{\bv,\bv'}^{\bw}(h_{\bv',\bv''}^{\bw}-\zeta_{\bv',\bv''}^{\bw}).
		\end{equation}
		Let $\mu_{\bv,\bv''}^{\bw}=h_{\bv,\bv''}^{\bw}-\zeta_{\bv,\bv''}^{\bw}$. Then we can deduce from \eqref{thm:IC coefficient is zeta_vv-2} that
		\begin{equation}\label{thm:IC coefficient is zeta_vv-3}
			\ov{\mu_{\bv,\bv''}^{\bw}}-\mu_{\bv,\bv''}^{\bw} = \sum_{\bv<\bv'\leq\bv''}b_{\bv,\bv'}^{\bw}\mu_{\bv',\bv''}^{\bw}.
		\end{equation}
		We shall prove that $\mu_{\bv,\bv''}^{\bw}=0$ for all $\bv\leq\bv''$ by induction on the maximal length of a chain between $\bv$ and $\bv''$. If $\bv=\bv''$ then this is clear as $h_{\bv,\bv}^{\bw}=\zeta_{\bv,\bv}^{\bw}=1$, so assume $\bv<\bv''$. We can assume that the statement is already known for all $\bv<\bv'\leq\bv''$. Then the right-hand side of \eqref{thm:IC coefficient is zeta_vv-3} is zero and therefore its left-hand side is also zero:
		\begin{equation}\label{thm:IC coefficient is zeta_vv-4}
			\ov{\mu_{\bv,\bv''}^{\bw}}=\mu_{\bv,\bv''}^{\bw}.
		\end{equation}
		By Gabber's purity theorem \cite[5.3.4]{BBD}, each $\mathcal{L}(\bv',\bw)$ is a pure complex on $\rep(\bw,\mathcal{S}^\imath)$. This means the eigenvalues of Frobenius map on the stalks of the $i$-th cohomology sheaves of $\mathcal{L}(\bv',\bw)$ at any $\F_q$-rational point of $\rep(\bw,\mathcal{S}^\imath)$ are algebraic numbers all of whose complex conjugates have absolute value $\leq q^{-i/2}$. On the other hand, by the property of IC sheaves \cite[2.1.9]{BBD}, the stalk of $\mathcal{L}(\bv',\bw)$ at any $\F_q$-rational point of $\mathcal{M}_0^{\reg}(\bv,\bw,\mathcal{R}^\imath)$ is a complex concentrated at degrees $<-\dim\mathcal{M}_0^{\reg}(\bv,\bw,\mathcal{R}^\imath)$, so $p_{\bv,\bv''}^{\bw}$ is a (formal) $\Z$-linear combination of elements of $\ov{\Q}_\ell^*$ that are algebraic numbers all of whose complex conjugates have absolute value $\leq q^{-(\dim\mathcal{M}_0^{\reg}(\bv,\bw,\mathcal{R}^\imath)+1)/2}$. Using this, together with Lemma~\ref{lem:unique zeta_vv} and the definition of $h_{\bv,\bv'}^{\bw}$, we see that $\mu_{\bv,\bv'}^{\bw}$ is a (formal) $\Z$-linear combination of elements of $\ov{\Q}_\ell^*$ that are algebraic numbers all of whose complex conjugates have absolute value $\leq q^{-1/2}$. This is compatible with \eqref{thm:IC coefficient is zeta_vv-4} only if both sides are zero. Thus our assertion is proved, and the theorem follows.
	\end{proof}
	
	\begin{corollary}\label{coro:L_vw unique determined}
		For any strongly $l$-dominant pair $(\bv,\bw)$, $L_{\bv,\bw}$ is the unique element in $\widetilde{\mathbf{M}}^*(Q,\varrho)$ such that $\ov{L_{\bv,\bw}}=L_{\bv,\bw}$ and 
		\[L_{\bv,\bw}\in H_{\bv,\bw}+\sum_{\bv<\bv'}\sqq^{-1}\Z[\sqq^{-1}]H_{\bv',\bw}.\]
	\end{corollary}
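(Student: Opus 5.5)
The corollary is essentially a repackaging of Theorem~\ref{thm:IC coefficient is zeta_vv} together with the uniqueness part of Lusztig's Lemma. There are two things to check: that $L_{\bv,\bw}$ has the two stated properties, and that no other element has them.

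\emph{Existence.} Invert the unitriangular relation \eqref{eq:M_bv into L_vw}, $H_{\bv,\bw}=\sum_{\bv'}h^{\bw}_{\bv,\bv'}L_{\bv',\bw}$. By Theorem~\ref{thm:IC coefficient is zeta_vv}, $h^{\bw}_{\bv,\bv'}=\zeta^{\bw}_{\bv,\bv'}$, which is $1$ if $\bv=\bv'$ and lies in $\sqq^{-1}\Z[\sqq^{-1}]$ otherwise. The matrix $(h^{\bw}_{\bv,\bv'})$ is therefore upper unitriangular with respect to $\leq$. Its inverse $(\eta^{\bw}_{\bv,\bv'})$ is again upper unitriangular with off-diagonal entries in $\sqq^{-1}\Z[\sqq^{-1}]$, because that set is a ring ideal of $\Z[\sqq^{-1}]$ and the inverse is computed by a finite alternating sum of products. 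This gives
\[
L_{\bv,\bw}=H_{\bv,\bw}+\sum_{\bv<\bv'}\eta^{\bw}_{\bv,\bv'}H_{\bv',\bw},\qquad \eta^{\bw}_{\bv,\bv'}\in\sqq^{-1}\Z[\sqq^{-1}].
\]
Bar-invariance of $L_{\bv,\bw}$ was already observed from Verdier self-duality of the IC complexes. So $L_{\bv,\bw}$ satisfies both conditions.

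\emph{Uniqueness.} Suppose $L'$ also satisfies both conditions. Then $D=L'-L_{\bv,\bw}$ is bar-invariant and lies in $\sum_{\bv<\bv'}\sqq^{-1}\Z[\sqq^{-1}]H_{\bv',\bw}$; here I use that $\bv'$ ranges over the finitely many strongly $l$-dominant pairs with fixed $\bw$. Re-expand $D$ in the $L$-basis using the unitriangular transition above. Since the coefficients lie in the ideal $\sqq^{-1}\Z[\sqq^{-1}]$, this gives
\[
D=\sum_{\bv<\bv'}d_{\bv'}L_{\bv',\bw},\qquad d_{\bv'}\in\sqq^{-1}\Z[\sqq^{-1}].
\]
Now apply the bar involution. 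Each $L_{\bv',\bw}$ is bar-invariant and $D$ is bar-invariant, so comparing coefficients in the basis $\{L_{\bv',\bw}\}$ gives $\ov{d_{\bv'}}=d_{\bv'}$. An element of $\sqq^{-1}\Z[\sqq^{-1}]$ fixed by $\sqq\mapsto\sqq^{-1}$ must vanish, so $D=0$.

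\emph{Main obstacle.} The one subtle point is that the coefficient ring is $\mathbf{A}$, the group ring of $\ov{\Q}_\ell^*$, not a Laurent polynomial ring. One must know that every coefficient involved lies in the subring $\Z[\sqq,\sqq^{-1}]$, which is mapped isomorphically into $\mathbf{A}$ by fixing the square root of $q$, so that ``$\sqq^{-1}\Z[\sqq^{-1}]$'' and the bar-fixed argument make sense. In the uniqueness step the hypothesis already puts the coefficients of $D$ in $\sqq^{-1}\Z[\sqq^{-1}]$, and Theorem~\ref{thm:IC coefficient is zeta_vv} puts the transition coefficients there too, so this is handled. I would state this explicitly. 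I would also note that the bar involution on $\mathbf{A}$ restricts on this subring to $\sqq\mapsto\sqq^{-1}$, since it inverts group elements of $\ov{\Q}_\ell^*$.
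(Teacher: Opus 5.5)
Your proof is correct and is essentially the paper's argument. Existence comes from inverting the unitriangular matrix $(h^{\bw}_{\bv,\bv'})=(\zeta^{\bw}_{\bv,\bv'})$ supplied by Theorem~\ref{thm:IC coefficient is zeta_vv}, and uniqueness is the standard Lusztig's-lemma argument: the paper only cites Lemma~\ref{lem:unique zeta_vv} for it, while you spell it out directly (a bar-invariant difference whose $L$-basis coefficients lie in $\sqq^{-1}\Z[\sqq^{-1}]$ must vanish). Your remark that this needs $\Z[\sqq^{\pm1}]\hookrightarrow\mathbf{A}$ is apt; it holds because $\sqq^2=q$ forces $\sqq$ to have infinite order in $\ov{\Q}_\ell^*$, so its powers are distinct group-ring basis elements.
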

	\begin{proof}
		This follows from Theorem \ref{thm:IC coefficient is zeta_vv} and Lemma~\ref{lem:unique zeta_vv}, by taking the inverse matrix of $(h_{\bv,\bv'}^{\bw})_{\bv,\bv'}$.
	\end{proof}
	
	\begin{corollary}\label{coro:Lv^0 times Lv^+ prop}
		For any strongly $l$-dominant pairs $(\bv,\bw)$ and $(\bv^0,\bw^0)\in ({^\imath V^0},{^\imath W^0})$ such that $\sigma^*\bw^0-\mathcal{C}_q\bv^0=0$, we have 
		\[L_{\bv^0,\bw^0}\cdot L_{\bv,\bw}=\ov{L_{\bv,\bw}\cdot L_{\bv^0,\bw^0}}\in\sqq^{\frac{1}{2}\Z}L_{\bv^0+\bv,\bw^0+\bw}.\]
	\end{corollary}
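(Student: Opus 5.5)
We want to show that $L_{\bv^0,\bw^0}\cdot L_{\bv,\bw}$ is a power of $\sqq^{1/2}$ times $L_{\bv^0+\bv,\bw^0+\bw}$, and that it equals $\ov{L_{\bv,\bw}\cdot L_{\bv^0,\bw^0}}$. The plan is to combine Lemma~\ref{lem:Hall basis map to M_vw}, which says that multiplication by $H_{\bv^0,\bw^0}=L_{\bv^0,\bw^0}$ sends $H$-basis elements to monomial multiples of $H$-basis elements, with the characterization of $L$ in Corollary~\ref{coro:L_vw unique determined}. By induction on $(\bv^0,\bw^0)$ via Lemma~\ref{lem:R0 subalgebra}, it suffices to treat $(\bv^0,\bw^0)=(\bv^i,\bw^i)$. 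Here $\widetilde{\Upsilon}(\K_i)=H_{\bv^{\varrho i},\bw^i}=L_{\bv^{\varrho i},\bw^i}$; since the $(\bv^{\varrho i},\bw^i)$ for $i\in\I$ exhaust the generators of ${}^\imath V^0$, this covers the needed cases. The element $\K_i$ is bar-invariant and, by the Hall algebra computation of \cite{LW19}, central up to a twist in $\widetilde{\ch}(Q,\varrho)$. Its image $L_0:=L_{\bv^0,\bw^0}$ is bar-invariant.

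First, translate the $\diamond$-action. For each strongly $l$-dominant $(\bv,\bw)$ with decomposition $(\bv,\bw)=(\bv',\bw')+(\bv^+,\bw^+)$ as in Lemma~\ref{lem:DP3}, Lemma~\ref{lem:Hall basis map to M_vw} gives
\[
L_0\cdot H_{\bv,\bw}=\sqq^{c(\bv,\bw)}H_{\bv+\bv^0,\bw+\bw^0}
\]
for some $c(\bv,\bw)\in\frac12\Z$. The key point is that $c$ depends only on the pairing $\frac12(\alpha-\varrho\alpha,\cdot)$ coming from the definition of $\diamond$ together with commutation with $\K_\alpha$, so it depends only on $\bw$. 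Concretely, the relation $\K_\alpha * \fu_\lambda = \sqq^{-\frac12(\alpha-\varrho\alpha,\dimv M_q(\lambda))}\K_\alpha\diamond\fu_\lambda$ shows that the exponent is a linear function $c(\bw)$ of $\bw$ (more precisely, of $\bw^+$, extended linearly using that $\K$'s commute). I will verify this from \cite[Lemma A.8]{LW19} and the formula for $\diamond$.

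Second, since all $H_{\bv',\bw}$ with the same $\bw$ are scaled by the same factor, multiplication by $\sqq^{-c(\bw)}L_0$ maps $H_{\bv,\bw}+\sum_{\bv<\bv'}\sqq^{-1}\Z[\sqq^{-1}]H_{\bv',\bw}$ into the corresponding set for $(\bv+\bv^0,\bw+\bw^0)$. Here I use that $\bv<\bv'$ if and only if $\bv+\bv^0<\bv'+\bv^0$, and that every strongly $l$-dominant pair above $(\bv+\bv^0,\bw+\bw^0)$ arises this way by Remark~\ref{remk:strongly dominant iff dominant} and Lemma~\ref{lem:DP3}.

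Third, check bar-invariance. We have $\ov{\sqq^{-c}L_0 L_{\bv,\bw}}=\sqq^{c}L_{\bv,\bw}L_0$, since the bar is an anti-automorphism. To compare this with $\sqq^{-c}L_0L_{\bv,\bw}$, we need the twisted commutation $L_{\bv,\bw}L_0=\sqq^{-2c}L_0L_{\bv,\bw}$, again coming from the $\K$-commutation relations, which are homogeneous in $\bw$. Granting this, $\sqq^{-c}L_0L_{\bv,\bw}$ is bar-invariant and satisfies the triangularity condition of Corollary~\ref{coro:L_vw unique determined}. By uniqueness it equals $L_{\bv+\bv^0,\bw+\bw^0}$. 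The same computation gives the equality with $\ov{L_{\bv,\bw}L_0}$.

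The main obstacle is the bookkeeping in the first step: showing that the scalar $\sqq^{c}$ is uniform over all $\bv'$ with fixed $\bw$, including those whose ${}^\imath V^0$-parts differ. I would handle this by expressing everything in the Hall algebra, where $\K_\alpha*(\K_\beta\diamond\mathfrak U_\mu)=\sqq^{\frac12(\ldots)}\K_{\alpha+\beta}\diamond\mathfrak U_\mu$ with an exponent depending only on the total degree.
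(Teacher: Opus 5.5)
Your proposal is correct and follows essentially the paper's argument. Both reduce to $(\bv^i,\bw^i)$ and use that left (resp.\ right) multiplication by $H_{\bv^i,\bw^i}=L_{\bv^i,\bw^i}$ rescales every $H_{\tilde{\bv},\bw}$ of fixed $\bw$ by one common factor $\sqq^{n}$ (resp.\ $\sqq^{-n}$). The only difference is the last step: the paper re-expands both products in the $L$-basis and compares them via $\ov{L_{\bv,\bw}L_{\bv^i,\bw^i}}=L_{\bv^i,\bw^i}L_{\bv,\bw}$, while you show $\sqq^{-c}L_0L_{\bv,\bw}$ is bar-invariant and invoke the uniqueness in Corollary~\ref{coro:L_vw unique determined}, which amounts to the same thing.

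The uniformity you single out as the main obstacle (which the paper also only cites) does hold. By $\varrho$-invariance of $(-,-)_Q$ one has $(\alpha-\varrho\alpha,\beta+\varrho\beta)_Q=0$. Hence the exponent $\mp\frac12(\alpha-\varrho\alpha,\dimv M_q(\mu))$ attached to $\K_\beta\diamond\mathfrak{U}_\mu$ depends only on $\beta+\varrho\beta+\dimv M_q(\mu)$, i.e.\ only on $\bw$.
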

	
	\begin{proof}
		From Corollary~\ref{coro:L_vw unique determined} we know that $L_{\bv,\bw}$ is uniquely determined by $\ov{L_{\bv,\bw}}=L_{\bv,\bw}$ and the following equation:
		\begin{equation}
			\label{eq:L-M}
			L_{\bv,\bw}\in H_{\bv,\bw}+\sum_{\bv<\bv'}\sqq^{-1}\Z[\sqq^{-1}]H_{\bv',\bw}.
		\end{equation}
		Then we also have
		\begin{equation}
			\label{eq:M-L}
			H_{\bv,\bw}\in L_{\bv,\bw}+\sum_{\bv<\bv'}\sqq^{-1}\Z[\sqq^{-1}]L_{\bv',\bw}.
		\end{equation}
		By induction and Lemma \ref{lem:R0 subalgebra} we may assume that $(\bv^0,\bw^0)=(\bv^i,\bw^i)$ for some $i\in Q_0$. In this case we have $L_{\bv^i,\bw^i}=H_{\bv^i,\bw^i}$, and from \cite[Lemma 3.5]{LP25}, \eqref{eq:L-M}, \eqref{eq:M-L} and Lemma \ref{lem:Hall basis map to M_vw} we find that 
		\begin{align*}
			L_{\bv^i,\bw^i}\cdot L_{\bv,\bw}&=\sum_{\bv\leq\tilde{\bv}}f_{\bv,\tilde{\bv}}^{\bw}H_{\bv^i,\bw^i}\cdot H_{\tilde{\bv},\bw}=\sqq^n\sum_{\bv\leq\tilde{\bv}}f_{\bv,\tilde{\bv}}^{\bw}H_{\bv^i+\tilde{\bv},\bw^i+\bw}=\sqq^n\sum_{\bv+\bv^i\leq\tilde{\bv}}g_{\bv,\tilde{\bv}}^{\bw}L_{\tilde{\bv},\bw^i+\bw},
		\end{align*}
		and similarly, 
		\begin{align*}
			L_{\bv,\bw}\cdot L_{\bv^i,\bw^i}&=\sqq^{-n}\sum_{\bv+\bv^i\leq\tilde{\bv}}h_{\bv,\tilde{\bv}}^{\bw}L_{\tilde{\bv},\bw^i+\bw},
		\end{align*}
		where $n$ is a half integer and $g_{\bv,\bv+\bv^i}^{\bw}=1=h_{\bv,\bv+\bv^i}^{\bw}$, $g_{\bv,\tilde{\bv}}^{\bw},h_{\bv,\tilde{\bv}}^{\bw}\in \sqq^{-1}\Z[\sqq^{-1}]$ if $\bv\neq\tilde{\bv}$. Since $\ov{L_{\bv,\bw}\cdot L_{\bv^i,\bw^i}}=L_{\bv^i,\bw^i}\cdot L_{\bv,\bw}$, we know that $g_{\bv,\tilde{\bv}}^{\bw}=0=h_{\bv,\tilde{\bv}}^{\bw}$ if $\bv\neq\tilde{\bv}$. Then 
		\begin{equation*}
			L_{\bv^i,\bw^i}\cdot L_{\bv,\bw}=\sqq^{n}L_{\bv^i+\bv,\bw^i+\bw},\quad L_{\bv,\bw}\cdot L_{\bv^i,\bw^i}=\sqq^{-n}L_{\bv^i+\bv,\bw^i+\bw}.\qedhere
		\end{equation*}
	\end{proof}

	\begin{corollary}\label{coro:Upsilon maps dCB}
		The isomorphism $\widetilde{\Upsilon}$ sends the dual canonical basis of $\widetilde{\ch}(Q,\varrho)_\mathcal{Z}\otimes_{\mathcal{Z}}\mathbf{A}$ to the basis $\{L_{\bv,\bw}\mid \text{$(\bv,\bw)$ is strongly $l$-dominant}\}$ of $\widetilde{\mathbf{M}}^*(Q,\varrho)$.
	\end{corollary}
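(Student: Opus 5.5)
The plan is to compare the two characterizations by uniqueness: Theorem~\ref{iHA dCB theorem} for $\K_\alpha\diamond\mathfrak{L}_\lambda$, and Corollary~\ref{coro:L_vw unique determined} for $L_{\bv,\bw}$. First treat $\alpha\in\N^\I$ and the element $\mathfrak{L}_\lambda=\mathfrak{L}_{0,\lambda}$. By Theorem~\ref{iHA dCB theorem},
\[\mathfrak{L}_\lambda\in \mathfrak{U}_\lambda+\sum_{(0,\lambda)\prec(\beta,\mu)}v^{-1}\Z[v^{-1}]\,\K_\beta\diamond\mathfrak{U}_\mu,\qquad \ov{\mathfrak{L}_\lambda}=\mathfrak{L}_\lambda.\]
Apply $\widetilde{\Upsilon}$. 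Lemma~\ref{lem:Hall basis map to M_vw} gives $\widetilde{\Upsilon}(\K_\beta\diamond\mathfrak{U}_\mu)=H_{\bv_\beta+\bv_\mu,\bw_\beta+\bw_\mu}$, and \eqref{eq:Upsilon compatible with bar} shows that $\widetilde{\Upsilon}(\mathfrak{L}_\lambda)$ is bar-invariant. Hence
\[\widetilde{\Upsilon}(\mathfrak{L}_\lambda)\in H_{\bv_\lambda,\bw_\lambda}+\sum \sqq^{-1}\Z[\sqq^{-1}]\,H_{\bv',\bw_\lambda}.\]

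The key step is to check that the index pairs appearing here satisfy $\bv_\lambda<\bv'$ in the order of Corollary~\ref{coro:L_vw unique determined}. That is, whenever $(0,\lambda)\prec(\beta,\mu)$ we need $\bw_\beta+\bw_\mu=\bw_\lambda$ and $\bv_\beta+\bv_\mu>\bv_\lambda$, where $>$ is the partial order on $\N^{\mcr_0-\cs_0}$ from \eqref{eq:leq}. I expect this to be the main obstacle. It has two pieces.

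\emph{The case $\beta=0$, $\lambda\prec\mu$.} Degeneration of orbits should give $\bv_\mu>\bv_\lambda$ through $\sigma^*\bw-\mathcal{C}_q\bv=\lambda$. I would check this via Proposition~\ref{dCB of HA is L} and Theorem~\ref{dCB of U^+ by L}, since for $\tR^+$ the unitriangularity of $\mathfrak{U}_\lambda$ against $L$ (the proof of Theorem~\ref{prop:Hall-sheaf-integ}) already encodes it.

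\emph{The case $\beta\neq0$.} We need $\bv_\beta+\bv_\mu>\bv_\lambda$. Here I would use Lemma~\ref{lem:strata M_0^reg char}: a module $X\oplus\K_\beta$ lies in the closure-stratum with larger $\bv$. Alternatively, I would compare with $K_{LR}$ dimension counts via Lemma~\ref{lem:K_LR exact if projdim} and the fact that $\dimv K_{LR}(\K_i)=(\bv^{\varrho i},\bw^i)$ has $\bv$-part $\bv^{\varrho i}$, which dominates the $\bv$-contribution of $S_i\oplus S_{\varrho i}$.

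If the order comparison proves awkward, there is a fallback. Corollary~\ref{coro:L_vw unique determined} was really obtained from Lusztig's Lemma, with the triangularity coming from the $b$-matrix in \eqref{eq:bar of M_vw}. So it suffices to note that the "$v^{-1}\Z[v^{-1}]$, bar-invariant" characterization in Theorem~\ref{iHA dCB theorem} is already unique without any order (first statement there). Then both $\widetilde{\Upsilon}^{-1}(L_{\bv,\bw})$ and $\mathfrak{L}_\lambda$ are bar-invariant and congruent to the same Hall element modulo $v^{-1}\Z[v^{-1}]$-combinations of Hall elements, using \eqref{eq:L-M}. Uniqueness then forces equality, after specializing $\mathbf{A}$-coefficients in $\Z[\sqq^{\pm1}]$ back to $\mathcal{Z}$. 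I would in fact prefer this route, as it sidesteps the order comparison.

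Finally, for general $\alpha\in\Z^\I$, use $\K_\alpha\diamond\mathfrak{L}_\lambda$. Lemma~\ref{lem:Hall basis map to M_vw} and Corollary~\ref{coro:Lv^0 times Lv^+ prop} show that $\widetilde{\Upsilon}(\K_\alpha\diamond\mathfrak{L}_\lambda)$ is bar-invariant and equals $L_{\bv_\alpha+\bv_\lambda,\bw_\alpha+\bw_\lambda}$ up to a power $\sqq^{n}$. Bar-invariance of both sides forces $n=0$, and localization handles negative $\alpha$. Bijectivity onto the set of all strongly $l$-dominant pairs follows from Lemma~\ref{lem:DP3} together with Remark~\ref{remk:strongly dominant iff dominant}.
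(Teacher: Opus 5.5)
Your fallback argument is correct. In substance it is the paper's proof read through $\widetilde{\Upsilon}^{-1}$:
\begin{itemize}
\item The paper pushes $\mathfrak{L}_\lambda$ forward and invokes the sheaf-side characterization (Corollary~\ref{coro:L_vw unique determined} together with Theorem~\ref{thm:IC coefficient is zeta_vv}).
\item You pull $L_{\bv_\lambda,\bw_\lambda}$ back via \eqref{eq:L-M} and Lemma~\ref{lem:Hall basis map to M_vw}, and then invoke the order-free uniqueness clause of Theorem~\ref{iHA dCB theorem}. The coefficients lie in $\Z[\sqq^{-1}]$, so the preimage does lie in $\widehat{\ch}(Q,\varrho)_\cz$.
\item Your passage to $\K_\alpha\diamond\mathfrak{L}_\lambda$ is the same as the paper's.
\end{itemize}
The sheaf-side uniqueness is also order-free. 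Suppose $x=\ov{x}\in\sum_{\bv'}\sqq^{-1}\Z[\sqq^{-1}]H_{\bv',\bw}$. Since $h^{\bw}_{\bv',\bv''}\in\delta_{\bv',\bv''}+\sqq^{-1}\Z[\sqq^{-1}]$, the $L_{\bv'',\bw}$-coordinates of $x$ are bar-invariant elements of $\sqq^{-1}\Z[\sqq^{-1}]$, hence $0$. This is what the paper's one-line appeal to Theorem~\ref{thm:IC coefficient is zeta_vv} actually uses.

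You were right to drop the first route, because its ``key step'' is false when $\beta\neq 0$. No argument along the lines you sketch, via strata closures or $K_{LR}$ counts, can prove it. Here is a counterexample.
\begin{itemize}
\item Take $Q\colon 1\to 2$ of type $A_2$ with $\varrho=\Id$, and let $P_1$ be the indecomposable projective at $1$.
\item The non-frozen vertices of $\mcr^\imath$ are labelled $\tS_2,P_1,\tS_1$. The non-frozen arrows form the cycle $\tS_2\to P_1\to \tS_1\to \tS_2$, and $\tau$ acts by $\tS_1\mapsto \tS_2\mapsto P_1\mapsto \tS_1$.
\item For $\bw=(2,2)$ and $M_q(\lambda)=P_1^{\oplus 2}$, solving $\sigma^*\bw-\mathcal{C}_q\bv=\lambda$ gives $\bv_\lambda=2\e_{\tS_2}$.
\item On the other hand $\bv^1=\e_{\tS_1}+\e_{\tS_2}$, directly from $\dim\cp^\imath(\tS_1,z)$. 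So for $\beta=\alpha_1$ and $M_q(\mu)=S_2^{\oplus 2}$ you get $\bv_\beta+\bv_\mu=\e_{\tS_1}+\e_{\tS_2}$.
\end{itemize}
Thus $(0,\lambda)\prec(\alpha_1,\mu)$, but the two $\bv$'s are incomparable.

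Geometrically, on the stratum of $\K_1\oplus S_2^{\oplus 2}$ one has $\rank\varepsilon_1=1$ and $\varepsilon_2=0$. The relation $\varepsilon_2 a=a\varepsilon_1$, with $a$ the arrow $1\to 2$, then forces $\rank a\le 1$, so $P_1^{\oplus 2}$ is not in the closure of that stratum. The order $\prec$ on $\N^\I\times\fp$ allows terms that cannot occur in $L_{\bv_\lambda,\bw_\lambda}$. The comparison therefore has to go through an order-free uniqueness statement, as in your fallback.
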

	\begin{proof}
		From Theorem~\ref{thm:IC coefficient is zeta_vv} and Corollary~\ref{coro:L_vw unique determined} we see that $\widetilde{\Upsilon}(\mathfrak{L}_\lambda)=L_{\bv^\lambda,\bw^\lambda}$. Then Corollary~\ref{coro:Lv^0 times Lv^+ prop} and Lemma~\ref{lem:Hall basis map to M_vw} imply $\widetilde{\Upsilon}(\K_\alpha\diamond \mathfrak{L}_\lambda)=\widetilde{\Upsilon}(\mathfrak{L}_{\alpha,\lambda})=L_{\bv_\alpha+\bv_\lambda,\bv_\alpha+\bv_\lambda}$, whence the claim.
	\end{proof}

	\subsection{Comparison of dual canonical bases}\label{subsec:dCB compare}
	
	To compare the dual canonical basis of $\widetilde{\mathbf{M}}^*(Q,\varrho)$ with that of $\tRi$, we need to give an isomorphism from $\widetilde{\mathbf{M}}^*(Q,\varrho)$ to $\tRi\otimes_\mathcal{Z}\mathbf{A}$, with $L_{\bv,\bw}$ sent to $L(\bv,\bw)$. The problem is that transversal slice theorem does not hold over positive characteristic, so we can not define the analogue of $\tRi$ over $\F$. To overcome this we shift to the subvariety of Gorenstein projective $\mathcal{S}^\imath$-modules, as in \S\ref{subsec:Gproj reduction}. 
	
	Consider the subvariety $\rep^{\Gp}(\bw,\mathcal{S}^\imath)$ of $\rep(\bw,\mathcal{S}^\imath)$ defined over $\F$, equipped with an $\F_q$-structure so that the natural inclusion $i:\rep^{\Gp}(\bw,\mathcal{S}^\imath)\rightarrow \rep(\bw,\mathcal{S}^\imath)$ admits an $\F_q$-structure. Let $\mathcal{F}_\bw^{\Gp}$ be the vector space of $\ov{\Q}_\ell$-valued functions on the set of $\F_q$-rational points of $\rep^{\Gp}(\bw,\mathcal{S}^\imath)$ that are constant on the set of $\F_q$-rational points of any $G_\bw$-orbits on $\rep^{\Gp}(\bw,\mathcal{S}^\imath)$. The function $\res^{\bw}_{\bw_1,\bw_2}$ can be restricted to $\mathcal{F}_\bw^{\Gp}$, and we obtain an algebra structure on $\mathcal{F}^{\Gp,*}(Q,\varrho):=\bigoplus_\bw\mathcal{F}_\bw^{\Gp,*}$. 
	
	Let $\widetilde{\ch}(\Gproj(\Lambda^\imath))$ be the twisted Hall algebra of $\Gproj(\Lambda^\imath)$, with its multiplication twisted by the Euler form of $\mod(\bfk Q)$. Note that $\widetilde{\ch}(\Gproj(\Lambda^\imath))$ can be viewed as the subalgebra of $\widetilde{\ch}(\iLa)$. 
	From the proof of Proposition~\ref{iHA by const function} we get the following.

	\begin{proposition}
		There is an algebra isomorphism
		\[\Upsilon:\widetilde{\ch}(\Gproj(\Lambda^\imath))\longrightarrow \mathcal{F}^{\Gp,*}(Q,\varrho),\quad [M]\mapsto \sqq^{\dimv M\cdot\dimv M-\frac{1}{2}\langle M,M\rangle_Q}\tilde{\xi}_{[M]}.\]
	\end{proposition}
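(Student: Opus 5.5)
The plan is to repeat the proof of Proposition~\ref{iHA by const function} with $\mathcal{F}^{\Gp,*}(Q,\varrho)$ in place of $\mathcal{F}^*(Q,\varrho)$. First I would check that $\Upsilon$ is a linear isomorphism. By definition $\widetilde{\ch}(\Gproj(\Lambda^\imath))$ has basis the isoclasses $[M]$ with $M$ Gorenstein projective. Over $\F_q$, these isoclasses correspond bijectively to the $G_\bw$-orbits of $\F_q$-rational points of $\rep^{\Gp}(\bw,\mathcal{S}^\imath)$. Here we use that $\rep^{\Gp}(\bw,\mathcal{S}^\imath)$ is the open $G_\bw$-stable subvariety of Gorenstein projective modules \cite[Lemma 4.5]{SS16}, together with the identification $\mod(\cs^\imath)\simeq\mod(\Lambda^\imath)$. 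The dual functions $\tilde{\xi}_{[M]}$ therefore form a basis of $\mathcal{F}^{\Gp,*}(Q,\varrho)$, and $\Upsilon$ sends a basis to a rescaled basis.

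For multiplicativity, the key input is that $\Gproj(\Lambda^\imath)$ is closed under extensions in $\mod(\Lambda^\imath)$. This holds because it is the left perpendicular $^{\perp}\proj(\Lambda^\imath)$ for the $1$-Gorenstein algebra $\Lambda^\imath$. Consequently, for $M,N$ Gorenstein projective, every middle term $L$ of an extension of $M$ by $N$ is again Gorenstein projective. So the product $[M]*[N]$ computed in $\widetilde{\ch}(\Lambda^\imath)$ stays inside $\widetilde{\ch}(\Gproj(\Lambda^\imath))$, which makes the latter a subalgebra with the same structure constants. Geometrically, the same fact shows that the restriction of $\res^{\bw}_{\bw_1,\bw_2}$ to $\mathcal{F}^{\Gp}_\bw$ is just the pullback of the convolution diagram along the open inclusion $i$. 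For $M\in\rep^{\Gp}(\bw_1,\mathcal{S}^\imath)$, $N\in\rep^{\Gp}(\bw_2,\mathcal{S}^\imath)$ and $L$ Gorenstein projective, the count in Lemma~\ref{res of const function iHA} is unchanged:
\[
\widetilde{\res}^{\bw}_{\bw_1,\bw_2}(\delta_{[L]})(M,N)=q^{\bw_1\cdot\bw_2}\frac{|\Ext^1_{\mathcal{S}^\imath}(M,N)_L|}{|\Hom_{\mathcal{S}^\imath}(M,N)|}.
\]
The fiber $D(M,N)\cap\mathfrak{O}_L$ is the same set whether we view it in $\rep$ or in $\rep^{\Gp}$.

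Given this, the computation in the proof of Proposition~\ref{iHA by const function} goes through verbatim. The same exponent bookkeeping applies, combining $\bw_1\cdot\bw_1$, $\bw_2\cdot\bw_2$, $2\bw_1\cdot\bw_2$ with the antisymmetric twist $\tfrac12\langle\bw_1,\bw_2\rangle_{Q,a}$, and it gives $\Upsilon([M])\star\Upsilon([N])=\Upsilon([M]*[N])$.

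The main point that needs care is the consistency of the restricted comultiplication. Dualizing $\res^{\bw}_{\bw_1,\bw_2}$ on $\mathcal{F}^{\Gp}$ must give exactly the structure constants indexed by Gorenstein projective $L$, with no contributions from non-Gorenstein projective middle terms. This is precisely the extension-closedness of $\Gproj(\Lambda^\imath)$. I would prove that closedness directly: $\Ext^{1}(-,\Lambda^\imath)$ vanishing is preserved by extensions via the long exact sequence. Then I would note that, since $\Lambda^\imath$ is $1$-Gorenstein, this vanishing characterizes Gorenstein projectives.
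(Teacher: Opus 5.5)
Your proposal is correct and takes the same approach as the paper, whose entire proof is that the statement follows from the proof of Proposition~\ref{iHA by const function}. Your main addition is extension-closedness of $\Gproj(\Lambda^\imath)$, which holds because $\Gproj(\Lambda^\imath)=\{M\mid \Ext^1_{\Lambda^\imath}(M,\Lambda^\imath)=0\}$ for the $1$-Gorenstein algebra. It guarantees that both the Hall product and the convolution restricted along the open inclusion only involve Gorenstein projective middle terms, which is the point the paper leaves implicit.
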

	
	We denote by $\mathcal{SD}\widetilde{\ch}(\Gproj(\Lambda^\imath))$ the localization of $\widetilde{\ch}(\Gproj(\Lambda^\imath))$ with respect to projective $\Lambda^\imath$-modules. Then it is known \cite[Theorem A15]{LW19} that the natural inclusion $\widetilde{\ch}(\Gproj(\Lambda^\imath))\hookrightarrow\widetilde{\ch}(\Lambda^\imath)$ induces an isomorphism 
	\[\mathcal{SD}\widetilde{\ch}(\Gproj(\Lambda^\imath))\stackrel{\sim}{\longrightarrow} \widetilde{\ch}(\bfk Q,\varrho).\]
	
	The advantage of shifting from $\rep(\bw,\mathcal{S}^\imath)$ to $\rep^{\Gp}(\bw,\mathcal{S}^\imath)$ is that the stratification of $\rep^{\Gp}(\bw,\mathcal{S}^\imath)$ is given by orbits (see Lemma \ref{lem:Gproj unique strata}), and any $G_\bw$-equivariant local system on an orbit in $\rep^{\Gp}(\bw,\mathcal{S}^\imath)$ is trivial. 
	
	We now recall the definition of sheaf-function correspondence. Let $X$ be an $\F$-variety together with a connected algebraic group $G$-action. Suppose $X,G$ have $\F_q$-structures, we denote by $\Fr$ their Frobenius maps. Let $D^b_{G_\bw}(X)$ denote the category consisting of objects of the form $(L,\varphi)$, where $L$ is a $G_\bw$-equivariant complex and $\varphi:\Fr^*L\stackrel{\cong}{\rightarrow} L$ is an isomorphism. For any $\F_q$-point $x\in \rep(\mathbf{w},\mathcal{S}^\imath)$, the isomorphism $\varphi$ induces an isomorphism $\varphi_x:L_x\rightarrow L_x$. Taking the alternating sum of the traces of the isomorphisms on cohomology groups, we obtain
	\[\chi_L(x)=\sum_{i\in\Z}(-1)^i\mathrm{tr}(H^i(\varphi_x):H^i(L_x)\rightarrow H^i(L_x)).\]
	In this way, we obtain a $G(\F_q)$-invariant function on $X(\F_q)$. We denote by $L\mapsto L(1)$ the Tate twist, and fix a square root $L\mapsto L(\frac{1}{2})$. The following
	theorem is called the sheaf-function correspondence, see \cite[III. Theorem 12.1]{RW01} or \cite[Section 5.3]{Ach21}.
	
	\begin{proposition}
		\label{prop:sheaf-function}
		Let $X$, $Y$ be $\F$-varieties together with connected algebraic group $G$-actions, suppose they are defined over $\F_q$, and $f:X\rightarrow Y$ is a $G$-equivariant morphism which is compatible with $\F_q$-structures.
		\begin{enumerate}
			\item[(a)] For any distinguished triangle $L'\rightarrow L\rightarrow L''\rightarrow L'[1]$ in $D^b_G(X)$, we have
			\[\chi_L=\chi_{L'}+\chi_{L''}.\]
			\item[(b)] For any $L\in D^b_G(X)$ and $L'\in D^b_G(Y)$, we have
			\begin{gather*}
				\chi_{L[n]}=(-1)^n\chi_L,\quad \chi_{L(n)}=q^{-n}\chi_L,\quad \chi_{L\boxtimes L'}=\chi_L\otimes\chi_{L'},\\
				\chi_{f_!L}=f_!\chi_L,\quad \chi_{f^*L}=f^*\chi_L.
			\end{gather*}
		\end{enumerate}
	\end{proposition}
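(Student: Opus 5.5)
This is the standard sheaf-function dictionary (Grothendieck–Lefschetz trace formula and its functoriality), and I will treat it as a citation-level statement rather than reprove Deligne's results. For part (a), I would take stalks at an $\F_q$-rational point $x$. A distinguished triangle in $D^b_G(X)$ whose maps commute with the Weil structures $\varphi$ gives a long exact sequence of cohomology stalks $H^i(L'_x)\to H^i(L_x)\to H^i(L''_x)\to\cdots$. This sequence is equivariant for the induced Frobenius actions $H^i(\varphi_x)$. The alternating sum of traces is additive along a finite long exact sequence of finite-dimensional spaces with compatible endomorphisms: split it into short exact pieces and use additivity of traces on each. This gives $\chi_L=\chi_{L'}+\chi_{L''}$.

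For part (b), the shift, Tate twist and external product statements are immediate from stalk formulas. The first holds because $H^i(L[n]_x)=H^{i+n}(L_x)$, which introduces the sign $(-1)^n$. The second holds because $(1)$ multiplies Frobenius by $q^{-1}$ on $\ov{\Q}_\ell(1)$. The third follows from the Künneth formula for stalks, $(L\boxtimes L')_{(x,y)}=L_x\otimes L'_y$, together with multiplicativity of supertraces on tensor products. The pullback statement holds because $(f^*L)_x=L_{f(x)}$ with the same Frobenius structure.

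The main obstacle is $\chi_{f_!L}=f_!\chi_L$. Here the stalk is $(f_!L)_y=R\Gamma_c(f^{-1}(y),L|_{f^{-1}(y)})$ by proper base change. The claim then reduces to the Grothendieck–Lefschetz trace formula on the fiber: $\sum_i(-1)^i\mathrm{tr}(\Fr,H^i_c(f^{-1}(y),L))=\sum_{x\in f^{-1}(y)(\F_q)}\chi_L(x)$. I would not reprove this, but cite Deligne (SGA $4\frac12$, Rapport) exactly as in \cite[III. Theorem 12.1]{RW01} and \cite[Section 5.3]{Ach21}.

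Equivariance plays a limited role. The functions $\chi_L$ are $G(\F_q)$-invariant because the $G$-equivariant structure identifies stalks along $G(\F_q)$-orbits compatibly with $\varphi$; connectedness of $G$ ensures, via Lang's theorem, that the Weil structure is compatible with the equivariant structure. All operations $f_!,f^*,\boxtimes$ preserve $D^b_G$ with Weil structures, so both sides of each identity are well-defined invariant functions.
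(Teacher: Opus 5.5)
Your proposal is correct and matches the paper, which gives no argument of its own and simply cites the sheaf--function correspondence from \cite[III.~Theorem 12.1]{RW01} and \cite[Section 5.3]{Ach21}. Your stalkwise sketch is the standard proof behind those references: long exact sequences for (a), direct stalk computations for shift, Tate twist, K\"unneth and pullback, and proper base change plus the Grothendieck--Lefschetz trace formula for $f_!$. One imprecision in your closing remark: compatibility of $\varphi$ with the equivariant structure comes from connectedness of $G$ making equivariant structures unique, while Lang's theorem concerns $\F_q$-points of orbits; this plays no role in proving the identities (a) and (b).
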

	
	For each Gorenstein projective $\mathcal{S}^\imath$-module $M$, we denote by $\mathcal{L}(M)$ the intersection cohomology complex associated to the constant local system on the orbit $\mathfrak{O}_M$. Let $K_\bw(\Gproj(\mathcal{S}^\imath))$ be the Grothendieck group of shifts of perverse sheaves $\mathcal{L}(M)$, with $\mathfrak{O}_M$ runs through orbits in $\rep^{\Gp}(\bw,\mathcal{S}^\imath)$. The $\mathcal{Z}$-module structure is given by $v^{-d}L:=L[d](\frac{d}{2})$. We can define the quantum Grothendieck ring for $\rep^{\Gp}(\bw,\mathcal{S}^\imath)$ as in the characteristic zero case. Using the general machinery in \cite[\S 6]{BBD}, we know that this ring is isomorphic to $\tR^{\imath,\Gp}$. 
	
	For each $M\in\Gproj(\Lambda^\imath)$ of dimension $\bw$, we denote by $\mathfrak{O}_M$ the orbit of $M$ in $\rep^{\Gp}(\bw,\mathcal{S}^\imath)$. Define a partial order by $N\prec M$ if $\mathfrak{O}_N$ is contained in the closure of $\mathfrak{O}_M$. We write $\mathbf{F}_\bw^{\Gp}$ for the free $\mathbf{A}$-module generated by symbols $\tilde{\delta}_{[M]}$ indexed by orbits $\mathfrak{O}_M$ in $\rep^{\Gp}(\bw,\mathcal{S}^\imath)$. Then we can define a natural map
	\begin{equation}
		\tilde{\chi}:K_\bw(\Gproj(\Lambda^\imath))\otimes_{\mathcal{Z}}\mathbf{A}\longrightarrow \mathbf{F}_\bw^{\Gp},\quad \mathcal{L}(M)\mapsto \sum_{N}p_{N,M}\tilde{\delta}_{[N]}
	\end{equation}
	where $p_{N,M}$ is the (formal) alternating sums of eigenvalues (in $\ov{\Q}_\ell^*$) of Frobenius map on the stalks of the cohomology sheaves of $\mathcal{L}(M)$ at any $\F_q$-rational point of $\mathfrak{O}_N$. To ensure that $\tilde{\chi}$ is an $\mathbf{A}$-linear map, we need to choose $\sqq=-\sqrt{q}$. Note that $p_{M,M}=1$ and $p_{N,M}=0$ if the condition $N\prec M$ is not satisfied.
	
	The restriction map $\res^{\bw}_{\bw_1,\bw_2}$ can be similarly defined on $\mathbf{F}_\bw^{\Gp}$, and we obtain an algebra structure on $\mathbf{F}^{\Gp,*}(Q,\varrho)=\bigoplus_\bw\mathbf{F}_\bw^{\Gp,*}$. Let $\tilde{\xi}_{[M]}$ be the dual basis of $\tilde{\delta}_{[M]}$ and $L_{[M]}$ the dual basis to the image of $\mathcal{L}(M)$ under $\tilde{\chi}$. If we define $H_{[M]}=\sqq^{\dim\mathfrak{O}_M}\tilde{\xi}_{[M]}$, then 
	\[H_{[M]}=\sum_{N}h_{M,N}L_{[N]}\]
	where $h_{M,N}=\sqq^{\dim\mathfrak{O}_M}p_{M,N}$ and $h_{M,N}=0$ if $M\prec N$ is not satisfied. There is a bar-involution on $\mathbf{F}^{\Gp,*}(Q,\varrho)$ defined by Verdier duality, and we have
	\[\ov{H_{[M]}}=\sum_{N}b_{M,N}H_{[N]}.\]
	We can find unique family of elements $\zeta_{M,N}$ such that $\zeta_{M,M}=1$, $\zeta_{M,N}\in\sqq^{-1}\Z[\sqq^{-1}]$ if $N\neq M$, and 
	\[\sum_{M\preceq P\preceq N}b_{M,P}\zeta_{P,N}=\ov{h_{M,N}}.
	\]
	Like Theorem~\ref{thm:IC coefficient is zeta_vv}, one can show that $h_{M,N}=\zeta_{M,N}$ for any $N\prec M$. We also have the following analogue of Corollary~\ref{coro:Lv^0 times Lv^+ prop}.
	
	\begin{proposition}
		For any Gorenstein projective $\mathcal{S}^\imath$-module $M$ and projective $\mathcal{S}^\imath$-module $P$, we have
		\[L_{[P]}\cdot L_{[M]}=\ov{L_{[M]}\cdot L_{[P]}}\in\sqq^{\frac{1}{2}\Z}L_{[P\oplus M]}.\]
	\end{proposition}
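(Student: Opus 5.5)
The plan is to copy the proof of Corollary~\ref{coro:Lv^0 times Lv^+ prop}, working in $\mathbf{F}^{\Gp,*}(Q,\varrho)$ with orbits in place of strata. Write $P=\bigoplus_i P_i^{\oplus m_i}$ as a sum of indecomposable projectives. If the claim holds for $P'$ and $P''$, then $L_{[P'\oplus P'']}\cdot L_{[M]}$ is a power of $\sqq^{1/2}$ times $L_{[P']}\cdot L_{[P''\oplus M]}$, so induction on the number of summands reduces us to $P=P_i$ indecomposable. Since $P_i$ has no self-extensions, its orbit is open in $\rep^{\Gp}(\dimv P_i,\mathcal{S}^\imath)$. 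We will check that it is also closed there, so that $L_{[P_i]}=H_{[P_i]}$. Any degeneration of $P_i$ has the same dimension vector, and must be Gorenstein projective. By Lemma~\ref{lem:Gproj unique strata} it lies in the unique stratum attached to $\dimv K_{LR}(P_i)\in({^\imath V^0},{^\imath W^0})$, and this stratum is the orbit of $P_i$.

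\textbf{Step 1.} Compute $H_{[P]}\cdot H_{[N]}$ and $H_{[N]}\cdot H_{[P]}$ for any Gorenstein projective $N$. Projectives are injective in the Frobenius category $\Gproj(\Lambda^\imath)$, so $\Ext^1(P,N)=0=\Ext^1(N,P)$. Hence the only middle term in either product is $P\oplus N$, and Lemma~\ref{res of const function iHA} gives
\[
H_{[P]}\cdot H_{[N]}\in \sqq^{\frac12\Z}H_{[P\oplus N]},\qquad H_{[N]}\cdot H_{[P]}\in \sqq^{\frac12\Z}H_{[P\oplus N]}.
\]
The exponents come from $\dim\Hom$, the twist and the orbit dimensions. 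These quantities depend on $N$, so we must check that the same exponent $n$ works for every $N\preceq M$. For this we want to show that $\dim\Hom(P,N)-\dim\Hom(N,P)$ depends only on $\dimv N$, which should follow from $P$ being projective-injective: $\dim\Hom(P,N)$ and $\dim\Hom(N,P)$ are both linear in the dimension vector of $N$. This is the analogue of \cite[Lemma 3.5]{LP25} as used in Corollary~\ref{coro:Lv^0 times Lv^+ prop}.

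\textbf{Step 2.} Expand $L_{[M]}=\sum_{M\preceq N}f_{M,N}H_{[N]}$, where $f_{M,M}=1$ and $f_{M,N}\in\sqq^{-1}\Z[\sqq^{-1}]$ otherwise. This is the orbit analogue of Corollary~\ref{coro:L_vw unique determined}, obtained from $h_{M,N}=\zeta_{M,N}$ by matrix inversion. Multiplying by Step 1 gives
\[
L_{[P]}\cdot L_{[M]}=\sqq^{n}\sum f_{M,N}H_{[P\oplus N]}.
\]
The map $N\mapsto P\oplus N$ respects the degeneration order. Re-expanding each $H_{[P\oplus N]}$ in the $L$-basis via the inverse unitriangular relation gives
\[
L_{[P]}\cdot L_{[M]}=\sqq^{n}\Big(L_{[P\oplus M]}+\sum_{N'\neq P\oplus M} g_{N'}L_{[N']}\Big),\qquad g_{N'}\in\sqq^{-1}\Z[\sqq^{-1}],
\]
and likewise
\[
L_{[M]}\cdot L_{[P]}=\sqq^{-n}\Big(L_{[P\oplus M]}+\sum_{N'\neq P\oplus M} h_{N'}L_{[N']}\Big),\qquad h_{N'}\in\sqq^{-1}\Z[\sqq^{-1}].
\]

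\textbf{Step 3.} The bar involution coming from Verdier duality is an anti-automorphism, and $L_{[P]}$, $L_{[M]}$ are bar-invariant. Therefore $\ov{L_{[M]}\cdot L_{[P]}}=L_{[P]}\cdot L_{[M]}$, which gives $\ov{h_{N'}}=g_{N'}$. Since $g_{N'}$ and $h_{N'}$ both lie in $\sqq^{-1}\Z[\sqq^{-1}]$, this forces $g_{N'}=h_{N'}=0$, and the statement follows.

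The main obstacle is the uniformity of the exponent in Step 1. We need the same power of $\sqq$ for every $N$ appearing in the expansion of $L_{[M]}$, which amounts to showing that the relevant Hom and Euler-form terms depend only on dimension vectors. The approach is to use that $\Hom(P,-)$ and $\Hom(-,P)$ are exact on $\Gproj(\Lambda^\imath)$ because $P$ is projective-injective there.
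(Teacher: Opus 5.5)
Your strategy is the one the paper intends. The paper states this proposition as the analogue of Corollary~\ref{coro:Lv^0 times Lv^+ prop}: show $L_{[P]}=H_{[P]}$, obtain $H_{[P]}\cdot H_{[N]}\in\sqq^{n}H_{[P\oplus N]}$ with $n$ depending only on dimension vectors, expand unitriangularly, and let bar-invariance kill the off-diagonal terms. Your Steps 1--3 carry this out correctly.

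However, your justification of $L_{[P_i]}=H_{[P_i]}$ is false. The orbit of $P_i$ is in general \emph{not} closed in $\rep^{\Gp}(\dimv P_i,\mathcal{S}^\imath)$. Take the split rank-one case: $\iLa=\bfk[\varepsilon]/(\varepsilon^2)$ is self-injective, so every module is Gorenstein projective. There $S_1\oplus S_1$ (the zero matrix) lies in the closure of the orbit of $\iLa$ (the rank-one square-zero $2\times 2$ matrices). Lemma~\ref{lem:Gproj unique strata} only says that strata of $\rep^{\Gp}$ are orbits. It does not stop a degeneration of $P_i$ from landing in a smaller stratum $\bv'<\bv$.

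Closedness is also the wrong property. $L_{[X]}$ is the \emph{dual} basis, and $H_{[X]}=\sum_{X\preceq N}h_{X,N}L_{[N]}$ is triangular in the same direction you use in Step 2. So $H_{[X]}=L_{[X]}$ holds exactly when $\mathfrak{O}_X$ lies in no other orbit closure. A closed orbit would instead give $\tilde\chi(\mathcal{L}(X))=\tilde\delta_{[X]}$ on the function side, which is not what you need. For instance, $H_{[S_1\oplus S_1]}\neq L_{[S_1\oplus S_1]}$ in the example above, even though that orbit is closed.

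The fix is immediate. The openness you already noted, from $\Ext^1(P,P)=0$, is what gives $L_{[P]}=H_{[P]}$, since an open orbit cannot lie in the closure of another orbit. This holds for every projective $P$, so the reduction to indecomposable $P_i$ is also unnecessary.

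A smaller point in Step 1: exactness of $\Hom(-,P)$ on conflations in $\Gproj(\iLa)$ only gives additivity on $K_0(\Gproj(\iLa))$, not dependence on $\dimv N$ alone. To get that, use $\idim P\le 1$ ($\iLa$ is $1$-Gorenstein). Then $N\mapsto\dim\Hom(N,P)-\dim\Ext^1(N,P)$ is additive on all short exact sequences, because $\Ext^2(-,P)=0$, so it is a function of $\dimv N$. The $\Ext^1$ term vanishes for Gorenstein projective $N$.
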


	We now consider the following diagram of homomorphisms
	\[\begin{tikzcd}
		\hR^{\imath,\Gp}\otimes_{\mathcal{Z}}\mathbf{A}\ar[r,hook]&\hRi\otimes_{\mathcal{Z}}\mathbf{A}\\
		\mathbf{F}^{\Gp,*}(Q,\varrho)\ar[r,hook]\ar[u,"\tilde{\chi}^*"]&\mathbf{M}^*(Q,\varrho)
	\end{tikzcd}\]
	Let $\widetilde{\mathbf{F}}^{\Gp,*}(Q,\varrho)$ be the localization of $\mathbf{F}^{\Gp,*}(Q,\varrho)$ with respect to $\tilde{\xi}_{[P]}$, $P$ being projective. By the results in \cite[Appendix A.4]{LW19}, this algebra is isomorphic to $\widetilde{\mathbf{M}}^*(Q,\varrho)$, and we get an induced isomorphism 
	\[\tilde{\chi}^*:\widetilde{\mathbf{M}}^*(Q,\varrho)\stackrel{\sim}{\longrightarrow} \tRi\otimes_\mathcal{Z}\mathbf{A}.\]
	
	\begin{proposition}\label{prop:iHA dual trace map}
		The isomorphsim $\tilde{\chi}^*:\widetilde{\mathbf{M}}^*(Q,\varrho)\stackrel{\sim}{\rightarrow} \tRi\otimes_\mathcal{Z}\mathbf{A}$ sends $L_{\bv,\bw}$ to $L(\bv,\bw)$.
	\end{proposition}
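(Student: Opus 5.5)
The plan is to reduce everything to the Gorenstein projective locus, where sheaf--function correspondence is available. Then we use the characterization of $L_{\bv,\bw}$ by bar-invariance plus a unitriangularity condition (Corollary~\ref{coro:L_vw unique determined}), together with its analogue for $L_{[M]}$ in $\mathbf{F}^{\Gp,*}(Q,\varrho)$.

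First I would treat the case where $(\bv,\bw)=\dimv K_{LR}(M)$ for a Gorenstein projective $\mathcal{S}^\imath$-module $M$. By Lemma~\ref{lem:Gproj unique strata}, the stratum of $\rep^{\Gp}(\bw,\mathcal{S}^\imath)$ containing $M$ is exactly $\mathfrak{O}_M=\mathcal{M}_0^{\reg}(\bv,\bw,\mathcal{R}^\imath)\cap\rep^{\Gp}(\bw,\mathcal{S}^\imath)$. Since $i$ is an open inclusion, $i^*\mathcal{L}(\bv,\bw)=\mathcal{L}(M)$. Hence the stalk coefficients agree: $p_{N,M}=p^{\bw}_{\bv',\bv}$ whenever $\dimv K_{LR}(N)=(\bv',\bw)$. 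Under the inclusion $\mathbf{F}^{\Gp,*}(Q,\varrho)\hookrightarrow\mathbf{M}^*(Q,\varrho)$, which is dual to restricting functions to the open Gorenstein projective locus, $\tilde{\xi}_{[N]}$ corresponds to $\tilde{\xi}_{(\bv',\bw)}$. I also need $\dim\mathfrak{O}_N=\dim\mathcal{M}_0^{\reg}(\bv',\bw,\mathcal{R}^\imath)$, which holds because the orbit is open dense in the stratum. Granting this, $H_{[N]}$ corresponds to $H_{\bv',\bw}$, and $L_{[M]}$ corresponds to $L_{\bv,\bw}$. On the other side, $\tilde{\chi}^*$ is by construction the dual of the trace map $\tilde{\chi}$. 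So $\tilde{\chi}^*(L_{[M]})$ is the element dual to $\mathcal{L}(M)$, which by Theorem~\ref{tRi reduction to Gp} and the embedding $L(M)\mapsto L(\bv,\bw)$ is $L(\bv,\bw)$. This settles every pair realized by a Gorenstein projective module, in particular pairs in $({^\imath V^0},{^\imath W^0})$, which come from projectives, and pairs $\dimv K_{LR}(G_X)$.

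For a general strongly $l$-dominant $(\bv,\bw)$, I would write $(\bv,\bw)=(\bv^0,\bw^0)+(\bv^+,\bw^+)$ by Lemma~\ref{lem:DP3}. As in the proof of Theorem~\ref{tRi reduction to Gp}, choose $0\to P_X\to G_X\to X\to 0$ with $\dimv K_{LR}(X)=(\bv^+,\bw^+)$. Then Corollary~\ref{coro:Lv^0 times Lv^+ prop} gives $L_{\dimv K_{LR}(P_X)}L_{\bv^+,\bw^+}\in\sqq^{\frac12\Z}L_{\dimv K_{LR}(G_X)}$. Lemma~\ref{lem: multiply by L^0} gives the same identity for $L(\cdot)$ in $\tRi$. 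Both $L$'s attached to $P_X$ and $G_X$ are already matched by the first step, and $\tilde{\chi}^*$ is an algebra isomorphism on the localizations. Therefore $\tilde{\chi}^*(L_{\bv^+,\bw^+})\in\sqq^{\frac12\Z}L(\bv^+,\bw^+)$. Applying the same product argument once more, this time with $L_{\bv^0,\bw^0}$, yields $\tilde{\chi}^*(L_{\bv,\bw})=\sqq^{n}L(\bv,\bw)$ for some $n\in\frac12\Z$.

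The main obstacle is pinning down $n=0$. The two scalars are exactly matched only if the exponents in Corollary~\ref{coro:Lv^0 times Lv^+ prop} and Lemma~\ref{lem: multiply by L^0} agree. I would avoid computing them and argue via bar-invariance instead. The map $\tilde{\chi}^*$ intertwines the Verdier-duality bar involutions, since both are induced by $\mathbb{D}$ and the trace map converts $\mathbb{D}$ into the bar on $\mathbf{A}$. Consequently $\sqq^nL(\bv,\bw)$ is bar-invariant, which forces $\sqq^{n}=\sqq^{-n}$, that is $n=0$. For this step I must check that $\tilde{\chi}^*$ commutes with the localization and with the bar involution; this is where the choice $\sqq=-\sqrt q$ and the normalization $v^{-d}L=L[d](\frac d2)$ enter.
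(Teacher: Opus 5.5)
Your proposal is correct and is essentially the paper's argument. You reduce to $(\bv^+,\bw^+)$ via Lemma~\ref{lem: multiply by L^0} and Corollary~\ref{coro:Lv^0 times Lv^+ prop}, express $L_{\bv^+,\bw^+}$ through $L_{[P_X]}^{-1}L_{[G_X]}$ using $0\to P_X\to G_X\to X\to 0$, and remove the leftover power of $v$ by bar-invariance via the sheaf--function correspondence; your first step only makes explicit the identifications $L_{[M]}\mapsto L_{\bv,\bw}$ and $\tilde{\chi}^*(L_{[M]})=L(\bv,\bw)$ for Gorenstein projective $M$, which the paper uses tacitly (and there openness of the orbit in the pure-dimensional stratum already gives the dimension equality, so density is not needed).
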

	\begin{proof}
		In view of Lemma~\ref{lem: multiply by L^0} and Corollary~\ref{coro:Lv^0 times Lv^+ prop}, it suffices to deal with $L_{\bv^+,\bw^+}$ with $(\bv^+,\bw^+)\in({^\imath V^0},{^\imath W^0})$. For such a pair we choose (unique up to isomorphism) a $\bfk Q$-module $X$ such that $\dimv K_{LR}(X)=(\bv^+,\bw^+)$. Then there is a short exact sequence
		$$0\longrightarrow P_X\longrightarrow G_X\longrightarrow X\longrightarrow0$$
		where $G_X$ is Gorenstein projective and $P_X$ is projective. The desired isomorphism then sends $L_{\bv^+,\bw^+}$ to $v^{d}L_{[P_X]}^{-1}L_{[G_X]}$ in $\mathbf{F}^{\Gp,*}(Q,\varrho)$, for some half integer $d$ so that $v^{d}L_{[P_X]}^{-1}L_{[G_X]}$ is bar-invariant. Then $v^{d}L_{[P_X]}^{-1}L_{[G_X]}$ is sent to $v^dL(P_X)^{-1}L(G_X)$ in $\tR^{\imath,\Gp}$, which is still bar-invariant by the sheaf function correspondence. Its image in $\tRi\otimes_{\mathcal{Z}}\mathbf{A}$ is then equal to $v^{-d'}L(\bv^+,\bw^+)$. Since $v^{-d'}L(\bv^+,\bw^+)$ is bar-invariant, we must have $d'=0$, whence the claim.
	\end{proof}
	
	\begin{remark}
		Consider the composition $\tilde{\chi}^*\circ\widetilde{\Upsilon}:\widetilde{\ch}(Q,\varrho)\otimes_\mathcal{Z}\mathbf{A}\rightarrow\tRi\otimes_\mathcal{Z}\mathbf{A}$. By definition we have
		\begin{alignat*}{2}
			\widetilde{\Upsilon}([S_i])&=\sqq^{\frac{1}{2}}\xi_{(0,\e_{\sigma\tS_i})},\quad\quad &&
			\widetilde{\Upsilon}([\K_i])=\begin{cases}
				\sqq\xi_{(\bv^{\varrho i},\bw^i)} &\text{if $\varrho i=i$},\\
				\sqq^2\xi_{(\bv^{\varrho i},\bw^i)} &\text{if $\varrho i\neq i$},
			\end{cases}\\
			\tilde{\chi}^*(\xi_{(0,\e_{\sigma\tS_i})})&=L(0,\e_{\sigma\tS_i}),\quad\quad &&
			\tilde{\chi}^*(\xi_{(\bv^i,\bw^i)})=\begin{cases}
				\sqq^{-1}L(\bv^i,\bw^i) &\text{if $\varrho i=i$},\\
				\sqq^{-2}L(\bv^i,\bw^i) &\text{if $\varrho i\neq i$},
			\end{cases}
		\end{alignat*}
		so $\tilde{\chi}^*\circ\widetilde{\Upsilon}$ acts on generators by (compare this with the definition of $\widetilde{\Omega}$)
		\[[S_i]\mapsto \sqq^{\frac{1}{2}}L(0,\e_{\sigma\tS_i}),\quad [\K_i]\mapsto L(\bv^{\varrho i},\bw^i).\]
	\end{remark}

	\section{Fourier transforms of dual canonical basis}\label{sec:FT of iHA}
	
	In this section, all Hall algebras and $\imath$Hall algebras are defined over the field of complex numbers $\C$. We shall recall the Fourier transforms of $\imath$Hall algebras constructed in \cite{LP25}. As in \cite{SV99}, we consider $\imath$Hall algebras via functions constructed in \S\ref{subsec:QV function iHA}.

	\subsection{Fourier transforms of dual canonical bases}\label{subsec:FT of dCB}
	
	We return to the set up of \S\ref{subsec:QV function iHA}. For a dimension vector $\bw$, we denote by $E_{Q,\bw}$ the variety of representations of $Q$ of dimension $\bw$. There is a natural inclusion
	\[i:E_{Q,\bw}\longrightarrow\rep(\bw,\mathcal{S}^\imath).\]
	An element of $\rep(\bw,\mathcal{S}^\imath)$ has the form $(x_h,\eps_i,\eps_{\varrho i})_{h\in Q_1,i\in Q_0}$, where 
	\[x_h:W_{\ts(h)}\longrightarrow W_{\tt(h)},\quad \eps_i:W_i\longrightarrow W_{\varrho i},\quad \eps_{\varrho i}:W_{\varrho i}\longrightarrow W_i.\]
	We now consider a $\G_m$-action on $\rep(\bw,\mathcal{S}^\imath)$ defined by 
	\[t\diamond(x_h,\eps_i,\eps_{\varrho i}):=(x_h,t\eps_i,t\eps_{\varrho i}).\]
	This action clearly commutes with the action of $G_\bw$ on $\rep(\bw,\mathcal{S}^\imath)$ and its fixed points are exactly $E_{Q,\bw}$. Moreover, each stratum $\mathcal{M}_0^{\reg}(\bv,\bw,\mathcal{R}^\imath)$ is a $\G_m$-stable subset. In particular, each IC sheaf $\mathcal{L}(\bv,\bw)$ is $\G_m$-equivariant.
	
	Let us denote by $\mathcal{K}_\bw$ the category of direct sums of shifts of $\mathcal{L}(\bv,\bw)$, for $(\bv,\bw)$ being strongly $l$-dominant. Recall that Lusztig has defined a semisimple category $\mathcal{Q}_\bw$ over $E_{Q,\bw}$ consisting of direct sums of Lusztig's perverse sheaves \cite{Lus90}.
	
	\begin{lemma}\label{lem:pullback to Q_w}
		There is an induced functor $i^*:\mathcal{K}_\bw\rightarrow\mathcal{Q}_\bw$.
	\end{lemma}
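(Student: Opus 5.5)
We have to show that $i^*\mathcal{L}(\bv,\bw)$ is a direct sum of shifts of Lusztig's simple perverse sheaves on $E_{Q,\bw}$. These are the IC complexes $\mathcal{P}(\lambda)$ of the $G_\bw$-orbits $\mathfrak{O}_{M_q(\lambda)}\subset E_{Q,\bw}$ with trivial local system; for Dynkin $Q$ the category $\mathcal{Q}_\bw$ is exactly the additive category they generate. The argument has two parts: first show that $i^*\mathcal{L}(\bv,\bw)$ is semisimple, then identify its simple constituents.

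\emph{Semisimplicity via the $\G_m$-contraction.} The $\G_m$-action $t\diamond(x_h,\eps_i,\eps_{\varrho i})=(x_h,t\eps_i,t\eps_{\varrho i})$ scales only the $\eps$-coordinates by positive weight. As $t\to 0$ it therefore contracts $\rep(\bw,\mathcal{S}^\imath)$ onto the fixed locus $E_{Q,\bw}$, through the linear projection $p:\rep(\bw,\mathcal{S}^\imath)\to E_{Q,\bw}$ that forgets the $\eps$'s. Note that setting $\eps=0$ preserves the nilpotent and commutativity relations.

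The stratification is $\G_m$-stable, and the $\G_m$-action commutes with $G_\bw$, so $\mathcal{L}(\bv,\bw)$ is $\G_m$-equivariant (already noted before the statement). We then apply Braden's hyperbolic localization, or more simply the contraction lemma: for a $\G_m$-equivariant complex $F$ on a variety contracted onto $E_{Q,\bw}$,
\[i^*F\cong p_*F.\]
Since $p$ is $G_\bw$-equivariant, the decomposition theorem applies: $p_*$ of a pure (weight-zero IC) complex is pure. Purity can be checked via the sheaf--function correspondence over $\F_q$ as in \S\ref{subsec:dCB compare}, or directly in characteristic zero by Saito's theory. Hence $i^*\mathcal{L}(\bv,\bw)$ is pure and $G_\bw$-equivariant on $E_{Q,\bw}$, and so it is a direct sum of shifted $G_\bw$-equivariant IC complexes on $E_{Q,\bw}$. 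This is the step I expect to be the main obstacle. Its key input is the contraction isomorphism $i^*\cong p_*$ on $\G_m$-equivariant objects, which requires monodromicity, and this holds because $\mathcal{L}(\bv,\bw)$ is genuinely $\G_m$-equivariant.

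\emph{Identification of constituents.} $E_{Q,\bw}$ has finitely many $G_\bw$-orbits, namely the $\mathfrak{O}_{M_q(\lambda)}$. All stabilizers $\Aut(M_q(\lambda))$ are connected, being open in a vector space. Hence every equivariant local system on an orbit is trivial, and the equivariant IC complexes are exactly the $\mathcal{P}(\lambda)$, which lie in $\mathcal{Q}_\bw$ by Lusztig's description for Dynkin quivers. Since $i^*$ is additive and commutes with shifts, it sends $\mathcal{K}_\bw$ into $\mathcal{Q}_\bw$, and this gives the functor $i^*:\mathcal{K}_\bw\to\mathcal{Q}_\bw$.
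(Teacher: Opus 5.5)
Your proposal follows the same route as the paper. The paper also uses the contraction isomorphism $i^*\mathcal{L}(\bv,\bw)\cong p_*\mathcal{L}(\bv,\bw)$ for the $\G_m$-equivariant complex, citing \cite[Theorem 2.10.3]{Ach21}. It then deduces purity, gets semisimplicity from \cite[5.4.5]{BBD}, and uses $G_\bw$-equivariance to land in $\mathcal{Q}_\bw$. Your connected-stabilizer argument spells out what the paper calls well known. You are also right that the result is a direct sum of \emph{shifted} IC complexes and in general not perverse. Already for $\mathfrak{sl}_2$, $i^*\mathcal{L}(1,2)$ is the stalk at $0$ of the IC sheaf of the cone $\{x^2=0\}\subset\mathfrak{gl}_2$, which is one-dimensional in degree $-2$. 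So the paper's phrase ``pure perverse sheaf'' is loose.

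The one genuine flaw is your justification of purity. The projection $p$ that forgets the $\eps$'s is not proper: its fibres are affine cones of $\eps$-data. So neither the decomposition theorem nor ``pushforward preserves purity'' applies to $p_*$. The $G_\bw$-equivariance of $p$ is beside the point, since for a non-proper map $p_*$ only preserves weights $\geq 0$.

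What gives purity is a weight squeeze built on the contraction isomorphism itself. By \cite[5.1.14]{BBD}, $i^*$ sends the pure weight-$0$ complex $\mathcal{L}(\bv,\bw)$ to a complex of weights $\leq 0$. Likewise $p_*$ sends it to a complex of weights $\geq 0$. The canonical map $p_*\mathcal{L}(\bv,\bw)\to p_*i_*i^*\mathcal{L}(\bv,\bw)=i^*\mathcal{L}(\bv,\bw)$, which uses $p\circ i=\mathrm{id}$, is an isomorphism compatible with the Frobenius structures. Hence $i^*\mathcal{L}(\bv,\bw)$ is pure of weight $0$. This is why the contraction isomorphism is the key input rather than a separate ingredient. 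With this replacing your purity sentence, the rest of your argument goes through and coincides with the paper's.
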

	
	\begin{proof}
		It is well-known that $\mathcal{Q}_\bw$ is generated by IC sheaves associated to the trivial local system on the $G_\bw$-orbits of $E_{Q,\bw}$. If we denote by $p:\rep(\bw,\mathcal{S}^\imath)\rightarrow E_{Q,\bw}$ the map defined by taking limit, then \cite[Theorem 2.10.3]{Ach21} implies that
		\[i^*\mathcal{L}(\bv,\bw)\cong p_*\mathcal{L}(\bv,\bw).\]
		In particular, $i^*\mathcal{L}(\bv,\bw)$ is a pure perverse sheaf on $E_{Q,\bw}$, hence semisimple \cite[5.4.5]{BBD}. Since it is also $G_\bw$-equivariant, it then lies in the category $\mathcal{Q}_\bw$.
	\end{proof}

	By using the same twisted coproducts, we can construct the dual Grothendieck ring for $E_{Q,\mathbf{w}}$, which will be isomorphic to $\hR^+$. For each $(\bv^+,\bw^+)\in({^\imath V^+},{^\imath W^+})$, we denote by $\mathcal{P}(\bv,\bw)$ the restriction of $\mathcal{L}(\bv,\bw)$, which is also the IC sheaf associated to the $G_\bw$-orbit $\mathcal{M}_0^{\reg}(\bv,\bw,\mathcal{R}^\imath)\subseteq E_{Q,\bw}$. Let $B(\mathbf{v},\mathbf{w})$ denote the dual basis of $\mathcal{P}(\mathbf{v},\mathbf{w})$. For any strongly $l$-dominant pair $(\bv,\bw)$ for $\mathcal{R}^\imath$, we can write
	\begin{equation}\label{eq:i^* of Lvw}
		i^*\mathcal{L}(\bv,\bw)=\sum_{\substack{(\bv',\bw)\in({^\imath V^+},{^\imath W^+})\\ \bv'\leq\bv}}a_{\bv',\bv}\mathcal{P}(\bv',\bw)
	\end{equation}
	where $a_{\bv',\bv}\in\N[v,v^{-1}]$.
	
	\begin{lemma}\label{lem:i^* induced map triangular}
		We have $a_{\bv,\bv}=1$ and $a_{\bv,\bv'}\in v^{-1}\N[v^{-1}]$ if $\bv\neq\bv'$.
	\end{lemma}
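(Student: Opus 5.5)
The plan is to read off the coefficients $a_{\bv',\bv}$ in \eqref{eq:i^* of Lvw} from stalks. Since the $\mathcal{P}(\bv',\bw)$ are IC sheaves of $G_\bw$-orbits in $E_{Q,\bw}$, each coefficient is determined by the stalks of $i^*\mathcal{L}(\bv,\bw)$ at points of the orbits $\mathcal{M}_0^{\reg}(\bv',\bw,\mathcal{R}^\imath)\cap E_{Q,\bw}$, compared against the stalks of the $\mathcal{P}$'s. I would split into the diagonal coefficient ($\bv'=\bv$) and the off-diagonal ones ($\bv'<\bv$).

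\textbf{Diagonal case.} Here $(\bv,\bw)\in({^\imath V^+},{^\imath W^+})$, so the stratum $\mathcal{M}_0^{\reg}(\bv,\bw,\mathcal{R}^\imath)$ meets $E_{Q,\bw}$ exactly in the orbit $\mathfrak{O}_X$. The module $X$ is the $kQ$-module from Lemma~\ref{lem:unique V^+W^+}, and $\mathfrak{O}_X$ is open dense in the support of $\mathcal{P}(\bv,\bw)$. On the open stratum, $\mathcal{L}(\bv,\bw)$ restricts to the constant sheaf shifted by $\dim\mathcal{M}_0^{\reg}(\bv,\bw,\mathcal{R}^\imath)$. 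Hence $i^*\mathcal{L}(\bv,\bw)$ restricted to $\mathfrak{O}_X$ is the constant sheaf shifted by $\dim\mathcal{M}_0^{\reg}$, while $\mathcal{P}(\bv,\bw)$ restricts to the constant sheaf shifted by $\dim\mathfrak{O}_X$.

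This leaves a discrepancy to resolve. Lemma~\ref{lem:pullback to Q_w} shows $i^*\mathcal{L}\cong p_*\mathcal{L}$ is perverse, so the stratum closure must sit inside the attracting locus of $E_{Q,\bw}$ under the $\G_m$-action. I would use the hyperbolic-localization description $p_*\mathcal{L}$ with the contracting map $p$ (taking the limit $t\to0$). The fibers of $p$ over $\mathfrak{O}_X$ intersected with the open stratum are affine spaces of dimension $\dim\mathcal{M}_0^{\reg}-\dim\mathfrak{O}_X$, and these are contracted. This shift accounts for the difference and gives $a_{\bv,\bv}=1$.

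\textbf{Off-diagonal case.} For $\bv'<\bv$, I would use perversity of $i^*\mathcal{L}(\bv,\bw)$ together with the IC stalk condition for $\mathcal{L}(\bv,\bw)$ on $\mathcal{M}_0^{\reg}(\bv',\bw)$. By \cite[2.1.9]{BBD}, the stalks there live in degrees $<-\dim\mathcal{M}_0^{\reg}(\bv',\bw)$. After the dimension correction from the diagonal step, this forces the stalks of $i^*\mathcal{L}$ on $\mathfrak{O}_{X'}$ into degrees $<-\dim\mathfrak{O}_{X'}$. The decomposition $i^*\mathcal{L}=\sum a_{\bv',\bv}\mathcal{P}(\bv',\bw)$ is a direct sum of shifted perverse sheaves in which the total is perverse. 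Combined with the strict stalk bound, this means no unshifted copy of $\mathcal{P}(\bv',\bw)$ can occur for $\bv'<\bv$. More precisely, because $i^*\mathcal{L}$ is perverse it is a sum of perverse sheaves with no shifts, so each $a_{\bv',\bv}$ is a constant in $\N$, and the stalk condition then forces that constant to be $0$.

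\textbf{Expected obstacle.} This last conclusion would give the stronger statement $a_{\bv',\bv}=0$. Since the lemma only claims $v^{-1}\N[v^{-1}]$, I expect the real subtlety is that $i^*\mathcal{L}$ is perverse only up to a global shift. I would therefore normalize via purity and weights instead: Gabber purity \cite[5.3.4]{BBD}, plus the fact that $i^*$ of a pure weight-zero complex on the $\G_m$-equivariant variety is pure (by \cite[Theorem 2.10.3]{Ach21}). Comparing Frobenius weights exactly as in Theorem~\ref{thm:IC coefficient is zeta_vv} then places off-diagonal coefficients in negative degrees, i.e. in $v^{-1}\N[v^{-1}]$, with positivity coming from semisimplicity. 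I expect the main obstacle to be this dimension and shift bookkeeping between $\dim\mathcal{M}_0^{\reg}$ and $\dim\mathfrak{O}$ via the contraction $p$.
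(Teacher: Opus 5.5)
Your proposal has two genuine gaps: the off-diagonal step relies on a false premise, and the diagonal step relies on a mechanism that does not exist.

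\textbf{Off-diagonal step.} Lemma~\ref{lem:pullback to Q_w} only yields that $i^*\mathcal{L}(\bv,\bw)$ is pure, hence semisimple: a direct sum of shifts $\mathcal{P}(\bv',\bw)[d]$, which is what gives $a_{\bv',\bv}\in\N[v,v^{-1}]$. It is \emph{not} perverse, despite the word used in that proof. Already for split $\mathfrak{sl}_2$ with $w=2$ this fails. There $E_{Q,2}$ is a point, and the fibre of $\pi\colon \mathcal{M}(1,2)\cong T^*\mathbb{P}^1\to\mathcal{M}_0(2)$ over $0$ is $\mathbb{P}^1$. Since $\pi(1,2)=\mathcal{L}(0,2)\oplus\mathcal{L}(1,2)$, one gets $i^*\mathcal{L}(1,2)=\underline{\C}[2]=v^{-2}\mathcal{P}(0,2)$, so $a_{0,1}=v^{-2}\neq 0$. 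Thus your intermediate conclusion $a_{\bv',\bv}=0$ is false. The rescue ``perverse up to a global shift'' fails as well: for $w=4$ the fibre is $\mathbb{P}^3$ and $a_{0,1}=v^{-2}+v^{-4}+v^{-6}$.

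The weight comparison you gesture at is neither carried out nor needed. Weights only supply semisimplicity; the degree bound has to come from the IC support condition. The paper uses exactly these two ingredients. Take $x\in\mathcal{M}_0^{\reg}(\bv',\bw)$ with $\bv'<\bv$. The stalk of $i^*\mathcal{L}(\bv,\bw)$ at $x$ is that of $\mathcal{L}(\bv,\bw)$, so it lives in degrees $<-\dim\mathcal{M}_0^{\reg}(\bv',\bw)$ (BBD 2.1.9). Each summand $\mathcal{P}(\bv',\bw)[d]$ in \eqref{eq:i^* of Lvw} contributes a nonzero stalk in degree $-\dim\mathcal{M}_0^{\reg}(\bv',\bw)-d$. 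Direct summands cannot cancel, so every such $d$ satisfies $d\geq1$. This rules out all copies with $d\le 0$ at once, including positive powers of $v$, which your ``no unshifted copy'' remark does not address.

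\textbf{Diagonal step.} There is no discrepancy to resolve. When $(\bv,\bw)\in({^\imath V^+},{^\imath W^+})$, the stratum $\mathcal{M}_0^{\reg}(\bv,\bw,\mathcal{R}^\imath)$ \emph{is} the orbit $\mathfrak{O}_X$. By Lemma~\ref{lem:strata M_0^reg char}, any $M$ in it has $\res_\BH(M)\cong\res_\BH(X)$, which is semisimple. So every $\varepsilon_i$ acts by zero on $M$, i.e.\ $M\in\mod(kQ)$. Then $M\cong X$ in $\mathcal{D}_{sg}(\Lambda^\imath)$ forces $M\cong X$, by the uniqueness used in that lemma. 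Hence $\overline{\mathfrak{O}_X}\subseteq E_{Q,\bw}$, $\mathcal{L}(\bv,\bw)=i_*\mathcal{P}(\bv,\bw)$, and $i^*\mathcal{L}(\bv,\bw)=\mathcal{P}(\bv,\bw)$. This is what the paper means by ``from the definition of $\mathcal{P}(\bv,\bw)$''.

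Your proposed fix would not have worked had the gap been real. The functor $i^*$ does not change stalks at points of $E_{Q,\bw}$, and the isomorphism $i^*\cong p_*$ only recomputes those same stalks. The cohomology of contractible attracting fibres introduces no shift. You would therefore have obtained $a_{\bv,\bv}=v^{\dim\mathfrak{O}_X-\dim\mathcal{M}_0^{\reg}(\bv,\bw)}$, not $1$.
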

	\begin{proof}
		Let $(\bv,\bw)\in ({^\imath V^+},{^\imath W^+})$ be a strongly $l$-dominant pair for $\mathcal{R}^\imath$. From the definition of $\mathcal{P}(\bv,\bw)$ we know that $a_{\bv,\bv}=1$. Now assume that $\bv\neq\bv'$ and let $x_{\bv,\bw}$ be a generic point in $\mathcal{M}_0^{\reg}(\bv,\bw,\mathcal{R}^\imath)$ (viewed as a subset of $E_{Q,\bw}$). Then the decomposition \eqref{eq:i^* of Lvw} for $i^*\mathcal{L}(\bv',\bw)$ tells us
		\begin{align*}
			\mathcal{L}(\bv',\bw)_{x_{\bv,\bw}}&=\sum_{\substack{(\bv'',\bw)\in({^\imath V^+},{^\imath W^+})\\ \bv\leq\bv''\leq \bv'}}a_{\bv'',\bv'}\mathcal{P}(\bv'',\bw)_{x_{\bv,\bw}}\\
			&=a_{\bv,\bv'}\mathcal{P}(\bv,\bw)_{x_{\bv,\bw}}+\sum_{\substack{(\bv'',\bw)\in({^\imath V^+},{^\imath W^+})\\ \bv<\bv''\leq\bv'}}a_{\bv'',\bv'}\mathcal{P}(\bv'',\bw)_{x_{\bv,\bw}}.
		\end{align*}
		By \cite[2.1.9]{BBD} the complex $\mathcal{L}(\bv',\bw)_{x_{\bv,\bw}}$ is concentrated at degrees $<-\dim\mathcal{M}_0^{\reg}(\bv,\bw,\mathcal{R}^\imath)$, while $\mathcal{P}(\bv,\bw)_{x_{\bv,\bw}}$ is concentrated at degree $-\dim\mathcal{M}_0^{\reg}(\bv,\bw,\mathcal{R}^\imath)$. Since each $a_{\bv'',\bv'}$ have positive coefficients, this implies $a_{\bv,\bv'}\in v^{-1}\N[v^{-1}]$.
	\end{proof}

	By applying the isomorphisms in Lemma~\ref{lem:iHA to bfM} and Proposition~\ref{prop:iHA dual trace map} to $\widetilde{\mathcal{H}}(Q)$ and $\widetilde{\mathcal{H}}(Q,\varrho)$ (see Remark~\ref{rmk:F^*(Q)}, the generic version of $\mathcal{F}^*(Q)$, denoted by $\mathbf{F}^*(Q)$, is defined in the same way), we obtain a commutative diagram
	\[\begin{tikzcd}
		\hR^+\otimes_{\mathcal{Z}}\mathbf{A}\ar[r,hook,"\iota"]&\tRi\otimes_{\mathcal{Z}}\mathbf{A}\\
		\mathbf{F}^*(Q)\ar[r,hook,"\iota"]\ar[u,"\tilde{\chi}^*"]&\widetilde{\mathbf{M}}^*(Q,\varrho)\ar[u,swap,"\tilde{\chi}^*"]\\
		\widetilde{\mathcal{H}}(Q)\otimes_{\mathcal{Z}}\mathbf{A}\ar[r,hook,"\iota"]\ar[u,"\Upsilon^+"]\ar[uu,bend left=60pt,"\Omega^+"]&\widetilde{\mathcal{H}}(Q,\varrho)\otimes_{\mathcal{Z}}\mathbf{A}\ar[u,swap,"\widetilde{\Upsilon}"]\ar[uu,bend right=60pt,swap,"\widetilde{\Omega}"]
	\end{tikzcd}\]
	Here the embedding in the second row is induced by $i^*$, and that in the third row is the natural inclusion
	\begin{equation}\label{eq:inclusion of HA to iHA}
		\iota:\widetilde{\ch}(Q)\hookrightarrow \widehat{\ch}(Q,\varrho),\quad \fu_\lambda\mapsto\fu_\lambda,\forall\lambda\in\mathfrak{P}.
	\end{equation}
	From the definition of $\tilde{\chi}^*$, we see that the embedding in the first row is also induced by $i^*$.
	
	Recall that Verdier duality defines an anti-automorphism $\bar{\cdot}$ on $\tRi$ by $\ov{L(\bv,\bw)}=L(\bv,\bw)$, $\ov{v}=v^{-1}$. Then using Lemma~\ref{lem: multiply by L^0}, we can define an action of the subalgebra generated by $L(\bv^0,\bw^0)$, $(\bv^0,\bw^0)\in ({^\imath V^0},{^\imath W^0})$ on $\tRi$, characterized by
	\begin{equation}\label{eq:diamond for L def}
		L(\bv^0,\bw^0)\diamond L(\bv^+,\bw^+) = L(\bv^0+\bv^+,\bw^0+\bw^+) = \ov{L(\bv^0,\bw^0)\diamond L(\bv^+,\bw^+)}
	\end{equation}
	where $(\bv^+,\bw^+)\in ({^\imath V^+},{^\imath W^+})$. From this we can also see that $\widetilde{\Omega}$ commutes with $\diamond$-action: for any $\fu_\lambda\in\widetilde{\ch}(Q,\varrho)$ we have (notations as in Lemma~\ref{lem:Hall basis map to M_vw}, note that $\widetilde{\Omega}=\tilde{\chi}^*\circ\widetilde{\Upsilon}$)
	\[\widetilde{\Omega}(\K_\alpha\diamond \fu_\lambda)=L(\bv_\alpha,\bw_\alpha)\diamond \widetilde{\Omega}(\fu_\lambda).\]
	
	Let us recall the Fourier transforms of $\imath$Hall algebras established in \cite{LP25}.
	\begin{theorem}[{\cite[Theorem {\bf B}]{LP25}}]
		\label{thm:FThall}
		Let $(Q,\btau)$ be a Dynkin $\imath$quiver, and $(Q',\btau)$ be the Dynkin $\imath$quiver constructed from $(Q,\btau)$ by reversing the arrows in a subset $\ce$ of $Q_1$ satisfying $\btau(\ce)=\ce$. Then the Fourier transform $\Phi_{Q',Q}:\widehat{\ch}(\bfk Q,\btau)\rightarrow \widehat{\ch}(\bfk Q',\btau)$ is an isomorphism of $\C$-algebras with the inverse given by $\ov{\Phi}_{Q,Q'}:\widehat{\ch}(\bfk Q',\btau)\rightarrow \widehat{\ch}(\bfk Q,\btau)$.
	\end{theorem}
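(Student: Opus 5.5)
The plan is to identify $\Phi_{Q',Q}$ with the composite of the two Hall-algebra realizations of the same $\imath$quantum group. By Lemma~\ref{lem:Hall-iQG}, the isomorphism $\widehat{\psi}$ exists for both orientations:
\[\widehat{\psi}_Q:\hUi|_{v=\sqq}\stackrel{\sim}{\longrightarrow}\widehat{\ch}(\bfk Q,\btau),\qquad \widehat{\psi}_{Q'}:\hUi|_{v=\sqq}\stackrel{\sim}{\longrightarrow}\widehat{\ch}(\bfk Q',\btau).\]
It therefore suffices to prove $\Phi_{Q',Q}=\widehat{\psi}_{Q'}\circ\widehat{\psi}_Q^{-1}$, and likewise $\ov{\Phi}_{Q,Q'}=\widehat{\psi}_{Q}\circ\widehat{\psi}_{Q'}^{-1}$, after extending scalars to $\C$. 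Both claims then follow at once: $\Phi_{Q',Q}$ is an algebra isomorphism, and $\ov{\Phi}_{Q,Q'}$ is its inverse.

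First I recall the linear-algebraic definition of $\Phi_{Q',Q}$ on the Hall basis $\{[X]*[\K_\alpha]\}$ of Lemma~\ref{basis-iHall}. Write $E_{Q,\bw}=E^{\circ}_\bw\times\prod_{h\in\ce}\Hom(W_{\ts(h)},W_{\tt(h)})$. On functions on $E_{Q,\bw}$, take the Fourier transform in the $\ce$-coordinates with respect to a fixed additive character, normalized by a power of $\sqq$ (as in Sevenhant--Van den Bergh \cite{SV99}). Then set
\[\Phi_{Q',Q}([X]*[\K_\alpha])=\mathrm{FT}([X])*[\K_\alpha],\]
so that the $\K$-part is untouched. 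The inverse candidate $\ov{\Phi}_{Q,Q'}$ is the same construction with the conjugate character. Fourier inversion on each fiber then gives $\ov{\Phi}_{Q,Q'}\circ\Phi_{Q',Q}=\Id$ and $\Phi_{Q',Q}\circ\ov{\Phi}_{Q,Q'}=\Id$ directly at the level of vector spaces. I use this rather than the algebra-level argument for the inverse.

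Next I check the generators. Since $S_i$ has no arrows, $\mathrm{FT}([S_i])=[S_i]$; also $\Phi_{Q',Q}([\K_i])=[\K_i]$. Hence $\Phi_{Q',Q}\circ\widehat{\psi}_Q$ and $\widehat{\psi}_{Q'}$ agree on $B_i$ and $\tk_i$. To conclude that they agree everywhere, I prove right-compatibility with generators:
\[\Phi_{Q',Q}(x*[S_i])=\Phi_{Q',Q}(x)*[S_i],\qquad \Phi_{Q',Q}(x*[\K_i])=\Phi_{Q',Q}(x)*[\K_i]\quad\text{for all } x.\]
Induction on word length in the generators then gives $\Phi_{Q',Q}\circ\widehat{\psi}_Q=\widehat{\psi}_{Q'}$. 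Compatibility with $[\K_i]$ is immediate, because $[\K_i]$ is central up to the twist in Lemma~\ref{basis-iHall}, and that twist depends only on dimension vectors, which the Fourier transform preserves. For $[S_i]$, take $x=[X]*[\K_\alpha]$ and split $[X]*[S_i]$ by the decomposition
\[\Ext^1_{\Lambda^\imath}(X,S_i)\cong\Ext^1_{\bfk Q}(X,S_i)\oplus\Hom_{\bfk Q}(X,\btau S_i)\]
(cf.\ \cite[Lemma 2.9]{LR24}). The first summand gives the ordinary twisted Hall product in $\widetilde{\ch}(\bfk Q)$. Its compatibility with the Fourier transform is the Sevenhant--Van den Bergh theorem. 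The twist $\sqq^{\langle-,-\rangle_Q}$ versus $\sqq^{\langle-,-\rangle_{Q'}}$ depends only on dimension vectors, since the Euler forms of $Q$ and $Q'$ agree there, so it causes no trouble.

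The main obstacle is the second summand. These extensions leave $\mod(\bfk Q)$ and are rewritten as $[X'\oplus\K_\beta]$ in $\widehat{\ch}(\bfk Q,\btau)$. I must show that the function $X\mapsto\sum_{X'}|\{\text{elements of }\Hom(X,\btau S_i)\text{ producing }X'\}|\,[X']$ is intertwined by the Fourier transform. My first attempt is to realize this term geometrically: it is a convolution over pairs $(X,f)$ with $f\in\Hom(X,\btau S_i)$, whose kernel is cut out by equations linear in the arrow maps $x_h$. Because $\btau(\ce)=\ce$, the commutation relations $\varepsilon_i\alpha=\btau(\alpha)\varepsilon_j$ pair a reversed arrow only with a reversed arrow. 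I would then apply the standard fact that Fourier transform turns pullback–pushforward along linear subbundles into the analogous operation for the annihilator bundle, and compute the resulting power of $\sqq$. If this direct route becomes unwieldy, the fallback is a reduction to the case where $\ce$ is a single $\btau$-orbit of a sink or source. There $\Phi$ can be compared with BGP reflection functors, and I would verify the identity only for $X$ ranging over a generating set.
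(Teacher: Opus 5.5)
The paper gives no proof of this statement; it quotes it from \cite[Theorem B]{LP25}. So I assess your argument on its own terms, and it has a genuine gap at its central step.

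The reduction is sound. $\Phi_{Q',Q}$ fixes $1$, $[S_i]$ and $[\K_i]$, and right-compatibility with $[\K_i]$ is immediate from the definition on the Hall basis. Fourier inversion gives the inverse. Right-compatibility with $[S_i]$ on every basis element $[X]*[\K_\alpha]$ would then give $\Phi_{Q',Q}=\widehat{\psi}_{Q'}\circ\widehat{\psi}_Q^{-1}$.

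But that right-compatibility is essentially the whole theorem, and you establish only its $\Ext^1_{\bfk Q}(X,S_i)$-part. The $\Hom_{\bfk Q}(X,\btau S_i)$-part, which you rightly call the main obstacle, is left as a ``first attempt'' and a ``fallback''; neither is carried out. The missing idea is to identify this term with something to which known Fourier-transform results apply.

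Take $0\neq f\colon X\twoheadrightarrow S_{\btau i}$. The middle term $L$ of an extension with $\varepsilon$-component $f$ is isomorphic to $\ker f$ in $\cd_{sg}(\Lambda^\imath)$, whatever its $\Ext^1_{\bfk Q}$-component is. One way to check this: in $\cd^b(\bfk Q)/\Sigma\btau$ the connecting morphism has the form $f\circ(1+\psi)$ with $\psi^2=0$, because $\Ext^1_{\bfk Q}(X,\btau X)\to\Ext^1_{\bfk Q}(X,S_i)$ is onto. Hence $[L]$ is a power of $\sqq$ times $[\K_{\btau i}]*[\ker f]$. Also $|\Ext^1_{\bfk Q}(X,S_i)|/|\Hom_{\bfk Q}(X,S_i)|=q^{-\langle X,S_i\rangle_Q}$. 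So the extra term equals $c_Q(\dimv X)\,[\K_{\btau i}]*T_Q([X])$, where $T_Q([X])=\sum_{U\subseteq X,\ X/U\cong S_{\btau i}}[U]$.

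Up to scalars depending on dimension vectors, $T_Q$ is the adjoint, for Green's form, of left multiplication by $[S_{\btau i}]$; equivalently it is a component of Green's coproduct. Multiplicativity of the Sevenhant--Van den Bergh transform alone does not give $\mathrm{FT}\circ T_Q\propto T_{Q'}\circ\mathrm{FT}$. You also need that the Fourier transform respects Green's form or Green's coproduct. For the form, Plancherel suffices, since orbit-constant functions on a tree quiver are invariant under $x_h\mapsto -x_h$. You then have to check that the resulting powers of $\sqq$ match $c_Q$, $c_{Q'}$ and the normalization already forced by the $\bfk Q$-part. That computation is the proof, and it is absent.

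There are two further problems.

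First, the Euler forms of $Q$ and $Q'$ do \emph{not} agree on dimension vectors; only the symmetrized forms do. So $\sqq^{\langle-,-\rangle_Q}$ and $\sqq^{\langle-,-\rangle_{Q'}}$ really differ. This is harmless in the $\bfk Q$-part only because the Sevenhant--Van den Bergh normalization absorbs it. It is exactly the discrepancy you must track in the extra term.

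Second, the fallback is circular as stated. The identity $\Phi_{Q',Q}(x*[S_i])=\Phi_{Q',Q}(x)*[S_i]$ is linear in $x$, so it must be checked on a spanning set such as all $[X]*[\K_\alpha]$. Checking it on a generating set and extending presupposes the multiplicativity you are proving. Moreover, the isomorphisms induced by reflection functors realize braid group symmetries rather than the identity of $\hUi$, so they cannot simply be matched with $\Phi_{Q',Q}$.
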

	
	The Fourier transform $\Phi_{Q',Q}:\widehat{\ch}(\bfk Q,\btau)\rightarrow \widehat{\ch}(\bfk Q',\btau)$ induces an algebra isomorphism  
	$$\Phi:=\Phi_{Q',Q}:\widetilde{\ch}(\bfk Q,\btau)\longrightarrow \widetilde{\ch}(\bfk Q',\btau),$$
	which is called the Fourier transform of $\imath$Hall algebras.

	Let $\Phi_{Q',Q}:\widetilde{\ch}(Q)\rightarrow \widetilde{\ch}(Q')$ be the Fourier transform constructed in \cite{SV99}. From the construction of the Fourier transform $\Phi:\widetilde{\ch}(\bfk Q,\btau)\longrightarrow \widetilde{\ch}(\bfk Q',\btau)$ in \cite[\S5]{LP25}, we have the following commutative diagram
	\[\begin{tikzcd}
		\widetilde{\ch}(Q')\otimes_{\mathcal{Z}}\mathbf{A}\ar[r,hook]&\widetilde{\ch}(Q',\varrho)\otimes_{\mathcal{Z}}\mathbf{A}\\
		\widetilde{\ch}(Q)\otimes_{\mathcal{Z}}\mathbf{A}\ar[r,hook]\ar[u,"\Phi_{Q',Q}"]&\widetilde{\ch}(Q,\varrho)\otimes_{\mathcal{Z}}\mathbf{A}\ar[u,swap,"\Phi"]
	\end{tikzcd}\]

	Now consider a different orientation $Q'$ as in Theorem \ref{thm:FThall}, and let $'\hR^+$, $'\tRi$ be the corresponding dual Grothendieck rings for $(Q',\varrho)$. Recall that Fourier-Deligne transform on $E_{Q,\bw}$ preserves the category $\mathcal{Q}_\bw$ (cf. \cite{Lus93}, see also \cite{Lau87} for the basic properties of Fourier-Deligne transform). So it defines a Fourier transform $\Psi_{Q',Q}:\hR^+\rightarrow {'\hR^+}$ preserving the dual canonical basis of both sides. Moreover, there is a commutative diagram (cf. \cite[Théorème 1.2.1.2]{Lau87})
	\[\begin{tikzcd}
		\hR^+\otimes_{\mathcal{Z}}\mathbf{A}\ar[r,"\Psi_{Q',Q}"]&{'\hR^+}\otimes_{\mathcal{Z}}\mathbf{A}\\
		\widetilde{\ch}(Q)\otimes_{\mathcal{Z}}\mathbf{A}\ar[r,"\Phi_{Q',Q}"]\ar[u,"\Omega^+"]&\widetilde{\ch}(Q')\otimes_{\mathcal{Z}}\mathbf{A}\ar[u,swap,"\Omega^+"]
	\end{tikzcd}\]
	
	We can therefore extend $\Psi_{Q',Q}$ uniquely to an algebra isomorphism $\Psi=\Psi_{Q',Q}:\tRi\rightarrow {'\tRi}$ such that there is a commutative diagram
	\[\begin{tikzcd}[row sep=4mm,column sep=4mm]
		&{'\hR^+}\otimes_{\mathcal{Z}}\mathbf{A}\ar[rr,hook]&&{'\tRi}\otimes_{\mathcal{Z}}\mathbf{A}\\
		\hR^+\otimes_{\mathcal{Z}}\mathbf{A}\ar[ru,"\Psi_{Q',Q}"]&&\tRi\otimes_{\mathcal{Z}}\mathbf{A}\ar[ru,dashed,"\Psi"]\\
		&\widetilde{\mathcal{H}}(Q')\otimes_{\mathcal{Z}}\mathbf{A}\ar[rr,hook]\ar[uu]&&\widetilde{\mathcal{H}}(Q',\varrho)\otimes_{\mathcal{Z}}\mathbf{A}\ar[uu]\\
		\widetilde{\mathcal{H}}(Q)\otimes_{\mathcal{Z}}\mathbf{A}\ar[rr,hook]\ar[uu]\ar[ru,"\Phi_{Q',Q}"]&&\widetilde{\mathcal{H}}(Q,\varrho)\otimes_{\mathcal{Z}}\mathbf{A}\ar[ru,"\Phi"]
		\arrow[from=2-1,to=2-3,hook,crossing over]
		\arrow[from=4-3,to=2-3,crossing over]
	\end{tikzcd}\]
	
	\begin{theorem}\label{FT of CB}
		The map $\Psi:\tRi\rightarrow {'\tRi}$ preserves dual canonical bases.
	\end{theorem}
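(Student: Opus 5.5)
We will show that $\Psi$ sends each $L(\bv,\bw)$ to the corresponding element of the dual IC basis of ${'\tRi}$. The strategy is to characterize $L(\bv,\bw)$ by Lusztig's Lemma inside $\tRi$, using the images of the dual canonical basis of $\hR^+$ under $\iota$, and then check that $\Psi$ preserves every ingredient of that characterization.

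\emph{Reduction to the positive part.} By Lemma~\ref{lem:DP3}, every strongly $l$-dominant pair decomposes as $(\bv^0,\bw^0)+(\bv^+,\bw^+)$. By \eqref{eq:diamond for L def} this gives $L(\bv,\bw)=L(\bv^0,\bw^0)\diamond L(\bv^+,\bw^+)$. The map $\Psi$ is compatible with $\diamond$, because $\Phi$ fixes $\K_\alpha$ up to the identification of $\K_i$ for $Q$ and $Q'$, and $\widetilde{\Omega}$ commutes with $\diamond$. It also sends $L(\bv^0,\bw^0)$ to $L(\bv^0,\bw^0)$, since these correspond to $\K_\alpha$ and the $\K_\alpha$ are fixed by the Fourier transform of \cite{LP25}. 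It therefore suffices to treat $L(\bv^+,\bw^+)$ with $(\bv^+,\bw^+)\in({^\imath V^+},{^\imath W^+})$.

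\emph{Characterization via $\iota$.} Dualizing \eqref{eq:i^* of Lvw} gives
\[
\iota(B(\bv',\bw))=\sum_{\bv} a_{\bv',\bv}\,L(\bv,\bw),
\]
where $\bv$ runs over all strongly $l$-dominant pairs with the same $\bw$. By Lemma~\ref{lem:i^* induced map triangular} we have $a_{\bv',\bv'}=1$ and $a_{\bv',\bv}\in v^{-1}\N[v^{-1}]$ otherwise. For a general $\bv$ we write $(\bv,\bw)=(\bv^0,\bw^0)+(\bv^+,\bw^+)$. Using the $\diamond$-action we consider the elements $L(\bv^0,\bw^0)\diamond\iota(B(\bv^+,\bw^+))$. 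These are bar-invariant, since $\iota$ is induced by $i^*$, which commutes with Verdier duality up to the appropriate normalization, and $B$ is bar-invariant. They are also unitriangular with respect to $\{L(\bv,\bw)\}$ in the order $\prec$ of Theorem~\ref{prop:Hall-sheaf-integ}, with off-diagonal coefficients in $v^{-1}\Z[v^{-1}]$. Inverting this unitriangular matrix, Lusztig's Lemma shows that $L(\bv,\bw)$ is the unique bar-invariant element in
\[
L(\bv^0,\bw^0)\diamond\iota(B(\bv^+,\bw^+))+\sum v^{-1}\Z[v^{-1}]\,L(\bv'^0,\bw'^0)\diamond\iota(B(\bv'^+,\bw'^+)).
\]
Equivalently, $L(\bv,\bw)$ is the bar-invariant element congruent to $L(\bv^0,\bw^0)\diamond\iota(B(\bv^+,\bw^+))$ modulo $v^{-1}\Z[v^{-1}]$-combinations of the family $\{L(\bv'^0,\bw'^0)\diamond\iota(B(\bv'^+,\bw'^+))\}$. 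This is the main point to get right: the triangularity must hold in an order on which Lusztig's Lemma applies, and the bar-invariance of $\iota(B)$ has to be checked using the fact that $i^*$ commutes with duality on $\G_m$-equivariant sheaves (hyperbolic localization \cite{Ach21}).

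\emph{Transport by $\Psi$.} From the commutative cube, $\Psi\circ\iota=\iota'\circ\Psi_{Q',Q}$, and $\Psi_{Q',Q}$ maps $B(\bv,\bw)$ to $B'(\bv',\bw)$, because the Fourier–Deligne transform preserves $\mathcal{Q}_\bw$ and its simple objects. We also need $\Psi$ to commute with the bar involution. This holds because $\Phi$ commutes with bar: \cite{LP25} gives that its inverse is $\ov{\Phi}_{Q,Q'}$, and bar-compatibility on generators follows directly. Consequently $\Psi$ maps the standard family for $Q$ to the standard family for $Q'$ and preserves bar-invariance. By uniqueness in Lusztig's Lemma, $\Psi(L(\bv,\bw))$ is a dual IC basis element of ${'\tRi}$.

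Finally, transferring through $\widetilde{\Omega}$ and Corollary~\ref{coro:Upsilon maps dCB} yields the statement for $\Phi$ on $\widetilde{\ch}(Q,\varrho)$.
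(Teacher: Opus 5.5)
Your strategy is the paper's. You characterize $L(\bv,\bw)$ by Lusztig's Lemma as the unique bar-invariant element of $L(\bv^0,\bw^0)\diamond\iota(B(\bv^+,\bw^+))+\sum v^{-1}\Z[v^{-1}]\,L(\tilde{\bv}^0,\tilde{\bw}^0)\diamond\iota(B(\tilde{\bv}^+,\tilde{\bw}^+))$. You then use that $\Psi$ is bar-equivariant, fixes the $L(\bv^0,\bw^0)$, commutes with $\diamond$, and satisfies $\Psi\circ\iota=\iota\circ\Psi_{Q',Q}$.

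However, the claim you single out as "the main point to get right" is false. You assert that the elements $L(\bv^0,\bw^0)\diamond\iota(B(\bv^+,\bw^+))$ are bar-invariant because $i^*$ commutes with Verdier duality. In fact $\mathbb{D}\circ i^*\cong i^!\circ\mathbb{D}$, and for the contracting $\mathbb{G}_m$-action one has $i^*\cong p_*$ but $i^!\cong p_!$. These differ because the limit map $p$ is not proper. The contraction/hyperbolic-localization results give purity of $i^*\mathcal{L}(\bv,\bw)$, not self-duality.

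Your own setup rules the claim out. An element that is bar-invariant and unitriangular with respect to the bar-invariant basis $\{L(\bv,\bw)\}$, with off-diagonal coefficients in $v^{-1}\Z[v^{-1}]$, must equal the corresponding $L(\bv,\bw)$. That would force every off-diagonal $a_{\bv,\bv'}$ in Lemma~\ref{lem:i^* induced map triangular} to vanish, and they do not. In the rank-one split case, $\mathcal{L}(1,2)$ is the shifted constant sheaf on the rationally smooth cone $\{x\in\mathfrak{gl}_2\mid x^2=0\}$. Hence $i^*\mathcal{L}(1,2)=v^{-2}\mathcal{P}(0,2)$, and $\iota(B(0,2))=L(0,2)+v^{-2}L(1,2)=B^2-(1-v^{-2})\K$, which is not bar-invariant.

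Fortunately nothing in your argument depends on this claim. Uniqueness in the characterization only needs the family to be unitriangular with respect to the bar-invariant basis $\{L(\bv,\bw)\}$, with off-diagonal entries in $v^{-1}\Z[v^{-1}]$. Then any bar-invariant element of the $v^{-1}\Z[v^{-1}]$-span of the family is $0$. The transport step only needs the properties of $\Psi$ listed above.

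If you delete the bar-invariance assertion and the hyperbolic-localization remark, your argument is exactly the paper's proof. Your preliminary reduction to $(\bv^+,\bw^+)$ is harmless but redundant. Even $L(\bv^+,\bw^+)$ is characterized using family members with nonzero $\bv^0$-part, so the general case has to be handled anyway.
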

	\begin{proof}
		From \eqref{eq:i^* of Lvw} we see that for a strongly $l$-dominant pair $(\bv,\bw)\in({^\imath V^+},{^\imath W^+})$, 
		\[\iota(B(\bv,\bw))=\sum_{\bv\leq\bv'}a_{\bv,\bv'}L(\bv',\bw)\]
		with $a_{\bv,\bv'}\in\N[v,v^{-1}]$. Combined with \eqref{eq:diamond for L def}, this implies for any strongly $l$-dominant pair $(\bv,\bw)$, $L(\mathbf{v},\mathbf{w})$ can be expressed as 
		\begin{equation}\label{FT of CB-1}
			L(\mathbf{v},\mathbf{w})\in L(\mathbf{v}^0,\mathbf{w}^0)\diamond \iota(B(\mathbf{v}^+,\mathbf{w}^+))+\sum_{\substack{\mathbf{w}=\tilde{\mathbf{w}}^0+\tilde{\mathbf{w}}^+\\(\mathbf{v}^0,\mathbf{w}^0)\prec(\tilde{\mathbf{v}}^0,\tilde{\mathbf{w}}^0)}} v^{-1}\Z[v^{-1}]L(\tilde{\mathbf{v}}^0,\tilde{\mathbf{w}}^0) \diamond \iota(B(\tilde{\mathbf{v}}^+,\tilde{\mathbf{w}}^+))
		\end{equation}
		where we have used the decomposition $(\bv,\bw)=(\bv^0,\bw^0)+(\bv^+,\bw^+)$ from Lemma~\ref{lem:DP3}. Using Lusztig's Lemma \cite[Theorem 1.1]{BZ14}, it is easy to see that $L(\mathbf{v},\mathbf{w})$ is the unique bar-invariant element satisfying this property. 
		
		Since $\Psi$ commutes with the bar-involution, $\Psi(L(\mathbf{v},\mathbf{w}))$ is bar-invariant. Also, (\ref{FT of CB-1}) gives
		\[\Psi(L(\mathbf{v},\mathbf{w}))\in L(\mathbf{v}^0,\mathbf{w}^0)\diamond \iota(B'(\mathbf{v},\mathbf{w}))+\sum v^{-1}\Z[v^{-1}]L(\tilde{\mathbf{v}}^0,\tilde{\mathbf{w}}^0)\diamond \iota(B'(\tilde{\mathbf{v}}^+,\tilde{\mathbf{w}}^+)),\]
		where we have set $\Psi_{Q',Q}(B(\mathbf{v},\mathbf{w}))=B'(\mathbf{v},\mathbf{w})$. Since $\Psi$ preserves the dimension vector, it follows from the above characterization that $\Psi(L(\mathbf{v},\mathbf{w}))$ is an element of the dual canonical basis in ${'\tRi}$.
	\end{proof}

	\subsection{Dual canonical bases of $\imath$quantum groups}
	\label{subsec:dCB-iQG}
	
	From Theorem \ref{FT of CB}, we know that the dual canonical basis $\{L(\bv,\bw)\mid \text{$(\bv,\bw)$ is strongly $l$-dominant}\}$ of $\hRi$ (and also $\hRiZ$) does not depend on the orientation of the underlying Dynkin diagram. The following is a direct consequence of Corollary~\ref{coro:Upsilon maps dCB} and Proposition~\ref{prop:iHA dual trace map}.
	
	\begin{proposition}\label{iHA dCB compare}
		For $\alpha\in\Z^\I$ and $\lambda\in\mathfrak{P}$, the dual canonical basis $\K_\alpha\diamond \mathfrak{L}_\lambda$ of $\widetilde{\mathcal{H}}(Q,\varrho)$ is mapped to $L(\bv_\alpha+\bv_\lambda,\bw_\alpha+\bw_\lambda)$ under the isomorphism $\widetilde{\Omega}:\widetilde{\ch}(Q,\varrho)\rightarrow\tRi$, where $(\bv_\lambda,\bw_\lambda)$ is such that $\sigma^*\bw_\lambda-\cc_q\bv_\lambda=\lambda$ and $\widetilde{\Omega}(\K_\alpha)=L(\bv_\alpha,\bw_\alpha)$.
	\end{proposition}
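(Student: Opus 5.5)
The plan is to factor $\widetilde{\Omega}$ as $\tilde{\chi}^*\circ\widetilde{\Upsilon}$ after extending scalars to $\mathbf{A}$, and to read off the image of $\K_\alpha\diamond\mathfrak{L}_\lambda$ in two steps.

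First, I would check that $\widetilde{\Omega}\otimes_\cz\mathbf{A}$ and $\tilde{\chi}^*\circ\widetilde{\Upsilon}$ agree on $\widetilde{\ch}(Q,\varrho)_\cz\otimes_\cz\mathbf{A}$. Both are algebra homomorphisms, so it is enough to compare them on the generators $\fu_{\alpha_i}$ and $\K_i$ (and on the inverses $\K_i^{-1}$, which is then automatic). The Remark after Proposition~\ref{prop:iHA dual trace map} computes the composite on generators as
\[
[S_i]\mapsto \sqq^{\frac12}L(0,\e_{\sigma\tS_i}),\qquad [\K_i]\mapsto L(\bv^{\varrho i},\bw^i).
\]
This is exactly the definition of $\widetilde{\Omega}$, and Theorem~\ref{prop:Hall-sheaf-integ} guarantees that the integral forms match.

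Second, I would apply the two results on bases in turn:
\begin{itemize}
\item Corollary~\ref{coro:Upsilon maps dCB} gives $\widetilde{\Upsilon}(\K_\alpha\diamond\mathfrak{L}_\lambda)=L_{\bv_\alpha+\bv_\lambda,\bw_\alpha+\bw_\lambda}$. This uses Lemma~\ref{lem:Hall basis map to M_vw} to identify the pairs $(\bv_\alpha,\bw_\alpha)$ and $(\bv_\lambda,\bw_\lambda)$.
\item Proposition~\ref{prop:iHA dual trace map} then sends $L_{\bv,\bw}$ to $L(\bv,\bw)$.
\end{itemize}
Combining these, $\widetilde{\Omega}(\K_\alpha\diamond\mathfrak{L}_\lambda)=L(\bv_\alpha+\bv_\lambda,\bw_\alpha+\bw_\lambda)$ holds in $\tRi\otimes_\cz\mathbf{A}$.

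Finally, I would descend from $\mathbf{A}$ to $\cz$. Both sides already lie in $\tRiZ$: the left side by Theorem~\ref{prop:Hall-sheaf-integ}, the right side by definition. Since $\tRiZ\to\tRiZ\otimes_\cz\mathbf{A}$ is injective (the module is free over $\cz$), the equality holds in $\tRi$. I would also confirm that $\widetilde{\Omega}(\K_\alpha)=L(\bv_\alpha,\bw_\alpha)$ with the pair chosen as in Lemma~\ref{lem:Hall basis map to M_vw}. This follows because $\widetilde{\Omega}$ commutes with the $\diamond$-action, as in \eqref{eq:diamond for L def}.

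The main obstacle is making sure that the normalizations line up, namely the factor $\sqq^{\frac12}$ on $[S_i]$ and the $\sqq$ versus $\sqq^2$ on $\K_i$, so that no stray power of $v^{1/2}$ survives. I would handle this with the bar-invariance argument already used in Proposition~\ref{prop:iHA dual trace map}. Both $\widetilde{\Omega}(\K_\alpha\diamond\mathfrak{L}_\lambda)$ and $L(\bv,\bw)$ are bar-invariant, and $\widetilde{\Omega}$ commutes with bar. Any discrepancy $v^{d}$ would therefore have to be bar-invariant, which forces $d=0$.
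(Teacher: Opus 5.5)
Your proposal is correct and follows the paper's route. The paper states the proposition as a direct consequence of Corollary~\ref{coro:Upsilon maps dCB} and Proposition~\ref{prop:iHA dual trace map}, using $\widetilde{\Omega}=\tilde{\chi}^*\circ\widetilde{\Upsilon}$ as in the Remark. Your extra steps (checking the factorization on generators and descending from $\mathbf{A}$ to $\cz$ by freeness) make explicit what the paper leaves implicit, and the closing bar-invariance check is redundant once the factorization holds exactly.
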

	
	From the observations in \S\ref{subsec:FT of dCB}, especially \eqref{eq:i^* of Lvw}, we can prove the following positivity result of the dual canonical basis of $\widetilde{\ch}(Q,\varrho)$, which is reminiscent to \cite[\S 10]{Lus90}.
	
	\begin{theorem}\label{iHA dCB positivity}
		For $\alpha\in\N^\I$ and $\lambda\in\fp$, in $\widetilde{\ch}(Q,\varrho)$ we have 
		\[\K_\alpha\diamond\mathfrak{U}_\lambda\in \K_\alpha\diamond\mathfrak{L}_\lambda+\sum_{(\alpha,\lambda)\prec(\beta,\mu)}v^{-1}\N[v^{-1}]\K_\beta\diamond \mathfrak{L}_\mu.\] 
	\end{theorem}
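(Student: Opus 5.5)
We work on the geometric side and transport the result back through $\widetilde{\Omega}$. By Proposition~\ref{iHA dCB compare}, $\widetilde{\Omega}(\K_\alpha\diamond\mathfrak{L}_\lambda)=L(\bv_\alpha+\bv_\lambda,\bw_\alpha+\bw_\lambda)$. Moreover $\widetilde{\Omega}$ commutes with the $\diamond$-action, and $L(\bv^0,\bw^0)\diamond-$ sends $L(\bv^+,\bw^+)$ to $L(\bv^0+\bv^+,\bw^0+\bw^+)$ by \eqref{eq:diamond for L def}. It therefore suffices to treat $\alpha=0$ and then apply $\K_\alpha\diamond-$ to both sides. The shift by $\alpha$ is compatible with the order $\prec$ on $\N^\I\times\fp$, because $(\alpha,\lambda)\prec(\beta,\mu)$ is preserved when $\alpha$ and $\beta$ are both increased by the same vector. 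So the real task is to expand $\widetilde{\Omega}(\mathfrak{U}_\lambda)$ in the basis $L(\bv,\bw)$ and show that the coefficients lie in $v^{-1}\N[v^{-1}]$ off the diagonal.

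\textbf{Step 1: the positive part.} In $\widetilde{\ch}(Q)$, Lusztig's purity argument gives positivity. Concretely, by Proposition~\ref{dCB of HA is L} together with the argument of Theorem~\ref{thm:IC coefficient is zeta_vv} applied to orbits in $E_{Q,\bw}$,
\[
\Omega^+(\mathfrak{U}_\lambda)=\sum_{\bv'}\sqq^{\dim}p^{+}_{\bv_\lambda,\bv'}\,B(\bv',\bw_\lambda).
\]
Here the coefficients are the shifted stalk Poincaré polynomials of $\mathcal{P}(\bv',\bw)$ at points of the orbit labelled by $\lambda$. Since these complexes are pure and even (Lusztig \cite{Lus90}), the coefficients lie in $\N[v^{-1}]$. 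They equal $1$ on the diagonal and lie in $v^{-1}\N[v^{-1}]$ off it; the vanishing of the constant term off the diagonal comes from \cite[2.1.9]{BBD}, exactly as in the proof of Lemma~\ref{lem:i^* induced map triangular}.

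\textbf{Step 2: transport through $\iota$.} We use the commutative diagram relating $\widetilde{\ch}(Q)\to\widetilde{\ch}(Q,\varrho)$ with $\hR^+\to\tRi$. Since $\iota(\mathfrak{U}_\lambda)=\mathfrak{U}_\lambda$, we get $\widetilde{\Omega}(\mathfrak{U}_\lambda)=\sum_{\bv'}(\cdots)\,\iota(B(\bv',\bw))$. The embedding $\iota$ is dual to $i^*$, so by \eqref{eq:i^* of Lvw},
\[
\iota(B(\bv',\bw))=\sum_{\bv''}a_{\bv',\bv''}\,L(\bv'',\bw),
\]
and Lemma~\ref{lem:i^* induced map triangular} gives $a_{\bv',\bv'}=1$ and $a_{\bv',\bv''}\in v^{-1}\N[v^{-1}]$ otherwise. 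Composing two matrices with these properties yields a matrix of the same type, with entries in $\N[v^{-1}]$, diagonal $1$, and off-diagonal entries in $v^{-1}\N[v^{-1}]$.

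\textbf{Step 3: indices and the main obstacle.} It remains to check that every $L(\bv'',\bw)$ that occurs, with $(\bv'',\bw)=(\bv''^0,\bw''^0)+(\bv''^+,\bw''^+)$ as in Lemma~\ref{lem:DP3}, corresponds under Proposition~\ref{iHA dCB compare} to some $\K_\beta\diamond\mathfrak{L}_\mu$ with $(0,\lambda)\prec(\beta,\mu)$. Nonzero $\bv''^0$ gives $\beta\neq0$, so $0\prec\beta$. If $\bv''^0=0$, then $\bv_\lambda<\bv''$ inside ${}^\imath V^+$, which corresponds to $\lambda\prec\mu$ in the degeneration order. The stalk bookkeeping in Steps 1–2 is routine. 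The obstacle I expect is this matching of the triangularity in $\bv$ with the order $\prec$ on $\N^\I\times\fp$, in particular the fact that $i^*\mathcal{L}(\bv'',\bw)$ may contain $\mathcal{P}(\bv',\bw)$ with $(\bv'',\bw)$ having nonzero $V^0$-part. I would handle it using Lemma~\ref{lem:strata M_0^reg char}, which identifies the strata with pairs $(X\oplus\K_\beta)$, together with equality of total dimension vectors $\bw$. The latter equality is exactly the condition $\beta+\varrho\beta+\dimv M_q(\mu)=\dimv M_q(\lambda)$ in the definition of $\prec$.
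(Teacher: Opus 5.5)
Your proposal is correct and is essentially the paper's proof. You reduce to $\alpha=0$ and obtain $\Omega^+(\mathfrak{U}_\lambda)\in B(\bv_\lambda,\bw_\lambda)+\sum_{\lambda\prec\mu}v^{-1}\N[v^{-1}]B(\bv_\mu,\bw_\mu)$ from Lusztig's positivity. The paper quotes this algebraically via the dual PBW basis and \eqref{eq:TE}, while you read it off the IC stalks; it is the same fact. You then compose with the unitriangular positive matrix of Lemma~\ref{lem:i^* induced map triangular} and pull back by Proposition~\ref{iHA dCB compare}.

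Your Step 3 index check, which the paper leaves implicit, is easier than you fear. For $\bv''\in{}^\imath V^+$, the stratum is just the orbit of a $kQ$-module inside the closed subset $E_{Q,\bw}$: by Lemma~\ref{lem:strata M_0^reg char}, $\res_\BH$ being semisimple forces $\varepsilon=0$. Hence $a_{\bv',\bv''}=\delta_{\bv',\bv''}$ there, and the $\beta=0$ terms are exactly the $\mu$ with $\lambda\prec\mu$ coming from Step 1. You therefore do not need to compare the componentwise $\bv$-order with the degeneration order.
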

	
	\begin{proof}
		It suffices to consider the case $\alpha=0$. By the positivity of Lusztig's canonical basis \cite{Lus90}, we find that (see notations in \S\ref{dCB of HA subsec})
		\[\mathfrak{E}_\lambda^*\in \psi^+(\mathfrak{B}_\lambda^*)+\sum_{\lambda\prec\mu}v^{-1}\N[v^{-1}]\psi^+(\mathfrak{B}_\mu^*).\]
		By \eqref{eq:TE} and Proposition~\ref{dCB of HA is L}, this translates to
		\begin{align*}
			\Omega^+(\mathfrak{U}_\lambda)&\in B(\bv_\lambda,\bw_\lambda)+\sum_{\lambda\prec\mu}v^{-1}\N[v^{-1}]B(\bv_\mu,\bw_\mu).
		\end{align*}
		On the other hand, we have already remarked in Lemma~\ref{lem:i^* induced map triangular} that 
		\[\iota(B(\bv,\bw))\in L(\bv,\bw)+\sum_{\bv<\bv'}v^{-1}\N[v^{-1}]L(\bv',\bw).\]
		The assertion now follows from Proposition~\ref{iHA dCB compare}, by noting that $\iota(\Omega^+(\mathfrak{U}_\lambda))=\widetilde{\Omega}(\mathfrak{U}_\lambda)$.
	\end{proof}
	
	From Theorem \ref{FT of CB} and Proposition \ref{iHA dCB compare}, we know the dual canonical basis $\{\K_\alpha\diamond \mathfrak{L}_\lambda \mid \lambda\in\fp,\alpha\in\Z^\I\}$ of $\widetilde{\ch}(Q,\varrho)$ (and also $\widetilde{\ch}(Q,\varrho)_\cz$) does not depend on the orientation of $Q$. From them, we  can also obtain the similar statement for $\widehat{\ch}(Q,\btau)$ (and $\widehat{\ch}(Q,\btau)_\cz$), so we make the following definition. 
	
	\begin{definition}[Dual canonical basis of $\imath$quantum groups]
		\label{def:dual CB}
		The dual canonical basis for $\widetilde{\ch}(Q,\varrho)_\cz$ is transferred to a basis for $\tUi_\cz$ via the isomorphism in Lemma~\ref{lem:Hall-iQG}, which are called the {\em dual canonical basis} for $\tUi$.
	\end{definition}
	
	The dual canonical basis of $\tUi$ can be equivalently defined by using the dual IC basis of $\tRi$, and in particular it is integral and positive. Similarly, we can define the dual canonical  basis for $\hUi$.

	For any $\beta\in\Phi^+$, we construct the root vector $B_\beta\in \tUi$ by using braid group actions; see \cite{LW21a,LP25}. 
	Given any total order of $\Phi^+$, we have defined a PBW basis $\{B^\ba \K_\alpha\mid \ba\in\N^{|\Phi^+|},\alpha\in\Z^\I\}$ for $\tUi$, where 
	\begin{equation}\label{eq:iQG PBW basis def}
		B^{\ba}:=\prod_{\gamma\in\Phi^+} B_{\gamma}^{a_\gamma},\quad \forall \ba=(a_\gamma)_\gamma\in\N^{|\Phi^+|}.
	\end{equation}

	\begin{proposition}\label{prop:iQG PBW to dCB positivity}
		The transition matrix coefficients of $\{B^\ba \K_\alpha\mid \ba\in\N^{|\Phi^+|},\alpha\in\Z^\I\}$ with respect to the dual canonical basis of $\tUi$ belong to $\N[v^{\frac{1}{2}},v^{-\frac{1}{2}}]$.
	\end{proposition}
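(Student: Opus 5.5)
The plan is to reduce the statement to Theorem~\ref{iHA dCB positivity} by comparing the PBW basis with the rescaled Hall basis $\{\K_\alpha\diamond\mathfrak{U}_\lambda\}$ under the isomorphism $\widetilde{\psi}$ of Lemma~\ref{lem:Hall-iQG}.

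\textbf{Step 1: the $\K$-factor.} Since $\K_\alpha\diamond(-)$ differs from $\K_\alpha\ast(-)$ by a power of $v^{1/2}$, and $\K_\alpha\diamond\mathfrak{L}_\lambda$ is again a dual canonical basis element, right multiplication by $\K_\alpha$ sends dual canonical basis elements to $v^{\frac12\Z}$-multiples of dual canonical basis elements. The same holds in $\tRi$ by Lemma~\ref{lem: multiply by L^0}. So it suffices to treat $B^{\ba}$ with $\alpha=0$.

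\textbf{Step 2: the case of a single root vector.} Using the construction of $B_\beta$ via braid group actions in \cite{LW21a,LP25}, I expect $\widetilde{\psi}(B_\beta)$ to be, up to a power of $v^{1/2}$, the Hall element $\fu_{M_q(\beta)}$ when the total order is adapted to the orientation of $Q$. For an arbitrary total order, one can instead choose a reduced expression adapted to some orientation $Q'$ and transport along the Fourier transform $\Phi_{Q',Q}$. By Theorem~\ref{FT of CB} and Proposition~\ref{iHA dCB compare}, this transport preserves the dual canonical basis.

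\textbf{Step 3: products of root vectors.} For a total order compatible with the Auslander--Reiten ordering of $\mod(\bfk Q)$, the ordered product $\prod_\gamma \fu_{M_q(\gamma)}^{a_\gamma}$ should equal $v^{m}\fu_\lambda$ plus possibly terms $\K_\beta\ast\fu_\mu$. Here $\lambda$ corresponds to $\ba$. This is where the $\imath$Hall product differs from the Hall product of $\bfk Q$: extensions in $\Lambda^\imath$ involving $\Hom_{\bfk Q}(M,\btau N)$ can produce $\K$-terms. The key point is to show that all such coefficients lie in $\N[v^{1/2},v^{-1/2}]$. Counts of extensions are polynomials with nonnegative coefficients whenever the relevant $\Ext^1_{\Lambda^\imath}$ are of the form $\Hom_{\bfk Q}(M,\btau N)$ and the $\Hom$ denominators cancel.

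A cleaner route avoids this computation: write the product directly in the dual IC basis. Each $\widetilde{\Omega}(B_\gamma)$ is, up to a power of $v^{1/2}$, a dual IC basis element $L(\bv,\bw)$. The product of dual IC basis elements has structure constants $c^{\bv}_{\bv_1,\bv_2}(v)\in\N[v,v^{-1}]$ by \eqref{eq:positive}, twisted by $v^{\frac12\langle-,-\rangle_{Q,a}}$. Hence $\widetilde{\Omega}(B^{\ba})$ is a $\N[v^{\frac12},v^{-\frac12}]$-combination of $L(\bv,\bw)$, and Proposition~\ref{iHA dCB compare} together with Definition~\ref{def:dual CB} then gives the claim. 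I would use this route, so that Theorem~\ref{iHA dCB positivity} is needed only to justify Step 2.

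\textbf{Main obstacle.} The hardest step is Step 2: identifying $\widetilde{\kappa}(B_\beta)$ with a single dual canonical basis element up to a power of $v^{1/2}$, for every $\beta$ and every total order. I would first handle the case where $\beta$ corresponds to a simple module of some orientation $Q'$ reachable by reflections. Then I would use that $\widetilde{\psi}(B_\beta)$ is obtained from $\sqq^{-\frac12}[S_i]$ by braid operators realized as reflection functors and Fourier transforms, as in \cite{LP25}. These preserve the dual canonical basis by \cite{LP25} and Theorem~\ref{FT of CB}. Since $\widetilde{\psi}(B_i)=v^{-\frac12}\fu_{\alpha_i}$ and $\widetilde{\Omega}(\fu_{\alpha_i})=v^{\frac12}L(0,\e_{\sigma\tS_i})$, this yields $\widetilde{\kappa}(B_\beta)\in v^{\frac12\Z}\,L(\bv_\beta,\bw_\beta)$, which completes the argument.
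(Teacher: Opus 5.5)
Your proposal is correct and is essentially the paper's argument. The paper cites \cite[Theorem C]{LP25} (braid-group invariance of the dual canonical basis) to see that each root vector $B_\beta$ is itself a dual canonical basis element, and then concludes from the integrality and positivity of the structure constants of the dual canonical basis; this is your ``cleaner route'' combined with your Step 2. Your explicit treatment of the $\K_\alpha$ factor is a harmless extra detail, and Theorem~\ref{iHA dCB positivity} is not actually needed, since Step 2 rests on braid-group invariance rather than on that theorem.
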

	
	\begin{proof}
		Using \cite[Theorem {\bf C}]{LP25}, we know that the element $B_\beta$ belongs to the dual canonical basis of $\tUi$ for each $\beta\in\Phi^+$. 
		Then the result follows since the structure constants of dual canonical basis are integral and positive.
	\end{proof}

	\section{Dual canonical basis of $\tUi_v(\mathfrak{sl}_2)$}\label{sec:dCB irank I}
	
	In this section we consider the $\imath$quiver $Q$ with one vertex $1$, no edge, and $\varrho=\Id$. The regular NKS category $\mcr^\imath$ is
	\[
	\begin{tikzcd}
		\tS_1\ar[r,shift left=2pt,"\alpha"]& \sigma(\tS_1)\ar[l,shift left=2pt,"\beta"]
	\end{tikzcd}
	\]
	subject to $\alpha\beta=0$. The corresponding $\imath$quantum group is $\tUi_v(\mathfrak{sl}_2)$ of split type, which is a commutative algebra $\Q(v^{1/2})[B,\K^{\pm1}]$. We will identify $\tUi(\mathfrak{sl}_2)$ with $\tRi$ in the following, in particular, the generators $B=L(0,\e_{\sigma\tS_1})$ and $\K=L(\bv^1,\bw^1)$. 
	
	By definition, we have
	\[\N^{\mathcal{R}^\imath_0-\mathcal{S}^\imath_0}=\N\e_{\tS_1},\quad \N^{\mathcal{S}^\imath_0}=\N\e_{\sigma \tS_1}.\]
	So we may write $(v,w)$ for $l$-dominant pairs in $\mathcal{R}^\imath$. Then the quiver varieties are
	\begin{align*}
		\mathcal{M}(v,w,\mathcal{R}^\imath)&=\{V\subseteq \C^{w},x:\C^{w}\rightarrow \C^{w}\mid \dim V=v,x(\C^{w})\subseteq V,x(V)=0\},\\
		\mathcal{M}_0(w,\mathcal{R}^\imath)&=\{x:\C^{w}\rightarrow \C^{w}\mid x^2=0\}.
	\end{align*}
	In particular, we see that a pair $(v,w)$ is strongly $l$-dominant if and only if $0\leq v\leq\lfloor\frac{w}{2}\rfloor$, and the stratification of $\mathcal{M}_0(w)$ is
	\begin{equation}\label{eq:irank I stratification}
		\mathcal{M}_0(w)=\bigsqcup_{0\leq v\leq\lfloor\frac{w}{2}\rfloor}\mathcal{M}_0^{\reg}(v,w).
	\end{equation}
	Here the stratum $\mathcal{M}_0^{\reg}(v,w)$ consists of matrices $x:\C^{w}\rightarrow \C^{w}$ of rank $v$.
	
	The decomposition of $\pi(v,w)$ can be worked out explicitly. To this end, recall the definition of a semismall map. Let $f:X\rightarrow Y$ be a proper surjective map and $Y=\bigsqcup_\lambda S_\lambda$ a stratification of $Y$ such that each restriction $f_\lambda:f^{-1}(S_\lambda)\rightarrow S_\lambda$ is a topologically locally trivial fibration. The map $f$ is called semismall if for every $\lambda$ we have 
	\[2\dim(f^{-1}(s_\lambda))+\dim(S_\lambda)\leq\dim(X)\]
	where for given stratum $S_\lambda$, we let $s_\lambda\in S_\lambda$ denote any of its points. A stratum $S_\lambda$ of $Y$ is called relevant if $2\dim(f^{-1}(s_\lambda))+\dim(S_\lambda)=\dim(X)$. The most important result for semismall maps is the following decomposition theorem, proved in \cite{dM02}:
	
	\begin{theorem}\label{semismall map decomposition}
		Let $f:X\rightarrow Y$ be a proper surjective semi-small map with $X$ smooth of pure complex dimension $n$. Let $Y=\bigsqcup_\lambda S_\lambda$ be a stratification of $Y$ as above and for each relevant stratum $S_\lambda$, let $L_S=R^{n-\dim(S_\lambda)}f_*\C_X$ be the corresponding local system. There is a canonical isomorphism
		\[f_*\C_X[n]=\bigoplus_{S}\IC_{\ov{S}}(L_S)\]
		where the sum is over all relevant strata $S$.
	\end{theorem}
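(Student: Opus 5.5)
The plan is to deduce the statement from the decomposition theorem of \cite{BBD}, with semismallness used to show two things: that $f_*\C_X[n]$ is itself perverse, and that its summands are detected by the top cohomology of the fibres. We take the stratification fine enough that $f$ is a locally trivial fibration over each $S_\lambda$, as in the statement. Write $d_\lambda=\dim S_\lambda$.

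\emph{Step 1: $f_*\C_X[n]$ is perverse.} Since $f$ is proper, $f_*=f_!$, and proper base change identifies the stalk at $s_\lambda$:
\[
\mathcal{H}^{i}(f_*\C_X[n])_{s_\lambda}\cong H^{i+n}(f^{-1}(s_\lambda),\C).
\]
This vanishes for $i+n>2\dim f^{-1}(s_\lambda)$. The semismall inequality $2\dim f^{-1}(s_\lambda)\le n-d_\lambda$ therefore gives vanishing for $i>-d_\lambda$, which is the support condition. For cosupport, note that $X$ is smooth of pure dimension $n$, so $\C_X[n]$ is Verdier self-dual. As $f$ is proper, $\mathbb{D}f_*=f_*\mathbb{D}$, hence $f_*\C_X[n]$ is self-dual and the cosupport condition follows from the support condition.

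\emph{Step 2: decomposition.} By the decomposition theorem, $f_*\C_X[n]$ is a direct sum of shifted IC complexes. Because it is perverse, no shifts occur, and because of the locally trivial structure along the strata we get
\[
f_*\C_X[n]\cong\bigoplus_\lambda \IC_{\ov{S_\lambda}}(L_\lambda)
\]
for semisimple local systems $L_\lambda$ on $S_\lambda$, possibly zero.

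\emph{Step 3: identify $L_\lambda$.} Restrict to the stratum $S_\lambda$ and take $\mathcal{H}^{-d_\lambda}$.
\begin{itemize}
\item The summand $\IC_{\ov{S_\lambda}}(L_\lambda)$ contributes exactly $L_\lambda$, since its restriction to $S_\lambda$ is $L_\lambda[d_\lambda]$.
\item A summand supported on a stratum $S_\mu$ with $S_\lambda\subset\ov{S_\mu}\setminus S_\mu$ contributes nothing, because an IC complex satisfies strict support conditions on boundary strata (\cite[2.1.9]{BBD}): it lives in degrees $<-d_\lambda$ there.
\item Summands whose support does not meet $S_\lambda$ contribute nothing.
\end{itemize}
Hence $L_\lambda\cong \mathcal{H}^{-d_\lambda}(f_*\C_X[n])|_{S_\lambda}=R^{n-d_\lambda}f_*\C_X|_{S_\lambda}$. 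This is the local system $L_S$ of the statement. Its stalk is $H^{n-d_\lambda}(f^{-1}(s_\lambda))$, which is nonzero exactly when $n-d_\lambda=2\dim f^{-1}(s_\lambda)$, since the top cohomology of a variety is spanned by its top-dimensional irreducible components. So $L_\lambda\neq 0$ precisely for the relevant strata, and the sum runs over relevant strata only.

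\emph{Main obstacle: canonicity.} The decomposition theorem alone gives a noncanonical isomorphism. To make it canonical, I would argue inductively over the strata ordered by closure, using the truncation triangles for the open union of strata. By the support computations above, the relevant $\Hom$ groups between IC summands on distinct strata vanish in the required degrees, so each summand $\IC_{\ov{S}}(L_S)$ is cut out canonically as the intermediate extension of $\mathcal{H}^{-\dim S}$ on its stratum. Equivalently, the isotypic decomposition of a perverse sheaf by supports is canonical.
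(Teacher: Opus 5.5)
Your proof is correct. Note that the paper does not prove this statement itself: it quotes it from de Cataldo--Migliorini \cite{dM02} and uses it only as a black box for the rank-one map $\pi:\mathcal{M}(v,w)\to\mathcal{M}_0(w)$.

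\textbf{Your route.} You deduce the result from the decomposition theorem of \cite{BBD}. Semismallness, together with proper base change and the Verdier self-duality of $\C_X[n]$, makes $f_*\C_X[n]$ perverse, so the BBD splitting has no shifts. The strict support condition for IC complexes on boundary strata then isolates $L_S=R^{n-\dim S}f_*\C_X|_S$. This is nonzero exactly on the relevant strata, because the fibres are compact and their top cohomology is spanned by the top-dimensional components. You should say ``compact'' explicitly there; it follows from properness.

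\textbf{The cited route.} As far as I recall, \cite{dM02} argues differently and avoids the BBD theorem, with its reduction to finite fields and purity. It shows that for a semismall map, splitting off $\IC_{\ov S}(L_S)$ stratum by stratum is equivalent to nondegeneracy of a refined intersection form. This form lives on the fundamental classes of the top-dimensional components of the fibre over a relevant stratum, computed in a normal slice. The nondegeneracy (indeed definiteness, up to sign) is then proved from hard Lefschetz and the Hodge--Riemann relations for pullbacks of ample bundles, in the projective setting.

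\textbf{Comparison.} Your approach is shorter and covers any proper $f$ once BBD is granted. Theirs uses only classical Hodge theory and gives more information: the signs of the intersection forms and an explicit mechanism for the splitting.

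\textbf{Two points to tighten.}
\begin{itemize}
\item In Step 2, say that direct summands of a complex whose cohomology sheaves are locally constant along the (connected) strata inherit this property. Hence the simple summands are IC complexes of local systems on the given strata, and their supports are unions of strata. This is what makes your three cases in Step 3 exhaustive.
\item In the canonicity paragraph, the decomposition by supports is canonical because the perverse sheaf is \emph{semisimple}: there are no nonzero maps between simple perverse sheaves with different supports. Each piece is then canonically the intermediate extension of its restriction $L_S[\dim S]$.
\end{itemize}
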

	
	\begin{proposition}\label{L_k,n decomposition}
		The map $\pi:\mathcal{M}(v,w)\rightarrow\mathcal{M}_0(w)$ is semismall with respect to the stratification \eqref{eq:irank I stratification}, and we have a decomposition
		\[\pi(v,w)=\bigoplus_{0\leq v'\leq\min\{v,w-v\}}\mathcal{L}(v',w).\]
	\end{proposition}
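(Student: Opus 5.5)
We treat this as a small explicit geometry problem and then apply Theorem~\ref{semismall map decomposition}. By the description above, $\mathcal{M}(v,w)$ parametrizes pairs $(V,x)$ with $\dim V=v$, $\operatorname{Im}x\subseteq V\subseteq\Ker x$. Forgetting $x$ gives a map to the Grassmannian $\Gr(v,w)$ whose fibre over $V$ is $\Hom(\C^w/V,V)$. Hence $\mathcal{M}(v,w)$ is the total space of a vector bundle and is smooth, irreducible, of dimension
\[
n=v(w-v)+v(w-v)=2v(w-v).
\]
The map $\pi$ is $(V,x)\mapsto x$. It is proper (it factors through a closed subvariety of $\Gr(v,w)\times\mathcal{M}_0(w)$) and has image the set of $x$ with $x^2=0$ and $\rank x\le \min\{v,w-v\}$.

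The next step is to compute fibres and stratum dimensions. For $x$ of rank $r$ with $x^2=0$, the fibre $\pi^{-1}(x)$ is $\{V: \operatorname{Im}x\subseteq V\subseteq\Ker x,\ \dim V=v\}\cong\Gr(v-r,w-2r)$. This has dimension $(v-r)(w-v-r)$ and is nonempty exactly when $r\le\min\{v,w-v\}$. The stratum $\mathcal{M}_0^{\reg}(r,w)$ consists of rank $r$ square-zero matrices. Its dimension can be computed as $\dim GL_w-\dim$ of the centralizer, or more directly by choosing $\operatorname{Im}x$ in $\Gr(r,w)$, then $\Ker x\supseteq \operatorname{Im}x$ in $\Gr(w-2r,w-r)$, then an isomorphism $\C^w/\Ker x\to\operatorname{Im}x$. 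This gives
\[
\dim\mathcal{M}_0^{\reg}(r,w)=r(w-r)+r(w-2r)+r^2=2r(w-r)-r^2=r(2w-3r).
\]
We then check the identity
\[
2(v-r)(w-v-r)+2r(w-r)-r^2 \stackrel{?}{=}2v(w-v).
\]
Expanding the left-hand side gives $2v(w-v)-2r(w-2r)+\dots$, so care is needed here. The main obstacle is getting this dimension right, since semismallness with \emph{every} stratum relevant is exactly what the claimed decomposition requires.

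We recompute using the orbit dimension $w^2-\dim Z(x)$. For $x$ of Jordan type $2^r1^{w-2r}$, the centralizer has dimension $\sum_i(\lambda_i')^2=(w-r)^2+r^2$, where $\lambda'=(w-r,r)$ is the conjugate partition. So the stratum has dimension $w^2-(w-r)^2-r^2=2r(w-r)$; the earlier count double-subtracted. With this, $2(v-r)(w-v-r)+2r(w-r)$ expands to $2v(w-v)$: the cross terms give $-2rw+2r^2+2rw-2r^2=0$. Hence equality holds for every stratum in the image, so $\pi$ is semismall and all strata $0\le r\le\min\{v,w-v\}$ are relevant. Each is a locally trivial fibration because the strata are $GL_w$-orbits and $\pi$ is equivariant.

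By Theorem~\ref{semismall map decomposition}, $\pi_*\C[n]=\bigoplus_r\IC(L_r)$, where $L_r$ is the top cohomology local system of the fibres $\Gr(v-r,w-2r)$. The top cohomology of a Grassmannian is one-dimensional. The monodromy is trivial, since the equivariant local system on an orbit comes from a representation of the component group of the stabilizer $Z(x)$, and that group is connected (it is the unit group of an algebra). So $L_r$ is the constant sheaf of rank one, and $\IC(L_r)=\mathcal{L}(r,w)$. The grading shift in \eqref{eq:piPS} is exactly $[n]$, and $\pi_!=\pi_*$ by properness. This gives $\pi(v,w)=\bigoplus_{0\le v'\le\min\{v,w-v\}}\mathcal{L}(v',w)$ with no shifts.
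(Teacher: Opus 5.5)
Your proposal is correct and follows the paper's route. Like the paper, you compute $\dim\mathcal{M}(v,w)=2v(w-v)$, the fibre dimension $(v-r)(w-v-r)$ and the stratum dimension $2r(w-r)$, observe that every stratum in the image is relevant, and apply Theorem~\ref{semismall map decomposition}. You also correctly supply two details the paper leaves implicit: the fibres are Grassmannians, and the local systems are trivial because $Z(x)$ is connected.

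One cleanup: your first orbit count was already right, since $r(w-r)+r(w-2r)+r^2=2r(w-r)$. The value $r(2w-3r)$ came from an addition slip, not from any double-subtraction, so the centralizer recomputation and the detour around it can be removed.
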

	\begin{proof}
		The fact that $\pi_{k,n}$ is semismall can be checked directly from the following formulas
		\begin{gather*}
			\dim(\mathcal{M}(v,w,\mathcal{R}^\imath))=2v(w-v),\quad \dim(\mathcal{M}_0^{\reg}(v',w,\mathcal{R}^\imath))=2v'(n-v'),\\
			\dim(\pi^{-1}(x_{v',w}))=(v-v')(w-v-v'),
		\end{gather*}
		where $x_{v',w}$ is a generic point in $\mathcal{M}_0^{\reg}(v',w,\mathcal{R}^\imath)$. One can also see that every stratum in the image of $\pi$ is relevant.
	\end{proof}
	
	Given integers $w_1,w_2$, we now consider the restriction functor $\Delta^w_{w_1,w_2}$, where $w=w_1+w_2$. In this case \eqref{eqn:comultiplication} reads
	\begin{equation}\label{eq:irank I Res formula}
		\Res_{w_1,w_2}^{w}(\pi(v,w))=\bigoplus_{v_1+v_2=v}\pi(v_1,w_1)\boxtimes \pi(v_1,w_2).
	\end{equation}
	From this we deduce the following multiplication rule in $\tRi$:
	\begin{equation}\label{eq:irank I multiplication formula}
		L(v_1,w_1)\cdot L(v_2,w_2)=\sum_{v=v_1+v_2}^{\min\{w_1-v_1+v_2,w_2-v_2+v_1,\lfloor\frac{w_1+w_2}{2}\rfloor\}}L(v,w).
	\end{equation}
	In particular, the generators $B=L(0,\e_{\sigma\tS_1})$ and $\K=L(\bv^1,\bw^1)$ satisfy the following equation:
	\[B^a\K^b=\sum_{i=0}^{\lfloor\frac{a}{2}\rfloor}C_{i,a-1}L(i+b,a+2b),\]
	where $C_{i,a-1}=\binom{a-1}{i}-\binom{a-1}{i-2}$. To find an explicit expression of $L(v,w)$, let us write
	\begin{align*}
		L(k,2n)&=\sum_{i=0}^{n-k}X_{i,n-k}B^{2i}\K^{n-i},\\
		L(k,2n+1)&=\sum_{i=0}^{n-k}Y_{i,n-k}B^{2i+1}\K^{n-i},
	\end{align*}
	where the coefficients $X_{k,n}$ and $Y_{k,n}$ are defined inductively as follows
	\begin{align}
		\label{equation for X_kn}
		\begin{aligned}
			X_{n,n+1}&=-C_{1,2n+1},\quad X_{n+1,n+1}=1,\\
			X_{k,n+1}&=-C_{1,2n+1}X_{k,n}-\sum_{i=2}^{n+1-k}C_{i,2n+1}X_{k,n-i+1},\quad 0\leq k\leq n-2,
		\end{aligned}\\
		\label{equation for Y_kn}
		\begin{aligned}
			Y_{n,n+1}&=-C_{1,2n+2},\quad Y_{n+1,n+1}=1,\\
			Y_{k,n+1}&=-C_{1,2n+2}Y_{k,n}-\sum_{i=2}^{n+1-k}C_{i,2n+2}Y_{k,n-i+1},\quad 0\leq k\leq n-2.
		\end{aligned}
	\end{align}
	
	\begin{proposition}\label{I_k,n coefficient formula}
		The above coefficients are given by
		\[X_{k,n}=(-1)^{n-k}\binom{n+k}{2k},\quad Y_{k,n}=(-1)^{n-k}\binom{n+k+1}{2k+1},\]
		and we can therefore write 
		\[L(k,n)=\sum_{j=k}^{\lfloor\frac{n}{2}\rfloor}(-1)^{j-k}\binom{n-k-j}{n-2j}B^{n-2j}\K^j.\]
	\end{proposition}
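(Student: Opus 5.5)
We plan to avoid the recursions \eqref{equation for X_kn} and \eqref{equation for Y_kn} and instead derive a three-term recurrence for $L(0,m)$ directly from the multiplication rule \eqref{eq:irank I multiplication formula}. The closed formula for $L(k,n)$ then follows, and the formulas for $X_{k,n},Y_{k,n}$ are read off from it. Since $\tRi\cong\Q(v^{1/2})[B,\K^{\pm1}]$ with $B,\K$ algebraically independent, the expansions $L(k,n)=\sum_j c_j B^{n-2j}\K^j$ are unique. Hence any family satisfying the defining relations must coincide with the one produced by the recursions.

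\emph{Step 1: multiplication by $\K$.} Apply \eqref{eq:irank I multiplication formula} with $(v_1,w_1)=(1,2)$, i.e.\ $\K=L(\bv^1,\bw^1)$, and $(v_2,w_2)=(v,w)$ with $0\le v\le\lfloor w/2\rfloor$. The summation runs from $v+1$ to $\min\{1+v,\,w-v+1,\,\lfloor (w+2)/2\rfloor\}$. Because $v\le\lfloor w/2\rfloor$, this minimum equals $v+1$. Therefore $\K\cdot L(v,w)=L(v+1,w+2)$, and by induction
\[L(k,n)=\K^kL(0,n-2k).\]

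\emph{Step 2: Chebyshev recurrence.} Apply \eqref{eq:irank I multiplication formula} with $(v_1,w_1)=(0,1)$, i.e.\ $B=L(0,\e_{\sigma\tS_1})$, and $(v_2,w_2)=(0,m)$. The summation bound is $\min\{1,m,\lfloor (m+1)/2\rfloor\}$. For $m\ge1$ this gives $B\,L(0,m)=L(0,m+1)+L(1,m+1)$, so by Step 1
\[L(0,m+1)=B\,L(0,m)-\K\,L(0,m-1),\qquad L(0,0)=1,\quad L(0,1)=B.\]
This is the recurrence of the Chebyshev polynomials of the second kind in $B$, with $\K$ as the weight. By induction on $m$, using Pascal's rule $\binom{m-j}{j}+\binom{m-j}{j-1}=\binom{m+1-j}{j}$, one gets
\[L(0,m)=\sum_{j=0}^{\lfloor m/2\rfloor}(-1)^j\binom{m-j}{j}B^{m-2j}\K^j .\]

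\emph{Step 3: reindexing.} Multiply by $\K^k$ with $m=n-2k$ and set $j'=j+k$. This gives $\binom{n-2k-j}{j}=\binom{n-k-j'}{j'-k}=\binom{n-k-j'}{n-2j'}$, which is the stated formula for $L(k,n)$. Specializing to $n\mapsto 2n$ and $n\mapsto 2n+1$ and matching indices with $L(k,2n)=\sum_i X_{i,n-k}B^{2i}\K^{n-i}$ (resp.\ the analogous $Y$-expansion) yields $X_{k,n}=(-1)^{n-k}\binom{n+k}{2k}$ and $Y_{k,n}=(-1)^{n-k}\binom{n+k+1}{2k+1}$. By the uniqueness remark above, these agree with the recursively defined coefficients; no direct verification of the recursions is needed.

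The main point requiring care is the bookkeeping of the summation bounds in \eqref{eq:irank I multiplication formula} in Steps 1–2, including the small cases $m=0,1$ where some terms are absent. The subsequent index matching in Step 3 is routine but also needs care.
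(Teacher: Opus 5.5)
Your proof is correct, but it follows a different route from the paper's. The paper keeps the recursions \eqref{equation for X_kn}--\eqref{equation for Y_kn}, which come from inverting the triangular expansion $B^a\K^b=\sum_i C_{i,a-1}L(i+b,a+2b)$ with $C_{i,a-1}=\binom{a-1}{i}-\binom{a-1}{i-2}$. It fixes $k$ and plugs the conjectured closed form into the recursion for $\{X_{k,n}\}_{n\geq k}$. This reduces everything to a binomial identity, proved by Vandermonde convolution with a negative upper index (both sides equal $\binom{2n-2}{n}=\binom{2n-2}{n-2}$). The paper leaves $Y_{k,n}$ as ``similar'' and gets $L(k,n)$ by reindexing.

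You instead use only two special products from \eqref{eq:irank I multiplication formula}: $\K\cdot L(v,w)=L(v+1,w+2)$, and $B\cdot L(0,m)=L(0,m+1)+L(1,m+1)$ for $m\geq 1$. Your checks of the summation bounds are right. These give $L(k,n)=\K^kL(0,n-2k)$ and a Chebyshev three-term recurrence, which you solve by Pascal's rule.

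What your route buys:
\begin{itemize}
\item It is shorter and needs neither the general formula for $B^a\K^b$ nor the convolution identity.
\item It treats even and odd $n$ at once.
\item It explains why $X_{i,n-k}$ depends only on $n-k$, and identifies $L(0,m)$ as a homogenized Chebyshev polynomial of the second kind.
\item It does not depend on the exact form of the displayed recursions, whose stated range $0\leq k\leq n-2$ actually omits $k=n-1$.
\end{itemize}
The paper's computation, in turn, is literally a check that the signed-binomial matrix inverts the monomial-to-dual-canonical-basis matrix $(C_{i,a-1})$, which is how the coefficients were introduced.

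Your closing appeal to uniqueness of expansions in $\Q(v^{1/2})[B,\K^{\pm1}]$ is the right bridge between the two. The $X_{k,n}$ and $Y_{k,n}$ are the expansion coefficients of $L(k,2n)$ and $L(k,2n+1)$, and the recursions are just a consequence of that.
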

	\begin{proof}
		We only provide the proof for $X_{k,n}$, that for $Y_{k,n}$ is similar. We fix $k\geq 0$ and consider the sequence $\{X_{k,n}\}_{n\geq k}$. It then suffices to prove the following binomial identity:
		\[\sum_{i=0}^{n}(-1)^i\binom{2k+i}{i}\binom{2n+2k-1}{n-i}=\sum_{i=0}^{n}(-1)^i\binom{2k+i}{i}\binom{2n+2k-1}{n-i-2}\]
		where $k,n\geq 1$. This follows from a direct computation
		\begin{align*}
			\sum_{i=0}^{n}(-1)^i\binom{2k+i}{i}\binom{2n+2k-1}{n-i}=\sum_{i=0}^{n}\binom{-2k-1}{i}\binom{2n+2k-1}{n-i}=\binom{2n-2}{n},
		\end{align*}
		\begin{align*}
			\sum_{i=0}^{n}(-1)^i\binom{2k+i}{i}\binom{2n+2k-1}{n-i-2}&=\sum_{i=0}^{n-2}(-1)^i\binom{2k+i}{i}\binom{2n+2k-1}{n-i-2}\\
			&=\sum_{i=0}^{n-2}\binom{-2k-1}{i}\binom{2n+2k-1}{n-i-2}=\binom{2n-2}{n-2},
		\end{align*}
		where we have used the following standard identities ($x,y,m$ are integers and $n\geq 0$):
		\begin{gather*}
			\binom{x}{n}=(-1)^n \binom{n-x-1}{n},\quad \sum_{k=0}^n \binom{x}{n-k} \binom{y}{k} = \binom{x+y}{n}.\qedhere
		\end{gather*}
	\end{proof}

\end{document}